\numberwithin{equation}{section}
\renewcommand{\uparrow}{}
\newcommand{\projection}{\mathcal{P}}
\newcommand{\semigroup}{\mathscr{S}}
\newcommand{\category}[1][(P,Q,\psi)]{\mathcal{C}_{#1}}
\newcommand{\N}{{\mathbb{N}}}
\newcommand{\No}{\mathbb{N}_0}
\newcommand{\Z}{{\mathbb{Z}}}
\newcommand{\inv}{^{-1}}
\newcommand{\uloopr}[1]{\ar@'{@+{[0,0]+(-4,5)}@+{[0,0]+(0,10)}@+{[0,0] +(4,5)}}^{#1}}
\newcommand{\uloopd}[1]{\ar@'{@+{[0,0]+(5,4)}@+{[0,0]+(10,0)}@+{[0,0]+ (5,-4)}}^{#1}}
\newcommand{\dloopr}[1]{\ar@'{@+{[0,0]+(-4,-5)}@+{[0,0]+(0,-10)}@+{[0, 0]+(4,-5)}}_{#1}}
\newcommand{\dloopd}[1]{\ar@'{@+{[0,0]+(-5,4)}@+{[0,0]+(-10,0)}@+{[0,0 ]+(-5,-4)}}_{#1}}
\newcommand{\luloop}[1]{\ar@'{@+{[0,0]+(-8,2)}@+{[0,0]+(-10,10)}@+{[0, 0]+(2,2)}}^{#1}}
\DeclareMathOperator{\spa}{span}
\DeclareMathOperator{\aut}{Aut}
\DeclareMathOperator{\id}{Id}
\newtheorem{lem}{Lemma}[section]
\newtheorem{corol}[lem]{Corollary}
\newtheorem{theor}[lem]{Theorem}
\newtheorem{prop}[lem]{Proposition}
\theoremstyle{definition}
\newtheorem{defi}[lem]{Definition}
\newtheorem{exem}[lem]{Example}
\newtheorem{exems}[lem]{Examples}
\newtheorem{nota}[lem]{Notation}
\newtheorem{rema}[lem]{Remark}
\begin{document}
\title{Algebraic Cuntz-Pimsner rings}
\author{Toke Meier Carlsen}
\address{Department of Mathematical Sciences\\
NTNU\\
NO-7491 Trondheim\\
Norway } \email{Toke.Meier.Carlsen@math.ntnu.no}
\author{Eduard Ortega}
\address{Department of Mathematics and Computer Science University of
  Southern Denmark\\
  Campusvej 55\\
  DK-5230 Odense M\\
  Denmark}
\email{ortega@imada.sdu.dk}
\thanks{The first named author was partly supported by The Danish
  Natural Science Research Council and the Research Council of Norway, and the second named author was
  partially supported by MEC-DGESIC (Spain) through Project
  MTM2008-06201-C02-01/MTM, and by the Comissionat per Universitats i
  Recerca de la Generalitat de Catalunya.}
\subjclass[2000]{Primary 16D70, 46L35; Secondary
06A12, 06F05, 46L80} \keywords{}
\date{\today}

\begin{abstract}
From a system consisting of a ring $R$, a pair of
$R$-bimodules $Q$ and $P$ and an $R$-bimodule homomorphism
$\psi:P\otimes Q\longrightarrow R$, we construct a $\Z$-graded ring
$\mathcal{T}_{(P,Q,\psi)}$ called the Toeplitz ring and (for certain
systems) a $\Z$-graded quotient $\mathcal{O}_{(P,Q,\psi)}$ of
$\mathcal{T}_{(P,Q,\psi)}$ called the Cuntz-Pimsner ring.
These rings are the algebraic analogs of the Toeplitz $C^*$-algebra and
the Cuntz-Pimsner $C^*$-algebra associated to a $C^*$-correspondence
(also called a Hilbert bimodule).

This new construction generalizes for example the
algebraic crossed product by a single automorphism, fractional skew monoid ring by a single corner automorphism and Leavitt path
algebras.
We also describe the structure of the graded ideals of
our graded rings in terms of pairs of ideals of the coefficient ring and show that our Cuntz-Pimsner rings satisfies the \emph{Graded Uniqueness Theorem}.
\end{abstract}

\maketitle

\tableofcontents

\section*{Introduction}

In \cite{PI} Pimsner introduced a way to construct a $C^*$-algebra
$\mathcal{O}_X$ from a $C^*$-correspondence $X$ over a
$C^*$-algebra $A$. These so-called Cuntz-Pimsner algebras have
been found to be a class of $C^*$-algebras that is extraordinarily rich
and with numerous examples included in the literature: crossed
products by automorphisms, Cuntz algebras, Cuntz-Krieger algebras, $C^*$-algebras
associated to graphs without sinks and Exel-Laca algebras.
Later on Katsura \cite{KS1} improved the construction of Pimsner
in the case that the left action on the correspondence is not
injective, this for example allows us to include the class of
$C^*$-algebras associated with any graph into the Cuntz-Pimsner
algebras class.
Consequently the study of the Cuntz-Pimsner algebras has received
a lot of attention in recent years, and because information about
$\mathcal{O}_X$ is densely codified in $X$ and $A$,
determining how to extract it has been the focus of considerable interest.

It has recently been discovered that many of $C^*$-algebras which can be constructed as Cuntz-Pimsner algebras have algebraic analogs.
For example the crossed product of a ring by an automorphism is the obvious analog of the crossed product of a $C^*$-algebra of an automorphism. In \cite{AGGP} Ara, Gonz\'alez-Barroso, Goodearl and Pardo inspired by a construction in $C^*$-algebra constructed \emph{fractional skew monoid rings} from actions of monoid on rings by endomorphisms.
In \cite{LE} Leavitt described a class of $F$-algebras $L(m,n)$ (where $F$ is an arbitrary algebra) which are
universal with respect to an isomorphism property between finite
rank modules, i.e. $L(m,n)^n\cong L(m,n)^m$. Later Cuntz \cite{CU} (independently)
constructed and investigated the $C^*$-algebra $\mathcal{O}_n$,
called the Cuntz algebras. When $F$ is the complex numbers then
$\mathcal{O}_n$ can be viewed as a completion, in an appropriate
norm, of $L(1,n)$. Soon after the appearance of \cite{CU}, Cuntz
and Krieger \cite{CK} described the significantly more general
notion of the $C^*$-algebra of a (finite) matrix $A$, denoted
$\mathcal{O}_A$. In \cite{KPRR} Cuntz-Krieger algebras were generalized to $C^*$-algebras of locally finite directed graphs, and this construction has later been generalized several time and now apply to arbitrary directed graphs. Inspired by the fractional skew monoid rings and by the graph $C^*$-algebras, Abrams and Aranda Pino \cite{AA1} constructed the Leavitt path algebra of a row-finite directed graph.
This construction was later generalized to apply to arbitrary directed graphs. The Leavitt path algebras provide a generalization of Leavitt algebras of type $(1, n)$
just in the same way as graph $C^*$-algebras $C^*(E)$ provide a
generalization of Cuntz algebras, and they have recently attracted a great deal of interest (see for example \cite{AA2,AALP,AMP,TF}).

It would be interesting and useful to put these rings and algebras in a larger
category of rings whose properties can be studied and analyzed
from more simple objects, just as it has been done in the $C^*$-algebraic setting with Cuntz-Pimsner algebras. This is the purpose of this paper.

From a ring $R$
and a triple $(P,Q,\psi)$, called an $R$-system, consisting of
two $R$-bimodules $P$ and $Q$ and a
$R$-bimodule homomorphism $\psi:P\otimes Q\longrightarrow R$ we construct
a universal $\Z$-graded ring $\mathcal{T}_{(P,Q,\psi)}$, called the \textit{Toeplitz
ring} associated with $(P,Q,\psi)$, which contains copies of $R$, $P$ and $Q$
and which implements the $R$-bimodule structure of $P$ and $Q$ and the
$R$-bimodule homomorphism $\psi$.
We then, for $R$-systems satisfying a certain condition which we call \textbf{(FS)},
carefully study quotients of $\mathcal{T}_{(P,Q,\psi)}$ which preserve the
$\Z$-grading of $\mathcal{T}_{(P,Q,\psi)}$. We show that under a mild assumption
about the $R$-system $(P,Q,\psi)$, there exists a smallest quotient of $\mathcal{T}_{(P,Q,\psi)}$ which preserve the
$\Z$-grading of $\mathcal{T}_{(P,Q,\psi)}$ and which leaves the embedded copy of $R$
intact. We define the \textit{Cuntz-Pimsner ring}
$\mathcal{O}_{(P,Q,\psi)}$ of $(P,Q,\psi)$ to be this quotient.

We show that the construction of Cuntz-Pimsner rings is a
generalization of, for example, \emph{the crossed product of a ring by an automorphism},
\emph{the Leavitt path algebra of a directed graphs} and of \emph{the fractional skew monoid ring of a corner isomorphism}.
We also generalizes the \emph{Graded Uniqueness Theorem}
known from Leavitt path algebras to our class
of Cuntz-Pimsner rings, and describe the structure of the graded ideals of
$\mathcal{T}_{(P,Q,\psi)}$ (and thus of $\mathcal{O}_{(P,Q,\psi)}$, if it exists),
in terms of pairs of ideals of $R$.

We believe that our construction is interesting both from the
point of view of algebra and from the point of view of operator
algebra. Our construction unifies many interesting classes of
rings, and we believe it will provide us with the right frame for
studying properties, such as the ideal structure, the $K$-theory,
purely infiniteness, and the real and stable rank of these rings.
It is also worth mentioning that the construction of Cuntz-Pimsner
algebras have been generalized in several ways in $C^*$-algebra
(see for example \cite{fowler}, \cite{hirsh} and \cite{SY}), and
there is no reason to believe that the same cannot be done in the
algebraic setting. We also expect that other examples of
classes of $C^*$-algebras which can be obtained through the
Cuntz-Pimsner construction, such as $C^*$-algebras associated with
subshifts, can be adapted to the algebraic setting through our
construction. So this paper is hopefully only the first step on
the way of what we hope to be a fruitful adaption of work done in
operator algebra to the algebraic setting.

We also believe that if one is only interested in the
$C^*$-algebraic case, then there is some insight to be gained by
reading this paper. One reason is that $C^*$-algebras have some nice properties
not shared by all rings. For example a $C^*$-algebra is always non-degenerate and
semiprime. This means that things which automatically work in the
$C^*$-algebraic setting do not necessarily work in the algebraic
setting, and we believe that by studying the algebraic case, one
gain some insight into why things work the way they do in the
operator algebraic case.
Here are some of the specific differences between the $C^*$-algebraic case and the purely algebraic case:
\begin{enumerate}
\item In the algebraic case we are not just working with a single bimodule equipped with a inner product, but with more general systems consisting of two $R$-bimodules $Q$ and $P$ connected by a bimodule homomorphism $\psi:P\otimes Q\longrightarrow R$.
\item If we are working with a right degenerate ring, then the \emph{Fock space representation} does not have the universal property the \emph{Toeplitz representation} should have. We therefore have to construct the Toeplitz representation in a different way.
\item Unlike in the $C^*$-algebraic case, we do not in the algebraic case automatically have that
every representation will induce a representation of the finite rank
operators (which correspond to the compact operators) of the $R$-system in question. We therefore have to introduce a condition on the $R$-systems we are working with which
insures that very representation will induce a representation of the
finite rank operators. We do that by introducing the condition we call
\textbf{(FS)}. This is probably not the optimal condition, but it is quite
natural and satisfied by all the interesting examples we consider in this paper.
\item Unlike the Toeplitz and Cuntz-Pimsner $C^*$-algebras, the algebraic Toeplitz and Cuntz-Pimsner rings do not in general carry a \emph{gauge action}. Instead, we have to work with $\Z$-gradings.
\item In the algebraic case, it is not always the case that a representation with all the
properties the Cuntz-Pimsner representation should have, exists (that
it always exists in the $C^*$-algebraic case is because every $C^*$-algebra
is semiprime). We think this is an interesting fact on its own, but it
means that we in general have to work with relative Cuntz-Pimsner rings instead of Cuntz-Pimsner rings. 
\end{enumerate}

Another reason why we believe that our construction is interesting from the point of view of operator algebra is that since we  do not
have to worry about any norms or topology, our arguments become more
tangible than in the $C^*$-algebraic setting. This allows us for example
to put everything into a frame of category theory, something we think
makes this whole construction more transparent. We believe that something
similar can, and ought to, be done in the $C^*$-algebraic setting.

\subsection*{The contents of the paper}

The contents of this paper can be summarized as follows:

In Section $1$ we give some basic definitions and introduce $R$-systems
$(P,Q,\psi)$ (Definition \ref{def:rsystem}). We define the category
$\mathcal{C}_{(P,Q,\psi)}$ of surjective covariant representations of an $R$-system $(P,Q,\psi)$ (Definition \ref{def:category})
and we prove that this category has an initial object which we call
the \textit{Toeplitz  representation} (Theorem \ref{theor:toeplitz}).
We then introduce some
essential examples of this construction, namely $R$-systems
associated with ring automorphisms (Example \ref{examples:item:1})
and with oriented graphs (Example \ref{examples:graph-toeplitz}), and
we study their Toeplitz representations.

Section $2$ defines the ring of adjointable homomorphisms $\mathcal{L}_P(Q)$
(Definition \ref{def:adjoint})
as well as its ideal of the finite rank adjointable homomorphisms
$\mathcal{F}_P(Q)$ (Definition \ref{def:finiterank}
and gives us the \emph{Fock space representation}
(Proposition \ref{toeplitz_rep})
which we later show is isomorphic to the Toeplitz representation under
certain conditions (Proposition \ref{prop:fock}).

In Section $3$ we show that the \emph{Toeplitz ring} $\mathcal{T}_{(P,Q,\psi)}$,
on which the Toeplitz representation of an $R$-system $(P,Q,\psi)$ lives,
comes with a $\Z$-grading (Proposition \ref{prop:Z-grading}).
We then go on to study graded and injective representations of $(P,Q,\psi)$;
that is representations which are compatible with the $\Z$-grading of
$\mathcal{T}_{(P,Q,\psi)}$ (Definition \ref{defi:graded rep})
and for which the representation of $R$ is injective (Definition \ref{def:representations}).
To do this we need that representations of $(P,Q,\psi)$ induces representations of
$\mathcal{F}_P(Q)$. In contract to the $C^*$-algebraic case where a representation of a
Hilbert bimodule always induces a representation of the compact operators of the
bimodule, a representation of $(P,Q,\psi)$ does not automatically induces a representation
of $\mathcal{F}_P(Q)$. We introduce a condition called \textbf{(FS)} on $(P,Q,\psi)$
(Definition \ref{defi:FS}) which guarantees that every representation of $(P,Q,\psi)$ induces
a representation of $\mathcal{F}_P(Q)$ (Proposition \ref{lemma_2}).
Under this condition we define the relative Cuntz-Pimsner
ring $\mathcal{O}_{(P,Q,\psi)}(J) $ of an $R$-system $(P,Q,\psi)$
with respect to an ideal $J$ as a certain quotient of the Toeplitz
ring $\mathcal{T}_{(P,Q,\psi)}$ (Definition \ref{def:relativeCP}), and we show that the
representations of $(P,Q,\psi)$ corresponding to these relative Cuntz-Pimsner rings, up to
isomorphism, include all graded and injective representations of
$(P,Q,\psi)$ (Remark \ref{remark:classi}).

In Section 4 we use the classification of graded and injective representations obtained in
Section 3 to first show that under certain conditions the Fock representation of an
$R$-system is isomorphic to the Toeplitz representation (Proposition \ref{prop:fock}),
and we then show that a relative Cuntz-Pimsner ring $\mathcal{O}_{(P,Q,\psi)}(J)$ satisfies
the \textit{Graded Uniqueness Theorem} (Definition \ref{def:gun})
if and only if the ideal $J$ is maximal among the ideals of $R$ for which the corresponding
relative Cuntz-Pimsner representation is injective (Theorem \ref{uniqueness}).
We also show by example that there can be more than one such maximal ideal
(Example \ref{example_two_maximal}). This is in
contrast to the $C^*$-algebraic case where there always exists a unique such maximal
ideal.

If such a unique maximal ideal exists, then we define the \emph{Cuntz-Pimsner}
representation of the $R$-system in question to be the relative Cuntz-Pimsner
representation corresponding to this maximal ideal (Definition \ref{def:CK}).
We do this in Section 5 where we also give conditions under which such a unique maximal
ideal exists (Lemma \ref{lemma:perp} and \ref{lemma:semiprime})
and show that several interesting examples satisfy these conditions
(Example \ref{examples_cuntz:cross}, \ref{exam:endo}, \ref{examples_cuntz:skew}
and \ref{examples_cuntz:graph_cuntz}).
We then show that the \emph{Cuntz-Pimsner ring}, the ring on which the
Cuntz-Pimsner representation lives, automatically satisfies the Graded Uniqueness
Theorem (Corollary \ref{uniqueness_semiprime})
and use this to show that we can construct the Leavitt path algebras
(Example \ref{examples_cuntz:graph_cuntz})),
the crossed product of a ring $R$ by an automorphism
(Example \ref{examples_cuntz:cross}) and the fractional skew monoid ring of a corner
isomorphism (Example \ref{examples_cuntz:skew}) as Cuntz-Pimsner rings.

In Section $6$ we generalize the
\emph{Algebraic Gauge-Invariant Uniqueness Theorem}
of \cite{AALP} to our Cuntz-Pimsner rings (Corollary \ref{GIUTFCP}),
and thereby to all Leavitt Path algebras (Corollary \ref{cor:GIUTLA}).

Finally in Section $7$ we extend the classification of graded and injective representations
obtained in Section 3 to graded representations which are not necessarily injective
(Remark \ref{remark:classification-noninjective}) and use this classification to give a
complete description of the graded ideals of relative Cuntz-Pimsner rings (and thereby of
Toeplitz rings, and of Cuntz-Pimsner rings) in terms of certain pairs of ideals of $R$
(Theorem \ref{ideals_cuntz} and Corollary \ref{ideals_toeplitz} and \ref{corol:ideal}).

\section{The Toeplitz ring}

First we establish the basic definitions for our setting.
Throughout the paper we set $\N_0=\N\cup\{0\}$.

A  ring $R$ is said to be \textit{right (left) non-degenerate} if
$rR=0$ ($Rr=0$) implies $r=0$. A ring $R$ is said to be
\textit{non-degenerate} if it is both right and left non-degenerate.
A non-degenerate has \textit{local units} if for every finite set
$\{r_1,\ldots,r_n\}\subseteq R$ there exists an idempotent $e\in R$
such that $r_i\in eRe$ for every $i= 1,\ldots,n$.

Let $R$ be a ring.
Given two $R$-bimodules $P$ and $Q$ we will by $P\otimes Q$ denote
the $R$-balanced tensor product.

\subsection{$R$-systems, covariant representations and the Toeplitz representation}
\label{sec:r-systems-covariant}

\begin{defi} \label{def:rsystem}
  Let $R$ be a ring.
  An \emph{$R$-system} is a triple $(P,Q,\psi)$ where $P$ and $Q$
  are $R$-bimodules, and $\psi$ is a $R$-bimodule homomorphism from
  $P\otimes Q$ to $R$.
\end{defi}

\begin{defi}[{Cf. \cite[Definition 2.11]{MS}}] \label{def:representations}
  Let $R$ be a ring and $(P,Q,\psi)$ an $R$-system.
  We say that a quadruple $(S,T,\sigma,B)$ is a \emph{covariant representation}
  of $(P,Q,\psi)$ on  $B$ if
  \begin{enumerate}
  \item $B$ is a ring,
  \item $S:P\longrightarrow B$ and $T:Q\longrightarrow B$ are linear
    maps,
  \item $\sigma:R \longrightarrow B$ is a ring
    homomorphism,
  \item $S(pr)=S(p)\sigma(r)$, $S(rp)=\sigma(r)S(p)$, $T(qr)=T(q)\sigma(r)$ and
    $T(rq)=\sigma(r)T(q)$ for every $r\in R$, $p\in P$ and
    $q\in Q$,
  \item $\sigma(\psi(p\otimes q))=S(p)T(q)$ for every $p\in P$ and $q\in
    Q$.
  \end{enumerate}
  We denote by $\mathcal{R}\langle S,T,\sigma\rangle$ the subring of
  $B$ generated by $\sigma(R)\cup T(Q) \cup S(P)$.
  If $\mathcal{R}\langle S,T,\sigma\rangle=B$, then we say that the
  covariant representation $(S,T,\sigma,B)$ is \emph{surjective}, and
  if the ring homomorphism $\sigma$ is injective, then we say that the
  covariant representation $(S,T,\sigma,B)$ is \emph{injective}.
\end{defi}

\begin{exems}
\text{}

\begin{enumerate}
\item Let $R$ be any ring and let $P=Q=R$ be the regular
$R$-bimodules. Defining  $\psi:P\otimes Q\longrightarrow R$ by
$\psi(p\otimes q)=pq$. We then have that $(P,Q,\psi)$ is an $R$-system. We
can define a covariant representation $(S,T,\sigma,R[t,t^{-1}])$,
where $R[t,t^{-1}]$ is the Laurent polynomial ring with coefficients
in $R$, by letting $T(q)=qt$, $S(p)=pt^{-1}$ and $\sigma(r)=r$ for every
$p\in P$, $q\in Q$ and $r\in R$. It is easy to check that
$(S,T,\sigma,R[t,t^{-1}])$ is indeed a covariant representation of
$(P,Q,\psi)$. Observe that this representation is injective and surjective.

\item Let $P=Q$ be the $\mathbb{R}$-module $\mathbb{R}$.
Define $\psi:\mathbb{R}\otimes \mathbb{R}\longrightarrow
\mathbb{R}$ by $\psi(p\otimes q)=-pq$. We then have that $(P,Q,\psi)$ is
an $\mathbb{R}$-system. We can then define a covariant
representation $(S,T,\sigma,\mathbb{C})$ by letting $T(q)=q\textbf{i}$,
$S(p)=p\textbf{i}$ and $\sigma(r)=r$ for every $p\in P$, $q\in Q$
and $r\in R$. This representation is injective and surjective.

\item Let $P=Q$ be the $\mathbb{Z}$-module $\mathbb{Z}$.
Then if given any $a\in \mathbb{Z}$ we define
$\psi_a:\mathbb{Z}\otimes \mathbb{Z}\longrightarrow \mathbb{Z}$ by
$\psi(p\otimes q)=apq$, then  $(P,Q,\psi_a)$ is a $\mathbb{Z}$-system.
We can then define a covariant representation
$(S,T,\sigma,\mathbb{C})$  by letting $T(q)=q\sqrt{a}$, $S(p)=p\sqrt{a}$ and
$\sigma(r)=r$ for every $p\in P$, $q\in Q$ and $r\in \mathbb{Z}$. Notice that
the representation $(S,T,\sigma,\mathbb{C})$ is injective but not surjective.

\item Let $V$ be a $K$-vector space and let $\mathcal{Q}(-,-):V\times
V\longrightarrow K$ be a non-degenerate quadratic form. Then $V$ is a
$K$-module, and if we let $P=Q=V$ and define
$\psi_\mathcal{Q}:V\otimes V\longrightarrow K$ by
$\psi_\mathcal{Q}(p\otimes q)=\mathcal{Q}(p,q)$, then
$(P,Q,\psi_\mathcal{Q})$ is a $K$-system. Recall that the Clifford
algebra $\mathcal{C}l(V,\mathcal{Q})$ is the universal unital
$K$-algebra generated by $V$ and with the relation
$v^2=\mathcal{Q}(v,v)\textbf{1}$ for every $v\in V$. Therefore we
can define a covariant representation
$(S,T,\sigma,\mathcal{C}l(V,\mathcal{Q}))$ of
$(P,Q,\psi_\mathcal{Q})$ by letting $T(v)=v$, $S(v)=v$ and
$\sigma(k)=k\textbf{1}$. This representation is surjective.

\end{enumerate}
\end{exems}

\begin{defi} \label{def:category}
  Let $R$ be a ring and $(P,Q,\psi)$ an $R$-system. We denote by
  $\category$ the category whose objects are surjective covariant
  representations $(S,T,\sigma,B)$ of $(P,Q,\psi)$,
  and where the class of morphisms between two
  representations $(S_1,T_1,\sigma_1,B_1)$ and
  $(S_2,T_2,\sigma_2,B_2)$ is the class of ring homomorphisms
  $\phi:B_1\longrightarrow B_2$ such that $\phi\circ T_1=T_2$,
  $\phi\circ S_1=S_2$ and $\phi\circ\sigma_1=\sigma_2$.
\end{defi}

The main purpose of this paper is, for a given $R$-system
$(P,Q,\psi)$, to study the category $\category$. First we will show
that $\category$ has an initial object, but we begin with some more
definitions and an easy lemma.

Given an $R$-system $(P,Q,\psi)$ we define recursively the
$R$-bimodules $P^{\otimes n}$ and $Q^{\otimes n}$ by letting
$P^1=P$ and $Q^1=Q$, and letting
$P^{\otimes n}=P^{\otimes n-1}\otimes P$ and $Q^{\otimes n}=
Q^{\otimes n-1}\otimes Q$ for $n>1$. We also let
$P^{\otimes 0}=Q^{\otimes 0}=R$.
We then define $\psi_0:P^0\otimes Q^0\longrightarrow R$ by
\begin{equation*}
  r_1\otimes r_2 \longmapsto r_1r_2
\end{equation*}
for $r_1,r_2\in R$, and we let $\psi_1=\psi$ and define recursively
$\psi_n: P^{\otimes n}\otimes Q^{\otimes n} \longrightarrow R$ for $n>1$
by
\begin{equation*}
  (p_1\otimes p_2)\otimes (q_1\otimes q_2) \longmapsto
  \psi\bigl(p_1\cdot\psi_{n-1}(p_2\otimes q_1)\otimes q_2\bigr)
\end{equation*}
for $p_2\in P^{\otimes n-1}$, $p_1\in P$, $q_1\in Q^{\otimes
n-1}$ and $q_2\in Q$.

\begin{lem} \label{lemma:tensor}
Let $R$ be ring and $(P,Q,\psi)$ an $R$-system, and let $(S,T,\sigma,B)$
be a covariant representation of $(P,Q,\psi)$. For each $n\in\N$ there
exist linear maps $T^n:Q^{\otimes n}\longrightarrow B$ and
$S^n:P^{\otimes n}\longrightarrow B$ such that $T^n(q_1\otimes
q_2\otimes\dots\otimes q_n)=T(q_1)T(q_2)\dots T(q_n)$ and
$S^n(p_1\otimes p_2\otimes\dots\otimes p_n)=S(p_1)S(p_2)\dots
S(p_n)$.
\end{lem}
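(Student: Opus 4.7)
The plan is to construct $T^n$ and $S^n$ by induction on $n$, invoking the universal property of the $R$-balanced tensor product at each step. By symmetry between the two cases, I will only describe the construction of $T^n$.

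The base case $n=1$ is immediate: set $T^1=T$, and note that by condition (4) of Definition \ref{def:representations} this is an $R$-bimodule map, i.e.\ $T^1(rq)=\sigma(r)T^1(q)$ and $T^1(qr)=T^1(q)\sigma(r)$. The inductive hypothesis I carry along is stronger than the conclusion stated in the lemma: I assume that $T^{n-1}$ exists, satisfies the factorization formula, and is an $R$-bimodule homomorphism from $Q^{\otimes n-1}$ to $B$. For the inductive step, define the map
\begin{equation*}
f:Q^{\otimes n-1}\times Q\longrightarrow B,\qquad f(x,q)=T^{n-1}(x)T(q).
\end{equation*}
Additivity of $T^{n-1}$ and $T$ together with distributivity in $B$ give that $f$ is $\Z$-bilinear. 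The $R$-balancing identity $f(xr,q)=f(x,rq)$ follows from the right $R$-linearity of $T^{n-1}$ (from the inductive hypothesis) and the left $R$-linearity of $T$: both sides equal $T^{n-1}(x)\sigma(r)T(q)$. Hence the universal property of the balanced tensor product $Q^{\otimes n}=Q^{\otimes n-1}\otimes Q$ yields a unique $\Z$-linear map $T^n:Q^{\otimes n}\to B$ with $T^n(x\otimes q)=T^{n-1}(x)T(q)$. Specializing to elementary tensors and applying the inductive formula for $T^{n-1}$ gives the required identity $T^n(q_1\otimes\cdots\otimes q_n)=T(q_1)\cdots T(q_n)$.

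Finally, I verify that $T^n$ is again an $R$-bimodule map, so that the induction can proceed. On elementary tensors one computes $T^n(r(x\otimes q))=T^{n-1}(rx)T(q)=\sigma(r)T^{n-1}(x)T(q)=\sigma(r)T^n(x\otimes q)$, using left $R$-linearity of $T^{n-1}$, and similarly $T^n((x\otimes q)r)=T^{n-1}(x)T(qr)=T^{n-1}(x)T(q)\sigma(r)=T^n(x\otimes q)\sigma(r)$, using condition (4) on $T$; since both sides of each identity are $\Z$-linear in $x\otimes q$, they extend from elementary tensors to all of $Q^{\otimes n}$.

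There is no real obstacle in this argument; it is a routine application of the universal property. The only subtlety worth flagging is that one must strengthen the inductive hypothesis to carry bimodule linearity along with the factorization formula, because bimodule compatibility of $T^{n-1}$ is precisely what is needed to check the $R$-balancing of the bilinear map used in the next step. The construction of $S^n$ from $S$ proceeds in exactly the same way.
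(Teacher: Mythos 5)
Your proof is correct and is exactly the argument the paper has in mind: the paper's own proof is the one-line remark that the lemma ``easily follows from the universal property of tensor products,'' and your induction, with the strengthened hypothesis that $T^{n-1}$ is an $R$-bimodule map so that the balancing condition $f(xr,q)=f(x,rq)$ can be checked, is the standard way to fill in those details.
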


\begin{proof}
Easily follows from the universal property of tensor products.
\end{proof}

Gradings by the following semigroup will play an important role in
this paper.

\begin{defi} \label{defi:semigroup}
  We define $\semigroup$ to be the semigroup $\No^2$ with
  multiplication defined by
  \begin{equation*}
    (m,n)(k,l)=
    \begin{cases}
      (m,n-k+l)&\text{if }n\ge k,\\
      (m+k-n,l)&\text{if }k\ge n.
    \end{cases}
  \end{equation*}
\end{defi}

We are now ready to show that the category $\category$ has an initial object.

\begin{theor}[{Cf. \cite{PI}}] \label{theor:toeplitz}
  Let $R$ be a ring and $(P,Q,\psi)$ an $R$-system. Then there exists
  an injective and surjective covariant representation
  $(\iota_P,\iota_Q,\iota_R,\mathcal{T}_{(P,Q,\psi)})$ with the
  following property:
  \begin{enumerate}[label=\textbf{(TP)}]
  \item If $(S,T,\sigma,B)$ is a covariant representation of $(P,Q,\psi)$, then
    there exists a unique ring homomorphism
    $\eta_{(S,T,\sigma,B)}:\mathcal{T}_{(P,Q,\psi)}\longrightarrow B$ such that
    $\eta_{(S,T,\sigma,B)}\circ\iota_R=\sigma$,
    $\eta_{(S,T,\sigma,B)}\circ\iota_Q=T$ and
    $\eta_{(S,T,\sigma,B)}\circ\iota_P=S$. \label{item:6}
  \end{enumerate}
  Moreover,
  $(\iota_P,\iota_Q,\iota_R,\mathcal{T}_{(P,Q,\psi)})$ is the, up to
  isomorphism in $\category$, unique surjective covariant
  representation of $(P,Q,\psi)$ which possesses
  the property \ref{item:6}; in fact, if $(S,T,\sigma,B)$ is a
  surjective covariant representation of $(P,Q,\psi)$ and
  $\phi:B\longrightarrow\mathcal{T}_{(P,Q,\psi)}$ is a ring homomorphism such that
  $\phi\circ\sigma=\iota_R$, $\phi\circ S=\iota_P$ and $\phi\circ
  T=\iota_Q$, then $\phi$ is an isomorphism.

  If we for $m,n\in\N$ let $\mathcal{T}_{(m,n)}:=
  \spa\{\iota_Q^m(q)\iota_P^n(p)\mid q\in Q^{\otimes m},\ p\in P^{\otimes n}\}$,
  and we for $k\in\N$ let $\mathcal{T}_{(k,0)}:=\iota_Q^k(Q^{\otimes
    k})$ and $\mathcal{T}_{(0,k)}:=\iota_P^k(P^{\otimes k})$, and we
  let $\mathcal{T}_{(0,0)}:=\iota_R(R)$, then
  $\oplus_{(m,n)\in\semigroup}\mathcal{T}_{(m,n)}$ is a
  $\semigroup$-grading of $\mathcal{T}_{(P,Q,\psi)}$.
  The grading
  $\oplus_{(m,n)\in\semigroup}\mathcal{T}_{(m,n)}$ is the only
  $\semigroup$-grading $\oplus_{(m,n)\in\semigroup}\mathcal{Y}_{(m,n)}$ of
  $\mathcal{T}_{(P,Q,\psi)}$ such that $\iota_R(R)\subseteq
  \mathcal{Y}_{(0,0)}$,  $\iota_Q(Q)\subseteq \mathcal{Y}_{(1,0)}$,
  and  $\iota_P(P)\subseteq \mathcal{Y}_{(0,1)}$.

  We call $(\iota_P,\iota_Q,\iota_R,\mathcal{T}_{(P,Q,\psi)})$
  \emph{the Toeplitz representation of $(P,Q,\psi)$}, and
  $\mathcal{T}_{(P,Q,\psi)}$ for \emph{the Toeplitz ring of $(P,Q,\psi)$}.
\end{theor}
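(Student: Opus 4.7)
The plan is to construct $\mathcal{T}_{(P,Q,\psi)}$ explicitly as the $\semigroup$-graded abelian group
\[
\mathcal{T}_{(P,Q,\psi)} := \bigoplus_{(m,n) \in \semigroup} Q^{\otimes m} \otimes P^{\otimes n},
\]
with the convention $Q^{\otimes 0} = P^{\otimes 0} = R$ so that $\mathcal{T}_{(0,0)} = R$, $\mathcal{T}_{(m,0)} = Q^{\otimes m}$ and $\mathcal{T}_{(0,n)} = P^{\otimes n}$. On homogeneous simple tensors $q\otimes p \in \mathcal{T}_{(m,n)}$ and $q'\otimes p' \in \mathcal{T}_{(k,l)}$ with $n \ge k$, I define the multiplication by
\[
(q\otimes p)(q'\otimes p') := q \otimes \bigl(p_1 \cdot \psi_k(p_2 \otimes q')\bigr) \otimes p',
\]
where $p = p_1 \otimes p_2$ with $p_1 \in P^{\otimes n-k}$ and $p_2 \in P^{\otimes k}$; the symmetric rule applies when $k \ge n$, and the two formulas agree on the overlap $n = k$. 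The universal property of the $R$-balanced tensor product (which is also the content of Lemma~\ref{lemma:tensor}) shows that this extends unambiguously to a well-defined bilinear multiplication on $\mathcal{T}_{(P,Q,\psi)}$, and declaring $\iota_R$, $\iota_Q$, $\iota_P$ to be the inclusions of $R$, $Q$, $P$ into the summands of degree $(0,0)$, $(1,0)$, $(0,1)$ yields an injective and surjective covariant representation whose $\semigroup$-grading is built in.

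For property \textbf{(TP)}, given any covariant representation $(S,T,\sigma,B)$, Lemma~\ref{lemma:tensor} produces linear maps $T^m$ and $S^n$ which assemble, via the universal property of the balanced tensor product, into a linear map on each graded piece by $q\otimes p \mapsto T^m(q)S^n(p)$; the resulting additive map $\eta_{(S,T,\sigma,B)}$ is multiplicative because iterating $\sigma(\psi(p\otimes q)) = S(p)T(q)$ reproduces exactly the recursive definition of the $\psi_n$ that was built into the multiplication on $\mathcal{T}_{(P,Q,\psi)}$. Uniqueness of $\eta_{(S,T,\sigma,B)}$ is immediate because $\iota_R(R) \cup \iota_Q(Q) \cup \iota_P(P)$ generates $\mathcal{T}_{(P,Q,\psi)}$ as a ring.

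For the sharper uniqueness statement, if $(S,T,\sigma,B)$ is surjective and $\phi\colon B \to \mathcal{T}_{(P,Q,\psi)}$ intertwines the generating data, then $\phi$ is surjective, and $\phi \circ \eta_{(S,T,\sigma,B)}$ is a ring endomorphism of $\mathcal{T}_{(P,Q,\psi)}$ fixing $\iota_R(R) \cup \iota_Q(Q) \cup \iota_P(P)$. Applying the uniqueness clause of \textbf{(TP)} to the identity representation of $\mathcal{T}_{(P,Q,\psi)}$ on itself gives $\phi \circ \eta_{(S,T,\sigma,B)} = \mathrm{id}$, so $\phi$ is a bijection; the general uniqueness of initial objects with property \textbf{(TP)} is then a specialisation. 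Finally, if $\bigoplus_{(m,n) \in \semigroup} \mathcal{Y}_{(m,n)}$ is another $\semigroup$-grading with $\iota_R(R) \subseteq \mathcal{Y}_{(0,0)}$, $\iota_Q(Q) \subseteq \mathcal{Y}_{(1,0)}$ and $\iota_P(P) \subseteq \mathcal{Y}_{(0,1)}$, then products $\iota_Q(q_1)\cdots\iota_Q(q_m)\iota_P(p_1)\cdots\iota_P(p_n)$ lie in $\mathcal{Y}_{(m,n)}$ by the semigroup multiplication rule; since these span $\mathcal{T}_{(m,n)}$ and both families are direct sum decompositions of the same abelian group, uniqueness of such decompositions forces $\mathcal{T}_{(m,n)} = \mathcal{Y}_{(m,n)}$ for every $(m,n)$.

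The main obstacle I expect is verifying that the piecewise multiplication on $\bigoplus Q^{\otimes m} \otimes P^{\otimes n}$ is well defined (independent of the splitting $p = p_1 \otimes p_2$) and associative. The cleanest route is to repackage this as a compatibility identity among the $\psi_n$'s, which follows by induction on the degree from their recursive definition and the $R$-bimodule property of $\psi$; once this is in place everything else in the theorem is formal.
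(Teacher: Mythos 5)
Your construction is essentially the paper's: the paper builds $\mathcal{T}_{(m,n)}$ for $m,n\in\N$ as the free abelian group on symbols $[q,p]$ modulo bilinearity and $[qr,p]=[q,rp]$, which is exactly the balanced tensor product $Q^{\otimes m}\otimes_R P^{\otimes n}$ you propose, and its long list of multiplication rules is your single formula unwound case by case; like you, the paper defers the tedious well-definedness and associativity check, and the arguments for \textbf{(TP)}, the isomorphism statement, and the uniqueness of the grading coincide. The one point where you should be more careful is the boundary summands: for a general ring $R$ (not assumed unital or non-degenerate) one has $Q^{\otimes m}\otimes_R R\ne Q^{\otimes m}$ and $R\otimes_R R\ne R$, so the convention $P^{\otimes 0}=R$ does not make $\mathcal{T}_{(m,0)}$, $\mathcal{T}_{(0,n)}$ and $\mathcal{T}_{(0,0)}$ come out right automatically; the paper avoids this by defining those summands directly as copies of $Q^{\otimes m}$, $P^{\otimes n}$ and $R$, which is the fix you should adopt. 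Finally, in the isomorphism argument, concluding that $\phi$ is bijective from $\phi\circ\eta_{(S,T,\sigma,B)}=\mathrm{id}$ alone also uses that $\eta_{(S,T,\sigma,B)}$ is surjective (its image contains the generators of $B$ because the representation is surjective); this is implicit in what you wrote but worth stating.
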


\begin{proof}
  For $(m,n)\in\N^2$ let $\mathcal{T}_{(m,n)}$ be the free abelian group
  generated by elements $\{[q,p]\mid q\in Q^{\otimes m},\ p\in
  P^{\otimes n}\}$ satisfying the relations
  \begin{itemize}
  \item $[q,p_1]+[q,p_2]=[q,p_1+p_2]$ for $q\in Q^{\otimes m}$ and
    $p_1,p_2\in P^{\otimes n}$,
  \item $[q_1,p]+[q_2,p]=[q_1+q_2,p]$ for $q_1,q_2\in Q^{\otimes m}$ and
    $p\in P^{\otimes n}$,
  \item $[qr,p]=[q,rp]$ for $r\in R$, $q\in Q^{\otimes m}$ and
    $p\in P^{\otimes n}$.
  \end{itemize}
  For $k\in\N$ let $\mathcal{T}_{(k,0)}$ be the abelian group
  $\{[q]\mid q\in Q^{\otimes k}\}$ with addition defined by
  $[q_1]+[q_2]=[q_1+q_2]$ for $q_1,q_2\in Q^{\otimes k}$ (so
  $\mathcal{T}_{(k,0)}$ is just a copy of the abelian group $Q^{\otimes k}$),
  and let
  $\mathcal{T}_{(0,k)}$ be the abelian group $\{[p]\mid p\in
  P^{\otimes k}\}$ with addition defined by $[p_1]+[p_2]=[p_1+p_2]$
  for $p_1,p_2\in P^{\otimes k}$ (so
  $\mathcal{T}_{(0,k)}$ is just a copy of the abelian group $P^{\otimes
    k}$). Finally, let $\mathcal{T}_{(0,0)}$ be the abelian group
  $\{[r]\mid t\in R\}$ with addition defined by $[r_1]+[r_2]=[r_1+r_2]$
  for $r_1,r_2\in R$ (so
  $\mathcal{T}_{(0,0)}$ is just a copy of the abelian group $R$). We let
  $\mathcal{T}_{(P,Q,\psi)}:=\oplus_{(m,n)\in\semigroup}\mathcal{T}_{(m,n)}$.
  It is not difficult (but a bit tedious) to show that there exists a
  unique multiplication on $\mathcal{T}_{(P,Q,\psi)}$ satisfying
  \begin{itemize}
  \item $[r_1][r_2]=[r_1r_2]$ for $r_1,r_2\in R$,
  \item $[r][q]=[rq]$ for $r\in R$, $q\in Q^{\otimes k}$, $k\in\N$,
  \item $[q][r]=[qr]$ for $q\in Q^{\otimes k}$, $r\in R$, $k\in\N$,
  \item $[r][p]=[rp]$ for $r\in R$, $p\in P^{\otimes k}$, $k\in\N$,
  \item $[p][r]=[pr]$ for $p\in P^{\otimes k}$, $r\in R$, $k\in\N$,
  \item $[q][p]=[q,p]$ for $q\in Q^{\otimes k}$, $p\in P^{\otimes l}$, $k,l\in\N$,
  \item $[p][q]=[\psi_k(p\otimes q)]$ for $q\in Q^{\otimes k}$,
    $p\in P^{\otimes k}$, $k\in\N$,
  \item $[p][q_1\otimes q_2]=[\psi_k(p\otimes q_1)q_2]$ for $p\in
    P^{\otimes k}$, $q_1\in Q^{\otimes k}$, $q_2\in Q^{\otimes l}$, $k,l\in\N$,
  \item $[p_1\otimes p_2][q]=[p_1,\psi_l(p_2\otimes q)]$ for $p_1\in
    P^{\otimes k}$, $p_2\in Q^{\otimes l}$, $q\in Q^{\otimes l}$, $k,l\in\N$,
  \item $[r][q,p]=[rq,p]$ for $r\in R$, $q\in Q^{\otimes k}$, $p\in
    P^{\otimes l}$, $k,l\in\N$,
  \item $[q,p][r]=[q,pr]$ for $q\in Q^{\otimes k}$, $p\in
    P^{\otimes l}$, $r\in R$, $k,l\in\N$,
  \item $[q_1][q_2,p]=[q_1\otimes q_2,p]$ for $q_1\in Q^{\otimes k}$,
    $q_2\in Q^{\otimes l}$, $p\in P^{\otimes m}$, $k,l,m\in\N$,
  \item $[q_1,p][q_2]=[q_1\psi_l(p\otimes q_2)]$ for $q_1\in
    Q^{\otimes k}$, $p\in P^{\otimes l}$, $q_2\in Q^{\otimes l}$, $k,l\in\N$,
  \item $[q_1,p_1\otimes p_2][q_2]=[q_1,p_1\psi_m(p_2\otimes q_2)]$
    for $q_1\in Q^{\otimes k}$, $p_1\in P^{\otimes l}$, $p_2\in
    P^{\otimes m}$, $q_2\in Q^{\otimes m}$, $k,l,m\in\N$,
  \item $[q_1,p][q_2\otimes q_3]=[q_1\psi_l(p\otimes q_2)\otimes q_3]$
    for $q_1\in Q^{\otimes k}$, $p\in P^{\otimes l}$, $q_2\in
    Q^{\otimes l}$, $q_3\in Q^{\otimes m}$, $k,l,m\in\N$,
  \item $[p_1][q,p_2]=[\psi_k(p_1\otimes q)p_2]$ for $p_1\in
    P^{\otimes k}$, $q\in Q^{\otimes k}$, $p_2\in P^{\otimes l}$,
    $k,l\in\N$,
  \item $[p_1\otimes p_2][q,p_3]=[p_1\psi_l(p_2\otimes q)\otimes p_3]$
    for $p_1\in P^{\otimes k}$, $p_2\in P^{\otimes l}$, $q\in
    Q^{\otimes l}$, $p_3\in P^{\otimes m}$, $k,l,m\in\N$,
  \item $[p_1][q_1\otimes q_2,p_2]=[\psi_k(p_1\otimes q_1)q_2,p_2]$
    for $p_1\in P^{\otimes k}$, $q_1\in Q^{\otimes k}$, $q_2\in
    Q^{\otimes l}$, $p_2\in P^{\otimes m}$, $k,l,m\in\N$,
  \item $[q,p_1][p_2]=[q,p_1\otimes p_2]$ for $q\in Q^{\otimes k}$,
    $p_1\in P^{\otimes l}$, $p_2\in P^{\otimes m}$, $k,l,m\in\N$,
  \item $[q_1,p_1][q_2,p_2]=[q_1\psi_l(p_1\otimes q_2),p_2]$
    for $q_1\in Q^{\otimes k}$, $p_1\in P^{\otimes l}$, $q_2\in Q^{\otimes
      l}$, $p_2\in P^{\otimes m}$, $k,l,m\in\N$,
  \item $[q_1,p_1][q_2\otimes q_3,p_2]=[q_1\psi_l(p_1\otimes
    q_2)\otimes q_3,p_2]$ for $q_1\in Q^{\otimes k}$, $p_1\in
    P^{\otimes l}$, $q_2\in Q^{\otimes l}$, $q_3\in Q^{\otimes m}$,
    $p_2\in P^{\otimes n}$, $k,l,m,n\in\N$,
  \item $[q_1,p_1\otimes p_2][q_2,p_3]=[q_1,p_1\psi_m(p_2\otimes q_2)\otimes p_3]$
    for $q_1\in Q^{\otimes k}$, $p_1\in P^{\otimes l}$, $p_2\in P^{\otimes
      m}$, $q_2\in Q^{\otimes m}$, $p_3\in P^{\otimes n}$,
    $k,l,m,n\in\N$.
  \end{itemize}
  With this $\mathcal{T}_{(P,Q,\psi)}$ becomes a ring.

  Let $\iota_R:R\longrightarrow\mathcal{T}_{(P,Q,\psi)}$ be the map $r\longmapsto [r]$,
  $\iota_Q:Q\longrightarrow\mathcal{T}_{(P,Q,\psi)}$ the map $q\longmapsto [q]$, and
  $\iota_P:P\longrightarrow\mathcal{T}_{(P,Q,\psi)}$ the map $p\longmapsto [p]$. Then
  $(\iota_P,\iota_Q,\iota_R,\mathcal{T}_{(P,Q,\psi)})$ is an injective
  and surjective
  covariant representation of $(P,Q,\psi)$.

  Let $(S,T,\sigma,B)$ be a covariant representation of $(P,Q,\psi)$. Since
  $\mathcal{T}_{(P,Q,\psi)}$ is generated by
  $\iota_R(R)\cup\iota_Q(Q)\cup\iota_P(P)$, there can at most be one
  ring homomorphism $\eta_{(S,T,\sigma,B)}:\mathcal{T}_{(P,Q,\psi)}\longrightarrow
  B$ such that
  $\eta_{(S,T,\sigma,B)}\circ\iota_R=\sigma$,
  $\eta_{(S,T,\sigma,B)}\circ\iota_Q=T$ and
  $\eta_{(S,T,\sigma,B)}\circ\iota_P=S$. For
  $(m,n)\in\N^2$ the set $\spa\{T^m(q)S^n(p)\mid q\in Q^{\otimes m},
  p\in P^{\otimes n}\}$ is a subgroup of $B$ in which the relations
  \begin{itemize}
  \item $T^m(q)S^n(p_1)+T^m(q)S^n(p_2)=T^m(q)S^n(p_1+p_2)$ for $q\in
    Q^{\otimes m}$ and $p_1,p_2\in P^{\otimes n}$,
  \item $T^m(q_1)S^n(p)+T^m(q_2)S^n(p)=T^m(q_1+q_2)S^n(p)$ for
    $q_1,q_2\in Q^{\otimes m}$ and $p\in P^{\otimes n}$,
  \item $T^m(qr)S^n(p)=T^m(q)S^n(rp)$ for $r\in R$, $q\in Q^{\otimes m}$ and
    $p\in P^{\otimes n}$,
  \end{itemize}
  are satisfied,
  so there exists a group homomorphism $\eta_{(m,n)}$ from
  $\mathcal{T}_{(m,n)}$ to $B$ such that
  $\eta_{(m,n)}([q,p])=T^m(q)S^n(p)$ for $q\in Q^{\otimes m}$ and
  $p\in P^{\otimes n}$. For $k\in\N$ let $\eta_{(k,0)}$ denote the map
  $T^k$, and let $\eta_{(0,k)}$ denote the map $S^k$. Finally, let
  $\eta_{(0,0)}$ denote the map $\sigma$. Then there exists a linear map
  $\eta_{(S,T,\sigma,B)}:\mathcal{T}_{(P,Q\psi)}\longrightarrow B$ such that for each
  $(m,n)\in\semigroup$ the
  restriction of $\eta_{(S,T,\sigma,B)}$ to $\mathcal{T}_{(m,n)}$ is equal
  to $\eta_{(m,n)}$. It is not difficult to check
  that $\eta_{(S,T,\sigma,B)}$ is multiplicative, and thus a ring
  homomorphism. It is
  clear that $\eta_{(S,T,\sigma,B)}\circ\iota_R=\sigma$,
  $\eta_{(S,T,\sigma,B)}\circ\iota_Q=T$ and
  $\eta_{(S,T,\sigma,B)}\circ\iota_P=S$.
  Thus the representation
  $(\iota_P,\iota_Q,\iota_R,\mathcal{T}_{(P,Q,\psi)})$ possesses
  property \ref{item:6}.

  If $(S,T,\sigma,B)$ is a
  surjective covariant representation of $(P,Q,\psi)$ and
  $\phi:B\longrightarrow\mathcal{T}_{(P,Q,\psi)}$ is a ring homomorphism such that
  $\phi\circ\sigma=\iota_R$, $\phi\circ S=\iota_P$ and $\phi\circ
  T=\iota_Q$, then
  $\eta_{(S,T,\sigma,B)}\circ\phi(\sigma(r))=\sigma(r)$ for all $r\in
  R$, $\eta_{(S,T,\sigma,B)}\circ\phi(S(p))=S(p)$ for all $p\in P$ and
  $\eta_{(S,T,\sigma,B)}\circ\phi(T(q))=T(q)$ for all $q\in Q$.
  Since $B$ is generated by $\sigma(R)\cup S(P)\cup T(Q)$, it follows
  that $\eta_{(S,T,\sigma,B)}\circ\phi$ is equal to the identity map
  of $B$. One can in a similar way show that
  $\phi\circ\eta_{(S,T,\sigma,B)}$ is equal to the identity map of
  $\mathcal{T}_{(P,Q,\psi)}$. Thus $\phi$ and $\eta_{(S,T,\sigma,B)}$
  are each other inverse,
  and $\phi$ is an isomorphism.

  It is clear that $\mathcal{T}_{(m,n)}=
  \spa\{\iota_Q^m(q)\iota_P^n(p)\mid q\in Q^{\otimes m},\ p\in
  P^{\otimes n}\}$ for $m,n\in\N$, that
  $\mathcal{T}_{(k,0)}=\iota_Q^k(Q^{\otimes
    k})$ and $\mathcal{T}_{(0,k)}=\iota_P^k(P^{\otimes k})$ for $k\in\N$, that
  $\mathcal{T}_{(0,0)}=\iota_R(R)$, and that
  $\oplus_{(m,n)\in\semigroup}\mathcal{T}_{(m,n)}$ is a
  $\semigroup$-grading of $\mathcal{T}_{(P,Q,\psi)}$.

  If $\oplus_{(m,n)\in\semigroup}\mathcal{Y}_{(m,n)}$ is another
  $\semigroup$-grading of $\mathcal{T}_{(P,Q,\psi)}$ such that
  $\iota_R(R)\subseteq \mathcal{Y}_{(0,0)}$, $\iota_Q(Q)\subseteq
  \mathcal{Y}_{(1,0)}$, and $\iota_P(P)\subseteq\mathcal{Y}_{(0,1)}$,
  then it follows that
  $\mathcal{T}_{(m,n)}\subseteq\mathcal{Y}_{(m,n)}$ for each
  $(m,n)\in\semigroup$, and thus that
  $\mathcal{T}_{(m,n)}=\mathcal{Y}_{(m,n)}$ for each $(m,n)\in\semigroup$.
\end{proof}

\begin{rema}
  It follows from Theorem \ref{theor:toeplitz} that the Toeplitz
  representation is an initial object of $\category$. It also follows
  that there is a bijective correspondence between covariant representations of
  an $R$-system $(P,Q,\psi)$ and ring homomorphisms defined on
  $\mathcal{T}_{(P,Q,\psi)}$.
\end{rema}

\subsection{Examples}
\label{sec:examples}

We end this section by looking at some examples. We will return to
these examples later in the paper.

\begin{exem}\label{examples:item:1}
Let $R$ be a ring and let $\varphi\in \aut(R)$ be a
ring automorphism. Let $P=:R_\varphi$ be the $R$-bimodule with the
right action defined by $p\cdot r=p\varphi(r)$ and the left action
defined by $r\cdot p=rp$ for $p\in P$ and $r\in R$. Likewise, let
$Q:=R_{\varphi\inv}$ be the $R$-bimodule with the right action
defined by $q\cdot r=q\varphi\inv(r)$ and the left action defined by
$r\cdot q=rq$ for $q\in Q$ and $r\in R$. Thus we can define the
following bimodule homomorphism:
\begin{eqnarray*}
  \psi:P\otimes_R Q & \longrightarrow R \\
p\otimes q & \longmapsto p\varphi(q).
\end{eqnarray*}

Notice that we for every $n\in\N$ have that $P^{\otimes n}$ is
isomorphic to $R_{\varphi^n}$ and that
$Q^{\otimes n}$ is isomorphic to $R_{\varphi^{-n}}$. We will in the
following for every $n\in\No$ identify $P^{\otimes n}$ and $Q^{\otimes
  n}$ with $R$. We then have that $p_1\otimes
p_2=p_1\varphi^{n_1}(p_2)$ for $p_1\in P^{\otimes n_1}$ and $p_2\in
P^{\otimes n_2}$, and that $q_1\otimes q_2=q_1\varphi^{-n_1}(q_2)$ for
$q_1\in Q^{\otimes n_1}$ and $q_2\in Q^{\otimes n_2}$.

Let $(S,T,\sigma,B)$ be a covariant representation of $(P,Q,\psi)$.
For $r\in R$ and $n\in\N$ let $[r,n]:=S^n(r)$, $[r,-n]:=T^n(r)$ and
$[r,0]:=\sigma(r)=T^0(r)=S^0(r)$. For $r_1,r_2\in R$ and
$n_1,n_2\in\No$ choose $u_1,u_2\in R$ such that $ur_1=r_1$ and
$r_2u_2=r_2$. Then we have
\begin{gather*}
\begin{split}
[r_1,n_1][r_2,n_2]&=S^{n_1}(r_1)S^{n_2}(r_2)=
S^{n_1+n_2}(r_1\otimes r_2)\\
&=S^{n_1+n_2}\bigl(r_1\varphi^{n_1}(r_2)\bigr)=[r_1\varphi^{n_1}(r_2),n_1+n_2],
\end{split}\\
\begin{split}
[r_1,-n_1][r_2,-n_2]&=T^{n_1}(r_1)T^{n_2}(r_2)=
T^{n_1+n_2}(r_1\otimes r_2)\\
&=T^{n_1+n_2}\bigl(r_1\varphi^{-n_1}(r_2)\bigr)
=[r_1\varphi^{-n_1}(r_2),-n_1-n_2],
\end{split}\\
\begin{split}
[r_1,n_1][r_2,-n_1]&=S^{n_1}(r_1)T^{n_1}(r_2)
=\sigma\bigl(\psi_{n_1}(r_1\otimes r_2)\bigr)\\
&=\sigma\bigl(r_1\varphi^{n_1}(r_2)\bigr)
=[r_1\varphi^{n_1}(r_2),0],
\end{split}\\
\begin{split}
[r_1,n_1+n_2][r_2,-n_2]&=[u_1r_1,n_1+n_2][r_2,-n_2]\\
&=[u_1,n_1][\varphi^{-n_1}(r_1),n_2][r_2,-n_2]\\
&=[u_1,n_1][\varphi^{-n_1}(r_1)\varphi^{n_2}(r_2),0]\\
&=[u_1r_1\varphi^{n_1+n_2}(r_2),n_1]=[r_1\varphi^{n_1+n_2}(r_2),n_1]
\end{split}
\end{gather*}
and
\begin{equation*}
\begin{split}
[r_1,n_1][r_2,-n_1-n_2]&=[r_1,n_1][r_2,-n_1][\varphi^{n_1}(u_2),-n_2]\\
&=[r_1\varphi^{n_1}(r_2),0][\varphi^{n_1}(u_2),-n_2]\\
&=[r_1\varphi^{n_1}(r_2),-n_2]
\end{split}
\end{equation*}
Thus $[r_1,k_1][r_2,k_2]=[r_1\varphi^{k_1}(r_2),k_1+k_2]$ for
$r_1,r_2\in R$ and $k_1,k_2\in\Z$ if $k_1$ and $k_2$ both are
non-positive, or both are non-negative, or if $k_1$ is
non-negative and $k_2$ is non-positive. We also have that
$[r_1,k]+[r_2,k]=[r_1+r_2,k]$ for $r_1,r_2\in R$ and $k\in\Z$.

If on the other hand we have a ring $B$ which contains a set of
elements $\{[r,k]: r\in R,\ k\in\Z\}$ satisfying
$[r_1,k]+[r_2,k]=[r_1+r_2,k]$ and
$[r_1,k_1][r_2,k_2]=[r_1\varphi^{k_1}(r_2),k_1+k_2]$ if $k_1$ and
$k_2$ both are non-positive, or both are non-negative, or if $k_1$
is non-negative and $k_2$ is non-positive, and we define
$\sigma:R\longrightarrow B$ by $\sigma(r)=[r,0]$,
$S:P\longrightarrow B$ by $S(p)=[p,1]$, and $T:Q\longrightarrow B$
by $T(q)=[q,-1]$, then $(S,T,\sigma,B)$ is a covariant
representation of $(P,Q,\psi)$.

Thus $\mathcal{T}_{(P,Q,\psi)}$ is the universal ring generated by
elements  $\{[r,k]: r\in R,\ k\in\Z\}$ satisfying
$[r_1,k]+[r_2,k]=[r_1+r_2,k]$ and
$[r_1,k_1][r_2,k_2]=[r_1\varphi^{k_1}(r_2),k_1+k_2]$ if $k_1$ and
$k_2$ both are non-positive, or both are non-negative, or if $k_1$
is non-negative and $k_2$ is non-positive. We will in Example
\ref{examples_cuntz:cross} see that if $R$ has local units then a
certain quotient of $\mathcal{T}_{(P,Q,\psi)}$ (\emph{the
  Cuntz-Pimsner ring of $(P,Q,\psi)$}) is isomorphic to the crossed product
$R\times_\varphi\Z$.
\end{exem}

\begin{exem}\label{examples:graph-toeplitz}
  Let $E=(E^0,E^1)$ be an oriented graph and let $F$ be a commutative unital
  ring. We define the ring $R:=\oplus_{v\in E^0} F_v$ where every
  $F_v$ is a copy of $F$, and we denote for
  each $v\in E^0$ by $\textbf{1}_v$ the unit of $F_v$. Observe that $R$ is
  non-degenerate with local units. We also define $Q:=\oplus_{e\in
    E^1} F_e$ and $P:=\oplus_{e\in E^1} F_{\overline{e}}$ where every
  $F_e$ and $F_{\overline{e}}$ are copies of $F$ with units
  $\textbf{1}_{e}$ and $\textbf{1}_{\overline{e}}$ respectively,
  with the following $R$-bimodule operations:
\begin{align*}
  \left(\sum_{e\in E^1} \lambda_e\textbf{1}_{e}\right)\cdot \left(\sum_{v\in E^0}
  s_v\textbf{1}_{v}\right) &= \sum_{e\in E^1} \left(\sum_{r(e)=v}\lambda_e
  s_v\right)\textbf{1}_{e} \,,\\
  \left(\sum_{v\in E^0} s_v\textbf{1}_{v}\right)\cdot\left(\sum_{e\in E^1}
  \lambda_e\textbf{1}_{e}\right) &= \sum_{e\in E^1} \left(\sum_{s(e)=v}s_v\lambda_e\right)\textbf{1}_{e}\,,\\
  \left(\sum_{e\in E^1} \lambda_e\textbf{1}_{\overline{e}}\right)\cdot \left(\sum_{v\in E^0}
  s_v\textbf{1}_{v}\right) &= \sum_{e\in E^1} \left(\sum_{s(e)=v}\lambda_e
  s_v\right)\textbf{1}_{\overline{e}} \,,\\
  \left(\sum_{v\in E^0} s_v\textbf{1}_{v}\right)\cdot\left(\sum_{e\in E^1}
  \lambda_e\textbf{1}_{\overline{e}}\right) &= \sum_{e\in E^1} \left(\sum_{r(e)=v}
  s_v\lambda_e\right)\textbf{1}_{\overline{e}}\,,
\end{align*}
for every $\{s_v\}_{v\in E^0}\subseteq F$ and $\{\lambda_e\}_{e\in
E^1}\subseteq F$.

Now if we define the following $R$-bimodule homomorphism
$$\begin{array}{rl}
\psi:P\otimes_R Q & \longrightarrow R \\
(\sum_{e\in E^1} p_e\textbf{1}_{\overline{e}})\otimes (\sum_{e\in
E^1} q_e\textbf{1}_{e}) & \longmapsto \sum_{v\in E^0} (\sum_{r(e)=v}
p_{e} q_e )\textbf{1}_{v},
\end{array}$$
then $(P,Q,\psi)$ is an $R$-system.

Let $(S,T,\sigma,B)$ be a covariant representation of $(P,Q,\psi)$
and let $p_v:=\sigma(\textbf{1}_v)$ for $v\in E^0$, and let
$x_e=T(\textbf{1}_e)$ and $y_e=S(\textbf{1}_{\overline{e}})$ for
$e\in E^1$. It is easy to check that $\{p_v\}_{v\in E^0}$ is a
family of pairwise orthogonal idempotents, and that for all $e,f\in
E^1$ we have that $p_{s(e)}x_{e}=x_{e}=x_{e}p_{r(e)}$,
$p_{r(e)}y_{e}=y_{e}=y_{e}p_{s(e)}$, and
$y_{e}x_{f}=\delta_{e,f}p_{r(e)}$. Since $R$ is an $F$-algebra, and
$P$ and $Q$ are $F$-modules, the ring $\mathcal{R}\langle
S,T,\sigma\rangle$ becomes an $F$-algebra when we equip it with an
$F$-multiplication of $F$ defined by
$\lambda\sigma(r)=\sigma(\lambda r)$, $\lambda S(p)=S(\lambda p)$
and $\lambda T(q)=T(\lambda q)$ for $\lambda\in F$, $r\in R$, $p\in
P$ and $q\in Q$.

If on the other hand $B$ is an $F$-algebra which contains a family
$\{p_{v}\}_{v\in E^0}$ of pairwise orthogonal idempotents and
families $\{x_{e}\}_{e\in E^1}$ and $\{y_{e}\}_{e\in E^1}$
satisfying for all $e,f\in E^1$ that
$p_{s(e)}x_{e}=x_{e}=x_{e}p_{r(e)}$,
$p_{r(e)}y_{e}=y_{e}=y_{e}p_{s(e)}$, and
$y_{e}x_{f}=\delta_{e,f}p_{r(e)}$, and we for $r=\sum_{v\in
E^0}s_v\textbf{1}_v\in R$ let $\sigma(r):=\sum_{v\in E^0}s_vp_{v}$,
for $p=\sum_{e\in E^1} \lambda_e\textbf{1}_{\overline{e}}\in P$ let
$S(p):=\sum_{e\in E^1}\lambda_ex_{e}$, and for $q=\sum_{e\in
E^1}\lambda_e\textbf{1}_{e}\in Q$ let $T(q):=\sum_{e\in
E^1}\lambda_ex_{e}$, then $(S,T,\sigma,B)$ is a covariant
representation of $(P,Q,\psi)$.

Thus $\mathcal{T}_E:=\mathcal{T}_{(P,Q,\psi)}$ is the universal
$F$-algebra generated by a set $\{p_{v}: v\in E^0\}$ of pairwise
orthogonal idempotents, together with a set $\{x_{e},y_{e}: e\in
E^1\}$ of elements satisfying for $e,f\in E^1$
\begin{enumerate}
\item $p_{s(e)}x_{e}=x_{e}=x_{e}p_{r(e)}$,
\item $p_{r(e)}y_{e}=y_{e}=y_{e}p_{s(e)}$,
\item $y_{e}x_{f}=\delta_{e,f}p_{r(e)}$.
\end{enumerate}
We will in Example \ref{examples_cuntz:graph_cuntz}
see that a certain quotient of $\mathcal{T}_{E}$ is
isomorphic to the Leavitt path algebra $L_F(E)$ associated with the
graph $E$, cf. \cite{AA1},\cite{AA2},\cite{AALP},\cite{AMP}\&\cite{TF}.
\end{exem}

\section{The Fock space representation}

We will in this section for an arbitrary ring $R$ and an arbitrary
$R$-system $(P,Q,\psi)$
construct a representation which we call \emph{the Fock space
  representation}. This construction is inspired by a similar
construction in the $C^*$-algebra setting, cf. \cite{PI} and \cite{KS1}. We will
later show (see Proposition \ref{prop:fock}) that the Fock space
representation under certain conditions is
isomorphic to the Toeplitz representation.

We begin by establishing some notation which will be used in the rest
of the paper.

\begin{defi} \label{def:adjoint}
Let $R$ be a ring and $(P,Q,\psi)$ an $R$-system. Then a right
$R$-module homomorphism $T:Q_R\longrightarrow Q_R$ is called
\textit{adjointable with respect to $\psi$} if there exists a left
$R$-module homomorphism
$S:{}_RP\longrightarrow {}_RP$ such that
$$\psi\bigl(p\otimes T(q)\bigr)
=\psi\bigl(S(p)\otimes q\bigr)\qquad \forall p\in P\qquad \forall q\in Q\,.$$

We call $S$ an \textit{adjoint} of $T$ with respect to $\psi$. We
write $\mathcal{L}_P(Q)$ for the set of all the adjointable
homomorphisms (with respect to $\psi$). Notice that without further
conditions the adjoint can be non-unique. We denote by
$\mathcal{L}_Q(P)$ the set of all the adjoints.
\end{defi}

Observe that $\mathcal{L}_P(Q)$ and $\mathcal{L}_Q(P)$ are subrings
of $\text{End}(Q_R)$ and $\text{End}({}_RP)$ respectively.

\begin{defi} \label{def:finiterank}
Let $R$ be a ring and $(P,Q,\psi)$ an $R$-system.
For every $p\in P$ and $q\in Q$ we
define the following homomorphisms
$$\begin{array}{rl}
\theta_{q,p}:Q_R & \longrightarrow Q_R \\
x & \longmapsto q\psi(p\otimes x)
\end{array}
\qquad
\begin{array}{rl}
\theta_{p,q}:{}_RP & \longrightarrow {}_RP \\
y & \longmapsto \psi(y\otimes q)p
\end{array}
\,.$$ Then $\theta_{q,p}\in \mathcal{L}_P(Q)$ and has $\theta_{p,q}$
as an adjoint.

We call these homomorphisms \textit{rank 1 adjointable}
homomorphisms, and we denote by $\mathcal{F}_P(Q)$ the linear span
of all the rank 1 adjointable homomorphisms. Similarly, we denote by
$\mathcal{F}_Q(P)$ the set of all rank 1 adjoints.
\end{defi}

\begin{lem}
Let $R$ be a ring and $(P,Q,\psi)$ an $R$-system. If $T\in
\mathcal{L}_P(Q)$ (with an adjoint $S$), $p\in P$ and $q\in Q$, then
we have that
$$T\Theta_{q,p}= \Theta_{T(q),p}\qquad \text{and}\qquad
\Theta_{q,p}T = \Theta_{q,S(p)}\,.$$ Thus $\mathcal{F}_P(Q)$ is a
two-sided ideal of $\mathcal{L}_P(Q)$.
\end{lem}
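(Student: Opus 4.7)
The plan is to verify both identities by direct computation on an arbitrary element of $Q$, using only the definition of the rank $1$ operators, the fact that $T$ is a right $R$-module homomorphism, and the adjoint relation. From the two identities the ideal claim is immediate.

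First I would check the left multiplication identity. For any $x\in Q$,
$$
(T\theta_{q,p})(x)=T\bigl(\theta_{q,p}(x)\bigr)=T\bigl(q\cdot\psi(p\otimes x)\bigr).
$$
Since $\psi(p\otimes x)\in R$ and $T$ is a right $R$-module homomorphism on $Q_R$, this equals $T(q)\cdot\psi(p\otimes x)=\theta_{T(q),p}(x)$, proving $T\theta_{q,p}=\theta_{T(q),p}$.

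Next I would check the right multiplication identity. For any $x\in Q$,
$$
(\theta_{q,p}T)(x)=\theta_{q,p}\bigl(T(x)\bigr)=q\cdot\psi\bigl(p\otimes T(x)\bigr).
$$
The defining property of the adjoint $S$ of $T$ gives $\psi(p\otimes T(x))=\psi(S(p)\otimes x)$, so the right-hand side is $q\cdot\psi(S(p)\otimes x)=\theta_{q,S(p)}(x)$, proving $\theta_{q,p}T=\theta_{q,S(p)}$.

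Finally, $\mathcal{F}_P(Q)$ is by definition the linear span of the rank $1$ adjointable operators, so it is an additive subgroup of $\mathcal{L}_P(Q)$. The two identities above show that for every $T\in\mathcal{L}_P(Q)$ (with adjoint $S$), the products $T\theta_{q,p}$ and $\theta_{q,p}T$ are again rank $1$ adjointable homomorphisms, hence lie in $\mathcal{F}_P(Q)$; extending by linearity to arbitrary elements of $\mathcal{F}_P(Q)$ shows that $\mathcal{F}_P(Q)$ is a two-sided ideal. There is no real obstacle here — the only point one has to be mindful of is to invoke the correct side-property of $T$ (right $R$-linearity for the first identity, the $\psi$-adjoint relation for the second), and the rest is just unpacking the definitions.
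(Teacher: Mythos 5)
Your proof is correct and is exactly the verification the paper leaves implicit (its proof is simply ``easy to check using the definitions''): pointwise evaluation, right $R$-linearity of $T$ for the first identity, the adjoint relation for the second, and linearity for the ideal claim. Nothing further is needed.
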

\begin{proof}
Is easy to check using the definitions.
\end{proof}
Notice that the above result does
not depend on the choice of the adjoint.
Notice also that by a dual argument we have that $\mathcal{F}_Q(P)$ is a
two-sided ideal of $\mathcal{L}_Q(P)$.

\begin{defi}[{Cf. \cite[Section 2.2]{MS} and \cite{PI}}]
Given a ring $R$ and an $R$-bimodule $Q$ we define the tensor ring or
\textit{Fock ring} $F(Q)$ by $$F(Q)=\bigoplus^{\infty}_{n=0} Q^{\otimes n}\,.$$
\end{defi}

Despite the inherited ring structure of $F(Q)$ (see \cite{GREEN} for
more information about tensor rings) we are only
interested in the $R$-bimodule structure of $F(Q)$. If $(P,Q,\psi)$ is
an $R$-system, then we can
define an $R$-balanced $R$-bilinear form
$$\begin{array}{rl}
\langle\cdot,\cdot \rangle: F(P)\times F(Q) & \longrightarrow R \\
(\{p_n\},\{q_n\}) & \longmapsto  \sum_{n\in\mathbb{N}_0}
\psi_n(p_n\otimes q_n)
\end{array}
$$
that one can extend to a $R$-bimodule homomorphism
$\psi:F(P)\otimes  F(Q)\longrightarrow R$ by the universal
property of the tensor product.

Define the ring homomorphism $\phi_\infty:R \longrightarrow
\mathcal{L}_{F(P)}(F(Q))$  assigning to $r\in R$ the adjointable
homomorphism $\phi_\infty(r)$ of $F(Q)$ defined by
$\phi_\infty(r)(\{q_n\})=\{rq_n\}$. Notice that $\varphi_\infty(r)$
defined as $\varphi_\infty(r)(\{p_n\})=\{p_nr\}$ is an adjoint of
$\phi_\infty(r)$.

If for every $n\in \No$ we define $\phi^n_\infty:R
\longrightarrow \mathcal{L}_{P^{\otimes n}}(Q^{\otimes n})$ as
$\phi^n_\infty(r)(q_n)=rq_n$, then we can write $\phi_\infty(r)$ in the
following matrix form
$$\phi_\infty(r)(\{q_n\})=\left( \begin{array}{cccc}
\phi^0_\infty(r) &  & 0 &  \\
 & \phi^1_\infty(r) &  & \\
0 &  & \phi^2_\infty(r) &  \\
 & & & \ddots
\end{array}\right) \left(  \begin{array}{c} q_0 \\ q_1 \\ q_2 \\ \vdots
\end{array}\right).$$

Given an $R$-system $(P,Q,\psi)$, for every $n,m\in\No$ with
$n\leq m$ and $q\in Q^{\otimes m-n}$, we define the following right
$R$-module homomorphism
$$\begin{array}{rl}
T^{(n,m)}_q: Q^{\otimes n} & \longrightarrow Q^{\otimes m} \\
q_n & \longmapsto  q\otimes q_n
\end{array}$$
and the left $R$-module homomorphism
$$\begin{array}{rl}
U^{(m,n)}_q: P^{\otimes m} & \longrightarrow P^{\otimes n} \\
p_1\otimes p_2 & \longmapsto  p_1\psi_{m-n}(p_2\otimes q)
\end{array},$$
where $p_1\in P^{\otimes n}$ and $p_2\in P^{\otimes m-n}$.

For $q\in Q$ let $T^{(n)}_q:=T^{(n,n+1)}_q$ and
$U^{(n)}_q:=U^{(n+1,n)}_q$. We define the \textit{creator
homomorphism} $T_q:F(Q)\longrightarrow F(Q)$ by
$$T_q(\{q_n\}):=\{0,T^{(0)}_q(q_0),T^{(1)}_q(q_1),\ldots\}=\{0,qq_0,q\otimes
q_1,\ldots\}\,.$$ Observe that we can write $T_q$ in the following
matrix form

$$T_q(\{q_n\})=\left( \begin{array}{ccccc}
0 &   &  &  & \\
 T^{(0)}_q& 0 &  &  &\\
   & T^{(1)}_q & 0 &  & \\
 &  & T^{(2)}_q& 0  & \\
& & & \ddots  & \ddots
\end{array}\right) \left(  \begin{array}{c} q_0 \\ q_1 \\ q_2 \\ q_3 \\ \vdots
\end{array}\right).$$
One gets that $T_q\in \mathcal{L}_{F(P)}(F(Q))$ with an adjoint
homomorphism $U_q:F(P)\longrightarrow F(P)$  defined by
$U_q(\{p_n\})=\{U^{(0)}_q(p_1),U^{(1)}_p(p_2),\ldots\}$ and which
can be written in the matrix form

$$U_q(\{p_n\})=\left( \begin{array}{cccccc}
0 &  U^{(0)}_q &   &  & & \\
 & 0 & U^{(1)}_q &   &  & \\
   &   & 0 & U^{(2)}_q &  &  \\
 & & & \ddots & \ddots &
\end{array}\right) \left(  \begin{array}{c} p_0 \\ p_1 \\ p_2 \\ p_3 \\ \vdots
\end{array}\right).$$

Similarly, for every $n,m\in\No$ with $n\leq m$ and  given any $p\in
P^{\otimes m-n}$ we define  the following right $R$-module
homomorphism
$$\begin{array}{rl}
S^{(n,m)}_p: Q^{\otimes m} & \longrightarrow Q^{\otimes n} \\
q_1\otimes q_2 & \longmapsto  \psi_{m-n}(p\otimes q_1)q_2
\end{array}\,,$$
where $q_1\in Q^{\otimes m-n}$ and $q_2\in Q^{\otimes n}$,
and the left $R$-module homomorphism

$$\begin{array}{rl}V^{(n,m)}_p: P^{\otimes n} & \longrightarrow P^{\otimes m} \\
p_n & \longmapsto  p_n\otimes p.
\end{array}$$

We denote by $S^{(n)}_p:=S^{(n,n+1)}_p$ and
$V^{(n)}_p:=V^{(n+1,n)}_p$ where $p\in P$, and we then define the
right $R$-module homomorphism $S_p:F(Q)\longrightarrow F(Q)$ by
$S_p(\{q_n\}):=\{S^{(0)}_p(q_1),S^{(1)}_p(q_2),\ldots\}$ which can
be written in the following matrix form

$$S_p(\{q_n\})=\left( \begin{array}{cccccc}
0 &  S^{(0)}_p & &  & & \\
 & 0 & S^{(1)}_p & &  & \\
   &   & 0 & S^{(2)}_p &  &  \\
 & & & \ddots & \ddots &
\end{array}\right) \left(  \begin{array}{c} q_0 \\ q_1 \\ q_2 \\ q_3 \\ \vdots
\end{array}\right).$$

One gets that $S_p\in \mathcal{L}_{F(P)}(F(Q))$ with an adjoint
homomorphism $V_p:F(P)\longrightarrow F(P)$ given by
$V_p(\{p_n\}):=\{0,V^{(0)}_p(p_0),V^{(1)}_p(p_1),\ldots\}$ and with
matrix form
$$V_p(\{p_n\})=\left( \begin{array}{ccccc}
0 &   &  &  & \\
 V^{(0)}_p& 0 &  &  &\\
  & V^{(1)}_p & 0 &  & \\
 & & V^{(2)}_p& 0  & \\
& & & \ddots  & \ddots
\end{array}\right) \left(  \begin{array}{c} p_0 \\ p_1 \\ p_2 \\ p_3 \\ \vdots
\end{array}\right).$$

\begin{prop}\label{toeplitz_rep}
  Let $R$ be a ring and $(P,Q,\psi)$ an $R$-system. Denote by
  $T_{\mathcal{F}}$ the map from $Q$ to
  $\mathcal{L}_{F(P)}(F(Q))$ given by $q\longmapsto T_q$, by $S_{\mathcal{F}}$
  the map from $P$ to $\mathcal{L}_{F(P)}(F(Q))$
  given by $p\longmapsto S_p$, and by $\sigma_{\mathcal{F}}$
  the map from $R$ to $\mathcal{L}_{F(P)}(F(Q))$ given by $r\longmapsto
  \phi_\infty(r)$, and let $\mathcal{F}_{(P,Q,\psi)}$ be the subring
  of $\mathcal{L}_{F(P)}(F(Q))$ generated by
  $T_{\mathcal{F}}(Q)\cup S_{\mathcal{F}}(P)\cup
  \sigma_{\mathcal{F}}(R)$. Then
  $(S_{\mathcal{F}},T_{\mathcal{F}},\sigma_{\mathcal{F}},\mathcal{F}_{(P,Q,\psi)}))$
  is a surjective covariant
  representation of $(P,Q,\psi)$. This representation is injective if
  and only if $R$ is right non-degenerate.

  We call
  $(S_{\mathcal{F}},T_{\mathcal{F}},\sigma_{\mathcal{F}},\mathcal{F}_{(P,Q,\psi)}))$
  for the Fock space representation of $(P,Q,\psi)$.
\end{prop}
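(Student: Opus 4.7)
The proof is a verification in three parts: that $(S_{\mathcal{F}},T_{\mathcal{F}},\sigma_{\mathcal{F}},\mathcal{F}_{(P,Q,\psi)})$ satisfies the five axioms of Definition \ref{def:representations}, that it is surjective, and that $\sigma_{\mathcal{F}}$ is injective precisely when $R$ is right non-degenerate. Surjectivity is immediate from the definition of $\mathcal{F}_{(P,Q,\psi)}$ as the subring generated by the three images. Axioms (1)--(3) are also essentially tautological: $\mathcal{F}_{(P,Q,\psi)}$ is a ring by construction; $T_{\mathcal{F}}$ and $S_{\mathcal{F}}$ are additive because each $T_q^{(n)}$ and $S_p^{(n)}$ is additive in $q$ and $p$ respectively; and $\sigma_{\mathcal{F}}=\phi_\infty$ is a ring homomorphism because componentwise left multiplication satisfies $\phi_\infty^n(r_1r_2)=\phi_\infty^n(r_1)\phi_\infty^n(r_2)$ for every $n$.

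The substantive calculation is axiom (4). The representative identity is $S_{\mathcal{F}}(pr)=S_{\mathcal{F}}(p)\sigma_{\mathcal{F}}(r)$. Applied to $\{q_n\}\in F(Q)$, the composition $S_p\circ\phi_\infty(r)$ yields in its $n$-th coordinate $S_p^{(n)}(rq_{n+1})$; writing $q_{n+1}=q'\otimes q''$ and using that the left $R$-action on a tensor acts on the first factor gives $\psi(p\otimes rq')q''$, and the bimodule-homomorphism property of $\psi$ rewrites this as $\psi(pr\otimes q')q''=S_{pr}^{(n)}(q_{n+1})$. The other three identities in (4) follow by the same pattern, using the analogous formulas for $T_q$, $U_q$, $V_p$ and the $R$-balancing of the tensor products. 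Axiom (5) is likewise direct: $T_q$ maps $\{q_n\}$ to the sequence whose $(n+1)$-st entry is $q\otimes q_n$, and then $S_p$ collapses this entry to $\psi(p\otimes q)q_n$; hence $S_pT_q=\phi_\infty(\psi(p\otimes q))$, which is $\sigma_{\mathcal{F}}(\psi(p\otimes q))$.

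For the injectivity criterion, the easy direction is that if $R$ is right non-degenerate and $\phi_\infty(r)=0$, then the restriction of $\phi_\infty(r)$ to the summand $Q^{\otimes 0}=R$ is the left-multiplication map $s\mapsto rs$, which must vanish; so $rR=0$, which forces $r=0$. For the converse, suppose $R$ is not right non-degenerate and pick $r\ne 0$ with $rR=0$; one argues that $\phi_\infty(r)=0$ by checking the left action of $r$ on every $Q^{\otimes n}$ vanishes, reducing the action on a pure tensor $q_1\otimes\cdots\otimes q_n$ via $r\cdot(q_1\otimes\cdots\otimes q_n)=(rq_1)\otimes q_2\otimes\cdots\otimes q_n$ to a statement about the left action of $r$ controlled by $rR=0$ through the $R$-balancing relation in the tensor product. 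This last step is the main obstacle and the place where the most care is needed; combined with the forward direction it yields the stated equivalence.
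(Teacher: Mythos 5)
Your verification of the covariant-representation axioms, of surjectivity, and of the key identity $S_pT_q=\phi_\infty(\psi(p\otimes q))$ is correct and follows the same route as the paper: the paper likewise reduces axiom (4) to the relations $\phi_\infty(r)T_q=T_{rq}$, $T_q\phi_\infty(r)=T_{qr}$, $\phi_\infty(r)S_p=S_{rp}$, $S_p\phi_\infty(r)=S_{pr}$ (stated there without computation, where you carry one out coordinatewise), and it verifies axiom (5) by exactly the matrix computation you describe. The forward half of the injectivity claim is also fine and is the only half used later (in Proposition \ref{prop:fock}): the restriction of $\phi_\infty(r)$ to the summand $Q^{\otimes 0}=R$ is left multiplication by $r$, so $\phi_\infty(r)=0$ forces $rR=0$.

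The converse is where your argument has a genuine gap, and you were right to flag it as the obstacle. The step that reduces the vanishing of $r\cdot(q_1\otimes\cdots\otimes q_n)=(rq_1)\otimes q_2\otimes\cdots\otimes q_n$ to the hypothesis $rR=0$ ``through the $R$-balancing relation'' does not work: balancing in $Q^{\otimes n}$ only identifies $q_ir\otimes q_{i+1}$ with $q_i\otimes rq_{i+1}$, so it gives no control over the left action of $r$ on the first factor, which is part of the bimodule structure of $Q$ and is logically independent of $rR=0$. Concretely, take $R=\Z$ with zero multiplication, $Q=\Z\oplus\Z$ with left action $r\cdot(a,b)=(rb,0)$ (ordinary integer multiplication), zero right action, $P=0$ and $\psi=0$; then $1\cdot R=0$ while $1\cdot(0,1)\neq 0$, so $\phi_\infty(1)\neq 0$ although $R$ is right degenerate, and in this example $\phi_\infty$ is injective. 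So the ``only if'' direction cannot be recovered by your reduction and in fact fails for general $R$-systems. To be fair, the paper's own proof dismisses the entire equivalence as clear and supplies no argument for this direction either; the honest fix is to prove only the implication that is actually needed (right non-degenerate implies injective), or to add a hypothesis guaranteeing that every $r$ with $rR=0$ also annihilates each $Q^{\otimes n}$ on the left.
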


\begin{proof}
  It is clear that the maps $T_{\mathcal{F}}$,
  $S_{\mathcal{F}}$ and
  $\sigma_{\mathcal{F}}$
  are linear, and that for every $r\in R$, $p\in P$ and $q\in Q$
  we have that
  \begin{equation*}
    \phi_{\infty}(r)T_q=T_{rq}{,}\qquad T_q \phi_{\infty}(r)=T_{qr}{,}
    \qquad \phi_{\infty}(r)S_p=S_{rp}{,}\qquad S_p\phi_{\infty}(r)=S_{pr}{,}
  \end{equation*}
  from which it follows that $\sigma_{\mathcal{F}}$ is a
  ring homomorphism
  and that
  $$S_{\mathcal{F}}(pr)=S_{\mathcal{F}}(p)\sigma_{\mathcal{F}}(r)\,,\qquad S_{\mathcal{F}}(rp)=\sigma_{\mathcal{F}}(r)S_{\mathcal{F}}(p)\,,$$
  $$T_{\mathcal{F}}(rq)=\sigma_{\mathcal{F}}(r)T_{\mathcal{F}}(q)\,,\qquad T_{\mathcal{F}}(qr)=T_{\mathcal{F}}(q)\sigma_{\mathcal{F}}(r)$$
  for every $r\in R$, $p\in P$ and $q\in Q$.

  Given any $p\in P$ and $q\in Q$ we have for every $n\in
  \No$ that $S^{(n)}_pT^{(n)}_q (q_n)=\psi(p\otimes
  q)q_n$ for $q_n\in Q^{\otimes n}$, and hence the composition
  homomorphism $S_pT_q$ gives
  \begin{equation*}
    \begin{split}
      S_pT_q(\{q_n\})&=\left( \begin{array}{cccccc}
          0 &  S^{(0)}_p & 0 &  & & \\
          & 0 & S^{(1)}_p & 0 &  & \\
          &   & 0 & S^{(2)}_p & 0 &  \\
          & & & \ddots & \ddots & \ddots
        \end{array}\right)\left( \begin{array}{ccccc}
          0 &   &  &  & \\
          T^{(0)}_q& 0 &  &  &\\
          0& T^{(1)}_q & 0 &  & \\
          & 0& T^{(2)}_q& 0  & \\
          & & & \ddots  & \ddots
        \end{array}\right) \left(  \begin{array}{c} q_0 \\ q_1 \\ q_2 \\ q_3 \\ \vdots
        \end{array}\right)\\
      &=\left( \begin{array}{cccccc}
          0 &  S^{(0)}_p & 0 &  & & \\
          & 0 & S^{(1)}_p & 0 &  & \\
          &   & 0 & S^{(2)}_p & 0 &  \\
          & & & \ddots & \ddots & \ddots
        \end{array}\right)\left(  \begin{array}{c} 0 \\ qq_0 \\ q\otimes q_1 \\ q\otimes q_2 \\ \vdots
        \end{array}\right)\\
      &=\left(  \begin{array}{c} \psi(p\otimes q)q_0 \\ \psi(p\otimes q) q_1 \\ \psi(p\otimes q) q_2 \\ \vdots
        \end{array}\right)
      =
      \phi_\infty\bigl(\psi(p\otimes q)\bigr)(\{q_n\}){,}
    \end{split}
  \end{equation*}
  from which it follows that $\sigma_{\mathcal{F}}(\psi(p\otimes
  q))=S_{\mathcal{F}}(p)T_{\mathcal{F}}(q)$
  for every $p\in P$ and $q\in Q$.
  Thus
  $(S_{\mathcal{F}},T_{\mathcal{F}},\sigma_{\mathcal{F}},\mathcal{F}_{(P,Q,,\psi)})$
  is a surjective covariant representation of $(P,Q,\psi)$.

  Finally it is clear that
  $(S_{\mathcal{F}},T_{\mathcal{F}},\sigma_{\mathcal{F}},\mathcal{F}_{(P,Q,\psi)})$ is injective if
  and only if $R$ is right non-degenerate.
\end{proof}

\begin{nota}\label{opposite} 
  Let us denote by $B^{op}$ the opposite
  ring of $B$. Given $a,b\in B^{op}$ we write $a\cdot b$ for the
  product of $a$ and $b$ in $B^{op}$. Thus $a\cdot b=ba$.
\end{nota}

\begin{rema}
Let $R$ be a ring and let $(P,Q,\psi)$ be an $R$-system. We could
define an \emph{anti-representation} of $(P,Q,\psi)$ to be a
quadruple $(V,U,\eta,B^{op})$ where $B$ is a ring,
$\eta:R\longrightarrow B^{op}$ is an ring homomorphism,
$U:Q\longrightarrow B^{op}$ and $V:P\longrightarrow B^{op}$ are
linear maps, and $U(qr)=U(q)\cdot \eta(r)$, $U(rq)=\eta(r)\cdot
U(q)$, $V(rp)=\eta(r)\cdot V(p)$, $V(pr)=V(p)\cdot\eta(r)$ and
$V(p)\cdot U(q)=\eta(\psi(p\otimes q))$ for every $r\in R$, $q\in Q$
and $p\in P$. If we then denoted by $U_{\mathcal{F}^{\sharp}}$ the
map from $Q$ to $\mathcal{L}_{F(Q)}(F(P))$ given by $q\longmapsto
U_q$, by $V_{\mathcal{F}^{\sharp}}$ the map from $P$ to
$\mathcal{L}_{F(Q)}(F(P))$ given by $p\longmapsto V_p$, and by
$\eta_{\mathcal{F}^{\sharp}}$ the map from $R$ to
$\mathcal{L}_{F(Q)}(F(P))$ given by $r\longmapsto
\varphi_\infty(r)$, then
$(V_{\mathcal{F}^{\sharp}},U_{\mathcal{F}^{\sharp}},
\eta_{\mathcal{F}^{\sharp}},(\mathcal{F}_{(P,Q,\psi)}^{\sharp})^{op})$
would be an anti-representation of $(P,Q,\psi)$, where
$\mathcal{F}_{(P,Q,\psi)}^{\sharp}$ is the subring of
$\mathcal{L}_{F(Q)}(F(P))$ generated by
$U_{\mathcal{F}^{\sharp}}(Q)\cup
V_{\mathcal{F}^{\sharp}}(P)\cup\eta_{\mathcal{F}^{\sharp}}(R)$

Notice that in general the rings $\mathcal{F}_{(P,Q,\psi)}$ and
$\mathcal{F}_{(P,Q,\psi)}^{\sharp}$ are not isomorphic. For example
if $R$ is a right non-degenerate ring, but not a left non-degenerate
ring, then if we consider the $R$-system $(P,Q,\psi)$ where $P=Q=0$
and $\psi$ is the zero homomorphism, we have that
$\mathcal{F}_{(P,Q,\psi)}\cong R$ and
$\mathcal{F}_{(P,Q,\psi)}^{\sharp}\cong R/I$ where $I=\{r\in R:
Rr=0\}$.
\end{rema}

\section{Relative Cuntz-Pimsner rings} \label{sec:relat-cuntz-pimsn}

The Toeplitz representation of an $R$-system $(P,Q,\psi)$ is in general too
big to be an attractive representation of $(P,Q,\psi)$. We will in
this section study a certain subclass of covariant representations of
$(P,Q,\psi)$ and, for $R$-systems satisfying the condition
\textbf{(FS)} defined below, completely classify these representations up to
isomorphism in $\category$. We begin by describing this class of
representations.

Remember (cf. Theorem \ref{theor:toeplitz}) that
$\mathcal{T}_{(P,Q,\psi)}$ comes with a $\semigroup$-grading
$\oplus_{(m,n)}\mathcal{T}_{(m,n)}$ where $\semigroup$ is the
semigroup defined in Definition \ref{defi:semigroup}.
It will often be more convenient to work with a $\Z$-grading instead
of this $\semigroup$-grading.

\begin{prop} \label{prop:Z-grading}
  Let $R$ be a ring and let $(P,Q,\psi)$ be an $R$-system.
  If we for $k\in\Z$ let
  $\mathcal{T}_{(P,Q,\psi)}^{(k)}=\oplus_{(m,n)\in\semigroup,\
    m-n=k}\mathcal{T}_{(m,n)}$, then
  $\oplus_{n\in\Z}\mathcal{T}_{(P,Q,\psi)}^{(n)}$ is a $\Z$-grading of
  $\mathcal{T}_{(P,Q,\psi)}$. The grading
  $\oplus_{n\in\Z}\mathcal{T}_{(P,Q,\psi)}^{(n)}$ is the only
  $\Z$-grading
  $\oplus_{n\in\Z}\mathcal{Y}^{(n)}$ of
  $\mathcal{T}_{(P,Q,\psi)}$ for which $\iota_R(R)\subseteq\mathcal{Y}^{(0)}$,
  $\iota_Q(Q)\subseteq\mathcal{Y}^{(1)}$, and
  $\iota_P(P)\subseteq\mathcal{Y}^{(-1)}$.
\end{prop}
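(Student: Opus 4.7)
The plan is to derive the $\Z$-grading directly from the $\semigroup$-grading established in Theorem \ref{theor:toeplitz} by regrouping the homogeneous components according to the map $(m,n)\mapsto m-n$, and then to prove uniqueness by a generator argument.

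First I would observe that the decomposition $\mathcal{T}_{(P,Q,\psi)} = \bigoplus_{k\in\Z}\mathcal{T}_{(P,Q,\psi)}^{(k)}$ is automatic as an abelian group decomposition, since $\semigroup = \No^2$ partitions according to the value of $m-n$ and the $\mathcal{T}_{(m,n)}$ already form a direct sum by Theorem \ref{theor:toeplitz}. So the only non-trivial point is compatibility with multiplication: one must check that $\mathcal{T}^{(k_1)}_{(P,Q,\psi)}\cdot \mathcal{T}^{(k_2)}_{(P,Q,\psi)} \subseteq \mathcal{T}^{(k_1+k_2)}_{(P,Q,\psi)}$. By the $\semigroup$-grading, it suffices to verify, for each pair $(m,n),(k,l)\in\semigroup$, that the difference of the two coordinates of the product $(m,n)(k,l)$ equals $(m-n)+(k-l)$. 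This is a direct check from Definition \ref{defi:semigroup}: if $n\ge k$ then $(m,n)(k,l)=(m,n-k+l)$ and $m-(n-k+l)=(m-n)+(k-l)$; if $k\ge n$ then $(m,n)(k,l)=(m+k-n,l)$ and $(m+k-n)-l=(m-n)+(k-l)$. Thus the map $\semigroup\to\Z$, $(m,n)\mapsto m-n$, is a semigroup homomorphism, which is exactly what is needed to collapse the $\semigroup$-grading to a $\Z$-grading.

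For uniqueness, suppose $\bigoplus_{n\in\Z}\mathcal{Y}^{(n)}$ is any $\Z$-grading satisfying $\iota_R(R)\subseteq\mathcal{Y}^{(0)}$, $\iota_Q(Q)\subseteq\mathcal{Y}^{(1)}$, and $\iota_P(P)\subseteq\mathcal{Y}^{(-1)}$. Using the description $\mathcal{T}_{(m,n)} = \spa\{\iota_Q^m(q)\iota_P^n(p)\mid q\in Q^{\otimes m},\ p\in P^{\otimes n}\}$ from Theorem \ref{theor:toeplitz}, each spanning element is a product of $m$ factors from $\iota_Q(Q)$ and $n$ factors from $\iota_P(P)$, and hence lies in $\mathcal{Y}^{(m-n)}$ by multiplicativity of the grading; similarly the degenerate cases $(k,0),(0,k),(0,0)$ land in $\mathcal{Y}^{(k)},\mathcal{Y}^{(-k)},\mathcal{Y}^{(0)}$ respectively. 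Therefore $\mathcal{T}^{(k)}_{(P,Q,\psi)}\subseteq\mathcal{Y}^{(k)}$ for every $k\in\Z$.

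Since both $\bigoplus_k\mathcal{T}^{(k)}_{(P,Q,\psi)}$ and $\bigoplus_k\mathcal{Y}^{(k)}$ equal $\mathcal{T}_{(P,Q,\psi)}$ as internal direct sums and the former is contained degreewise in the latter, the two must agree degreewise, giving $\mathcal{T}^{(k)}_{(P,Q,\psi)}=\mathcal{Y}^{(k)}$. No step here is really an obstacle; the only point needing any care is the coordinate computation for the product in $\semigroup$, and this is essentially the reason $\semigroup$ was defined the way it was in Definition \ref{defi:semigroup}.
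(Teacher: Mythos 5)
Your proof is correct and follows the same route as the paper: both derive the $\Z$-grading by regrouping the $\semigroup$-grading of Theorem \ref{theor:toeplitz} along the map $(m,n)\mapsto m-n$, and both prove uniqueness by showing $\mathcal{T}^{(k)}_{(P,Q,\psi)}\subseteq\mathcal{Y}^{(k)}$ and concluding degreewise equality from the two direct-sum decompositions. You simply make explicit the details the paper leaves as "easily follows," namely that $(m,n)\mapsto m-n$ is a semigroup homomorphism on $\semigroup$.
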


\begin{proof}
  It easily follows from Theorem \ref{theor:toeplitz} that
  $\oplus_{n\in\Z}\mathcal{T}_{(P,Q,\psi)}^{(n)}$ is a $\Z$-grading of
  $\mathcal{T}_{(P,Q,\psi)}$ and that
  $\iota_R(R)\subseteq\mathcal{T}_{(P,Q,\psi)}^{(0)}$,
  $\iota_Q(Q)\subseteq\mathcal{T}_{(P,Q,\psi)}^{(1)}$, and
  $\iota_P(P)\subseteq\mathcal{T}_{(P,Q,\psi)}^{(-1)}$.

  Suppose $\oplus_{n\in\Z}\mathcal{Y}^{(n)}$ is another $\Z$-grading of
  $\mathcal{T}_{(P,Q,\psi)}$ and that
  $\iota_R(R)\subseteq\mathcal{Y}^{(0)}$,
  $\iota_Q(Q)\subseteq\mathcal{Y}^{(1)}$, and
  $\iota_P(P)\subseteq\mathcal{Y}^{(-1)}$. Then
  $\mathcal{T}_{(P,Q,\psi)}^{(n)}\subseteq \mathcal{Y}^{(n)}$ for each
  $n\in\Z$ from which it follows that $\mathcal{T}_{(P,Q,\psi)}^{(n)}=
  \mathcal{Y}^{(n)}$ for each $n\in\Z$.
\end{proof}

\begin{prop} \label{prop:graded}
  Let $R$ be a ring, $(P,Q,\psi)$ an $R$-system, $(S,T,\sigma,B)$ a surjective
  covariant representation of $(P,Q,\psi)$, and let
  $\eta_{(S,T,\sigma,B)}:\mathcal{T}_{(P,Q,\psi)}\longrightarrow B$ be the ring
  homomorphism from Theorem \ref{theor:toeplitz}. If
  $\oplus_{n\in\Z}B^{(n)}$ is a $\Z$-grading of
  $B$ such that $\sigma(R)\subseteq
  B^{(0)}$, $T(Q)\subseteq B^{(1)}$ and $S(P)\subseteq B^{(-1)}$,
  then $\eta_{(S,T,\sigma,B)}(\mathcal{T}_{(P,Q,\psi)}^{(n)})=B^{(n)}$
  for every $n\in\Z$.
\end{prop}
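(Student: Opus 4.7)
The plan is to show that $\eta := \eta_{(S,T,\sigma,B)}$ is a graded ring homomorphism with respect to the two $\Z$-gradings, and then combine this with surjectivity of $\eta$ to conclude equality on homogeneous components.

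First I would verify the inclusion $\eta(\mathcal{T}_{(P,Q,\psi)}^{(n)})\subseteq B^{(n)}$ for every $n\in\Z$. By Theorem \ref{theor:toeplitz}, $\mathcal{T}_{(P,Q,\psi)}^{(n)}=\oplus_{m-k=n}\mathcal{T}_{(m,k)}$ where $\mathcal{T}_{(m,k)}=\spa\{\iota_Q^m(q)\iota_P^k(p)\mid q\in Q^{\otimes m},\ p\in P^{\otimes k}\}$. Since $\eta\circ\iota_R=\sigma$, $\eta\circ\iota_Q=T$ and $\eta\circ\iota_P=S$, applying $\eta$ to a typical generator gives $\eta(\iota_Q^m(q)\iota_P^k(p))=T^m(q)S^k(p)$ (using Lemma \ref{lemma:tensor}). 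Because $B^{(1)}$ and $B^{(-1)}$ are subgroups with $T(Q)\subseteq B^{(1)}$ and $S(P)\subseteq B^{(-1)}$, the grading hypothesis on $B$ yields $T^m(q)\in B^{(m)}$ and $S^k(p)\in B^{(-k)}$, hence $T^m(q)S^k(p)\in B^{(m)}B^{(-k)}\subseteq B^{(m-k)}=B^{(n)}$. Summing over homogeneous components of degree $n$ gives $\eta(\mathcal{T}_{(P,Q,\psi)}^{(n)})\subseteq B^{(n)}$.

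Next I would use surjectivity to get the reverse inclusion. Since $(S,T,\sigma,B)$ is surjective, $B=\mathcal{R}\langle S,T,\sigma\rangle$ is generated by $\sigma(R)\cup T(Q)\cup S(P)=\eta(\iota_R(R))\cup\eta(\iota_Q(Q))\cup\eta(\iota_P(P))$, so $\eta$ is surjective. Now fix $n\in\Z$ and $b\in B^{(n)}$. By surjectivity there exists $x\in\mathcal{T}_{(P,Q,\psi)}$ with $\eta(x)=b$. Decompose $x=\sum_{k\in\Z}x_k$ with $x_k\in\mathcal{T}_{(P,Q,\psi)}^{(k)}$ and only finitely many nonzero terms. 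Then
\begin{equation*}
b=\eta(x)=\sum_{k\in\Z}\eta(x_k),
\end{equation*}
and by the first step each $\eta(x_k)$ lies in $B^{(k)}$. Uniqueness of the homogeneous decomposition in the graded ring $B=\oplus_{k\in\Z}B^{(k)}$ forces $b=\eta(x_n)$, so $b\in\eta(\mathcal{T}_{(P,Q,\psi)}^{(n)})$.

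There is no real obstacle here; the argument is essentially a formal consequence of the universal property (which makes $\eta$ surjective and compatible with generators) together with the uniqueness of the $\Z$-grading on $\mathcal{T}_{(P,Q,\psi)}$ established in Proposition \ref{prop:Z-grading}. The only place where one needs to be careful is observing that the grading assumption on $B$ need not make $B$ the Toeplitz ring of anything, but this is irrelevant: the hypothesis $\sigma(R)\subseteq B^{(0)}$, $T(Q)\subseteq B^{(1)}$, $S(P)\subseteq B^{(-1)}$ is exactly what is needed for $\eta$ to be a morphism of $\Z$-graded rings, and graded surjective ring homomorphisms between $\Z$-graded rings automatically restrict to surjections on each homogeneous component.
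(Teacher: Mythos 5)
Your proof is correct and follows essentially the same route as the paper: first the containment $\eta(\mathcal{T}_{(P,Q,\psi)}^{(n)})\subseteq B^{(n)}$ via the generators, then surjectivity of $\eta$ plus uniqueness of homogeneous decompositions to upgrade the containment to equality. The paper compresses your second step into the single observation that $\oplus_{n\in\Z}\eta(\mathcal{T}_{(P,Q,\psi)}^{(n)})$ is itself a $\Z$-grading of $B$ refining $\oplus_{n\in\Z}B^{(n)}$, which is exactly the argument you spell out.
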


\begin{proof}
  If $\oplus_{n\in\Z}B^{(n)}$ is a $\Z$-grading of
  $B$ such that $\sigma(R)\subseteq
  B^{(0)}$, $T(Q)\subseteq B^{(1)}$ and $S(P)\subseteq B^{(-1)}$, then
  $\eta_{(S,T,\sigma,B)}(\mathcal{T}_{(P,Q,\psi)}^{(n)})\subseteq B^{(n)}$
  for every $n\in\Z$. It follows that
  $\oplus_{n\in\Z}\eta_{(S,T,\sigma,B)}(\mathcal{T}_{(P,Q,\psi)}^{(n)})$
  is a $\Z$-grading of $B$, and
  thus that $\eta_{(S,T,\sigma,B)}(\mathcal{T}_{(P,Q,\psi)}^{(n)})=B^{(n)}$
  for every $n\in\Z$.
\end{proof}

\begin{defi} \label{defi:graded rep}
Let $R$ be a ring and $(P,Q,\psi)$ an $R$-system. A surjective
covariant representation $(S,T,\sigma,B)$ of $(P,Q,\psi)$ is
\emph{graded} if there exists a $\Z$-grading
$\oplus_{n\in\Z}B^{(n)}$ of $B$ such that $\sigma(R)\subseteq
B^{(0)}$, $T(Q)\subseteq B^{(1)}$, and $S(P)\subseteq B^{(-1)}$.
\end{defi}

The aim of this section is to classify all surjective, injective and
graded representations
of an $R$-system. Unfortunately, we do not know how to do that for
general $R$-systems, but only for $R$-systems satisfying a condition
we have chosen to call \textbf{(FS)} and which is defined below.
This condition is probably not the
optimal one, but many interesting examples do satisfy this condition.

\subsection{Condition \textbf{(FS)}}
\label{sec:condition-FS}

We will now introduce the condition \textbf{(FS)} and show some
fundamental results for $R$-systems satisfying this condition.

\begin{defi} \label{defi:FS}
Let $R$ be a ring. An $R$-system $(P,Q,\psi)$ is said to satisfy
condition \textbf{(FS)} if for every finite set $\{q_1,\ldots,
q_n\}\subseteq Q$ and $\{p_1,\ldots,p_m\}\subseteq P$  there exist
$\Theta\in \mathcal{F}_P(Q)$ and $\Delta\in \mathcal{F}_Q(P)$ such
that $\Theta(q_i)=q_i$ and $\Delta(p_j)=p_j$ for every $i=1,\ldots,
n$ and $j=1,\ldots, m$ respectively.
\end{defi}

\begin{exem}
Observe that condition \textbf{(FS)} appears in a natural
context. Let $Q$ be an $R$-bimodule such that $Q_R$ is a finitely
generated projective right $R$-module. Then define
$P:=Q^*=\text{Hom}_R(Q_R,R)$. We then have that $P$ is an $R$-bimodule
such that ${}_RP$ is a finitely generated projective left $R$-module
with $P^*=Q^{**}=Q$. Therefore we can define
\begin{eqnarray*}
  \psi:P\otimes_R Q & \longrightarrow R \\
f\otimes q & \longmapsto f(q).
\end{eqnarray*}

Observe that by the Dual Basis Lemma there exist $q_1,\ldots,q_n\in
Q$ and $f_1,\ldots,f_n\in P$ such that $\sum_{i=1}^nq_if_i(q)=q$ for
every $q\in Q$. Dually and since $P^*=Q$, there exist
$p_1,\ldots,p_m\in Q$ and $g_1,\ldots,g_m\in P^*=Q$ such that
$\sum_{j=1}^m g_j(p)p_j=p$ for every $p\in P$, from where condition
\textbf{(FS)} follows.
\end{exem}

\begin{defi}
Let $R$ be a ring.
An $R$-system $(P,Q,\psi)$ is \textit{non-degenerate} if whenever
$\psi(p\otimes q)=0$ for every $p\in P$ then $q=0$, and whenever
$\psi(p\otimes q)=0$ for every $q\in Q$ then $p=0$.
\end{defi}
\noindent Notice that if $(P,Q,\psi)$ is non-degenerate then every $T\in
\mathcal{L}_P(Q)$ has a unique adjoint.

\begin{lem}\label{rema_1}
Let $R$ be a ring and $(P,Q,\psi)$ an $R$-system satisfying condition
\textbf{(FS)}. Then $(P,Q,\psi)$ is non-degenerate.
\end{lem}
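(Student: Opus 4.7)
The plan is to prove each of the two non-degeneracy conditions directly by using condition \textbf{(FS)} to produce a finite-rank adjointable operator that fixes the element one wants to show is zero, and then to observe that the defining formula for such an operator is manifestly zero on that element.

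More concretely, suppose $q\in Q$ satisfies $\psi(p\otimes q)=0$ for every $p\in P$. I would apply condition \textbf{(FS)} to the singleton $\{q\}\subseteq Q$ (together with, say, the empty set in $P$, or any convenient dummy element) to obtain a $\Theta\in\mathcal{F}_P(Q)$ with $\Theta(q)=q$. By the definition of $\mathcal{F}_P(Q)$, $\Theta$ is a finite sum of rank~1 adjointable homomorphisms, so I can write $\Theta=\sum_{i=1}^{k}\theta_{q_i,p_i}$ for some $q_i\in Q$ and $p_i\in P$. But then, using the definition of $\theta_{q_i,p_i}$ and the hypothesis,
\begin{equation*}
q=\Theta(q)=\sum_{i=1}^{k}\theta_{q_i,p_i}(q)=\sum_{i=1}^{k}q_i\,\psi(p_i\otimes q)=0,
\end{equation*}
which gives the first half of non-degeneracy.

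The second half is entirely dual: if $p\in P$ satisfies $\psi(p\otimes q)=0$ for every $q\in Q$, I apply \textbf{(FS)} to $\{p\}\subseteq P$ to get $\Delta\in\mathcal{F}_Q(P)$ with $\Delta(p)=p$, write $\Delta=\sum_{j=1}^{l}\theta_{p_j,q_j}$, and compute $p=\Delta(p)=\sum_j\psi(p\otimes q_j)p_j=0$. There is no genuine obstacle here: the argument is really just unwinding the definitions, and the main thing to be a little careful about is applying \textbf{(FS)} to singletons on one side while handling the other side trivially, which the statement of \textbf{(FS)} allows.
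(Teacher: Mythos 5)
Your proof is correct and follows essentially the same route as the paper: apply \textbf{(FS)} to the singleton to get a finite-rank operator fixing the element, expand it as $\sum_i\theta_{q_i,p_i}$, and observe that the hypothesis kills every term. The paper only writes out the $Q$-side computation and leaves the dual $P$-side implicit, which you have correctly supplied.
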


\begin{proof}
Let  $\psi(p\otimes q)=0$ for every $p\in P$. Then by condition
\textbf{(FS)} there exists
$\Theta=\sum^n_{i=1}\theta_{q_i,p_i}\in \mathcal{F}_P(Q)$ such
that
$q=\Theta(q)=\sum^n_{i=1}\theta_{q_i,p_i}(q)=\sum^n_{i=1}q_i\psi(p_i\otimes
q)=0$. Thus $(P,Q,\psi)$ is non-degenerate.
\end{proof}

Observe that if $R$ is right non-degenerate then $\psi_0:P^{\otimes
0}\otimes Q^{\otimes 0}\longrightarrow R$ is non-degenerate. For
general $n\in \mathbb{N}$ we need the condition $(\textbf{FS})$.

\begin{lem}\label{lemma_1}
Let $R$ be a ring and $(P,Q,\psi)$ an $R$-system satisfying condition
\textbf{(FS)}.
For every $n\in\mathbb{N}$ we have that the $R$-system
$(P^{\otimes n},Q^{\otimes n},\psi_n)$ satisfies condition \textbf{(FS)}.
\end{lem}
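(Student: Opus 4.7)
The plan is to argue by induction on $n$. The base case $n=1$ is the hypothesis that $(P,Q,\psi)$ satisfies \textbf{(FS)}. For the inductive step, assume $(P^{\otimes n-1}, Q^{\otimes n-1}, \psi_{n-1})$ satisfies \textbf{(FS)}, and let finite sets $X\subseteq Q^{\otimes n}$ and $Y\subseteq P^{\otimes n}$ be given. Writing each element of $X$ as a finite sum of simple tensors in $Q^{\otimes n-1}\otimes Q$, it suffices by linearity to produce a single $\Phi\in\mathcal{F}_{P^{\otimes n}}(Q^{\otimes n})$ fixing every simple tensor $q'\otimes q$ with $q'$ ranging over the (finitely many) first-factor parts and $q$ over the second-factor parts.

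I would construct $\Phi$ by composing two layers of rank-one operators. First apply the inductive hypothesis to the collected $q'$'s to get $\Theta' = \sum_\alpha \theta_{a'_\alpha, b'_\alpha}\in \mathcal{F}_{P^{\otimes n-1}}(Q^{\otimes n-1})$ fixing each $q'$; set $r_{\alpha, q'} := \psi_{n-1}(b'_\alpha\otimes q')\in R$, so that $\sum_\alpha a'_\alpha r_{\alpha,q'} = q'$. Then apply \textbf{(FS)} for $(P,Q,\psi)$ to the finite set $\{r_{\alpha, q'}\, q\}_{\alpha, q', q}$ to obtain $\Theta = \sum_\beta \theta_{a_\beta, b_\beta}\in \mathcal{F}_P(Q)$ fixing each $r_{\alpha,q'}q$. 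Using the identification $P^{\otimes n}\cong P\otimes P^{\otimes n-1}$ appearing in the recursive definition of $\psi_n$, define
\[
\Phi := \sum_{\alpha, \beta} \theta_{a'_\alpha\otimes a_\beta,\ b_\beta\otimes b'_\alpha}\ \in\ \mathcal{F}_{P^{\otimes n}}(Q^{\otimes n}).
\]

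The verification is then a short calculation. Applying the recursion
\[
\psi_n\bigl((b_\beta\otimes b'_\alpha)\otimes(q'\otimes q)\bigr) = \psi\bigl(b_\beta\cdot\psi_{n-1}(b'_\alpha\otimes q')\otimes q\bigr) = \psi(b_\beta\otimes r_{\alpha,q'}q),
\]
where the last equality uses $R$-balance, one gets
\[
\Phi(q'\otimes q) = \sum_\alpha a'_\alpha\otimes \Theta(r_{\alpha,q'} q) = \sum_\alpha a'_\alpha r_{\alpha,q'}\otimes q = q'\otimes q,
\]
by the fixing properties of $\Theta$ and $\Theta'$ and one more use of $R$-balance. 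By linearity $\Phi$ fixes every element of $X$. The symmetric construction of a $\Delta\in \mathcal{F}_{Q^{\otimes n}}(P^{\otimes n})$ fixing $Y$ proceeds via the recursion $P^{\otimes n}\cong P^{\otimes n-1}\otimes P$ and the rank-one adjoints $\theta_{p,q}$ on $P$; the roles of $P$ and $Q$, and of left and right actions, interchange throughout.

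The main delicate point — really the only one — is the ordering of tensor factors in the definition of $\Phi$: the source factors $b_\beta\otimes b'_\alpha$ are nested in the opposite order from the target factors $a'_\alpha\otimes a_\beta$, so that the innermost pairing built into $\psi_n$ first contracts $b'_\alpha$ with $q'$, producing the scalar $r_{\alpha,q'}\in R$ which is then absorbed by balance into $q$ before $\Theta$ can act. Once this bookkeeping is set up correctly, everything else is routine.
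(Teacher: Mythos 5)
Your proof is correct and follows essentially the same route as the paper: an induction in which the fixing operator on $Q^{\otimes n}$ is assembled from a double sum of rank-one operators $\theta_{a'_\alpha\otimes a_\beta,\,b_\beta\otimes b'_\alpha}$ with the $P$-side factors nested in the opposite order, so that the inner contraction in $\psi_n$ produces a ring element that is absorbed by balancedness before the second layer acts. The only (immaterial) difference is that you split off the last $Q$-factor and apply the inductive hypothesis first, whereas the paper splits off the first $Q$-factor and applies \textbf{(FS)} for $(P,Q,\psi)$ first; the bookkeeping and the verification are the same.
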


\begin{proof}
We will prove by induction that $\psi_n:P^{\otimes n}\otimes
Q^{\otimes n}\longrightarrow R$ satisfies condition \textbf{(FS)}
for every $n\in \mathbb{N}$. By hypothesis $(P,Q,\psi)$ satisfies
\textbf{(FS)}. Now suppose that $(P^{\otimes n-1},Q^{\otimes
n-1},\psi_{n-1})$ satisfies condition \textbf{(FS)}. Let
$q^1_1\otimes q^2_1,\ldots,q^1_m\otimes q^2_m\in Q^{\otimes n}$
where $q^1_1,\dots,q^1_m\in Q$ and $q^2_1,\dots,q^2_m\in Q^{\otimes
n-1}$. Since $(P,Q,\psi)$ satisfies condition \textbf{(FS)} there
exists $\Theta_1=\sum^l_{j=1}\theta_{a_j,b_j}\in \mathcal{F}_P(Q)$
with $a_j\in Q$ and $b_j\in P$ for every $j=1,\ldots,l$ such that
$\Theta_1(q^1_i)=q^1_i$ for every $i=1,\ldots, m$. Now since
$(P^{\otimes n-1},Q^{n-1},\psi_{n-1})$ satisfies condition
\textbf{(FS)}, by induction hypothesis, there exists
$\Theta_2=\sum^t_{k=1}\theta_{c_k,d_k}\in \mathcal{F}_{P^{\otimes
n-1}}(Q^{\otimes n-1})$ with $c_k\in Q^{\otimes n-1}$ and $d_k\in
P^{\otimes n-1}$ for every $k=1,\ldots,t$ such that
$\Theta_2(\psi(b_j\otimes q^1_i)q^2_i)=\psi(b_j\otimes q^1_i)q^2_i$
for every $i=1,\ldots, m$ and $j=1,\ldots, l$. Then define
$$\Theta=\sum^l_{j=1}\sum^t_{k=1}\theta_{a_j\otimes c_k, d_k\otimes
  b_j}\in\mathcal{F}_{P^{\otimes n}}(Q^{\otimes n})\,. $$
It is then straightforward to check that $\Theta(q^1_i\otimes
q^2_i)=q^1_i\otimes q^2_i$ for every $i=1,\ldots, m=$. Therefore
$(P^{\otimes n},Q^{\otimes n},\psi_{n})$ satisfies condition
\textbf{(FS)}.
\end{proof}

\begin{lem} \label{lemma:inj}
Let $R$ be a ring and let $(S,T,\sigma,B)$ be a covariant
representation of a $R$-system $(P,Q,\psi)$ satisfying condition
\textbf{(FS)}. If $\sigma$ is injective, then so are $T^n$ and $S^n$
for every $n\in \mathbb{N}$.
\end{lem}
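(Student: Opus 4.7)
The plan is to derive injectivity of $T^n$ and $S^n$ from the hypothesized injectivity of $\sigma$ by combining two ingredients. First, condition \textbf{(FS)} passes to all tensor powers (Lemma \ref{lemma_1}), so any element of $Q^{\otimes n}$ (resp.\ $P^{\otimes n}$) is fixed by some finite-rank operator on $Q^{\otimes n}$ (resp.\ $P^{\otimes n}$) and can therefore be written as $x=\sum_i q_i\,\psi_n(p_i\otimes x)$ (resp.\ $y=\sum_j\psi_n(y\otimes q_j)\,p_j$). Second, the covariance axiom $\sigma(\psi(p\otimes q))=S(p)T(q)$ lifts to a higher identity $\sigma(\psi_n(p\otimes q))=S^n(p)T^n(q)$, which converts a vanishing $T^n(x)$ or $S^n(y)$ into a vanishing inside $\sigma(R)$, where injectivity of $\sigma$ finishes the job.

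I would first prove by induction on $n$ that
\[
S^n(p)\,T^n(q)=\sigma\bigl(\psi_n(p\otimes q)\bigr)
\]
for every $p\in P^{\otimes n}$ and $q\in Q^{\otimes n}$. The base case $n=1$ is axiom (5) of Definition \ref{def:representations}, and the inductive step amounts to rewriting $S(p_1)\cdots S(p_n)T(q_1)\cdots T(q_n)$, collapsing the innermost pair via $S(p_n)T(q_1)=\sigma(\psi(p_n\otimes q_1))$, absorbing the resulting element of $R$ by means of $S(p)\sigma(r)=S(pr)$ and $\sigma(r)T(q)=T(rq)$, and identifying the outcome with $\sigma(\psi_n(p\otimes q))$ through the recursive definition of $\psi_n$ given in Section~\ref{sec:r-systems-covariant}.

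For $T^n$, suppose $T^n(x)=0$ for some $x\in Q^{\otimes n}$. By Lemma \ref{lemma_1} the system $(P^{\otimes n},Q^{\otimes n},\psi_n)$ satisfies \textbf{(FS)}, so there exist finitely many $q_i\in Q^{\otimes n}$ and $p_i\in P^{\otimes n}$ with $\Theta:=\sum_i\theta_{q_i,p_i}\in\mathcal{F}_{P^{\otimes n}}(Q^{\otimes n})$ and $\Theta(x)=x$, giving $x=\sum_i q_i\,\psi_n(p_i\otimes x)$. Left-multiplying $T^n(x)=0$ by each $S^n(p_i)$ and invoking the higher covariance identity yields $\sigma(\psi_n(p_i\otimes x))=0$; since $\sigma$ is injective, $\psi_n(p_i\otimes x)=0$ for every $i$, hence $x=0$. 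The case of $S^n$ is perfectly dual: if $S^n(y)=0$ with $y\in P^{\otimes n}$, choose $\Delta=\sum_j\theta_{p_j,q_j}\in\mathcal{F}_{Q^{\otimes n}}(P^{\otimes n})$ with $\Delta(y)=y$, so that $y=\sum_j\psi_n(y\otimes q_j)\,p_j$, right-multiply $S^n(y)=0$ by each $T^n(q_j)$, and conclude $\psi_n(y\otimes q_j)=0$ by injectivity of $\sigma$, whence $y=0$.

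The only mildly technical step is the inductive verification of the higher covariance identity $S^n(p)T^n(q)=\sigma(\psi_n(p\otimes q))$, since the recursion for $\psi_n$ given in the paper associates its arguments in a particular pattern and one must keep track of this when absorbing $\sigma$-factors into $S$ and $T$; apart from this bookkeeping, both injectivity claims reduce immediately to the assumed injectivity of $\sigma$.
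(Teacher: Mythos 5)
Your proof is correct and follows essentially the same route as the paper's: the paper also deduces $\sigma(\psi_n(p\otimes q))=0$ from $S^n(p)T^n(q)=\sigma(\psi_n(p\otimes q))$ and then appeals to non-degeneracy of $\psi_n$ via Lemma \ref{rema_1} and \ref{lemma_1}, which is exactly the \textbf{(FS)} fixed-point argument you have inlined. Your explicit remark that the higher covariance identity $S^n(p)T^n(q)=\sigma(\psi_n(p\otimes q))$ requires an induction consistent with the recursive definition of $\psi_n$ is a point the paper leaves implicit, but it does not change the argument.
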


\begin{proof}
Let $q\in Q^{\otimes n}$ such that $T^n(q)=0$. Then for every $p\in P^{\otimes n}$
we have that $0=S^n(p)T^n(q)=\sigma(\psi_n(p\otimes q))$,
and since $\sigma$ is injective, it follows that $\psi_n(p\otimes q)=0$ for
every $p\in P^{\otimes n}$, and it then follows from the
non-degeneracy of $\psi_n$ (cf. Lemma \ref{rema_1} and \ref{lemma_1})
that $q=0$.
Similarly one can check that $S^n$ is injective.
\end{proof}

\begin{defi}
  Let $R$ be a ring and $(P,Q,\psi)$ an $R$-system. We define the ring
  homomorphism
  $\Delta: R \longrightarrow \textrm{End}_R(Q_R)$ and the ring
  homomorphism
  $\Gamma: R \longrightarrow \textrm{End}_R({}_RP)^{op}$ by
  \begin{equation*}
    \Delta(r)(q)=rq\,,\qquad\Gamma(r)(p)=pr
  \end{equation*}
  for $r\in R$, $p\in P$ and $q\in Q$.
\end{defi}
Notice that for every $r\in R$ we have that $\Gamma(r)$ is the
adjoint of $\Delta(r)$, and thus that $\Delta(r)\in\mathcal{L}_P(Q)$
and $\Gamma(r)\in\mathcal{L}_Q(P)$.

\begin{prop}[{Cf. \cite[Lemma 2.2]{KPW} and \cite{PI}}] \label{lemma_2}
Let $R$ be a ring and $(P,Q,\psi)$ an $R$-system satisfying
condition \textbf{(FS)} and let $(S,T,\sigma,B)$ be a covariant
representation of $(P,Q,\psi)$. Then there exist a unique ring
homomorphism $\pi_{T,S}:\mathcal{F}_P(Q)\longrightarrow B$ such that
$\pi_{T,S}(\theta_{q,p})=T(q)S(p)$ for $p\in P$ and $q\in Q$, and a
unique ring homomorphism $\chi_{S,T}:\mathcal{F}_Q(P)\longrightarrow
B^{op}$ such that $\chi_{S,T}(\theta_{p,q})=S(p)\cdot T(q)$ for
$p\in P$ and $q\in Q$. These maps satisfy
\begin{align*}
  \pi_{T,S}(\Delta(r)\Theta)&=\sigma(r)\pi_{T,S}(\Theta)&
  \pi_{T,S}(\Theta\Delta(r))&=\pi_{T,S}(\Theta)\sigma(r)\\
  \chi_{S,T}(\Gamma(r)\Omega)&=\sigma(r)\cdot \chi_{S,T}(\Omega)&
  \chi_{S,T}(\Omega\Gamma(r))&=\chi_{S,T}(\Omega)\cdot \sigma(r)\\
  \pi_{T,S}(\Theta)T(q)&=T(\Theta(q))&S(p)\cdot\chi_{S,T}(\Omega)&=S(\Omega(p))
\end{align*}
for $r\in R$, $p\in P$, $q\in Q$, $\Omega\in \mathcal{F}_Q(P)$ and
$\Theta\in \mathcal{F}_P(Q)$. If $\Omega\in\mathcal{F}_Q(P)$ is the
adjoint of $\Theta\in\mathcal{F}_P(Q)$, then
$\pi_{T,S}(\Theta)=\chi_{S,T}(\Omega)$. Moreover
$\pi_{T,S}(\mathcal{F}_P(Q))=\chi_{S,T}(\mathcal{F}_Q(P))=\spa\{T(q)S(p):
q\in Q,\ p\in P\}$, and if $\sigma$ is injective, then $\pi_{T,S}$
and $\chi_{S,T}$ are injective too.
\end{prop}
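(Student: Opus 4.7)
The plan is to first construct $\pi_{T,S}$ on the free abelian group spanned by symbols $\theta_{q,p}$ for $q\in Q$ and $p\in P$, sending such a symbol to $T(q)S(p)\in B$, and then argue it descends to $\mathcal{F}_P(Q)$. The $R$-balancing relation $\theta_{qr,p}=\theta_{q,rp}$ is respected because $T(qr)S(p)=T(q)\sigma(r)S(p)=T(q)S(rp)$. The delicate step is showing that if $\sum_i\theta_{q_i,p_i}=0$ in $\mathcal{F}_P(Q)$ (that is, the sum vanishes as an operator on $Q$), then $\sum_i T(q_i)S(p_i)=0$ in $B$. This is where condition \textbf{(FS)} enters: applied to the finite set $\{p_1,\ldots,p_n\}$, it produces $\Delta=\sum_k\theta_{b_k,q_k'}\in\mathcal{F}_Q(P)$ with $\Delta(p_i)=p_i$ for every $i$, i.e.\ $p_i=\sum_k\psi(p_i\otimes q_k')b_k$. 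Substituting yields
\begin{equation*}
\sum_i T(q_i)S(p_i)=\sum_{i,k}T(q_i)\sigma(\psi(p_i\otimes q_k'))S(b_k)=\sum_k T\Bigl(\sum_i q_i\,\psi(p_i\otimes q_k')\Bigr)S(b_k)=0,
\end{equation*}
since $\sum_i q_i\,\psi(p_i\otimes q_k')=\bigl(\sum_i\theta_{q_i,p_i}\bigr)(q_k')=0$.

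Next I would verify multiplicativity. A direct computation shows $\theta_{q_1,p_1}\theta_{q_2,p_2}=\theta_{q_1\psi(p_1\otimes q_2),p_2}$, and then $\pi_{T,S}(\theta_{q_1,p_1}\theta_{q_2,p_2})=T(q_1\psi(p_1\otimes q_2))S(p_2)=T(q_1)\sigma(\psi(p_1\otimes q_2))S(p_2)=T(q_1)S(p_1)T(q_2)S(p_2)$, and the general case follows by linearity. The compatibility relations with $\Delta(r)$ and $\Gamma(r)$ follow from identities like $\Delta(r)\theta_{q,p}=\theta_{rq,p}$ and $\theta_{q,p}\Delta(r)=\theta_{q,pr}$, while $\pi_{T,S}(\Theta)T(q)=T(\Theta(q))$ reduces to the rank-one identity $T(q_1)S(p_1)T(q)=T(q_1\psi(p_1\otimes q))=T(\theta_{q_1,p_1}(q))$.

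The construction of $\chi_{S,T}:\mathcal{F}_Q(P)\longrightarrow B^{op}$ is entirely dual, using \textbf{(FS)} applied to the $q_i$'s rather than the $p_i$'s to absorb any operator relation $\sum_i\theta_{p_i,q_i}=0$ into the identity $\sum_i S(p_i)\cdot T(q_i)=0$ in $B^{op}$. To establish $\pi_{T,S}(\Theta)=\chi_{S,T}(\Omega)$ for an adjoint pair, I invoke Lemma \ref{rema_1}: \textbf{(FS)} forces $(P,Q,\psi)$ to be non-degenerate, so adjoints in $\mathcal{L}_P(Q)$ are unique. Writing $\Theta=\sum_i\theta_{q_i,p_i}$, the element $\sum_i\theta_{p_i,q_i}$ is a rank-one adjoint of $\Theta$, hence equals $\Omega$ in $\mathcal{F}_Q(P)$ by uniqueness; both $\pi_{T,S}(\Theta)$ and $\chi_{S,T}(\Omega)$ then unfold to the same sum $\sum_i T(q_i)S(p_i)$ in $B$. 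The claim $\pi_{T,S}(\mathcal{F}_P(Q))=\chi_{S,T}(\mathcal{F}_Q(P))=\spa\{T(q)S(p)\mid q\in Q,\ p\in P\}$ is immediate from the definitions.

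Finally, for injectivity, suppose $\sigma$ is injective and $\pi_{T,S}(\Theta)=0$. Using $\pi_{T,S}(\Theta)T(q)=T(\Theta(q))$ for every $q\in Q$, together with the fact that Lemma \ref{lemma:inj} gives injectivity of $T$, we conclude $T(\Theta(q))=0$ and hence $\Theta(q)=0$ for every $q$; so $\Theta=0$ as an operator, i.e.\ $\Theta=0$ in $\mathcal{F}_P(Q)$. The injectivity of $\chi_{S,T}$ follows symmetrically from $S(p)\cdot\chi_{S,T}(\Omega)=S(\Omega(p))$ and injectivity of $S$. The main obstacle is the well-definedness step: one has to convert the purely \emph{operator-theoretic} relation $\sum_i\theta_{q_i,p_i}=0$ into an \emph{algebraic} relation among elements of $B$, and it is precisely condition \textbf{(FS)} that allows one to insert enough ``identity approximants'' on the $P$-side (or $Q$-side) to make this bridge; everything else reduces to bookkeeping with the defining relations of a covariant representation.
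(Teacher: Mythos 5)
Your proof is correct and follows essentially the same route as the paper's: condition \textbf{(FS)} is applied to the $p_i$'s in exactly the same way to produce an identity approximant in $\mathcal{F}_Q(P)$ and thereby convert the operator relation $\sum_i\theta_{q_i,p_i}=0$ into the algebraic relation $\sum_i T(q_i)S(p_i)=0$, which is the heart of the argument. The only (harmless) variation is in the injectivity step, where you invoke Lemma \ref{lemma:inj} together with the identity $\pi_{T,S}(\Theta)T(q)=T(\Theta(q))$ instead of redoing the non-degeneracy computation inline as the paper does; since Lemma \ref{lemma:inj} precedes this proposition and does not depend on it, there is no circularity.
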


\begin{proof}
Since $\mathcal{F}_P(Q)=\text{span}\{\theta_{q,p}:p\in P,\ q\in Q\}$,
there can at most be one ring homomorphism from $\mathcal{F}_P(Q)$ to
$B$ which for all $p\in P$ and $q\in Q$ sends
$\theta_{q,p}$ to $T(q)S(p)$.

Assume $p_1,p_2,\dots,p_n\in P$, $q_1,q_2,\dots,q_n\in Q$ and
$\sum_{i=1}^n \theta_{q_i,p_i}=0$. Then $\sum_{i=1}^n
q_i\psi(p_i\otimes z)=0$ for every $z\in Q$. By
condition \textbf{(FS)} there exists $\Theta=\sum_{j=1}^k
\theta_{e_j,f_j}\in \mathcal{F}_Q(P)$ such that
$$\Theta(p_i)=\sum_{j=1}^k \theta_{e_j,f_j}(p_i)=\sum_{j=1}^k
\psi(p_i\otimes f_j)e_j=p_i$$ for every $i=1,\ldots,n$. We then have
that
\begin{align*}
\sum_{i=1}^n T(q_i)S(p_i)
&=\sum_{i=1}^n T(q_i)S\bigl(\Theta(p_i)\bigr)
=\sum_{i=1}^n T(q_i)S\left(\sum_{j=1}^k \psi(p_i\otimes f_j)e_j\right)\\
&=\sum_{i=1}^n\sum_{j=1}^k T\bigl(q_i\psi(p_i\otimes f_j)\bigr)S(e_j)
=\sum_{j=1}^k T\left(\sum_{i=1}^n q_i\psi(p_i\otimes f_j)\right)S(e_j)=0,
\end{align*}
since $\sum_{i=1}^n q_i\psi(p_i\otimes f_j)=0$ for every
$j=1,\ldots, k$. Thus there exists a linear map
$\pi_{T,S}:\mathcal{F}_P(Q)\longrightarrow B$ which for $p\in P$ and
$q\in Q$ sends $\theta_{q,p}$ to $T(q)S(p)$.

Let $r\in R$, $p\in P$ and $q\in Q$. Then we have
\begin{equation*}
  \pi_{T,S}(\Delta(r)\theta_{q,p})=\pi_{T,S}(\theta_{rq,p})=T(rq)S(p)
  =\sigma(r)T(q)S(p)=\sigma(r)\pi_{T,S}(\theta_{q,p}),
\end{equation*}
from which it follows that
$\pi_{T,S}(\Delta(r)\Theta)=\sigma(r)\pi_{T,S}(\Theta)$ for every
$\Theta\in\mathcal{F}_P(Q)$. One can in a similar way show that
$\pi_{T,S}(\Theta\Delta(r))=\pi_{T,S}(\Theta)\sigma(r)$ for every
$\Theta\in\mathcal{F}_P(Q)$.

Let $p\in P$ and $q,q'\in Q$. Then we have
$$\pi_{T,S}(\theta_{q,p})T(q')=T(q)S(p)T(q')
=T(q)\sigma\bigl(\psi(p\otimes q')\bigr) =T\bigl(q\psi(p\otimes
q')\bigr)=T\bigl(\theta_{q,p}(q')\bigr)$$ from which it follows that
$\pi_{T,S}(\Theta)T(q')=T(\Theta(q'))$ for all
$\Theta\in\mathcal{F}_P(Q)$.

If $p\in P$, $q\in Q$ and $\Theta\in\mathcal{F}_P(Q)$, then we have
$$\pi_{T,S}(\Theta)\pi_{T,S}(\theta_{q,p})=\pi_{T,S}(\Theta)T(q)S(p)
=T\bigl(\Theta(q)\bigr)S(p)=\pi_{T,S}(\theta_{\Theta(q),p})
=\pi_{T,S}(\Theta\theta_{q,p})$$ from which it follows that
$\pi_{T,S}(\Theta)\pi_{T,S}(\Theta')=\pi_{T,S}(\Theta\Theta')$ for
all $\Theta'\in\mathcal{F}_P(Q)$. Thus $\pi_{T,S}$ is a ring
homomorphism.

Now suppose that $\sigma:R\longrightarrow B$ is injective and let
$\sum^{n}_{i=1}\theta_{q_i,p_i}\in \mathcal{F}_P(Q)$ with
$\pi_{T,S}(\sum^{n}_{i=1}\theta_{q_i,p_i})=\sum^{n}_{i=1}T(q_i)S(p_i)=0$.
Then for every $p\in P$ and $q\in Q$ we have that
$$0=S(p)\left(\sum^{n}_{i=1}T(q_i)S(p_i)\right)T(q)
=\sigma\left(\sum^{n}_{i=1}\psi(p\otimes q_i)\psi(p_i\otimes q)\right).$$
Since $\sigma$ is injective it follows that
$\sum^{n}_{i=1}\psi(p\otimes q_i)\psi(p_i\otimes q)=\psi(p\otimes
\sum^{n}_{i=1}q_i\psi(p_i\otimes q))=0$ for every $p\in P$ and
$q\in Q$. By Lemma \ref{rema_1} $\psi$ is non-degenerate, so it follows that
$\sum^{n}_{i=1}q_i\psi(p_i\otimes q)=0$ for every $q\in Q$. Thus
$\sum^{n}_{i=1}\theta_{q_i,p_i}=0$ which proves that $\pi_{T,S}$ is
injective.

The existence and uniqueness of $\chi_{S,T}$ and that $\chi_{S,T}$
is a ring homomorphism and has the properties
$\chi_{S,T}(\Gamma(r)\Omega)=\sigma(r)\cdot \chi_{S,T}(\Omega)$,
$\chi_{S,T}(\Omega\Gamma(r))=\chi_{S,T}(\Omega)\cdot\sigma(r)$ and
$S(p)\cdot\chi_{S,T}(\Omega)=S(\Omega(p))$ for $r\in R$, $p\in P$
and $\Omega\in\mathcal{F}_Q(P)$, and that $\chi_{S,T}$ is injective
if $\sigma$ is injective, can be proved in a similar way.

If $p\in P$ and $q\in Q$, then $\theta_{p,q}$ is the adjoint of
$\theta_{q,p}$ and $\pi_{T,S}(\theta_{q,p})=T(q)S(p)=S(p)\cdot
T(q)=\chi_{S,T}(\theta_{p,q})$. It follows that if
$\Omega\in\mathcal{F}_Q(P)$ is the adjoint of
$\Theta\in\mathcal{F}_P(Q)$, then
$\pi_{T,S}(\Theta)=\chi_{S,T}(\Omega)$.

Finally we see that
$\pi_{T,S}(\mathcal{F}_P(Q))=\text{span}\{T(q)S(p):p\in P,\ q\in Q\}
=\chi_{S,T}(\mathcal{F}_Q(P))$.
\end{proof}

\begin{nota}
To avoid too heavy notation, we will often when working with a given
$R$-system $(P,Q,\psi)$ satisfying condition \textbf{(FS)} let $\pi$
denote $\pi_{\iota_Q^n,\iota_P^n}$ and let $\chi$ denote
$\chi_{\iota_P^n,\iota_Q^n}$ for any $n\in\N$. We will then view $\pi$
as a map from 
$\bigcup_{n\in\N}\mathcal{F}_{P^{\otimes n}}(Q^{\otimes n})$ to
$\mathcal{T}_{(P,Q,\psi)}$ 
and $\chi$ as a map from
$\bigcup_{n\in\N}\mathcal{F}_{Q^{\otimes n}}(P^{\otimes n})$ to
$\mathcal{T}_{(P,Q,\psi)}^{op}$. 
\end{nota}

\begin{rema} \label{remark:comp}
  Let $R$ be a ring and $(P,Q,\psi)$ an $R$-system satisfying
  condition \textbf{(FS)}. If $(S_1,T_1,\sigma_1,B_1)$ and
  $(S_2,T_2,\sigma_2,B_2)$ are two covariant representations of
  $(P,Q,\psi)$ and $\phi:B_1\longrightarrow B_2$ is a ring homomorphism such that
  $\phi\circ T_1=T_2$, $\phi\circ S_1=S_2$ and
  $\phi\circ\sigma_1=\sigma_2$, then $\phi\circ\pi_{T_1,S_1}=\pi_{T_2,S_2}$
  and $\phi\circ\chi_{S_1,T_1}=\chi_{S_2,T_2}$.
\end{rema}

\subsection{Cuntz-Pimsner invariant representations}
\label{sec:cuntz-pimsn-invar}

As already mentioned, the aim of this section is to classify all
injective and graded representation of an $R$-system satisfying condition
\textbf{(FS)}. We will now for a given $R$-system $(P,Q,\psi)$
satisfying condition \textbf{(FS)} construct a family of surjective,
injective and
graded representation of $(P,Q,\psi)$. We will later show that up to
isomorphism this family of surjective, injective and
graded representation of $(P,Q,\psi)$ contains all surjective,
injective and
graded representation of $(P,Q,\psi)$.

\begin{defi}
  Let $R$ be a ring and let $(P,Q,\psi)$ be an $R$-system satisfying
  condition \textbf{(FS)}. We say that a two-sided ideal $J$ of $R$ is 
  \emph{$\psi$-compatible} if $J\subseteq\Delta^{-1}(\mathcal{F}_P(Q))$, 
  and we say that a $\psi$-compatible two-sided ideal $J$ of $R$ is \emph{faithful}
  if $J\cap\ker\Delta=\{0\}$.
\end{defi}

\begin{defi}
  Let $R$ be a ring and let $(P,Q,\psi)$ be an $R$-system satisfying
  condition \textbf{(FS)}.
  For a $\psi$-compatible two-sided ideal
  $J$ of $R$, we define
  $\mathcal{T}(J)$ to be the minimal two-sided ideal of
  $\mathcal{T}_{(P,Q,\psi)}$ that contains
  $\{\iota_R(x)-\pi(\Delta(x))\mid
  x\in J\}$.
\end{defi}

\begin{defi}[{Cf. \cite[Proposition 1.3]{FMR} and \cite[Proposition 2.18]{MS}}] \label{def:relativeCP}
Let $R$ be a ring, let $(P,Q,\psi)$ be an $R$-system
satisfying condition \textbf{(FS)}
and let $J$ be a $\psi$-compatible two-sided ideal of $R$.
We define the
\emph{Cuntz-Pimsner ring relative to the ideal $J$} to be the
quotient ring
$\mathcal{O}_{(P,Q,\psi)}(J):=\mathcal{T}_{(P,Q,\psi)}/\mathcal{T}(J)$.
We denote by $\rho_J$ the quotient map $\rho_J: \mathcal{T}_{(P,Q,\psi)}
\longrightarrow \mathcal{O}_{(P,Q,\psi)}(J)$.
\end{defi}

\begin{defi}[{Cf. \cite[Definition 1.1]{FMR}}]
Let $R$ be a ring, let $(P,Q,\psi)$ be an $R$-system satisfying
condition \textbf{(FS)} and let $J$ be a $\psi$-compatible two-sided ideal of $R$. 
A covariant representation $(S,T,\sigma,B)$ of $(P,Q,\psi)$ is said to be
\textit{Cuntz-Pimsner invariant representation relative to $J$} if
$\pi_{T,S}(\Delta(x))=\sigma(x)$ for every $x\in J$.
\end{defi}

The following theorem gives a complete characterization of
$\mathcal{O}_{(P,Q,\psi)}(J)$.

\begin{theor}[{Cf. \cite[Proposition 1.3]{FMR}}]\label{univ_cuntz}
Let $R$ be a ring, let $(P,Q,\psi)$ be an $R$-system satisfying
condition \textbf{(FS)} and let $J$ be a $\psi$-compatible two-sided ideal of $R$.
Let
$\iota_R^J:=\rho_J\circ\iota_R$, $\iota_Q^J:=\rho_J\circ\iota_Q$ and
$\iota_P^J:=\rho_J\circ\iota_P$. Then
$(\iota_P^J,\iota_Q^J,\iota_R^J,\mathcal{O}_{(P,Q,\psi)}(J))$ is a
surjective covariant representation of $(P,Q,\psi)$ which is
Cuntz-Pimsner invariant representation relative to $J$ with the
following property:
\begin{enumerate}[label=\textbf{(CP)}]
\item If $(S,T,\sigma,B)$ is a covariant representation of
  $(P,Q,\psi)$ which is Cuntz-Pimsner invariant
  relative to $J$, then there exists a unique
  ring homomorphism $$\eta_{(S,T,\sigma,B)}^J:\mathcal{O}_{(P,Q,\psi)}(J) \longrightarrow
  B$$ such that $\eta_{(S,T,\sigma,B)}^J\circ \iota^J_R=\sigma$,
  $\eta_{(S,T,\sigma,B)}^J\circ \iota^J_Q=T$ and
  $\eta_{(S,T,\sigma,B)}^J\circ \iota^J_P=S$. \label{item:10}
\end{enumerate}

The representation
$(\iota_P^J,\iota_Q^J,\iota_R^J,\mathcal{O}_{(P,Q,\psi)}(J))$ is
the, up to isomorphism in $\category$, unique surjective covariant
representation of $(P,Q,\psi)$ which is Cuntz-Pimsner invariant
representation relative to $J$ and which possesses the property
\ref{item:10}; in fact if $(S,T,\sigma,B)$ is a surjective covariant
representation of $(P,Q,\psi)$ which is Cuntz-Pimsner invariant
representation relative to $J$ and
$\phi:B\longrightarrow\mathcal{O}_{(P,Q,\psi)}(J)$ is a ring homomorphism such
that $\phi\circ\sigma=\iota_R^J$, $\phi\circ S=\iota_P^J$ and
$\phi\circ T=\iota_Q^J$, then $\phi$ is an isomorphism.

  We have moreover that the ring
  homomorphism $\iota_R^J$ is injective if and only if
  $J$ is faithful, and that the representation
  $(\iota_P^J,\iota_Q^J,\iota_R^J,\mathcal{O}_{(P,Q,\psi)}(J))$ is graded.

  We call $(\iota_P^J,\iota_Q^J,\iota_R^J,\mathcal{O}_{(P,Q,\psi)}(J))$
  \emph{the Cuntz-Pimsner representation of $(P,Q,\psi)$ relative to $J$}.
\end{theor}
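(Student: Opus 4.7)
The plan is to build the Cuntz-Pimsner representation by composing the Toeplitz representation with the quotient map $\rho_J$, then verify each claim of the theorem in turn. First I would note that $(\iota_P^J,\iota_Q^J,\iota_R^J,\mathcal{O}_{(P,Q,\psi)}(J))$ is automatically a surjective covariant representation, being the composition of the surjective Toeplitz representation with the surjective map $\rho_J$. For Cuntz-Pimsner invariance, I would apply Remark \ref{remark:comp} to $\rho_J$ to obtain $\pi_{\iota_Q^J,\iota_P^J}=\rho_J\circ\pi$; then for $x\in J$,
\begin{equation*}
\iota_R^J(x)-\pi_{\iota_Q^J,\iota_P^J}(\Delta(x))=\rho_J\bigl(\iota_R(x)-\pi(\Delta(x))\bigr)=0,
\end{equation*}
since every generator of $\mathcal{T}(J)$ lies in $\ker\rho_J$.

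Next I would prove the universal property \ref{item:10}. Given a covariant representation $(S,T,\sigma,B)$ which is Cuntz-Pimsner invariant relative to $J$, Theorem \ref{theor:toeplitz} produces a unique ring homomorphism $\eta_{(S,T,\sigma,B)}:\mathcal{T}_{(P,Q,\psi)}\longrightarrow B$. Using Remark \ref{remark:comp} once more, $\eta_{(S,T,\sigma,B)}\circ\pi=\pi_{T,S}$, so for $x\in J$,
\begin{equation*}
\eta_{(S,T,\sigma,B)}\bigl(\iota_R(x)-\pi(\Delta(x))\bigr)=\sigma(x)-\pi_{T,S}(\Delta(x))=0
\end{equation*}
by the Cuntz-Pimsner invariance of $(S,T,\sigma,B)$. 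Hence $\mathcal{T}(J)\subseteq\ker\eta_{(S,T,\sigma,B)}$, and $\eta_{(S,T,\sigma,B)}$ descends to the required $\eta_{(S,T,\sigma,B)}^J$; uniqueness is forced because $\iota_R^J(R)\cup\iota_Q^J(Q)\cup\iota_P^J(P)$ generates $\mathcal{O}_{(P,Q,\psi)}(J)$. The uniqueness of $(\iota_P^J,\iota_Q^J,\iota_R^J,\mathcal{O}_{(P,Q,\psi)}(J))$ up to isomorphism in $\category$ is then the standard argument comparing the two mutually inverse factorizations.

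For the grading, I would observe that by Proposition \ref{prop:Z-grading} each generator $\iota_R(x)-\pi(\Delta(x))$ of $\mathcal{T}(J)$ lies in the degree-zero component $\mathcal{T}_{(P,Q,\psi)}^{(0)}$, so $\mathcal{T}(J)$ is a $\Z$-graded ideal and $\mathcal{O}_{(P,Q,\psi)}(J)$ inherits a $\Z$-grading in which $\iota_R^J(R)$, $\iota_Q^J(Q)$ and $\iota_P^J(P)$ have degrees $0$, $1$ and $-1$, so the representation is graded in the sense of Definition \ref{defi:graded rep}.

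The injectivity criterion is the main obstacle. The easy direction is immediate: if $\iota_R^J$ is injective and $x\in J\cap\ker\Delta$, then $\pi(\Delta(x))=0$, so $\iota_R(x)=\iota_R(x)-\pi(\Delta(x))\in\mathcal{T}(J)$ and thus $\iota_R^J(x)=0$, forcing $x=0$. The converse is where the real work lies: assuming $J\cap\ker\Delta=\{0\}$, one must exhibit a covariant representation $(S,T,\sigma,B)$ of $(P,Q,\psi)$ which is Cuntz-Pimsner invariant relative to $J$ and whose $\sigma$ is injective, for then the universal property just established produces $\eta^J:\mathcal{O}_{(P,Q,\psi)}(J)\longrightarrow B$ with $\eta^J\circ\iota_R^J=\sigma$, and injectivity of $\sigma$ propagates to $\iota_R^J$. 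The natural candidate is a modification of the Fock space representation of Proposition \ref{toeplitz_rep}: starting from $(S_{\mathcal{F}},T_{\mathcal{F}},\sigma_{\mathcal{F}},\mathcal{F}_{(P,Q,\psi)})$ inside $\mathcal{L}_{F(P)}(F(Q))$, one quotients by an ideal of operators that precisely encodes the defining relations $\sigma(x)=\pi_{T,S}(\Delta(x))$ for $x\in J$. The hard technical point is to show that this quotienting does not collapse $\sigma_{\mathcal{F}}(R)$: an element $\sigma_{\mathcal{F}}(r)$ becomes zero only when $r$ annihilates all levels of the Fock module, and after quotienting the identification is triggered by $r\in J$ with $\Delta(r)=0$, i.e. exactly by $J\cap\ker\Delta=\{0\}$. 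I expect that verifying this ideal-computation rigorously, without the $C^*$-algebraic luxury of positivity or automatic non-degeneracy, will be the delicate step that requires full use of condition \textbf{(FS)} together with Lemma \ref{lemma:inj} applied at each tensor level.
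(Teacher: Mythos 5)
Most of your proposal tracks the paper's proof: the covariance and Cuntz-Pimsner invariance of $(\iota_P^J,\iota_Q^J,\iota_R^J,\mathcal{O}_{(P,Q,\psi)}(J))$, the universal property \ref{item:10} via factoring $\eta_{(S,T,\sigma,B)}$ through $\rho_J$, the standard uniqueness-up-to-isomorphism argument, the gradedness of $\mathcal{T}(J)$ (an ideal generated by degree-zero elements is graded; the paper records the explicit graded decomposition in Lemma \ref{lemma_5}), and the easy direction of the injectivity criterion are all correct and essentially the paper's arguments.

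The genuine gap is in the hard direction: that faithfulness of $J$ implies injectivity of $\iota_R^J$. You correctly identify this as the crux, but you only sketch a strategy and explicitly defer its verification, so the claim is not proved. Worse, the strategy you propose --- building an injective Cuntz-Pimsner invariant representation by quotienting the Fock space representation --- cannot work at the stated level of generality. By Proposition \ref{toeplitz_rep} the Fock space representation is injective if and only if $R$ is right non-degenerate, and Theorem \ref{univ_cuntz} makes no such assumption on $R$; for a right degenerate $R$ the map $\sigma_{\mathcal{F}}$ already fails to be injective before any quotient is taken, so no quotient of $\mathcal{F}_{(P,Q,\psi)}$ can witness injectivity of $\iota_R^J$. (This is precisely the obstruction the authors flag in the introduction as a point where the algebraic setting diverges from the $C^*$-algebraic one, where the Fock-space argument you have in mind is standard.) The paper avoids any auxiliary representation: Proposition \ref{prop_1} works directly inside $\mathcal{T}_{(P,Q,\psi)}$, using the two structural facts from Lemma \ref{lemma_5} --- that $\projection_{(0,0)}(x)\in\iota_R(J)$ for every $x\in\mathcal{T}(J)$, and that each $x\in\mathcal{T}(J)$ kills $\iota_Q^m(Q^{\otimes m})$ for all large $m$ --- and then runs a downward induction on the tensor level, invoking the injectivity of $\iota_Q^n$ (Lemma \ref{lemma:inj}) and the non-degeneracy of $\psi_n$ (Lemmas \ref{rema_1} and \ref{lemma_1}) to conclude that any $r$ with $\iota_R(r)\in\mathcal{T}(J)$ satisfies $rq=0$ for all $q\in Q$, hence lies in $J\cap\ker\Delta=\{0\}$. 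To complete your proof you would need to supply an argument of this kind, or else add the hypothesis that $R$ is right non-degenerate and then actually carry out the ideal computation on the Fock module that you left as an expectation.
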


\begin{rema}
  If we for a ring $R$, an $R$-system $(P,Q,\psi)$ satisfying
  condition \textbf{(FS)}, and a $\psi$-compatible two-sided ideal $J$ of $R$, let
  $\category^J$ be the subcategory of $\category$ consisting of all
  surjective covariant representation of $(P,Q,\psi)$ which are Cuntz-Pimsner
  invariant representation relative to $J$,
  then it follows from Theorem \ref{univ_cuntz} that
  $(\iota_P^J,\iota_Q^J,\iota_R^J,\mathcal{O}_{(P,Q,\psi)}^J)$ is an
  initial object in $\category^J$.
\end{rema}

To prove Theorem \ref{univ_cuntz} we need a definition, a lemma and a
proposition:

\begin{defi}
  Let $R$ be a ring, let $(P,Q,\psi)$ be an $R$-system and let
  $(S,T,\sigma,B)$ be a surjective and graded covariant representation of
  $(P,Q,\psi)$. It follows from Proposition \ref{prop:graded} and
  Definition \ref{defi:graded rep} that there is a unique $\Z$-grading
  $\oplus_{n\in\Z}B^{(n)}$ of
  $B$ such that $\sigma(R)\subseteq
  B^{(0)}$, $T(Q)\subseteq B^{(1)}$ and $S(P)\subseteq B^{(-1)}$.

  A two-sided ideal $I$ of
  $B$ is said to be \emph{graded} if
  $\oplus_{n\in \mathbb{Z}} I^{(n)}$ is a $\Z$-grading of $I$ where $I^{(n)}:=I\cap B^{(n)}$
  for each $n\in\Z$.
  It is not difficult to show that in this case
  $\oplus_{n\in\Z}\wp_I(B^{(n)})$ is a
  $\Z$-grading of the quotient ring $B/I$ where
  $\wp_I$ denotes the quotient map from $B$ to
  $B/I$ and that the covariant representation $(S_I,T_I,\sigma_I,B/I)$
  where $T_I:=\wp_I\circ T$, $S_I=\wp_I\circ S$ and $\sigma_I=\wp_I\circ\sigma$,
  is graded.
\end{defi}

For $(m,n)\in\semigroup$ let $\projection_{(m,n)}$ denote the projection
of $\mathcal{T}_{(P,Q,\psi)}$ onto $\mathcal{T}_{(m,n)}$ given by the
$\semigroup$-grading
$\oplus_{(k,l)\in\semigroup}\mathcal{T}_{(k,l)}$ (cf. Theorem
\ref{theor:toeplitz}).

\begin{lem}[{Cf. \cite[Lemma 2.20]{MS}}]\label{lemma_5}
Let $R$ be a ring, let be $(P,Q,\psi)$ an $R$-system satisfying
condition \textbf{(FS)} and let
$J$ be a $\psi$-compatible two-sided ideal of
$R$. For $n\in\N$ let
\begin{align*}
  &\begin{multlined}[c][.97\columnwidth]
    \mathcal{T}^{(n)}(J)=\spa\Bigl(\{\iota_Q^k(q)
    \bigl(\iota_R(x)-\pi(\Delta(x))\bigr)
    \iota_P^l(p): x\in J,\ q\in
    Q^{\otimes k},\ p\in P^{\otimes l},\\
    k,l\in\N\text{ with }k-l=n\}
    \cup \{\iota_Q^k(q)
    \bigl(\iota_R(x)-\pi(\Delta(x))\bigr) :x\in J,\
    q\in Q^{\otimes n}\}\Bigr)
  \end{multlined}
  \shortintertext{and}
  &\begin{multlined}[c][.97\columnwidth]
    \mathcal{T}^{(-n)}(J)=\spa\Bigl(\{\iota_Q^k(q)
    \bigl(\iota_R(x)-\pi(\Delta(x))\bigr)
    \iota_P^l(p): x\in J,\ q\in
    Q^{\otimes k},\ p\in P^{\otimes l},\\
    k,l\in\N\text{ with }l-k=n\}
    \cup \{ \bigl(\iota_R(x)-\pi(\Delta(x))\bigr)
    \iota_P(p): x\in J,\ p\in P^{\otimes n}\}\Bigr),
  \end{multlined}
  \shortintertext{and let}
  &\begin{multlined}[c][.97\columnwidth]
    \mathcal{T}^{(0)}(J)=\spa\Bigl(\{\iota_Q^k(q)
    \bigl(\iota_R(x)-\pi(\Delta(x))\bigr)
    \iota_P^k(p): x\in J,\ q\in
    Q^{\otimes k},\ p\in P^{\otimes k},\ k\in\N\}\\
    \cup\{\iota_R(x)-\pi(\Delta(x)):x\in J\}\bigr).
  \end{multlined}
\end{align*}
Then we have that
$\mathcal{T}^{(m)}(J)=\mathcal{T}_{(P,Q,\psi)}^{(m)}\cap
\mathcal{T}(J)$ for each $m\in\Z$, and that
$\oplus_{m\in\mathbb{Z}}\mathcal{T}^{(m)}(J)$ is a $\Z$-grading of
$\mathcal{T}(J)$. Thus $\mathcal{T}(J)$ is a
graded two-sided ideal of $\mathcal{T}_{(P,Q,\psi)}$.

We furthermore have that the following holds for every
$x\in\mathcal{T}(J)$:
\begin{enumerate}
\item $\mathcal{P}_{(0,0)}(x)\in \iota_R(J)$, \label{item:7}
\item there exists an $n\in\N$ such that $x\iota_Q^m(q)=0$ for every
  $m\ge n$ and every $q\in Q^{\otimes m}$. \label{item:8}
\end{enumerate}
\end{lem}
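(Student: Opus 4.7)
The plan is to identify $\mathcal{T}(J)$ directly as $I := \oplus_{m\in\Z} \mathcal{T}^{(m)}(J)$, basing the whole argument on two cancellation identities that follow from Proposition~\ref{lemma_2}. For every $x\in J$ and every $q\in Q$, the proposition gives $\pi(\Delta(x))\iota_Q(q) = \iota_Q(\Delta(x)(q)) = \iota_Q(xq) = \iota_R(x)\iota_Q(q)$, so $(\iota_R(x) - \pi(\Delta(x)))\iota_Q(q) = 0$, and iterating yields $(\iota_R(x) - \pi(\Delta(x)))\iota_Q^m(q') = 0$ for every $m\ge 1$ and $q'\in Q^{\otimes m}$. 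Dually, writing $\Delta(x) = \sum_i \theta_{q_i,p_i}$ and expanding $\pi(\Delta(x)) = \sum_i \iota_Q(q_i)\iota_P(p_i)$ gives $\iota_P(p)\pi(\Delta(x)) = \sum_i \iota_P(\psi(p\otimes q_i)p_i) = \iota_P(\Gamma(x)(p)) = \iota_P(px) = \iota_P(p)\iota_R(x)$, and hence $\iota_P^m(p')(\iota_R(x) - \pi(\Delta(x))) = 0$ for every $m\ge 1$ and $p'\in P^{\otimes m}$.

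Granted these identities, showing that $I$ is a two-sided ideal reduces to a short list of cases: closure of the spanning set under left and right multiplication by each of $\iota_R(r)$, $\iota_Q(q')$, $\iota_P(p')$. Multiplication by $\iota_R(r)$ is absorbed into the adjacent outer factor or into the central one, using that $J$ is a two-sided ideal of $R$ together with $\iota_R(r)\pi(\Delta(x)) = \pi(\Delta(rx))$ and $\pi(\Delta(x))\iota_R(r) = \pi(\Delta(xr))$ from Proposition~\ref{lemma_2}. Left multiplication by $\iota_Q(q')$ and right multiplication by $\iota_P(p')$ simply lengthen the outer tensor-strings by one. Right multiplication by $\iota_Q(q')$ and left multiplication by $\iota_P(p')$ either collapse an inner pair via $\iota_P(p'')\iota_Q(q'') = \iota_R(\psi(p''\otimes q''))$, thereby shortening the inner string by one, or (in the boundary cases where the adjacent outer string is empty) produce zero by the two identities above. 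Since $I$ contains each $\iota_R(x) - \pi(\Delta(x))$ (the $k=l=0$ term of $\mathcal{T}^{(0)}(J)$), minimality of $\mathcal{T}(J)$ forces $\mathcal{T}(J)\subseteq I$; the reverse inclusion is immediate since every spanning element of $I$ is visibly a product in $\mathcal{T}(J)$.

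By construction each spanning element of $\mathcal{T}^{(m)}(J)$ lies in $\mathcal{T}_{(P,Q,\psi)}^{(m)}$ (its $\Z$-degree equals $k-l$), so the sum $\oplus_m \mathcal{T}^{(m)}(J)$ is direct; this promotes the equality $\mathcal{T}(J) = I$ to a $\Z$-grading of $\mathcal{T}(J)$, makes $\mathcal{T}(J)$ a graded ideal, and yields $\mathcal{T}^{(m)}(J) = \mathcal{T}_{(P,Q,\psi)}^{(m)} \cap \mathcal{T}(J)$. For (\ref{item:7}), the expansion $\pi(\Delta(x)) = \sum_i \iota_Q(q_i)\iota_P(p_i)$ places each spanning element $v = \iota_Q^k(q)(\iota_R(x) - \pi(\Delta(x)))\iota_P^l(p)$ into $\mathcal{T}_{(k,l)} + \mathcal{T}_{(k+1,l+1)}$, so $\projection_{(0,0)}(v) = 0$ unless $k=l=0$, in which case it equals $\iota_R(x) \in \iota_R(J)$; extending by linearity settles (\ref{item:7}). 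For (\ref{item:8}), given such a $v$ and $m > l$, the Toeplitz relations reduce $\iota_P^l(p)\iota_Q^m(q'')$ to a single-sided string $\iota_Q^{m-l}(\,\cdot\,)$, whence $v\iota_Q^m(q'')$ collapses to $\iota_Q^k(q)(\iota_R(x) - \pi(\Delta(x)))\iota_Q^{m-l}(\,\cdot\,)$ and vanishes by the first cancellation identity; for a general $x\in\mathcal{T}(J)$ written as a finite sum of spanning elements, one takes $n$ to be one more than the largest $l$ appearing. The main obstacle is really only the bookkeeping of the boundary cases $k=0$ or $l=0$, but each is resolved uniformly by the two basic cancellations.
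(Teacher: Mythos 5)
Your proposal is correct and follows essentially the same route as the paper's proof: the two cancellation identities $(\iota_R(x)-\pi(\Delta(x)))\iota_Q(q)=0$ and $\iota_P(p)(\iota_R(x)-\pi(\Delta(x)))=0$, the observation that $\oplus_m\mathcal{T}^{(m)}(J)$ is then a two-sided ideal containing the generators, and minimality of $\mathcal{T}(J)$ to force equality, after which the grading statement and items (1) and (2) drop out. You merely spell out the case analysis and the degree bookkeeping that the paper leaves as "it follows that".
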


\begin{proof}
  It is clear that
  $\mathcal{T}^{(m)}(J)\subseteq\mathcal{T}_{(P,Q,\psi)}^{(m)}\cap
  \mathcal{T}(J)$ for each $m\in\Z$. It is also clear that
  $\oplus_{m\in\mathbb{Z}}(\mathcal{T}_{(P,Q,\psi)}^{(m)}\cap
  \mathcal{T}(J))\subseteq \mathcal{T}(J)$.

  If $x\in J$, $q\in Q$ and $p\in P$, then we have that
  \begin{equation}
    \bigl(\iota_R(x)-\pi(\Delta(x))\bigr)\iota_Q(q)=
    \iota_Q(xq)-\iota_Q(\Delta(x)q)=\iota_Q(xq)-\iota_Q(xq)=0, \label{eq:3}
  \end{equation}
  and that
  \begin{equation*}
    \begin{split}
      \iota_P(p)\bigl(\iota_R(x)-\pi(\Delta(x))\bigr)&=
      \iota_P(p)\bigl(\iota_R(x)-\chi(\Gamma(x))\bigr)\\
      &=\iota_P(px)-\iota_P(\Gamma(x)p)=\iota_P(px)-\iota_P(px)=0,
    \end{split}
  \end{equation*}
  from which it follows that
  $\oplus_{m\in\mathbb{Z}}\mathcal{T}^{(m)}(J)$ is a two-sided
  ideal of $\mathcal{T}_{(P,Q,\psi)}$. Since
  $\{\iota_R(x)-\pi(\Delta(x)): x\in J\}\subseteq
  \mathcal{T}^{(0)}(J)$, it follows that $\mathcal{T}(J)\subseteq
  \oplus_{m\in\mathbb{Z}}\mathcal{T}^{(m)}(J)$. Thus we have that
  \begin{equation} \label{eq:4}
    \oplus_{m\in\mathbb{Z}}\mathcal{T}^{(m)}(J)=\mathcal{T}(J)
  \end{equation}
  and  that $\mathcal{T}^{(m)}(J)=\mathcal{T}_{(P,Q,\psi)}^{(m)}\cap
  \mathcal{T}(J)$ for each $m\in\Z$.

  Let $x\in\mathcal{T}(J)$. That \eqref{item:7} holds directly follows
  from \eqref{eq:4}, and that \eqref{item:8}
  holds directly follows from \eqref{eq:3} and \eqref{eq:4}.
\end{proof}

\begin{prop}[{Cf. \cite[Proposition 2.21]{MS}}]\label{prop_1}
Let $R$ be a ring, let $(P,Q,\psi)$ be an
$R$-system satisfying condition \textbf{(FS)}
and let $J$ be a faithful $\psi$-compatible two-sided ideal of $R$. Then
the ring homomorphism $\rho:R\longrightarrow
\mathcal{T}_{(P,Q,\psi)}/\mathcal{T}(J)$ given by
$\rho(r)=\iota_R(r)+\mathcal{T}(J)$ is injective.
\end{prop}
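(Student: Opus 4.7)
The plan is to show that $\iota_R(r)\in\mathcal{T}(J)$ forces $r=0$, using the two structural properties of $\mathcal{T}(J)$ established in Lemma \ref{lemma_5}, together with the non-degeneracy of $\psi$ (Lemma \ref{rema_1}) and the injectivity of the higher maps $\iota_Q^m$ (Lemma \ref{lemma:inj}). First, from Lemma \ref{lemma_5}\ref{item:7} and the injectivity of $\iota_R$, we immediately get $r\in J$. Lemma \ref{lemma_5}\ref{item:8} produces an $N\in\N$ with $\iota_R(r)\iota_Q^m(q)=0$ for all $m\geq N$ and $q\in Q^{\otimes m}$; combined with the covariance identity $\iota_R(r)\iota_Q^m(q)=\iota_Q^m(rq)$ and the injectivity of $\iota_Q^m$, this gives $rQ^{\otimes N}=\{0\}$.

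The remaining task is to prove by induction on $N\geq 1$ the following claim: if $\iota_R(r)\in\mathcal{T}(J)$ and $rQ^{\otimes N}=\{0\}$, then $r=0$. The base case $N=1$ is immediate: $rQ=\{0\}$ says $\Delta(r)=0$, which together with $r\in J$ and the faithfulness of $J$ forces $r=0$. For the inductive step ($N\geq 2$), fix $p\in P$ and $q\in Q$ and set $s:=\psi(p\otimes rq)=\psi(pr\otimes q)\in R$. The identity $\iota_R(s)=\iota_P(p)\iota_R(r)\iota_Q(q)$ together with the fact that $\mathcal{T}(J)$ is a two-sided ideal gives $\iota_R(s)\in\mathcal{T}(J)$. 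For any $v\in Q^{\otimes N-1}$ we have $r(q\otimes v)=(rq)\otimes v=0$ in $Q^{\otimes N}$; applying the Fock-space operator $S^{(N-1,N)}_p$ yields $sv=\psi(p\otimes rq)\,v=0$, so $sQ^{\otimes N-1}=\{0\}$. By the induction hypothesis, $s=0$. Since $p$ and $q$ were arbitrary, $\psi(p\otimes rq)=0$ for every $p\in P$ and every $q\in Q$, and Lemma \ref{rema_1} then yields $rq=0$ for every $q\in Q$. Thus $rQ=\{0\}$, and the base case closes the induction.

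The main obstacle is the inductive step. The decisive ingredients are that $\mathcal{T}(J)$ is two-sided, so Lemma \ref{lemma_5}\ref{item:7} propagates from $\iota_R(r)$ to the scalars $\iota_R(\psi(pr\otimes q))$, and that the Fock-space operator $S^{(N-1,N)}_p$ lets us drop the annihilation degree by one; non-degeneracy of $\psi$ (a consequence of condition \textbf{(FS)} via Lemma \ref{rema_1}) then converts the scalar annihilation that emerges in the induction back into the desired $rQ=\{0\}$, at which point faithfulness of $J$ finishes the argument.
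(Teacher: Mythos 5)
Your proof is correct and follows essentially the same descent-on-the-annihilation-degree strategy as the paper's: both extract $r\in J$ and an annihilation degree from Lemma \ref{lemma_5}, lower that degree one step at a time using the two-sidedness of $\mathcal{T}(J)$ together with non-degeneracy, and finish with faithfulness of $J$. The only organisational difference is that you induct on a changing element $s=\psi(p\otimes rq)$ and invoke faithfulness once at the base case, whereas the paper keeps $r$ fixed, shows $\psi_{n-1}(p\otimes rq)\in J\cap\ker\Delta=\{0\}$ at each descent step (using non-degeneracy of $\psi_{n-1}$ via Lemma \ref{lemma_1}), and concludes $rq=0$ from there.
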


\begin{proof}
  Assume that $r\in R$ and that $\iota_R(r)\in\mathcal{T}(J)$. It
  follows from Lemma \ref{lemma_5} that there exists an $n\in\N$ such
  that $\iota_R(r)\iota_Q^m(q)=0$ for every
  $m\ge n$ and every $q\in Q^{\otimes m}$. We will show that we can
  choose $n$ to be equal to $1$. We will do that by showing that if
  $n>1$ and $\iota_R(r)\iota_Q^n(q)=0$ for every $q\in Q^{\otimes n}$,
  then $\iota_R(r)\iota_Q^{n-1}(q)=0$ for every $q\in Q^{\otimes
    n-1}$.
  So assume that $n>1$ and $\iota_R(r)\iota_Q^n(q)=0$ for
  every $q\in Q^{\otimes n}$. Let  $q\in Q^{\otimes n-1}$, then we have that for every
  $q'\in Q$
  \begin{equation*}
    \iota_Q^n(rq\otimes q')=
    \iota_R(r)\iota_Q^n(q\otimes q')=0.
  \end{equation*}
  Since $\iota_Q^n$ is injective (cf. Lemma \ref{lemma:inj}), it
  follows that $rq\otimes q'=0$.
  Hence for every $p\in P^{\otimes n-1}$ and every $p'\in P$
  we have that
  \begin{equation*}
    \psi\bigl(p'\otimes \psi_{n-1}(p\otimes rq)q'\bigr)
    =\psi_n\bigl((p'\otimes p)\otimes(rq\otimes q')\bigr)=0.
  \end{equation*}
  The above holds for every $p'\in P$, so by Lemma
  \ref{rema_1} we have that
  \begin{equation*}
    \psi_{n-1}(p\otimes rq)q'=0.
  \end{equation*}
  Since the last
 equation holds for every $q'\in Q$, it follows that
  $\psi_{n-1}(p\otimes rq)\in\ker\Delta$ for every $p\in P^{\otimes
    n-1}$. We have that $\iota_P^{n-1}(p)\iota_R(r)\iota_Q^{n-1}(q)\in
  \mathcal{T}(J)$, so it follows from Lemma \ref{lemma_5} that
  \begin{equation*}
    \iota_R(\psi_{n-1}(p\otimes rq))
    =\mathcal{P}_0(\iota_P^{n-1}(p)\iota_R(r)\iota_Q^{n-1}(q))\in \iota_R(J).
  \end{equation*}
  Thus $\psi_{n-1}(p\otimes rq)\in J\cap\ker\Delta=\{0\}$ for all $p\in P^{n-1}$, so by Lemma \ref{rema_1} and
  \ref{lemma_1} we have that $rq=0$. Hence $\iota_R(r)\iota_Q^{n-1}(q)=0$.

  Thus $\iota_Q(\Delta(r)q)=\iota_R(r)\iota_Q(q)=0$ for
  every $q\in Q$. From the injectivity of $\iota_Q$ (cf. Lemma
  \ref{lemma:inj}) it follows that $r\in\ker\Delta$. Then by Lemma
  \ref{lemma_5} we have that $\iota_R(r)=\projection_{(0,0)}(\iota_R(r))\in
  \iota_R(J)$. Therefore $r\in J\cap\ker\Delta=\{0\}$, which shows that
  $r=0$ as desired.
\end{proof}

It follows from Lemma \ref{lemma_5} and Proposition \ref{prop_1} that
if $R$ is a ring, $(P,Q,\psi)$ is an
$R$-system satisfying condition \textbf{(FS)}
and $J$ is a faithful $\psi$-compatible two-sided ideal of $R$, then
$\mathcal{T}(J)$ is a graded two-sided ideal of
$\mathcal{T}_{(P,Q,\psi)}$ which satisfies that
$\iota_R(R)\cap\mathcal{T}(J)=\{0\}$. We will show (see Remark
\ref{rema:correspondence}) that every graded two-sided ideal $I$
of $\mathcal{T}_{(P,Q,\psi)}$ such that $\iota_R(R)\cap I=\{0\}$ is of this form.

\begin{proof}[Proof of Theorem \ref{univ_cuntz}]
  It is clear that
  $(\iota_P^J,\iota_Q^J,\iota_R^J,\mathcal{O}_{(P,Q,\psi)}^J)$ is a
  covariant representation of $(P,Q,\psi)$ which is Cuntz-Pimsner
  invariant representation relative to $J$, and that it possesses
  property \ref{item:10} follows from
  Theorem \ref{theor:toeplitz} and the definition of $\mathcal{T}(J)$
  and $(\iota_P^J,\iota_Q^J,\iota_R^J,\mathcal{O}_{(P,Q,\psi)}^J)$.

  If $(S,T,\sigma,B)$ is a
  surjective covariant representation of $(P,Q,\psi)$ which is
  Cuntz-Pimsner invariant representation relative to $J$ and
  $\phi:B\longrightarrow\mathcal{O}_{(P,Q,\psi)}(J)$ is a ring homomorphism such that
  $\phi\circ\sigma=\iota_R^J$, $\phi\circ S=\iota_P^J$ and $\phi\circ
  T=\iota_Q^J$, then
  $\eta_{(S,T,\sigma,B)}^J\circ\phi(\sigma(r))=\sigma(r)$ for all $r\in
  R$, $\eta_{(S,T,\sigma,B)}^J\circ\phi(S(p))=S(p)$ for all $p\in P$ and
  $\eta_{(S,T,\sigma,B)}^J\circ\phi(T(q))=T(q)$ for all $q\in Q$, and
  since $B$ is generated by $\sigma(R)\cup S(P)\cup T(Q)$, it follows
  that $\eta_{(S,T,\sigma,B)}^J\circ\phi$ is equal to the identity map
  of $B$. One can in a similar way show that
  $\phi\circ\eta_{(S,T,\sigma,B)}^J$ is equal to the identity map of
  $\mathcal{O}_{(P,Q,\psi)}(J)$. Thus $\phi$ and $\eta_{(S,T,\sigma,B)}^J$
  are each other inverse,
  and $\phi$ is an isomorphism.

  If $J$ is faithful, then it follows from Proposition
  \ref{prop_1} that $\iota_R^J$ is injective. If $x\in
  J\cap\ker\Delta$, then $\iota_R^J(x)=0$; so if
  $J$ is not faithful, then $\iota_R^J$ is not injective.

  It follows directly from Lemma \ref{lemma_5} that
  $(\iota_P^J,\iota_Q^J,\iota_R^J,\mathcal{O}_{(P,Q,\psi)}(J))$ is graded.
\end{proof}

\subsection{Injective and  graded covariant representations}
\label{sec:inject-grad-covar}

Let $R$ be a ring and $(P,Q,\psi)$ an $R$-system which satisfies
condition \textbf{(FS)}. We will, as mentioned at previously, show
that every surjective, injective and graded covariant representations of
$(P,Q,\psi)$ is isomorphic to
$(\iota_P^J,\iota_Q^J,\iota_R^J,\mathcal{O}_{(P,Q,\psi)}(J))$ for some
faithful $\psi$-compatible two-sided ideal $J$ of $R$.

\begin{defi} \label{def:J}
Let $R$ be a ring, $(P,Q,\psi)$ an $R$-system satisfying condition
\textbf{(FS)} and let $(S,T,\sigma,B)$ be a covariant
representation of $(P,Q,\psi)$. We define
$$J_{(S,T,\sigma,B)}:=\{r\in R:\sigma(r)\in\pi_{T,S}(\mathcal{F}_P(Q))\}.$$
\end{defi}

\begin{lem}[{Cf. \cite[Proposition 3.3]{KS1}}] \label{lemma:compact}
Let $R$ be a ring and let $(S,T,\sigma,B)$ be an injective covariant
representation of an $R$-system $(P,Q,\psi)$ that satisfies
condition \textbf{(FS)}. Then $r\in R$ is in $J_{(S,T,\sigma,B)}$
if and only if $r\in\Delta^{-1}(\mathcal{F}_P(Q))$ and
$\sigma(r)=\pi_{T,S}(\Delta(r))$.
\end{lem}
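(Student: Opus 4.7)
The backward direction is immediate from the definition of $J_{(S,T,\sigma,B)}$: if $\Delta(r)\in\mathcal{F}_P(Q)$ and $\sigma(r)=\pi_{T,S}(\Delta(r))$, then $\sigma(r)$ lies in $\pi_{T,S}(\mathcal{F}_P(Q))$, so $r\in J_{(S,T,\sigma,B)}$. All the content is in the forward direction.

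For the forward direction, suppose $r\in J_{(S,T,\sigma,B)}$, so there exists $\Theta\in\mathcal{F}_P(Q)$ with $\sigma(r)=\pi_{T,S}(\Theta)$. The plan is to show that $\Theta$ must coincide with $\Delta(r)$ as an element of $\mathcal{L}_P(Q)$, from which both conclusions follow at once. The key computation is to evaluate both sides on an arbitrary $q\in Q$ after applying $T$. On one hand, covariance gives
\begin{equation*}
T\bigl(\Delta(r)(q)\bigr)=T(rq)=\sigma(r)T(q).
\end{equation*}
On the other hand, by Proposition \ref{lemma_2} we have $\pi_{T,S}(\Theta)T(q)=T\bigl(\Theta(q)\bigr)$. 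Combining these, $T(rq)=\sigma(r)T(q)=\pi_{T,S}(\Theta)T(q)=T\bigl(\Theta(q)\bigr)$, i.e.\ $T\bigl(\Delta(r)(q)\bigr)=T\bigl(\Theta(q)\bigr)$ for every $q\in Q$.

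Now I invoke injectivity of $T$: since $\sigma$ is injective and $(P,Q,\psi)$ satisfies \textbf{(FS)}, Lemma \ref{lemma:inj} (applied with $n=1$) tells us that $T$ is injective. Therefore $\Delta(r)(q)=\Theta(q)$ for all $q\in Q$, which means $\Delta(r)=\Theta$ in $\mathcal{L}_P(Q)$. In particular $\Delta(r)\in\mathcal{F}_P(Q)$, so $r\in\Delta^{-1}(\mathcal{F}_P(Q))$, and $\pi_{T,S}(\Delta(r))=\pi_{T,S}(\Theta)=\sigma(r)$, as required.

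There is essentially no obstacle here once one notices the right equation to test: the main point is the compatibility $\pi_{T,S}(\Theta)T(q)=T(\Theta(q))$ from Proposition \ref{lemma_2}, which lets us turn an identity in $B$ into an identity between the right $R$-module endomorphisms $\Delta(r)$ and $\Theta$, and then injectivity of $T$ provided by condition \textbf{(FS)} does the rest.
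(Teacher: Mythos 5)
Your proof is correct and follows exactly the same route as the paper's: both directions are handled identically, with the forward direction resting on the identity $T(rq)=\sigma(r)T(q)=\pi_{T,S}(\Theta)T(q)=T(\Theta(q))$ from Proposition \ref{lemma_2} and the injectivity of $T$ supplied by Lemma \ref{lemma:inj}. Nothing to add.
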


\begin{proof}
It is obvious that if $r\in\Delta^{-1}(\mathcal{F}_P(Q))$ and
$\sigma(r)=\pi_{T,S}(\Delta(r))$, then $r\in J_{(S,T,\sigma,B)}$.

If $\Theta\in\mathcal{F}_P(Q)$ and $\sigma(r)=\pi_{T,S}(\Theta)$, then
we have for every $q\in Q$ that
$$T(rq)=\sigma(r)T(q)=\pi_{T,S}(\Theta)T(q)=T(\Theta(q)),$$
and since $T$ is injective (cf. Lemma \ref{lemma:inj} and \ref{rema_1}),
it follows that $rq=\Theta(q)$. Hence $\Delta(r)=\Theta$.
\end{proof}

\begin{rema}
  Let $R$ be a ring, $(P,Q,\psi)$ an $R$-system
  satisfying condition \textbf{(FS)}, let $J$ be a $\psi$-compatible 
  two-sided ideal of $R$, and let
  $(S,T,\sigma,B)$ be an injective covariant representation of
  $(P,Q,\psi)$. Then it follows from Lemma \ref{lemma:compact} that
  $(S,T,\sigma,B)$ is Cuntz-Pimsner invariant with respect to $J$ if
  and only if $J\subseteq J_{(S,T,\sigma,B)}$.
\end{rema}

\begin{lem}\label{lemma_6}
Let $R$ be a ring, $(P,Q,\psi)$ an $R$-system satisfying condition
\textbf{(FS)} and let $(S,T,\sigma,B)$ be a covariant
representation of $(P,Q,\psi)$. Then $J_{(S,T,\sigma,B)}$ is a
$\psi$-compatible two-sided ideal of $R$. If $(S,T,\sigma,B)$ is injective, then
$J_{(S,T,\sigma,B)}$ is faithful.
\end{lem}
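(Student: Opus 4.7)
My plan separates the lemma into three claims: (a) $J := J_{(S,T,\sigma,B)}$ is closed under addition and under left and right multiplication by elements of $R$; (b) $J$ is $\psi$-compatible, i.e., $\Delta(J) \subseteq \mathcal{F}_P(Q)$; and (c) if $(S,T,\sigma,B)$ is injective then $J \cap \ker \Delta = \{0\}$. Claim (a) is a purely ring-theoretic computation using the identities in Proposition \ref{lemma_2}, and claims (b) and (c) both reduce, in the injective case, to Lemma \ref{lemma:compact}.

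For (a), I would suppose $r \in J$ with $\sigma(r) = \pi_{T,S}(\Theta)$ for some $\Theta \in \mathcal{F}_P(Q)$ and pick $s \in R$. By the identity $\sigma(s)\pi_{T,S}(\Theta) = \pi_{T,S}(\Delta(s)\Theta)$ from Proposition \ref{lemma_2},
\begin{equation*}
  \sigma(sr) = \sigma(s)\sigma(r) = \sigma(s)\pi_{T,S}(\Theta) = \pi_{T,S}(\Delta(s)\Theta),
\end{equation*}
and since $\mathcal{F}_P(Q)$ is a two-sided ideal of $\mathcal{L}_P(Q)$, $\Delta(s)\Theta \in \mathcal{F}_P(Q)$, whence $sr \in J$; the symmetric identity $\pi_{T,S}(\Theta)\sigma(s) = \pi_{T,S}(\Theta\Delta(s))$ yields $rs \in J$. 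Additive closure is immediate, since $\pi_{T,S}(\mathcal{F}_P(Q))$ is additively closed.

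For (b) and (c), the key observation is that from $\sigma(r) = \pi_{T,S}(\Theta)$ and the identity $\pi_{T,S}(\Theta)T(q) = T(\Theta(q))$ of Proposition \ref{lemma_2}, one obtains $T(rq) = T(\Theta(q))$ for every $q \in Q$. When $(S,T,\sigma,B)$ is injective, Lemma \ref{lemma:inj} (applicable because $(P,Q,\psi)$ satisfies \textbf{(FS)}) gives injectivity of $T$, so $rq = \Theta(q)$ for all $q \in Q$, i.e., $\Delta(r) = \Theta \in \mathcal{F}_P(Q)$; this is precisely the content of Lemma \ref{lemma:compact}. For faithfulness, if additionally $r \in \ker \Delta$ then $\Delta(r) = 0$, so $\sigma(r) = \pi_{T,S}(0) = 0$, and injectivity of $\sigma$ forces $r = 0$.

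The main obstacle is verifying $\psi$-compatibility for representations that are not injective. The identity $T(rq) = T(\Theta(q))$ only forces $(\Delta(r) - \Theta)(Q) \subseteq \ker T$, and without injectivity of $T$ one cannot immediately pass from $\sigma(r) \in \pi_{T,S}(\mathcal{F}_P(Q))$ to $\Delta(r) \in \mathcal{F}_P(Q)$. A correct argument likely exploits condition \textbf{(FS)} more subtly, or factors the representation through the universal Toeplitz representation $(\iota_P, \iota_Q, \iota_R, \mathcal{T}_{(P,Q,\psi)})$, where $\iota_R$ is injective by Theorem \ref{theor:toeplitz}, and then transfers the conclusion back via the induced ring homomorphism $\eta_{(S,T,\sigma,B)}$.
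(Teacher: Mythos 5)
Your claim (a), and claims (b) and (c) in the injective case, are correct and coincide with the paper's own argument: the ideal property is precisely the ``easily follows from Proposition \ref{lemma_2}'' step, and faithfulness is proved in the paper exactly as you propose, by combining Lemma \ref{lemma:compact} with the injectivity of $\sigma$. So the only issue is the obstacle you flag at the end, and there the news is mixed: your instinct that something is genuinely problematic is right, but neither of your proposed repairs can close the gap, because with the paper's definition of $\psi$-compatibility (namely $J\subseteq\Delta^{-1}(\mathcal{F}_P(Q))$) the first assertion of the lemma actually fails for non-injective representations. For instance, the zero representation $(0,0,0,\{0\})$ satisfies all the axioms of Definition \ref{def:representations} and has $J_{(S,T,\sigma,B)}=\sigma^{-1}(\pi_{T,S}(\mathcal{F}_P(Q)))=R$, whereas $\Delta^{-1}(\mathcal{F}_P(Q))$ can be a proper ideal --- for the graph system of Example \ref{examples:graph-toeplitz} it equals $\spa_F\{\textbf{1}_v: |s^{-1}(v)|<\infty\}$ (Example \ref{examples_cuntz:graph_cuntz}), so any infinite emitter gives a counterexample. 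Your idea of factoring through the Toeplitz representation transfers information in the wrong direction: $\eta_{(S,T,\sigma,B)}$ maps $\mathcal{T}_{(P,Q,\psi)}$ onto $B$, so $\sigma(r)\in\pi_{T,S}(\mathcal{F}_P(Q))$ tells you nothing about $\iota_R(r)$; indeed $J_{(\iota_P,\iota_Q,\iota_R,\mathcal{T}_{(P,Q,\psi)})}=\{0\}$ by Proposition \ref{prop_3}, which is in general strictly smaller than $J_{(S,T,\sigma,B)}$.

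For comparison, the paper's proof dispatches this point with the phrase ``it is $\psi$-compatible by construction,'' which is a tautology only if one reads $\psi$-compatibility as $\sigma(J)\subseteq\pi_{T,S}(\mathcal{F}_P(Q))$ rather than $J\subseteq\Delta^{-1}(\mathcal{F}_P(Q))$; contrast Lemma \ref{lemma:invariantideal}, where $J_{\uparrow H}$ is defined as a subset of $\Delta^{-1}(\mathcal{F}_P(Q))$ and the same phrase is legitimate. What is true, and what the paper actually uses, is exactly what you proved: the $\psi$-compatibility and faithfulness of $J_{(S,T,\sigma,B)}$ are only ever invoked for injective representations (Remark \ref{remark:classi}, Theorem \ref{prop:psi}), where Lemma \ref{lemma:compact} together with Lemma \ref{lemma:inj} settles the matter, while for general representations the correct statement is the quotient one established in Proposition \ref{prop_31}: $\wp_I(J_{(S,T,\sigma,B)})$ is a faithful $\psi_I$-compatible ideal of $R_I$ for the system $({}_IP,Q_I,\psi_I)$ with $I=\ker\sigma$. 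The proof there uses $\ker T=QI$ (Lemma \ref{lemma_31}) to convert your observation that $(\Delta(r)-\Theta)(Q)\subseteq\ker T$ into $\Delta_I(\wp_I(r))=\wp_I(\Theta)\in\mathcal{F}_{{}_IP}(Q_I)$. So you should either restrict the $\psi$-compatibility claim to injective representations or pass to the quotient system; as stated, the missing step in your proposal cannot be supplied because the claim does not hold at that level of generality.
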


\begin{proof}
It easily follows from Proposition \ref{lemma_2} that
$J_{(S,T,\sigma,B)}$ is a two-sided ideal of $R$ and it is $\psi$-compatible by construction.
If $x\in J_{(S,T,\sigma,B)}\cap \ker\Delta$ and $(S,T,\sigma,B)$ is
injective, then it follows from Lemma \ref{lemma:compact} that
$\sigma(x)=\pi_{T,S}(\Delta(x))=0$, and since $\sigma$ is injective,
it follows that $x=0$. Thus $J_{(S,T,\sigma,B)}$ is faithful if $(S,T,\sigma,B)$ is injective.
\end{proof}

\begin{nota}
To avoid too heavy notation, we will often when working with a given
$R$-system $(P,Q,\psi)$ satisfying condition \textbf{(FS)} and a
faithful $\psi$-compatible
two-sided ideal $J$ of $R$,
let $\pi^J$ denote $\pi_{(\iota^J_Q)^n,(\iota^J_P)^n}$ for any
$n\in\N$. We will then view $\pi$ as a map from
$\bigcup_{n\in\N}\mathcal{F}_{P^{\otimes n}}(Q^{\otimes n})$ to $\mathcal{O}_{(P,Q,\psi)}(J)$.
\end{nota}

\begin{prop}\label{prop_3}
Let $R$ be a ring, let $(P,Q,\psi)$ be an $R$-system satisfying
condition \textbf{(FS)} and let $J$ be a faithful $\psi$-compatible 
two-sided ideal of $R$. Then $J=
J_{(\iota_P^J,\iota_Q^J,\iota_R^J,\mathcal{O}_{(P,Q,\psi)}(J))}$.
\end{prop}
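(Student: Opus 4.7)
The plan is to prove the two inclusions separately, using the Cuntz-Pimsner invariance of the relative representation for one direction and Lemma \ref{lemma:compact} together with the grading information from Lemma \ref{lemma_5} for the other.

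For the inclusion $J\subseteq J_{(\iota_P^J,\iota_Q^J,\iota_R^J,\mathcal{O}_{(P,Q,\psi)}(J))}$, I would simply take $x\in J$. Since $J$ is $\psi$-compatible, $\Delta(x)\in\mathcal{F}_P(Q)$, and since by Theorem \ref{univ_cuntz} the representation $(\iota_P^J,\iota_Q^J,\iota_R^J,\mathcal{O}_{(P,Q,\psi)}(J))$ is Cuntz-Pimsner invariant with respect to $J$, we have $\iota_R^J(x)=\pi^J(\Delta(x))\in\pi^J(\mathcal{F}_P(Q))$. Hence $x\in J_{(\iota_P^J,\iota_Q^J,\iota_R^J,\mathcal{O}_{(P,Q,\psi)}(J))}$.

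For the reverse inclusion, I would start with $r\in J_{(\iota_P^J,\iota_Q^J,\iota_R^J,\mathcal{O}_{(P,Q,\psi)}(J))}$. Since $J$ is faithful, Theorem \ref{univ_cuntz} ensures $\iota_R^J$ is injective, so the relative Cuntz-Pimsner representation is injective. Applying Lemma \ref{lemma:compact} to this representation yields $r\in\Delta^{-1}(\mathcal{F}_P(Q))$ and $\iota_R^J(r)=\pi^J(\Delta(r))$. Lifting to the Toeplitz ring via the quotient map $\rho_J$, this means $\iota_R(r)-\pi(\Delta(r))\in\mathcal{T}(J)$.

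The key step is then to extract information from this membership using the $\semigroup$-grading. The element $\iota_R(r)$ lies in $\mathcal{T}_{(0,0)}$, while $\pi(\Delta(r))$ lies in $\mathcal{T}_{(1,1)}$ because $\Delta(r)\in\mathcal{F}_P(Q)=\spa\{\theta_{q,p}\mid q\in Q,\ p\in P\}$ and $\pi(\theta_{q,p})=\iota_Q(q)\iota_P(p)\in\mathcal{T}_{(1,1)}$. Therefore
\begin{equation*}
\projection_{(0,0)}\bigl(\iota_R(r)-\pi(\Delta(r))\bigr)=\iota_R(r).
\end{equation*}
By Lemma \ref{lemma_5}\eqref{item:7}, the left-hand side belongs to $\iota_R(J)$, so $\iota_R(r)\in\iota_R(J)$, and injectivity of $\iota_R$ (from Theorem \ref{theor:toeplitz}) forces $r\in J$. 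The only subtle point is recognizing that the entire argument hinges on separating the $(0,0)$-component from the $(1,1)$-component of the defining relation $\iota_R(x)-\pi(\Delta(x))$ that generates $\mathcal{T}(J)$; once this grading observation is made, together with Lemma \ref{lemma_5}\eqref{item:7}, the conclusion is immediate.
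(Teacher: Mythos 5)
Your proof is correct and follows essentially the same route as the paper: the forward inclusion via Cuntz-Pimsner invariance of the relative representation, and the reverse inclusion via Lemma \ref{lemma:compact} (using faithfulness of $J$ to get injectivity) followed by Lemma \ref{lemma_5}\eqref{item:7} applied to $\iota_R(r)-\pi(\Delta(r))\in\mathcal{T}(J)$. Your explicit remark that $\pi(\Delta(r))$ lies in $\mathcal{T}_{(1,1)}$, so that the $(0,0)$-projection isolates $\iota_R(r)$, is exactly the (implicit) step in the paper's argument.
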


\begin{proof}
If $x\in J$, then
$\iota_R^J(x)=\pi^J(\Delta(x))\in\pi^J(\mathcal{F}_P(Q))$, and so
$x\in
J_{(\iota_P^J,\iota_Q^J,\iota_R^J,\mathcal{O}_{(P,Q,\psi)}(J))}$.

If $x\in
J_{(\iota_P^J,\iota_Q^J,\iota_R^J,\mathcal{O}_{(P,Q,\psi)}(J))}$,
then it follows from Lemma \ref{lemma:compact} that
$x\in\Delta^{-1}(\mathcal{F}_P(Q))$ and
$\iota_R^J(x)=\pi^J(\Delta(x))$. So
$\iota_R(x)-\pi(\Delta(x))\in\mathcal{T}(J)$, and we then get from
Lemma \ref{lemma_5} that
$\iota_R(x)=\projection_{(0,0)}(\iota_R(x)-\pi(\Delta(x)))\in\iota_R(J)$,
and thus that $x\in J$.
\end{proof}

We are now ready to show that every surjective, injective and graded
covariant representation of an $R$-system $(P,Q,\psi)$ satisfying
condition \textbf{(FS)} is isomorphic to
$(\iota_P^J,\iota_Q^J,\iota_R^J,\mathcal{O}_{(P,Q,\psi)}(J))$ for
some faithful $\psi$-compatible two-sided ideal $J$ of $R$.

\begin{theor} \label{prop:psi}
  Let $R$ be a ring, let $(P,Q,\psi)$ be an $R$-system satisfying
  condition \textbf{(FS)}, let $J$ be a two-sided ideal of $R$ such
  that $J\subseteq \Delta^{-1}(\mathcal{F}_P(Q))$ and
  $J\cap\ker\Delta=\{0\}$, and let $(S,T,\sigma,B)$ be a covariant
  representation of $(P,Q,\psi)$. Then we have:
  \begin{enumerate}
  \item \label{item:19}
    If there exists a ring homomorphism
    $\eta:\mathcal{O}_{(P,Q,\psi)}(J)\longrightarrow
    B$ such that $\eta\circ\iota_Q^J=T$,
    $\eta\circ\iota_P^J=S$ and
    $\eta\circ\iota_R^J=\sigma$, then the representation $(S,T,\sigma,B)$
    is Cuntz-Pimsner invariant with respect to $J$.
  \item \label{item:20}
    If the representation $(S,T,\sigma,B)$ is Cuntz-Pimsner invariant with
    respect to $J$, then there exists a unique ring homomorphism
    $\eta_{(S,T,\sigma,B)}^J:\mathcal{O}_{(P,Q,\psi)}(J)\longrightarrow
    B$ such that $\eta_{(S,T,\sigma,B)}^J\circ\iota_Q^J=T$,
    $\eta_{(S,T,\sigma,B)}^J\circ\iota_P^J=S$ and
    $\eta_{(S,T,\sigma,B)}^J\circ\iota_R^J=\sigma$.
  \item \label{item:21}
    If the representation $(S,T,\sigma,B)$ is Cuntz-Pimsner invariant with
    respect to $J$, then there the ring homomorphism
    $\eta_{(S,T,\sigma,B)}$ is an isomorphism if and only if
    $(S,T,\sigma,B)$ is surjective, injective and graded and
    $J=J_{(S,T,\sigma,B)}$.
  \end{enumerate}
\end{theor}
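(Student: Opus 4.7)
The plan is to dispatch parts (1) and (2) quickly, and to treat part (3) by separating the two implications, with almost all of the real work concentrated in the ``if'' direction of part (3). Part (1) is immediate: in $\mathcal{O}_{(P,Q,\psi)}(J)$ the identity $\iota_R^J(x) = \pi^J(\Delta(x))$ holds for every $x \in J$ by the definition of $\mathcal{T}(J)$, and Remark \ref{remark:comp} applied to $\eta$ yields $\eta \circ \pi^J = \pi_{T,S}$, so applying $\eta$ to this relation gives the Cuntz-Pimsner invariance $\sigma(x) = \pi_{T,S}(\Delta(x))$. Part (2) is literally property \textbf{(CP)} in Theorem \ref{univ_cuntz}.

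For the forward direction of (3), if $\eta := \eta_{(S,T,\sigma,B)}^J$ is an isomorphism, each property is transported from $\mathcal{O}_{(P,Q,\psi)}(J)$ via $\eta$: surjectivity of $(S,T,\sigma,B)$ follows from surjectivity of the Cuntz-Pimsner representation, injectivity of $\sigma = \eta\circ\iota_R^J$ uses that $\iota_R^J$ is injective (as $J$ is faithful, by Theorem \ref{univ_cuntz}) and $\eta$ is injective, the grading of $\mathcal{O}_{(P,Q,\psi)}(J)$ transports along $\eta^{-1}$ giving $\sigma(R), T(Q), S(P)$ the correct degrees, and finally $J = J_{(S,T,\sigma,B)}$ follows by combining Proposition \ref{prop_3} with the observation that $J_{(-)}$ is invariant under isomorphism of representations (a direct check from Definition \ref{def:J} using $\eta \circ \pi^J = \pi_{T,S}$ and the injectivity of $\eta$).

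For the reverse direction, assume $(S,T,\sigma,B)$ is surjective, injective, graded and $J = J_{(S,T,\sigma,B)}$. Lemma \ref{lemma:compact} together with $J = J_{(S,T,\sigma,B)}$ gives that $(S,T,\sigma,B)$ is Cuntz-Pimsner invariant relative to $J$, so by part (2) the map $\eta$ exists, and it is surjective since $B$ is generated by the images of $\sigma, S, T$, all of which lie in the image of $\eta$. By Proposition \ref{prop:Z-grading} (applied at both source and target) $\eta$ is a graded homomorphism, so $\ker\eta$ is a graded ideal; pulling it back along $\rho_J$, the set $I := \ker\eta_{(S,T,\sigma,B)}$ is a graded ideal of $\mathcal{T}_{(P,Q,\psi)}$ containing $\mathcal{T}(J)$, and $I \cap \iota_R(R) = \{0\}$ by injectivity of $\sigma$. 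The injectivity of $\eta$ is therefore equivalent to the reverse inclusion $I \subseteq \mathcal{T}(J)$.

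Establishing $I \subseteq \mathcal{T}(J)$ is the main obstacle. The intended approach is to take a homogeneous $y \in I \cap \mathcal{T}_{(m,n)}$, write $y = \sum_i \iota_Q^m(q_i)\iota_P^n(p_i)$ as in Theorem \ref{theor:toeplitz}, and translate $\eta_{(S,T,\sigma,B)}(y) = \sum_i T^m(q_i)S^n(p_i) = 0$ into a statement about the finite rank operators $\sum_i \theta_{q_i,p_i}$ via Proposition \ref{lemma_2}. Lemma \ref{lemma:inj} ensures the various $T^k$, $S^k$ are injective, and the equality $J = J_{(S,T,\sigma,B)}$ plays the decisive role: it guarantees that any ``collapse'' of a $\sigma(r)$-term into a $\pi_{T,S}$-term in $B$ must already come from an element $r \in J$, which means $y$ can be rewritten as a sum of elements $\iota_Q^k(q)\bigl(\iota_R(x) - \pi(\Delta(x))\bigr)\iota_P^l(p)$ with $x \in J$, placing $y$ in $\mathcal{T}(J)$ by Lemma \ref{lemma_5}. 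Making this reduction precise in all homogeneous degrees $(m,n)$, and in particular separating the degree-zero contribution carried by $\iota_R(R)$ from the degree-zero contribution carried by the finite-rank operators $\pi(\mathcal{F}_{P^{\otimes k}}(Q^{\otimes k}))$, is where the maximality hypothesis on $J$ is used in an essential way.
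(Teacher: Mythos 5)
Parts \eqref{item:19} and \eqref{item:20} and the ``only if'' direction of \eqref{item:21} are correct and essentially identical to the paper's argument (the paper verifies $J=J_{(S,T,\sigma,B)}$ by hand rather than quoting Proposition \ref{prop_3} plus isomorphism-invariance of $J_{(-)}$, but your route is equivalent). The problem is the ``if'' direction of \eqref{item:21}: you have correctly reduced it to showing $\ker\eta_{(S,T,\sigma,B)}\subseteq\mathcal{T}(J)$, but at exactly that point the proposal stops being a proof and becomes a statement of intent. The sentence beginning ``Making this reduction precise in all homogeneous degrees\dots'' names the hard step without carrying it out, and this step is the entire mathematical content of the implication. (Incidentally, there is no ``maximality hypothesis on $J$'' in this theorem; what is available is $J=J_{(S,T,\sigma,B)}$, which is a different and weaker-looking condition whose use has to be made explicit.)

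Two concrete difficulties are glossed over. First, your starting point ``take a homogeneous $y\in I\cap\mathcal{T}_{(m,n)}$'' is not available: $I=\ker\eta_{(S,T,\sigma,B)}$ is graded only with respect to the $\Z$-grading, so a degree-zero element of $I$ is an arbitrary finite sum $\sum_{i=0}^{n}\pi(\Theta_i)$ with $\Theta_i\in\mathcal{F}_{P^{\otimes i}}(Q^{\otimes i})$ (and $\Theta_0\in R$), and there is no reason its individual $\mathcal{T}_{(i,i)}$-components lie in $I$. Disentangling these levels is precisely what the paper's Lemma \ref{lemma:invariantideal} does, by an induction on $n$: condition \textbf{(FS)} is used to produce $\Theta\in\mathcal{F}_{P^{\otimes n}}(Q^{\otimes n})$ with $\pi_{T_H^n,S_H^n}(\Theta)=-\pi_{T_H^{n+1},S_H^{n+1}}(\Theta_{n+1})$ (via Lemma \ref{lemma:two-compact}), which pushes the top level down by one and lets the induction hypothesis absorb $\sum_{i=0}^{n}\pi(\Theta_i)-\pi(\Theta)$; the remaining piece $\pi(\Theta)+\pi(\Theta_{n+1})$ is then compressed by $\iota_P^n(b)(\cdot)\iota_Q^n(d)$ to an element $\iota_R(r)-\pi(\Delta(r))$ with $r=\psi_n(b\otimes\Theta(d))$, and Lemma \ref{lemma:compact} identifies $r$ as an element of $J_{\uparrow H}=J_{(S,T,\sigma,B)}=J$. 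Second, the reduction from arbitrary $\Z$-degree to degree zero is itself not automatic; it requires Lemma \ref{lemma:graded}, which again uses \textbf{(FS)} to write a degree-$n$ element of a graded ideal as a combination of degree-zero elements of that ideal. Your intuition that ``any collapse of a $\sigma(r)$-term into a $\pi_{T,S}$-term must come from $r\in J$'' is the right one, but without the filtration induction and the compression trick it does not yield a proof. You should either reproduce the argument of Lemmas \ref{lemma:graded} and \ref{lemma:invariantideal} or cite them explicitly; as written, the core of part \eqref{item:21} is missing.
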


For the proof of Theorem \ref{prop:psi} we need some lemmas, but
before we introduce them, let us notice that the promised classification of all
surjective, injective and graded covariant representations of a given $R$-system
$(P,Q,\psi)$ satisfying condition \textbf{(FS)} follows from Lemma
\ref{lemma_6} and Theorem \ref{prop:psi}.

\begin{rema} \label{remark:classi}
  Let $R$ be a ring and let $(P,Q,\psi)$ be an
  $R$-system satisfying condition \textbf{(FS)}. It follows from
  Lemma \ref{lemma_6} and Theorem
  \ref{prop:psi} that every surjective, injective and graded covariant
  representation
  of $(P,Q,\psi)$ is isomorphic to
  $(\iota_P^J,\iota_Q^J,\iota_R^J,\mathcal{O}_{(P,Q,\psi)}(J))$ for
  some faithful $\psi$-compatible two-sided ideal $J$ of $R$. 
  And it follows from Remark
  \ref{remark:comp} and Proposition
  \ref{prop_3} that if $J_1$ and $J_2$ are two faithful $\psi$-compatible 
  two-sided ideals of $R$, then there exists a
  ring homomorphism $\phi$ from $\mathcal{O}_{(P,Q,\psi)}(J_1)$ to
  $\mathcal{O}_{(P,Q,\psi)}(J_2)$ satisfying
  $\phi\circ\iota_Q^{J_1}=\iota_Q^{J_2}$,
  $\phi\circ\iota_P^{J_1}=\iota_P^{J_2}$ and
  $\phi\circ\iota_R^{J_1}=\iota_R^{J_2}$ if and only if $J_1\subseteq
  J_2$.
\end{rema}

We will now introduce and prove the lemmas which we will use in the
proof of Theorem \ref{prop:psi}.

\begin{lem}
  Let $R$ be a ring and $(P,Q,\psi)$ an $R$-system. Let $n\in\N$ and
  $T\in\mathcal{L}_{P^{\otimes n}}(Q^{\otimes n})$. Then there is a
  unique $T\otimes 1_Q\in\mathcal{L}_{P^{\otimes n+1}}(Q^{\otimes n+1})$
  such that $(T\otimes 1_Q)(q\otimes q')=T(q)\otimes q'$ for $q\in
  Q^{\otimes n}$ and $q'\in Q$.
\end{lem}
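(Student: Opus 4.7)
The plan is to build $T\otimes 1_Q$ from the universal property of the $R$-balanced tensor product in two stages: first as a right $R$-module endomorphism of $Q^{\otimes n+1}$, and then verify adjointability by exhibiting an adjoint.

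For existence and right $R$-linearity: since $Q^{\otimes n+1}=Q^{\otimes n}\otimes Q$ by definition, I would consider the map $Q^{\otimes n}\times Q\longrightarrow Q^{\otimes n+1}$ sending $(q,q')\longmapsto T(q)\otimes q'$. This is additive in each variable, and it is $R$-balanced because $T$ is right $R$-linear: $T(qr)\otimes q'=T(q)r\otimes q'=T(q)\otimes rq'$ for $r\in R$. The universal property of $\otimes=\otimes_R$ then produces a group homomorphism $T\otimes 1_Q:Q^{\otimes n+1}\longrightarrow Q^{\otimes n+1}$ sending simple tensors $q\otimes q'$ to $T(q)\otimes q'$, and right $R$-linearity is immediate on generators: $(T\otimes 1_Q)((q\otimes q')r)=T(q)\otimes q'r=(T(q)\otimes q')r$. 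Uniqueness is immediate too, since $Q^{\otimes n+1}$ is generated as an abelian group by the simple tensors $q\otimes q'$ and two group homomorphisms agreeing on a generating set coincide.

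For adjointability, let $S\in\mathcal{L}_{Q^{\otimes n}}(P^{\otimes n})$ be an adjoint of $T$. Using the convention implicit in the recursive definition of $\psi_{n+1}$, every element of $P^{\otimes n+1}$ is a sum of simple tensors $p_1\otimes p_2$ with $p_1\in P$ and $p_2\in P^{\otimes n}$. The map $(p_1,p_2)\longmapsto p_1\otimes S(p_2)$ is $R$-balanced because $S$ is left $R$-linear, and so by the same universal property it yields a well-defined left $R$-module homomorphism $1_P\otimes S:P^{\otimes n+1}\longrightarrow P^{\otimes n+1}$. To check the adjoint relation $\psi_{n+1}(p\otimes(T\otimes 1_Q)(q))=\psi_{n+1}((1_P\otimes S)(p)\otimes q)$, it suffices to verify it on simple tensors $p=p_1\otimes p_2$ and $q=q_1\otimes q_2$, where the recursion for $\psi_{n+1}$ together with the adjoint relation $\psi_n(p_2\otimes T(q_1))=\psi_n(S(p_2)\otimes q_1)$ gives
\begin{equation*}
\psi_{n+1}\bigl((p_1\otimes p_2)\otimes(T(q_1)\otimes q_2)\bigr)
=\psi\bigl(p_1\cdot\psi_n(p_2\otimes T(q_1))\otimes q_2\bigr)
=\psi\bigl(p_1\cdot\psi_n(S(p_2)\otimes q_1)\otimes q_2\bigr)
=\psi_{n+1}\bigl((p_1\otimes S(p_2))\otimes(q_1\otimes q_2)\bigr).
\end{equation*}
Bilinearity extends this to all of $P^{\otimes n+1}\otimes Q^{\otimes n+1}$, so $1_P\otimes S$ is an adjoint of $T\otimes 1_Q$, placing $T\otimes 1_Q$ in $\mathcal{L}_{P^{\otimes n+1}}(Q^{\otimes n+1})$ as required.

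There is no real obstacle; the only mild friction is bookkeeping the distinction between $P^{\otimes n}\otimes P$ and $P\otimes P^{\otimes n}$ (and similarly for $Q$) in the recursive definition of $\psi_{n+1}$, but the argument goes through symmetrically in either convention.
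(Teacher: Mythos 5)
Your proposal is correct and follows essentially the same route as the paper: construct $T\otimes 1_Q$ (and $1_P\otimes S$ for an adjoint $S$ of $T$) via the universal property of the balanced tensor product, then verify the adjoint identity on simple tensors using the recursive definition of $\psi_{n+1}$ together with the relation $\psi_n(p_2\otimes T(q_1))=\psi_n(S(p_2)\otimes q_1)$. The only difference is that you spell out the $R$-balancedness checks that the paper dismisses as routine.
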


\begin{proof}
  It easily follows from the universal property of tensor products
  that their exists a unique map $T\otimes 1_Q:Q^{\otimes
    n+1}\longrightarrow Q^{\otimes n+1}$ which for all $q\in
  Q^{\otimes n}$ and $q'\in Q$ maps $q\otimes q'$ to $T(q)\otimes
  q'$. Likewise, if $S$ denote an adjoint of $T$, then there is a
  unique map $1_P\otimes S:P^{\otimes n+1}\longrightarrow P^{\otimes n+1}$ which
  for all $p\in P^{\otimes n}$ and $p'\in P$ maps $p'\otimes p$ to
  $p'\otimes S(p)$. We have
  \begin{align*}
    \psi_{n+1}\bigl((p'\otimes p)\otimes (T(q)\otimes q')\bigr)
    &=\psi\bigl(p'\psi_n(p\otimes T(q))\otimes q'\bigr)
    =\psi\bigl(p'\psi_n(S(p)\otimes q)\otimes q'\bigr)\\
    &=\psi_{n+1}\bigl((p'\otimes S(p))\otimes (q\otimes q')\bigr)
  \end{align*}
  for $p'\in P$, $p\in P^{\otimes n}$, $q'\in Q$ and $q\in Q^{\otimes
    n}$, from which it follows that $1_P\otimes S$ it an adjoint of
  $T\otimes 1_Q$ and thus that $T\otimes 1_Q\in
  \mathcal{L}_{P^{\otimes n+1}}(Q^{\otimes n+1})$ (and $1_P\otimes
  S\in\mathcal{L}_{Q^{n+1}}(P^{n+1})$).
\end{proof}

\noindent The following abuse of notation will be convenient in the following.

\begin{nota}
   Let $R$ be a ring and $(P,Q,\psi)$ an $R$-system. If $n=0$, then we
   will on occasions let $\mathcal{F}_{P^{\otimes n}}(Q^{\otimes n})$
   denote $R$, and we will for $T\in\mathcal{L}_{P^{\otimes
       n}}(Q^{\otimes n})$ use $T\otimes 1_Q$ to denote $\Delta(T)$.
\end{nota}

\begin{lem} \label{lemma:two-compact}
  Let $R$ be a ring, $(P,Q,\psi)$ an $R$-system satisfying condition
  \textbf{(FS)}, let $(S,T,\sigma,B)$ a
  covariant representation and let $n\in\No$. Then
  \begin{equation*}
    \pi_{T^{n+1},S^{n+1}}\bigl((\Theta_1\otimes 1_Q)\Theta_2\bigr)
    =\pi_{T^n,S^n}(\Theta_1)\pi_{T^{n+1},S^{n+1}}(\Theta_2)
  \end{equation*}
  for $\Theta_1\in\mathcal{F}_{P^{\otimes n}}(Q^{\otimes n})$ and
  $\Theta_2\in\mathcal{F}_{P^{\otimes n+1}}(Q^{\otimes n+1})$.
\end{lem}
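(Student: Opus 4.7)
The plan is to reduce the identity to rank-one generators by bilinearity and then verify it by unpacking definitions. By linearity of $\pi_{T^{n+1},S^{n+1}}$, of $\pi_{T^n,S^n}$, and of the assignment $\Theta_1 \mapsto \Theta_1 \otimes 1_Q$, it suffices to prove the equation when $\Theta_1 = \theta_{q,p}$ with $q \in Q^{\otimes n}$, $p \in P^{\otimes n}$ and $\Theta_2 = \theta_{q'_1 \otimes q'_2,\, p'}$ with $q'_1 \in Q^{\otimes n}$, $q'_2 \in Q$, $p' \in P^{\otimes n+1}$ (since every element of $Q^{\otimes n+1}$ is a sum of elementary tensors of this shape). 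The case $n=0$ is covered directly by the identity $\pi_{T,S}(\Delta(r)\Theta) = \sigma(r)\pi_{T,S}(\Theta)$ established in Proposition \ref{lemma_2}, so I may assume $n \geq 1$.

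First I would compute $(\Theta_1 \otimes 1_Q)\Theta_2$ explicitly. For any $x \in Q^{\otimes n+1}$ one has $\Theta_2(x) = (q'_1 \otimes q'_2)\psi_{n+1}(p' \otimes x) = q'_1 \otimes q'_2 \psi_{n+1}(p' \otimes x)$, and applying $\Theta_1 \otimes 1_Q$ (by the lemma immediately preceding the statement) yields
\begin{equation*}
(\Theta_1 \otimes 1_Q)\Theta_2(x) = q\psi_n(p \otimes q'_1) \otimes q'_2 \psi_{n+1}(p' \otimes x),
\end{equation*}
so $(\Theta_1 \otimes 1_Q)\Theta_2 = \theta_{q\psi_n(p \otimes q'_1) \otimes q'_2,\, p'}$. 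Applying $\pi_{T^{n+1},S^{n+1}}$ gives the left-hand side as $T^{n+1}\bigl(q\psi_n(p \otimes q'_1) \otimes q'_2\bigr) S^{n+1}(p')$.

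Next I would attack the right-hand side. Using Lemma \ref{lemma:tensor} to factor $T^{n+1}(q'_1 \otimes q'_2) = T^n(q'_1)T(q'_2)$, and invoking the covariance identity $S^n(p)T^n(q'_1) = \sigma(\psi_n(p \otimes q'_1))$ (which follows by iterating condition (5) in Definition \ref{def:representations}, just as in the proof of Lemma \ref{lemma:inj}), one obtains
\begin{equation*}
T^n(q)S^n(p)T^{n+1}(q'_1 \otimes q'_2)S^{n+1}(p') = T^n(q)\sigma(\psi_n(p \otimes q'_1))T(q'_2)S^{n+1}(p').
\end{equation*}
The $R$-bimodule properties of $T$ in Definition \ref{def:representations}(4) let one absorb $\sigma(\psi_n(p \otimes q'_1))$ into the $T$ factors, giving $T^n(q)T(\psi_n(p \otimes q'_1) q'_2)S^{n+1}(p')$, which by Lemma \ref{lemma:tensor} equals $T^{n+1}\bigl(q \otimes \psi_n(p \otimes q'_1) q'_2\bigr)S^{n+1}(p')$.

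Finally, since the tensor product over $R$ is $R$-balanced, $q \otimes \psi_n(p \otimes q'_1) q'_2 = q\psi_n(p \otimes q'_1) \otimes q'_2$ in $Q^{\otimes n+1}$, and the two sides coincide. The only subtlety I anticipate is bookkeeping with the $R$-balanced tensors and correctly using condition \textbf{(FS)} implicitly through the well-definedness of $\pi_{T^{n+1},S^{n+1}}$ on finite rank operators (guaranteed by Proposition \ref{lemma_2} applied to $(P^{\otimes n+1}, Q^{\otimes n+1}, \psi_{n+1})$, which also satisfies \textbf{(FS)} by Lemma \ref{lemma_1}); the computation itself is a direct unwinding.
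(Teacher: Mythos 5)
Your proof is correct and follows essentially the same route as the paper: reduce to rank-one generators $\theta_{q'_1\otimes q'_2,p'}$ of $\mathcal{F}_{P^{\otimes n+1}}(Q^{\otimes n+1})$, handle $n=0$ via Proposition \ref{lemma_2}, and unwind the definitions. The only difference is that the paper keeps $\Theta_1$ general and invokes the identity $\pi_{T^n,S^n}(\Theta_1)T^n(q)=T^n(\Theta_1(q))$ from Proposition \ref{lemma_2}, which shortens the computation slightly compared to your fully expanded rank-one calculation using $S^n(p)T^n(q'_1)=\sigma(\psi_n(p\otimes q'_1))$; both are valid.
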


\begin{proof}
  If $n=0$, then the result follows directly from Proposition
  \ref{lemma_2}. Assume that $n\in\N$.
  It is enough to prove the lemma in the case where
  $\Theta_2=\theta_{q\otimes q',p}$ and $q\in Q^{\otimes n}$, $q'\in Q$
  and $p\in P^{\otimes n+1}$. In that case $(\Theta_1\otimes
  1_Q)\theta_{q\otimes q',p}=\theta_{\Theta_1(q)\otimes q',p}$, so it
  follows from Proposition \ref{lemma_2} that
  \begin{align*}
    \pi_{T^{n+1},S^{n+1}}\bigl((\Theta_1\otimes 1_Q)\theta_{q\otimes
      q',p}\bigr)
    &=\pi_{T^{n+1},S^{n+1}}(\theta_{\Theta_1(q)\otimes q',p})
    =T(\Theta_1(q)\otimes q')S(p)\\
    &=T\bigl(\Theta_1(q)\bigr)T(q')S(p)
    =\pi_{T^n,S^n}(\Theta_1)T(q)T(q')S(p)\\
    &=\pi_{T^n,S^n}(\Theta_1)T(q\otimes q')S(p)
    =\pi_{T^n,S^n}(\Theta_1)\pi_{T^{n+1},S^{n+1}}(\theta_{\Theta_1(q)\otimes q',p}).
  \end{align*}
\end{proof}

\begin{lem} \label{lemma:graded}
  Let $R$ be a ring, let $(P,Q,\psi)$ be an $R$-system, let
  $(S,T,\sigma,B)$ be a surjective and graded
  covariant representation of $(P,Q,\psi)$ and let $H$ be a two-sided
  ideal of $B$. If $H$ is generated as a two-sided ideal of $B$ by
  $H\cap B^{(0)}$, then $H$ is graded. If $(P,Q,\psi)$ satisfies
  condition \textbf{(FS)} and $H$ is graded, then $H$ is
  generated as a two-sided ideal of $B$ by $H\cap B^{(0)}$.
\end{lem}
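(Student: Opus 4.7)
For the first implication no hypothesis on $(P,Q,\psi)$ is needed. The plan is to observe that every element of $H\cap B^{(0)}$ is homogeneous of degree $0$, so the two-sided ideal it generates in $B$ is spanned by products $b_1hb_2$ with $b_1,b_2\in B$ and $h\in H\cap B^{(0)}$. Expanding $b_1$ and $b_2$ into their homogeneous components with respect to $\oplus_{n\in\Z}B^{(n)}$ then exhibits $H$ as a sum of homogeneous elements, which immediately forces $H=\oplus_{n\in\Z}(H\cap B^{(n)})$.

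For the second implication, assume $(P,Q,\psi)$ satisfies \textbf{(FS)} and that $H$ is graded. Set $H_0:=H\cap B^{(0)}$ and let $H'$ denote the two-sided ideal of $B$ generated by $H_0$; since $H$ is graded it suffices to show $H\cap B^{(n)}\subseteq H'$ for every $n\in\Z$. The case $n=0$ is trivial and $n<0$ is dual to $n>0$, so I focus on $n>0$. Given $a\in H\cap B^{(n)}$, the key step will be to construct a finite sum $\sum_i T^n(q_i)S^n(p_i)$ that acts as a left identity on $a$; once this is in hand, $a=\sum_i T^n(q_i)(S^n(p_i)a)$ with $S^n(p_i)a\in B^{(0)}\cap H=H_0$, so $a\in H'$. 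To produce such a sum I would first use surjectivity of $(S,T,\sigma,B)$ together with Theorem \ref{theor:toeplitz} and Proposition \ref{prop:graded} to write $a$ as a finite linear combination of basic monomials of the form $T^{n+k}(q)S^k(p)$ with $k\in\No$, $q\in Q^{\otimes n+k}$, $p\in P^{\otimes k}$ (the case $k=0$ being the monomial $T^n(q)$). Splitting $q=q^{(1)}\otimes q^{(2)}$ with $q^{(1)}\in Q^{\otimes n}$ in each summand yields a finite set $\{q^{(1)}_j\}\subseteq Q^{\otimes n}$. By Lemma \ref{lemma_1} the tensor power $(P^{\otimes n},Q^{\otimes n},\psi_n)$ satisfies \textbf{(FS)}, so I can choose $\Theta=\sum_i\theta_{q_i,p_i}\in\mathcal{F}_{P^{\otimes n}}(Q^{\otimes n})$ with $\Theta(q^{(1)}_j)=q^{(1)}_j$ for every $j$. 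Combining the covariance identity $S^n(p_i)T^n(q^{(1)}_j)=\sigma(\psi_n(p_i\otimes q^{(1)}_j))$ with $T^n(q_i)\sigma(r)=T^n(q_ir)$ then reduces $\sum_i T^n(q_i)S^n(p_i)\cdot T^{n+k}(q^{(1)}_j\otimes q^{(2)})S^k(p)$ to $T^n(\Theta(q^{(1)}_j))T^k(q^{(2)})S^k(p)=T^{n+k}(q^{(1)}_j\otimes q^{(2)})S^k(p)$, as needed. The case $n<0$ is handled dually by extracting a trailing factor $p^{(2)}_j\in P^{\otimes|n|}$ from each basic monomial and applying \textbf{(FS)} on the $P$-side to obtain a right identity.

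The main obstacle is bookkeeping rather than a conceptual difficulty: one must verify uniformly that $\sum_i T^n(q_i)S^n(p_i)$ fixes both the generic monomial $T^{n+k}(q^{(1)}_j\otimes q^{(2)})S^k(p)$ and the boundary monomial $T^n(q^{(1)}_j)$ in which the middle factor $T^k(q^{(2)})S^k(p)\in B^{(0)}$ is absent. Both reduce to the single identity $\sum_i q_i\psi_n(p_i\otimes q^{(1)}_j)=\Theta(q^{(1)}_j)=q^{(1)}_j$, but the computation has to be arranged so that the boundary case is subsumed under the generic one. This is why the inheritance of \textbf{(FS)} to arbitrary tensor powers (Lemma \ref{lemma_1}) is the crucial technical input and explains the role of \textbf{(FS)} in the hypothesis of the second implication.
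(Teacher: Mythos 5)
Your proof is correct and follows essentially the same route as the paper's: write an element $a\in H\cap B^{(n)}$ (for $n>0$) as a sum of monomials with leading factor in $T^n(Q^{\otimes n})$, invoke Lemma \ref{lemma_1} to produce $\sum_i T^n(q_i)S^n(p_i)$ acting as a left identity on $a$, and conclude $a=\sum_i T^n(q_i)\bigl(S^n(p_i)a\bigr)$ with each $S^n(p_i)a\in H\cap B^{(0)}$. The only nit is that in the first implication the two-sided ideal generated by $H\cap B^{(0)}$ in a possibly non-unital ring also contains $h$, $hb$ and $bh$, not only the products $b_1hb_2$; these extra terms decompose into homogeneous pieces in exactly the same way, so your conclusion is unaffected.
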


\begin{proof}
  For each $n\in\Z\setminus\{0\}$ let
  \begin{multline*}
    H^{(n)}=\spa\biggl(\bigcup_{m\in\Z}\{yxz\mid y\in B^{(m)},\  x\in
    H\cap B^{(0)},\ z\in B^{(n-m)}\}\\
    \cup\{xz\mid x\in
    H\cap B^{(0)},\ z\in B^{(n)}\}\cup\{yx\mid y\in B^{(n)},\  x\in
    H\cap B^{(0)}\}\biggr),
  \end{multline*}
  and let
  \begin{equation*}
    H^{(0)}=H\cap B^{(0)}.
  \end{equation*}
  Then $H^{(n)}\subseteq B^{(n)}$ for all $n\in\Z$, and it is not
  difficult to show that $\oplus_{n\in\Z}H^{(n)}$ is
  a graded two-sided ideal of
  $B$ which contains
  $H\cap B^{(0)}$, and that every two-sided
  of $B$ which contains
  $H\cap B^{(0)}$ also contains
  $\oplus_{n\in\Z}H^{(n)}$. So
  if $H$ is generated by $H\cap B^{(0)}$, then it is equal to
  $\oplus_{n\in\Z}H^{(n)}$ and thus graded.

  For the last assertion assume that $H$ is graded and that
  $(P,Q,\psi)$ satisfies condition \textbf{(FS)}. We will
  show that $H=\oplus_{n\in\Z}H^{(n)}$. Since $H$ is graded it
  is enough to show that if $n\in\Z$ and $x\in
  H\cap B^{(n)}$, then $x\in H^{(n)}$. If
  $n>0$ and $x\in
  H\cap B^{(n)}$, then there exists $q_0,q_1,q_2,\dots,q_k\in
  Q^{\otimes n}$ and $y_1,y_2,\dots,y_k\in B^{(0)}$ such that
  $x=T^n(q_0)+\sum_{i=1}^kT^n(q_i)y_i$. It follows from Lemma \ref{lemma_1}
  that there exist $q_1',q_2',\dots,q_l'\in Q^{\otimes n}$ and
  $p_1',p_2',\dots,p_l'\in P^{\otimes n}$ such that $\sum_{j=1}^lq_j'\psi_n(p_j'\otimes
  q_i)=q_i$ for $i\in\{0,1,2,\dots,k\}$. We then have that
  \begin{align*}
    \sum_{j=1}^lT^n(q_j')S^n(p_j')x
     & =\sum_{j=1}^lT^n(q_j')S^n(p_j')T^n(q_0)
    +\sum_{i=1}^k\sum_{j=1}^lT^n(q_j')S^n(p_j')T^n(q_i)y_i\\
    & =T^n(q_0)+\sum_{i=1}^kT^n(q_i)y_i
    =x\,,
  \end{align*}
  and that $S^n(p_j')x\in H^{(0)}$ for every $j\in\{1,2,\dots,l\}$,
  from which it follows that $x\in H^{(n)}$. One can in a similar way
  show that if $n<0$ and $x\in
  H\cap B^{(n)}$, then $x\in H^{(n)}$. Thus we have for all $n\in\Z$
  that if $x\in
  H\cap B^{(n)}$, then $x\in H^{(n)}$, from which it follows that
  $H=\oplus_{n\in\Z}H^{(n)}$.
\end{proof}

\begin{lem} \label{lemma:invariantideal}
  Let $R$ be a ring, $(P,Q,\psi)$ an $R$-system
  satisfying condition \textbf{(FS)} and let $H$ be a two-sided
  ideal of $\mathcal{T}_{(P,Q,\psi)}$. Then we have that
  \begin{equation*}
    J_{\uparrow H}:=\{r\in\Delta^{-1}(\mathcal{F}_P(Q))\mid
    \iota_R(r)-\pi(\Delta(r))\in H\}
  \end{equation*}
  is a $\psi$-compatible two-sided ideal of $R$ and 
  $\mathcal{T}(J_{\uparrow H})\subseteq H$. If in addition $H$ is graded and 
  $H\cap\iota_R(R)=\{0\}$, then
  $J_{\uparrow H}$ is faithful and $\mathcal{T}(J_{\uparrow H})=H$.
\end{lem}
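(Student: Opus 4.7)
To verify Part~1 I would first note that $\psi$-compatibility of $J_{\uparrow H}$ is built into the definition and additive closure is immediate from linearity. For closure under $R$-action, given $s \in R$ and $r \in J_{\uparrow H}$, the element $\Delta(sr) = \Delta(s)\Delta(r)$ lies in $\mathcal{F}_P(Q)$ (since $\mathcal{F}_P(Q)$ is a two-sided ideal of $\mathcal{L}_P(Q)$), and Proposition~\ref{lemma_2} gives
\begin{equation*}
\iota_R(sr) - \pi(\Delta(sr)) = \iota_R(s)\bigl(\iota_R(r) - \pi(\Delta(r))\bigr) \in H,
\end{equation*}
with a symmetric argument for right multiplication. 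The inclusion $\mathcal{T}(J_{\uparrow H}) \subseteq H$ is then immediate from the minimality in the definition of $\mathcal{T}(J_{\uparrow H})$. Under the extra hypotheses, faithfulness follows at once: for $r \in J_{\uparrow H} \cap \ker\Delta$, $\iota_R(r) = \iota_R(r) - \pi(\Delta(r)) \in H \cap \iota_R(R) = \{0\}$, forcing $r = 0$ by injectivity of $\iota_R$ (Theorem~\ref{theor:toeplitz}).

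For the reverse inclusion $H \subseteq \mathcal{T}(J_{\uparrow H})$, since $H$ is $\Z$-graded and $(P,Q,\psi)$ satisfies \textbf{(FS)}, Lemma~\ref{lemma:graded} reduces the problem to showing $H \cap \mathcal{T}_{(P,Q,\psi)}^{(0)} \subseteq \mathcal{T}(J_{\uparrow H})$. Every $x$ in this intersection decomposes uniquely via the finer $\semigroup$-grading from Theorem~\ref{theor:toeplitz} as $x = \iota_R(r_0) + \sum_{n=1}^N \pi(\Theta_n)$ with $r_0 \in R$ and $\Theta_n \in \mathcal{F}_{P^{\otimes n}}(Q^{\otimes n})$ (using the identification $\mathcal{T}_{(n,n)} = \pi(\mathcal{F}_{P^{\otimes n}}(Q^{\otimes n}))$ from Proposition~\ref{lemma_2}). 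The plan is to induct on the maximum $N$ with $\Theta_N \neq 0$; the base $N = 0$ is immediate from $H \cap \iota_R(R) = \{0\}$.

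The key input for the inductive step is the identity
\begin{equation*}
\sum_{n=0}^N \Theta_n \otimes 1_{Q^{\otimes N-n}} = 0 \quad \text{in} \quad \mathcal{L}_{P^{\otimes N}}(Q^{\otimes N}),
\end{equation*}
adopting the convention $\Theta_0 = r_0$ so that $\Theta_0 \otimes 1_{Q^{\otimes N}}$ denotes iterated left action on $Q^{\otimes N}$. To derive it, I would observe that for all $p \in P^{\otimes N}$ and $q \in Q^{\otimes N}$, the semigroup product $(0,N)(n,n)(N,0) = (0,0)$ places $\iota_P^N(p) x \iota_Q^N(q)$ in $\mathcal{T}_{(0,0)} \cap H = \iota_R(R) \cap H = \{0\}$; expanding the product via Proposition~\ref{lemma_2} and the Toeplitz relations rewrites it as $\iota_R(\psi_N(p \otimes T(q)))$ with $T$ the above sum, and injectivity of $\iota_R$ together with non-degeneracy of $\psi_N$ (Lemma~\ref{rema_1} applied to $(P^{\otimes N}, Q^{\otimes N}, \psi_N)$, which satisfies \textbf{(FS)} by Lemma~\ref{lemma_1}) forces $T = 0$. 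For $N = 1$ this reads $\Delta(r_0) = -\Theta_1$, putting $\Delta(r_0)$ in $\mathcal{F}_P(Q)$ and showing $x = \iota_R(r_0) - \pi(\Delta(r_0)) \in H$, hence $r_0 \in J_{\uparrow H}$ and $x \in \mathcal{T}(J_{\uparrow H})$. For $N \geq 2$, the plan will be to construct an element
\begin{equation*}
y = \sum_k \iota_Q^{N-1}(q_k)\bigl(\iota_R(r_k) - \pi(\Delta(r_k))\bigr)\iota_P^{N-1}(p_k) \in \mathcal{T}(J_{\uparrow H}),
\end{equation*}
with $r_k \in J_{\uparrow H}$, whose $\mathcal{T}_{(N,N)}$-component matches $\pi(\Theta_N)$; then $x - y \in H$ will have strictly smaller maximum index, and the inductive hypothesis will apply. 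The main obstacle will be this construction of $y$: extracting from the abstract identity above, together with condition~\textbf{(FS)} (via Lemmas~\ref{lemma_1} and~\ref{lemma:two-compact}), enough concrete elements of $J_{\uparrow H}$ and accompanying tensors in $Q^{\otimes N-1}, P^{\otimes N-1}$ to effect the cancellation of the top piece $\pi(\Theta_N)$.
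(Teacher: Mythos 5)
Your first paragraph is fine and matches the paper: $\psi$-compatibility and the ideal property via Proposition \ref{lemma_2}, the inclusion $\mathcal{T}(J_{\uparrow H})\subseteq H$ from minimality, and faithfulness from $H\cap\iota_R(R)=\{0\}$ are all exactly the paper's arguments. The reduction to $H\cap\mathcal{T}_{(P,Q,\psi)}^{(0)}$ via Lemma \ref{lemma:graded}, the $\semigroup$-decomposition $x=\iota_R(r_0)+\sum_{n=1}^N\pi(\Theta_n)$, and the induction on $N$ also agree with the paper. Your key identity $\sum_{n=0}^N\Theta_n\otimes 1_{Q^{\otimes N-n}}=0$, derived by compressing $x$ to $\mathcal{T}_{(0,0)}\cap H=\{0\}$ and using injectivity of $\iota_R$ together with non-degeneracy of $\psi_N$, is correct and is in fact a slightly cleaner packaging than the paper's device of passing to the quotient representation on $\mathcal{T}_{(P,Q,\psi)}/H$.

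However, for $N\ge 2$ you stop exactly where the real work begins: you state that "the main obstacle will be this construction of $y$" and do not carry it out. That construction is the content of the lemma, and it requires an idea you have not supplied, namely a mechanism for \emph{producing elements of $J_{\uparrow H}$} out of $x$. The paper's mechanism is this: after using \textbf{(FS)} and Lemma \ref{lemma:two-compact} to replace $\sum_{i<N}\pi(\Theta_i)$ modulo $H$ by a single term $-\pi(\Theta)$ with $\Theta\in\mathcal{F}_{P^{\otimes N-1}}(Q^{\otimes N-1})$ (so that $\pi(\Theta)+\pi(\Theta_N)\in H$), one frames $\pi(\Theta)+\pi(\Theta_N)$ on \emph{both} sides by units coming from \textbf{(FS)} for $(P^{\otimes N-1},Q^{\otimes N-1},\psi_{N-1})$, reducing to elements $\iota_P^{N-1}(b)\bigl(\pi(\Theta)+\pi(\Theta_N)\bigr)\iota_Q^{N-1}(d)\in\bigl(\iota_R(R)+\pi(\mathcal{F}_P(Q))\bigr)\cap H$; then Lemma \ref{lemma:compact}, applied to the quotient representation on $\mathcal{T}_{(P,Q,\psi)}/H$ (which is \emph{injective} precisely because $H\cap\iota_R(R)=\{0\}$ — this is where that hypothesis re-enters), identifies such an element as $\iota_R(r)-\pi(\Delta(r))$ with $r=\psi_{N-1}(b\otimes\Theta(d))\in J_{\uparrow H}$. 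Undoing the framing then yields exactly the $y=\sum_k\iota_Q^{N-1}(q_k)\bigl(\iota_R(r_k)-\pi(\Delta(r_k))\bigr)\iota_P^{N-1}(p_k)$ you postulate. Without this double \textbf{(FS)} framing and the appeal to Lemma \ref{lemma:compact} in the quotient, there is no route from your abstract identity to concrete members of $J_{\uparrow H}$, so the inductive step — and hence the equality $\mathcal{T}(J_{\uparrow H})=H$ — remains unproved.
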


\begin{proof}
  It directly follows from Proposition \ref{lemma_2} that $J_{\uparrow H}$ is a
  two-sided ideal of $R$, and it is $\psi$-compatible by construction. It follows directly 
  from the definition of $\mathcal{T}(J_{\uparrow H})$ that 
  $\mathcal{T}(J_{\uparrow H})\subseteq H$.

  Assume that $H$ is graded and $H\cap\iota_R(R)=\{0\}$.
  If $x\in J_{\uparrow H}\cap\ker\Delta$, then
  $\iota_R(x)=\iota_R(x)-\pi(\Delta(r))\in \mathcal{T}(J_{\uparrow H})\subseteq
  H$ and so $x=0$ proving that $J_{\uparrow H}\cap\ker\Delta=\{0\}$.

  We will then prove
  that $H\subseteq \mathcal{T}(J_{\uparrow H})$. It follows from Lemma
  \ref{lemma:graded} that it is enough to show that
  $H\cap\mathcal{T}_{(P,Q,\psi)}^{(0)}\subseteq \mathcal{T}(J_{\uparrow H})$.
  It follows from Theorem \ref{theor:toeplitz} and Proposition
  \ref{prop:Z-grading} and \ref{lemma_2} that
  $\mathcal{T}_{(P,Q,\psi)}^{(0)}=\oplus^\infty_{i=0}
  \pi(\mathcal{F}_{P^{\otimes i}}(Q^{\otimes i}))$ (where we let
  $\mathcal{F}_{P^{\otimes 0}}(Q^{\otimes 0})=R$ and
  $\pi:\mathcal{F}_{P^{\otimes 0}}(Q^{\otimes 0})\longrightarrow
  \mathcal{T}_{(P,Q,\psi)}=\iota_R$), so it is enough to prove that the
  following inclusion holds
  \begin{equation} \label{eq:1}
    H\cap\left(\bigoplus^n_{i=0}\pi\bigl(\mathcal{F}_{P^{\otimes
          i}}(Q^{\otimes i})\bigr)\right)\subseteq \mathcal{T}(J_{\uparrow H}),
  \end{equation}
  for every $n\in \mathbb{N}$.
  We will prove that \eqref{eq:1} holds
  by induction over $n$.

  First we notice that
  $H\cap(\pi(\mathcal{F}_{P^{\otimes 0}}(Q^{\otimes 0})))=H\cap
  \iota_R(R)=\{0\}\subseteq \mathcal{T}(J_{\uparrow H})$, proving that
  \eqref{eq:1} holds for $n=0$.

  Assume now that $n\in\No$ and that \eqref{eq:1} holds.
  Let $\Theta_i\in \mathcal{F}_{P^{\otimes i}}(Q^{\otimes i})$ for
  $i\in \{0,1,\ldots,n+1\}$ such that
  $\sum^{n+1}_{i=0}\pi(\Theta_i)\in H$. We want to prove that
  $\sum^{n+1}_{i=0}\pi(\Theta_i)\in\mathcal{T}(J_{\uparrow H})$.
  Let $\rho_H:\mathcal{T}_{(P,Q,\psi)}\longrightarrow \mathcal{T}_{(P,Q,\psi)}/H$
  denote the quotient map, and let $\sigma_H:=\rho_H\circ\iota_R$,
  $T_H:=\rho_H\circ\iota_Q$ and $S_H:=\rho_H\circ\iota_P$. Then
  $(S_H,T_H,\sigma_H,\mathcal{T}_{(P,Q,\psi)}/H)$ is an injective
  covariant representation of $(P,Q,\psi)$ and
  $\rho_H\circ\pi=\pi_{T_H,S_H}$.
  We then have that
  $\sum^{n+1}_{i=0}\pi_{T_H^i,S_H^i}(\Theta_i)
  =\rho_H(\sum^{n+1}_{i=0}\pi(\Theta_i))=0$.
  Choose $q_j\in
  Q^{\otimes n}$, $p_j\in P^{\otimes n}$, $q'_j\in Q$,
  $p'_j\in P$ for $j\in \{1,\ldots,m\}$ such that
  $\Theta_{n+1}=\sum^{m}_{j=1}\theta_{q_j\otimes q'_j, p'_j\otimes
    p_j}$
  and $a_h\in Q^{\otimes n}$, $b_h\in P^{\otimes n}$ for $h\in
  \{1,\ldots,l\}$ such
  that $\sum^l_{h=1}\theta_{a_h,b_h}(q_j)=q_j$ for every $j\in
  \{1,\ldots,m\}$. We then have that
  $\sum^l_{h=1}(\theta_{a_h,b_h}\otimes 1_Q)\Theta_{n+1}=\Theta_{n+1}$.
  Let $\Theta=
  \left(\sum^l_{h=1}\theta_{a_h,b_h}\right)
  \left(\sum^{n}_{i=0}\Theta_i\otimes
    1_{Q^{\otimes n-i}}\right)\in
  \mathcal{F}_{P^{\otimes n}}(Q^{\otimes n})$.
  It follows from Lemma \ref{lemma:two-compact} that we then have
  that
  \begin{align*}
    \pi_{T_H^n,S_H^n}(\Theta)
    &= \pi_{T_H^n,S_H^n}\left(\biggl(\sum^l_{h=1}\theta_{a_h,b_h}\biggr)
      \biggl(\sum^{n}_{i=0}\Theta_i\otimes
      1_{Q^{\otimes n-i}}\biggr)\right)
    = \pi_{T_H^n,S_H^n}\biggl(\sum^l_{h=1}\theta_{a_h,b_h}\biggr)
    \sum^{n}_{i=0}\pi_{T_H^i,S_H^i}(\Theta_i)\\
    &=-\pi_{T_H^n,S_H^n}\biggl(\sum^l_{h=1}\theta_{a_h,b_h}\biggr)
    \pi_{T_H^{n+1},S_H^{n+1}}(\Theta_{n+1})
    =-\pi_{T_H^{n+1},S_H^{n+1}}
    \biggl(\sum^l_{h=1}(\theta_{a_h,b_h}\otimes
    1_Q)\Theta_{n+1}\biggr) \\
    &=-\pi_{T_H^{n+1},S_H^{n+1}}(\Theta_{n+1}){,}
    \end{align*}
    so
    $\sum^{n}_{i=0}\pi_{T_H^i,S_H^i}(\Theta_i)-\pi_{T_H^n,S_H^n}(\Theta)
    =\sum^{n+1}_{i=0}\pi_{T_H^i,S_H^i}(\Theta_i)=0$, and therefore
    $\sum^{n}_{i=0}\pi(\Theta_i)-\pi(\Theta)\in H$. Thus it
    follows from the induction assumption that
    $\sum^{n}_{i=0}\pi(\Theta_i)-\pi(\Theta)\in\mathcal{T}(J_{\uparrow H})$.
    Therefore it is enough to prove that
    $\pi(\Theta)+\pi(\Theta_{n+1})\in\mathcal{T}(J_{\uparrow H})$.

    Choose $q_j\in Q^{\otimes n}$, $p_j\in P^{\otimes n}$ for $j\in
    \{1,\ldots,m\}$ such that $\Theta=\sum^m_{j=1}\theta_{q_j,p_j}$
    and $q'_h\in Q^{\otimes n}$, $p'_h\in P^{\otimes n}$, $q''_h\in
    Q$, $p''_h\in P$ for $h\in \{1,\ldots,l\}$ such that
    $\Theta_{n+1}=\sum^l_{h=1}\theta_{q'_h\otimes q''_h,p''_h\otimes
      p'_h}$. Now since $(P^{\otimes n},Q^{\otimes n},\psi_n)$ satisfies condition \textbf{(FS)} there exist $a_r\in Q^{\otimes n}$ and $b_r\in P^{\otimes
      n}$ for $r\in \{1,\ldots,s\}$ such that
    $\sum^s_{r=1}\theta_{a_r,b_r}(q_j)=q_j$ for all $j\in
    \{1,\ldots,m\}$, and $\sum^s_{r=1}\theta_{a_r,b_r}(q'_h)=q'_h$ for
    all $h\in \{1,\ldots,l\}$. There also exist $c_t\in P^{\otimes n}$ and  $d_t\in
    Q^{\otimes n}$ for $t\in \{1,\ldots,v\}$ such that
    $\sum^v_{t=1}\theta_{c_t,d_t}(p_j)=p_j$ for all $j\in
    \{1,\ldots,m\}$, and $\sum^v_{t=1}\theta_{c_t,d_t}(p'_h)=p'_h$ for
    all $h\in \{1,\ldots,l\}$.

    Then we have
    \begin{equation*}
      \sum^s_{r=1}\iota_Q^n(a_r)\iota_P^n(b_r)
      \bigl(\pi(\Theta)+\pi(\Theta_{n+1})\bigr)
      \sum^v_{t=1}\iota_Q^n(d_t)\iota_P^n(c_t)
      =\pi(\Theta)+\pi(\Theta_{n+1}){,}
    \end{equation*}
    so it is enough to prove that $\iota_P^n(b)
    (\pi(\Theta)+\pi(\Theta_{n+1}))\iota_Q^n(d)\in
    \mathcal{T}(J_{\uparrow H})$ for every $b\in P^{\otimes n}$ and $d\in
    Q^{\otimes n}$.
    Let $r=\psi_n(b\otimes \Theta(d))\in R$.
    We then have that
    $$\sigma_H(r)=S_H(b)\pi_{T_H,S_H}(\Theta)T_H(d)
    =-S_H(b)\pi_{T_H,S_H}(\Theta_{n+1})T_H(d)\in
    \pi_{T_H,S_H}(\mathcal{F}_P(Q))\,,$$ such it follows from Lemma
    \ref{lemma:compact} that $r\in\Delta^{-1}(\mathcal{F}_P(Q))$ and
    $\sigma_H(r)=\pi_{T_H,S_H}(\Delta(r))$. Hence
    $r\in J_{\uparrow H}$.
    Thus
    \begin{equation*}
      \iota_P^n(b)\bigl(\pi(\Theta)+\pi(\Theta_{n+1})\bigr)\iota_Q^n(d)
     =\iota_P^n(b)\pi(\Theta)\iota_Q^n(d)
     +\iota_P^n(b)\pi(\Theta_{n+1})\iota_Q^n(d)
     =\iota_R(r)-\pi(\Delta(r))\in
     \mathcal{T}(J_{\uparrow H}){.}
   \end{equation*}
\end{proof}

\begin{proof}[Proof of Theorem \ref{prop:psi}]
  \eqref{item:19}: If there exists a ring homomorphism
  $\eta:\mathcal{O}_{(P,Q,\psi)}(J)\longrightarrow
  B$ such that $\eta\circ\iota_Q^J=T$,
  $\eta\circ\iota_P^J=S$ and
  $\eta\circ\iota_R^J=\sigma$, and $x\in J$, then $\sigma(x)=\eta(\iota_R^J(x))
  =\eta(\pi^J(\Delta(x)))=\pi_{T,S}(\Delta(x))$, which proves that the representation
  $(S,T,\sigma,B)$ is Cuntz-Pimsner invariant with respect to $J$.

  \eqref{item:20}: If the representation $(S,T,\sigma,B)$ is Cuntz-Pimsner invariant with
    respect to $J$, then the existence and uniqueness of $\eta_{(S,T,\sigma,B)}^J$ follows
    from Proposition \ref{univ_cuntz}.

    \eqref{item:21}: Assume that $\eta_{(S,T,\sigma,B)}^J$ is an isomorphism. Then
    $\sigma=\eta_{(S,T,\sigma,B)}^J\circ\iota_R^J$ is injective, and
    $$\oplus_{n\in\Z}\eta_{(S,T,\sigma,B)}^J(\mathcal{O}_{(P,Q,\psi)}^{(n)}(J))$$
    is a $\Z$-grading of $B$ such that
    $$\sigma(R)\subseteq \eta_{(S,T,\sigma,B)}^J(\mathcal{O}_{(P,Q,\psi)}^{(0)}(J))\,,$$
    $$T(Q)\subseteq \eta_{(S,T,\sigma,B)}^J(\mathcal{O}_{(P,Q,\psi)}^{(1)}(J)) \text{ and}$$
    $$S(P)\subseteq
    \eta_{(S,T,\sigma,B)}^J(\mathcal{O}_{(P,Q,\psi)}^{(-1)}(J))\,.$$
    Hence $(S,T,\sigma,B)$ is injective, surjective and graded.
    If $x\in J$, then we have that
    \begin{equation*}
      \sigma(x)=\eta_{(S,T,\sigma,B)}^J(\iota_R^J(x))
      =\eta_{(S,T,\sigma,B)}^J(\pi^J(\Delta(x)))=\pi_{T,S}(\Delta(x)),
    \end{equation*}
    and thus $x\in J_{(S,T,\sigma,B)}$. If $x\in J_{(S,T,\sigma,B)}$,
    then it follows from Lemma \ref{lemma:compact} that
    $x\in\Delta^{-1}(\mathcal{F}_P(Q))$ and
    \begin{equation*}
      \eta_{(S,T,\sigma,B)}^J(\iota_R^J(x))=\sigma(x)
      =\pi_{T,S}(\Delta(x))=\eta_{(S,T,\sigma,B)}^J(\pi^J(\Delta(x))),
    \end{equation*}
    and since $\eta_{(S,T,\sigma,B)}^J$ is injective, it follows that
    $\iota_R^J(x)=\pi^J(\Delta(x))$. It follows that
    $\iota_R(x)-\pi(\Delta(x))\in\mathcal{T}(J)$, and we then get from
    Lemma \ref{lemma_5} that
    $\iota_R(x)=\projection_{(0,0)}(\iota_R(x)-\pi(\Delta(x)))\in\iota_R(J)$,
    and thus that $x\in J$. Hence $J=J_{(S,T,\sigma,B)}$.

    Assume then that $(S,T,\sigma,B)$ is surjective, injective and
    graded and that $J=J_{(S,T,\sigma,B)}$. Then
    $\eta_{(S,T,\sigma,B)}^J$ is surjective. Let
    $\eta_{(S,T,\sigma,B)}:\mathcal{T}_{(P,Q,\psi)}\longrightarrow B$ be as in
    Theorem \ref{theor:toeplitz}. Then
    $\eta_{(S,T,\sigma,B)}=\eta_{(S,T,\sigma,B)}^J\circ \rho_J$, so
    $\eta_{(S,T,\sigma,B)}^J$ is injective if $\ker
    \eta_{(S,T,\sigma,B)}=\ker\rho_J=\mathcal{T}(J)$. Let
    $H=\ker\eta_{(S,T,\sigma,B)}$. Then $H$ is a graded two-sided ideal
    of $\mathcal{T}_{(P,Q,\psi)}$ and $H\cap\iota_R(R)=\{0\}$, so it
    follows from Lemma \ref{lemma:invariantideal} that
    $\mathcal{T}(J_{\uparrow H})=H$. It easily follows from Lemma
    \ref{lemma:compact} that $J=J_{(S,T,\sigma,B)}=J_{\uparrow H}$, so
    we have that $\ker \eta_{(S,T,\sigma,B)}=H
    =\mathcal{T}(J_{\uparrow H})=\mathcal{T}(J)$ as desired.
\end{proof}

\section{The Graded Uniqueness Theorem}

We will in this section look at some consequences of the
classification of the surjective, injective and graded
representations of an $R$-system $(P,Q,\psi)$ satisfying condition
\textbf{(FS)}. We begin by noticing that we get a description of all
graded two-sided ideal $H$ of $\mathcal{T}_{(P,Q,\psi)}$ satisfying 
$\iota_R(R)\cap H=\{0\}$, and then
that the Fock space representation of $(P,Q,\psi)$ is isomorphic to
the Toeplitz representation if $R$ is right non-degenerate and
$(P,Q,\psi)$ satisfies condition \textbf{(FS)}.
Finally we will characterize the faithful $\psi$-compatible
two-sided ideals $J$ of $R$ 
for which $\mathcal{O}_{(P,Q,\psi)}(J)$ satisfies the
\emph{Graded Uniqueness Theorem}, cf. \cite[Theorem 4.8]{TF}.

\begin{rema} \label{rema:correspondence}
Let $R$ be a right non-degenerate ring and let $(P,Q,\psi)$ be an
$R$-system sa\-tis\-fy\-ing condition \textbf{(FS)}. It easily follows
from Lemma \ref{lemma:compact} that if $H$ is a graded two-sided of
$\mathcal{T}_{(P,Q,\psi)}$ satisfying $\iota_R(R)\cap H=\{0\}$, then
$J_{\uparrow H}=J_{(\iota_P^{J_{\uparrow H}},\iota_Q^{J_{\uparrow
H}},\iota_R^{J_{\uparrow H}},\mathcal{O}_{(P,Q,\psi)}(J_{\uparrow
H}))}$. Thus it follows from Proposition \ref{prop_3} and Lemma
\ref{lemma:invariantideal} that
\begin{equation*}
  H\longmapsto J_{\uparrow H}\qquad J\longmapsto \mathcal{T}(J)
\end{equation*}
is an order preserving bijective correspondence between the set of
graded two-sided ideal $H$ of $\mathcal{T}_{(P,Q,\psi)}$ satisfying $\iota_R(R)\cap
H=\{0\}$, and the set of faithful $\psi$-compatible two-sided ideals $J$ of $R$.

We will later (cf. Corollary \ref {ideals_toeplitz}) classify all graded
two-sided ideals of $\mathcal{T}_{(P,Q,\psi)}$.
\end{rema}

We will now show that the Fock space representation of an $R$-system
$(P,Q,\psi)$ is isomorphic to the Toeplitz representation if $R$ is
right non-degenerate and $(P,Q,\psi)$ satisfies condition
\textbf{(FS)}.

\begin{prop} \label{prop:fock}
  Let $R$ be a right non-degenerate ring and let $(P,Q,\psi)$ be an
  $R$-system satisfying condition \textbf{(FS)}. Then the Fock space
  representation
  $(S_{\mathcal{F}},T_{\mathcal{F}},\sigma_{\mathcal{F}},\mathcal{F}_{(P,Q,\psi)})$
  of $(P,Q,\psi)$ is isomorphic to the Toeplitz representation
  $(\iota_P,\iota_Q,\iota_R,\mathcal{T}_{(P,Q,\psi)})$.
\end{prop}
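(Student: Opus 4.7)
The plan is to show that the unique ring homomorphism $\eta_{\mathcal{F}}\colon\mathcal{T}_{(P,Q,\psi)}\longrightarrow\mathcal{F}_{(P,Q,\psi)}$ supplied by Theorem \ref{theor:toeplitz} is an isomorphism. Surjectivity is immediate because the Fock representation is surjective, so the real issue is injectivity. I will deduce this from part (3) of Theorem \ref{prop:psi} applied to the ideal $J=\{0\}$: this ideal is trivially $\psi$-compatible and faithful, one has $\mathcal{T}(\{0\})=\{0\}$ so $\mathcal{O}_{(P,Q,\psi)}(\{0\})=\mathcal{T}_{(P,Q,\psi)}$, and the induced map $\eta^{\{0\}}_{(S_{\mathcal{F}},T_{\mathcal{F}},\sigma_{\mathcal{F}},\mathcal{F}_{(P,Q,\psi)})}$ coincides with $\eta_{\mathcal{F}}$ under this identification. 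Moreover every representation is vacuously Cuntz-Pimsner invariant relative to $\{0\}$. So it suffices to verify that the Fock representation is surjective, injective and graded, and that $J_{(S_{\mathcal{F}},T_{\mathcal{F}},\sigma_{\mathcal{F}},\mathcal{F}_{(P,Q,\psi)})}=\{0\}$.

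The first two properties are already supplied by Proposition \ref{toeplitz_rep}, using that $R$ is right non-degenerate. For the grading, I exploit that $F(Q)=\oplus_{n\in\No}Q^{\otimes n}$ is itself graded and that each of the generating operators shifts this grading by a definite integer: inspection of the matrix forms shows that $\phi_\infty(r)$ preserves degree, that $T_q$ raises it by one, and that $S_p$ lowers it by one. Defining $\mathcal{F}^{(k)}$ to consist of those elements of $\mathcal{F}_{(P,Q,\psi)}$ sending $Q^{\otimes n}$ into $Q^{\otimes n+k}$ for every $n$ (with the convention $Q^{\otimes m}=0$ for $m<0$), composition adds degrees, so every monomial in the generators lies in some $\mathcal{F}^{(k)}$; hence $\mathcal{F}_{(P,Q,\psi)}=\sum_{k\in\Z}\mathcal{F}^{(k)}$. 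The sum is direct because the decomposition of $F(Q)$ into the $Q^{\otimes n}$ is direct, so we obtain a $\Z$-grading with $\sigma_{\mathcal{F}}(R)\subseteq\mathcal{F}^{(0)}$, $T_{\mathcal{F}}(Q)\subseteq\mathcal{F}^{(1)}$ and $S_{\mathcal{F}}(P)\subseteq\mathcal{F}^{(-1)}$, which is exactly what Definition \ref{defi:graded rep} demands.

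The key remaining point is $J_{(S_{\mathcal{F}},T_{\mathcal{F}},\sigma_{\mathcal{F}},\mathcal{F}_{(P,Q,\psi)})}=\{0\}$. Suppose $r\in R$ and $\phi_\infty(r)=\pi_{T_{\mathcal{F}},S_{\mathcal{F}}}(\Theta)$ with $\Theta=\sum_i\theta_{q_i,p_i}\in\mathcal{F}_P(Q)$. Evaluate both sides on the vector $(r_0,0,0,\dots)\in F(Q)$ with $r_0\in R=Q^{\otimes 0}$ arbitrary. The left-hand side has zeroth component $rr_0$; on the right-hand side, the matrix form of $S_{p_i}$ has its column of index $0$ entirely zero, so $S_{p_i}(r_0,0,0,\dots)=0$ and consequently $T_{q_i}S_{p_i}(r_0,0,0,\dots)=0$ for every $i$. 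Hence $rr_0=0$ for every $r_0\in R$, and right non-degeneracy of $R$ forces $r=0$.

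With all hypotheses of part (3) of Theorem \ref{prop:psi} verified, that result delivers the desired isomorphism between $\mathcal{T}_{(P,Q,\psi)}$ and $\mathcal{F}_{(P,Q,\psi)}$ respecting the covariant representations. The only delicate point is the matrix-level calculation identifying $J_{(S_{\mathcal{F}},T_{\mathcal{F}},\sigma_{\mathcal{F}},\mathcal{F}_{(P,Q,\psi)})}$; the construction of the $\Z$-grading and the application of the classification are otherwise essentially bookkeeping.
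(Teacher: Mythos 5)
Your proposal is correct and follows essentially the same route as the paper's own proof: both verify that the Fock representation is surjective, injective and graded, then show $J_{(S_{\mathcal{F}},T_{\mathcal{F}},\sigma_{\mathcal{F}},\mathcal{F}_{(P,Q,\psi)})}=\{0\}$ by observing that every $\pi_{T_{\mathcal{F}},S_{\mathcal{F}}}(\Theta)$ annihilates the degree-zero summand $\iota_0(R)$ of $F(Q)$ while $\phi_\infty(r)$ acts there by left multiplication, and finally invoke Theorem \ref{prop:psi}. Your treatment of the $\Z$-grading is just a more explicit version of the paper's one-line check that $\eta_{(S,T,\sigma,B)}(\mathcal{T}_{(P,Q,\psi)}^{(n)})$ shifts the degree of $F(Q)$ by $n$.
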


\begin{proof}
  To ease the notation let $T=T_{\mathcal{F}}$,
  $S=S_{\mathcal{F}}$, $\sigma=\sigma_{\mathcal{F}}$
  and $B=\mathcal{F}_{(P,Q,\psi)}$.
  It follows from Theorem \ref{theor:toeplitz} that there exists a
  unique ring homomorphism
  $\eta_{(S,T,\sigma,B)}: \mathcal{T}_{(P,Q,\psi)}\longrightarrow B$
  such that $\eta_{(S,T,\sigma,B)}\circ\iota_R=\sigma$,
  $\eta_{(S,T,\sigma,B)}\circ\iota_Q=T$ and
  $\eta_{(S,T,\sigma,B)}\circ\iota_P=S$.

  For each $m\in\No$ let $\iota_m$ denote the inclusion of $Q^{\otimes
    m}$ into $F(Q)$. It is easy to check that if
  $x\in\mathcal{T}_{(P,Q,\psi)}^{(n)}$ where $n\ge -m$, then
  $\eta_{(S,T,\sigma,B)}(x)\iota_m(Q^{\otimes
    m})\subseteq Q^{\otimes n+m}$. It follows that
  $(S,T,\sigma,B)$ is graded. It follows from the right non-degeneracy
  of $R$ that the covariant representation $(S,T,\sigma,B)$ is injective.

  Let $q\in Q$ and $p\in P$. Then
  $\pi_{T,S}(\theta_{q,p})=T(q)S(p)$
  acts as the zero map on $\iota_0(R)$. Thus it follows that if
  $\Theta\in\mathcal{F}_P(Q)$, then
  $\pi_{T,S}(\Theta)$
  acts as the zero map on $\iota_0(R)$. If $r\in R$, then it follows
  from the right non-degeneracy
  of $R$ that if $\sigma(r)=\phi_\infty(r)$
  acts as the zero map on $\iota_0(R)$,
  then $r=0$. Thus
  $J_{(S,T,\sigma,0)}=0$, and it follows from Theorem \ref{prop:psi} that
  $\eta_{(S,T,\sigma,B)}$
  is an isomorphism from $\mathcal{T}_{(P,Q,\psi)}$ to $\mathcal{F}_{(P,Q,\psi)}$.
\end{proof}

\begin{rema}
  Let $R$ be a ring and $(P,Q,\psi)$ an $R$-system. It is clear that
  it is a necessary condition for the the Fock space
  representation
  of $(P,Q,\psi)$ to be isomorphic to the Toeplitz representation is that
  $R$ is right non-degenerate. The following example shows that it is
  not in general sufficient. This is in contrast to the
  $C^*$-algebraic case where the Fock representation is always
  isomorphic to the universal Toeplitz representation,
  cf. \cite[Proposition 6.5]{KS1}
\end{rema}

\begin{exem}
  Let $R=Q=P=\Z$, let $R$ act on the left and the right on $Q$ and $P$
  by multiplication, and let $\psi:P\otimes Q\longrightarrow R$ be the zero
  map. Then $R$ is a non-degenerate ring, and $(P,Q,\psi)$ is an
  $R$-system. It is easy to check that $S_{\mathcal{F}}$
  is the zero map.

  Let $B=\oplus_{n\in\Z}\Z$, and for each $n\in\Z$ let $e_n$ be
  the element of $B$ given by $e_n(m)$ is 1 if and only if $n=m$ and
  0 otherwise. We turn $B$ into a ring by using the usual addition
  and defining a multiplication by
  \begin{equation*}
    e_me_n=
    \begin{cases}
      e_{m+n}&\text{if }nm\ge 0,\\
      0&\text{if }nm<0.
    \end{cases}
  \end{equation*}
  We define maps $\sigma:R\longrightarrow B$ by
  $\sigma(r)=re_0$, $S:P\longrightarrow B$ by
  $S(p)=pe_{-1}$ and $T:Q\longrightarrow B$ by $T(q)=qe_1$. It is easy to check
  that $(S,T,\sigma,B)$ is a covariant representation of
  $(P,Q,\psi)$. Since $S\ne 0$, it follows that $\iota_P\ne 0$ (in
  fact, it is not difficult to show that $(S,T,\sigma,B)$ is
  isomorphic to the Toeplitz representation of $(P,Q,\psi)$). Thus,
  the Fock space representation cannot be isomorphic to the Toeplitz
  representation in this example.
\end{exem}

We now define what it means for a relative Cuntz-Pimsner ring of an
$R$-system to satisfy the \emph{Graded Uniqueness Theorem}, and then
characterize when it does that.

\begin{defi}[{cf. \cite[Theorem 4.8]{TF}}] \label{def:gun}
Let $R$ be a ring, $(P,Q,\psi)$ an $R$-system
satisfying condition
\textbf{(FS)} and let $J$ be a faithful $\psi$-compatible two-sided ideal of $R$.
We say that the relative Cuntz-Pimsner ring $\mathcal{O}_{(P,Q,\psi)}(J)$ satisfies the \emph{Graded Uniqueness Theorem} if and only if the following holds:

If $B$ is a $\Z$-graded ring and
$\eta:\mathcal{O}_{(P,Q,\psi)}(J)\longrightarrow B$ is a graded ring homomorphism
such that $\eta\circ\iota_R^J$ is injective, then
$\eta$ is injective.
\end{defi}

\begin{defi}
  Let $R$ be a ring, and let $(P,Q,\psi)$ be an $R$-system satisfying condition
  \textbf{(FS)}. A faithful $\psi$-compatible two-sided ideal $J$ of $R$ is called 
  \emph{maximal} if $J=J'$ for any faithful $\psi$-compatible two-sided ideal $J'$ 
  of $R$ satisfying $J\subseteq J'$.
\end{defi}

\begin{theor}\label{uniqueness}
Let $R$ be a ring and let $(P,Q,\psi)$ be an
$R$-system satisfying condition \textbf{(FS)}. Let
$\category^{inj,grad}$ be the subcategory of $\category$ consisting of
all surjective, injective and graded covariant representation of $(P,Q,\psi)$.
Let $J$ be a two-sided ideal of $R$ such that $J\subseteq
\Delta^{-1}(\mathcal{F}_P(Q))$ and $J\cap \ker\Delta=0$. Then the
following three statements are equivalent:
\begin{enumerate}
\item The Cuntz-Pimsner ring $\mathcal{O}_{(P,Q,\psi)}(J)$ of
  $(P,Q,\psi)$ relative to $J$ satisfies the Graded Uniqueness
  Theorem. \label{item:9}
\item The Cuntz-Pimsner representation
  $(\iota_P^J,\iota_Q^J,\iota_R^J,\mathcal{O}_{(P,Q,\psi)}(J))$ of
  $(P,Q,\psi)$ relative to $J$ is minimal in $\category^{inj,grad}$ in
  the sense that if $(S,T,\sigma,B)$ is a surjective, injective
  and graded representation of $(P,Q,\psi)$ and
  $\eta:\mathcal{O}_{(P,Q,\psi)}(J)\longrightarrow B$ is a
  homomorphism such that $\eta\circ\iota_Q^J=T$, $\eta\circ\iota_P^J=S$ and
  $\eta\circ\iota_R^J=\sigma$, then $\eta$ is an isomorphism. \label{item:11}
\item $J$ is maximal. \label{item:14}
\end{enumerate}
\end{theor}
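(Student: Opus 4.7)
The plan is to prove the three equivalences cyclically: $(1)\Rightarrow(2)\Rightarrow(3)\Rightarrow(1)$. For $(1)\Rightarrow(2)$, suppose $(S,T,\sigma,B)\in\category^{inj,grad}$ and that $\eta:\mathcal{O}_{(P,Q,\psi)}(J)\longrightarrow B$ is a ring homomorphism with $\eta\circ\iota_Q^J=T$, $\eta\circ\iota_P^J=S$ and $\eta\circ\iota_R^J=\sigma$. Since $\iota_R^J(R)$, $\iota_Q^J(Q)$, $\iota_P^J(P)$ lie in degrees $0$, $1$, $-1$ of the $\Z$-grading of $\mathcal{O}_{(P,Q,\psi)}(J)$ and $\sigma(R),T(Q),S(P)$ lie in the matching degrees of $B$, and these sets generate $\mathcal{O}_{(P,Q,\psi)}(J)$, the map $\eta$ is graded. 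The injectivity of $\sigma=\eta\circ\iota_R^J$ and hypothesis (1) then force $\eta$ to be injective, while surjectivity is inherited from $(S,T,\sigma,B)$, so $\eta$ is an isomorphism.

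For $(2)\Rightarrow(3)$ I argue by contraposition. Suppose there exists a faithful $\psi$-compatible two-sided ideal $J'$ of $R$ with $J\subsetneq J'$. By Remark \ref{remark:classi} there is a ring homomorphism $\phi:\mathcal{O}_{(P,Q,\psi)}(J)\longrightarrow\mathcal{O}_{(P,Q,\psi)}(J')$ compatible with the universal maps, and the target representation lies in $\category^{inj,grad}$. Pick $x\in J'\setminus J$. Cuntz-Pimsner invariance of $\mathcal{O}_{(P,Q,\psi)}(J')$ relative to $J'$ yields $\iota_R^{J'}(x)=\pi^{J'}(\Delta(x))$, so $\phi$ annihilates $\iota_R^J(x)-\pi^J(\Delta(x))$. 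On the other hand, Proposition \ref{prop_3} identifies $J$ with $J_{(\iota_P^J,\iota_Q^J,\iota_R^J,\mathcal{O}_{(P,Q,\psi)}(J))}$, so $x\notin J$ forces $\iota_R^J(x)\notin\pi^J(\mathcal{F}_P(Q))$; since $\pi^J(\Delta(x))\in\pi^J(\mathcal{F}_P(Q))$, the element $\iota_R^J(x)-\pi^J(\Delta(x))$ is nonzero. Hence $\phi$ is not injective, contradicting (2).

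Finally, for $(3)\Rightarrow(1)$, let $\eta:\mathcal{O}_{(P,Q,\psi)}(J)\longrightarrow B$ be a graded ring homomorphism into a $\Z$-graded ring $B$ with $\eta\circ\iota_R^J$ injective, and set $H:=\rho_J^{-1}(\ker\eta)\subseteq\mathcal{T}_{(P,Q,\psi)}$. Since $\rho_J$ is graded and $\ker\eta$ is graded, $H$ is graded; any $r\in R$ with $\iota_R(r)\in H$ satisfies $\iota_R^J(r)=\rho_J(\iota_R(r))\in\ker\eta$, whence $\iota_R^J(r)=0$, and then faithfulness of $J$ (Theorem \ref{univ_cuntz}) forces $r=0$, so $H\cap\iota_R(R)=\{0\}$. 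Lemma \ref{lemma:invariantideal} now produces a faithful $\psi$-compatible ideal $J_{\uparrow H}$ with $H=\mathcal{T}(J_{\uparrow H})$, and since $\mathcal{T}(J)=\ker\rho_J\subseteq H$ the order-preserving bijection of Remark \ref{rema:correspondence} yields $J\subseteq J_{\uparrow H}$; maximality of $J$ collapses this to $J=J_{\uparrow H}$, so $H=\mathcal{T}(J)=\ker\rho_J$ and $\ker\eta=\rho_J(H)=0$. The main obstacle in the whole argument is this last step, which hinges on Lemma \ref{lemma:invariantideal} and Remark \ref{remark:classi}: it is precisely the correspondence between faithful $\psi$-compatible ideals of $R$ and graded ideals of $\mathcal{T}_{(P,Q,\psi)}$ avoiding $\iota_R(R)$ that makes maximality of $J$ translate into injectivity of $\eta$.
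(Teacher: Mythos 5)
Your proof is correct and follows essentially the same route as the paper: the equivalence of (1) and (2) via the observation that graded homomorphisms with injective $R$-part are exactly the intertwiners into surjective, injective and graded representations, and the equivalence with (3) via the correspondence between faithful $\psi$-compatible ideals and such representations (Remark \ref{remark:classi}, Proposition \ref{prop_3}, Lemma \ref{lemma:invariantideal}). The only cosmetic difference is that your $(3)\Rightarrow(1)$ step unpacks Remark \ref{remark:classi} by working directly with graded two-sided ideals of $\mathcal{T}_{(P,Q,\psi)}$ through Lemma \ref{lemma:invariantideal}, which is the same underlying machinery the paper invokes.
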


\begin{proof}
  If $B$ is a $\Z$-graded ring and
  $\eta:\mathcal{O}_{(P,Q,\psi)}(J)\longrightarrow B$ is a graded ring
  homomorphism
  such that $\eta\circ\iota_R^J$ is injective, and we let
  $T=\eta\circ\iota_Q^J$, $S=\eta\circ\iota_P^J$ and
  $\sigma=\eta\circ\iota_R^J$, then $(S,T,\sigma,B)$ is a
  surjective, injective and graded representation of $(P,Q,\psi)$. The
  equivalence of \eqref{item:9} and \eqref{item:11} easily follows
  from this.

  The equivalence of \eqref{item:11} and \eqref{item:14} follows from
  Remark \ref{remark:classi}.
\end{proof}

\begin{defi}
  Let $R$ be a ring, and let $(P,Q,\psi)$ be an $R$-system satisfying condition
  \textbf{(FS)}. A faithful $\psi$-compatible two-sided ideal $J$ of $R$ is called 
  \emph{uniquely maximal} if $J'\subseteq J$ for any $\psi$-compatible 
  two-sided ideal $J'$ of $R$.
\end{defi}

\begin{rema}
  Let $R$ be a ring, and let $(P,Q,\psi)$ be an $R$-system satisfying condition
  \textbf{(FS)}. It is clear that if $J$ is a uniquely maximal faithful $\psi$-compatible 
  two-sided of $R$, then it is the only maximal faithful $\psi$-compatible 
  two-sided of $R$. The standard argument using Zorn's Lemma shows that every faithful
  $\psi$-compatible two-sided ideal of $R$ is contained in a maximal faithful
  $\psi$-compatible two-sided ideal of $R$. Thus if there only is one 
  maximal faithful $\psi$-compatible 
  two-sided of $R$, then this ideal is automatically uniquely maximal.
\end{rema}

\begin{rema} \label{remark:CP}
Let $R$ be a ring and let $(P,Q,\psi)$ be an
$R$-system satisfying condition \textbf{(FS)}.
It follows from Remark \ref{remark:classi}
that if $J$ is a faithful $\psi$-compatible two-sided ideal of $R$, then
$(\iota^J_P,\iota^J_Q,\iota^J_R,\mathcal{O}_{(P,Q,\psi)}(J))$ is a
final object of $\category^{inj,grad}$ if and only if $J$ is uniquely maximal. 
If such a $J$ exists, then it would be natural to
define the Cuntz-Pimsner ring of the $R$-system $(P,Q,\psi)$ to be
$\mathcal{O}_{(P,Q,\psi)}(J)$ (and we will do that in Definition
\ref{def:CK}), however, as the following example shows, such a $J$
does not in general exist (in contrast to the $C^*$-algebraic case
where one always can use the analog of the ideal
$(\ker\Delta)^\perp\cap \Delta^{-1}(\mathcal{F}_P(Q))$ cf.
\cite{KS1}).
\end{rema}

\begin{exem}\label{example_two_maximal}
  Let $R=\mathbb{Z}\times \mathbb{R}\times \mathbb{Z}$ be a ring with
  multiplication defined by
  $$(x,y,z)\cdot(x',y',z'):=(xx',xy'+yx',xz'+zx')\,.$$
  Notice that $R$ is a unital ring with unit $(1,0,0)$.

  Let $\delta:R\longrightarrow R$ be a map defined as
  $\delta(x,y,z)=(x,y-z,0)$. We claim that $\delta$ is a ring
  homomorphism. Indeed, let $(x,y,z),(x',y',z')\in R$. Then we have
  \begin{align*}
    \delta(x,y,z)\delta(x',y',z')
    &=(x,y-z,0)(x',y'-z',0)
    =(xx',x(y'-z')+x'(y-z),0)\\
    &=(xx',xy'+yx'-(xz'+zx'),0)
    =\delta(xx',xy'+y'x,xz'+zx')\\
    &=\delta((x,y,z)(x',y',z')).
  \end{align*}

  Let $P=Q=\{(x,y,0):x\in \mathbb{Z},y\in\mathbb{R}\}\subseteq R$, and
  endow $P=Q$ with the following $R$-bimodule structure: Given $p\in P$, $q\in
  Q$ and $r\in R$ let
  \begin{align*}
    p\cdot r&=p\delta(r)&r\cdot p&=\delta(r)p\\
    q\cdot r&=q\delta(r)&r\cdot q&=\delta(r)q.
  \end{align*}

  Finally let $\psi:P\otimes_R Q\longrightarrow R$ be defined by
  $\psi(p\otimes q)=pq$. We will now check that the $R$-system $(P,Q,\psi)$
  satisfies property \textbf{(FS)}. Indeed, if $q\in Q$ then
  $$(1,0,0)\cdot\psi((1,0,0)\otimes q)=(1,0,0)\cdot q=q\,,$$
  and if $p\in P$ then

  $$\psi(p\otimes (1,0,0))\cdot (1,0,0)=p\cdot (1,0,0)=p\,.$$

  It easy to check that
  $$\Delta^{-1}(\mathcal{F}_{P}(Q)=R\qquad\text{and}\qquad \ker \Delta=\{(0,z,z):z\in \mathbb{Z}\}\,.$$

  Now we define
  $$J_1:=\{(0,y,0):y\in \mathbb{R}\}\qquad \text{and}\qquad J_2:=\{(0,0,z):z\in \mathbb{Z}\}\,.$$

  Now we will prove that both $J_1$ and $J_2$ are maximal faithful $\psi$-compatible 
  two-sided ideals of $R$. Let $J$ be a faithful $\psi$-compatible 
  two-sided ideals of $R$
  such that $J_1\subseteq J$ and assume
  that there exists $0\neq(x,y,z)\in J\setminus J_1$. Then $(x,0,z)\in
  J$, with either $x$ or $z$ are nonzero. If $x=0$, then $z\neq 0$ and
  then $(0,z,z)\in J\cap \ker\Delta$, but if $x\neq 0$ then
  $(0,0,1)(x,0,z)=(0,0,x)\in J$ and hence $0\neq (0,x,x)\in J\cap \ker
  \Delta$, a contradiction. Thus $J_1$ is maximal. We can do the same
  to prove that $J_2$ is also maximal.

  Notice that $J_1$ and $J_2$ are clearly non-isomorphic, however we can
  not deduce from this that their associated relative Cuntz-Pimsner
  rings are non-isomorphic.
\end{exem}

\section{Cuntz-Pimsner rings} \label{sec:cuntz-pimsner-rings}

Let $R$ be a ring, let $(P,Q,\psi)$ be an $R$-system satisfying
condition \textbf{(FS)}, and let $J$ be a uniquely maximal faithful 
$\psi$-compatible two-sided ideal of $R$.
In view of Remark \ref{remark:CP} it is natural to define
$\mathcal{O}_{(P,Q,\psi)}(J)$ to be \emph{the Cuntz-Pimsner ring} of
$(P,Q,\psi)$. We will do that now.

\begin{defi} \label{def:CK}
Let $R$ be a ring and let $(P,Q,\psi)$ be an $R$-system satisfying
condition \textbf{(FS)}. If there 
exists a  uniquely maximal faithful 
$\psi$-compatible two-sided ideal $J$ of $R$, then we
define \emph{the Cuntz-Pimsner ring} of $(P,Q,\psi)$ to be the ring
$$\mathcal{O}_{(P,Q,\psi)}:=\mathcal{O}_{(P,Q,\psi)}(J)$$
and we let
$$(\iota_P^{CP},\iota_Q^{CP},\iota_R^{CP},\mathcal{O}_{(P,Q,\psi)})$$
denote the covariant representation
$(\iota_P^J,\iota_Q^J,\iota_R^J,\mathcal{O}_{(P,Q,\psi)}(J))$ and
call it \emph{the Cuntz-Pimsner representation of $(P,Q,\psi)$}. We
let $p_r:=\iota_R^{CP}(r)$ for $r\in R$, $y_p:=\iota_P^{CP}(p)$ for
$p\in P$ and $x_q:=\iota_Q^{CP}(q)$ for $q\in Q$.
\end{defi}

It follows from Remark \ref{remark:CP} that $\mathcal{O}_{(P,Q,\psi)}$,
if it exists, is the (up to isomorphism) unique final object of
$\category^{inj,grad}$. It can also be described as the smallest
quotient of $\mathcal{T}_{(P,Q,\psi)}$ which preserves the $\Z$-grading
of $\mathcal{T}_{(P,Q,\psi)}$ and which leaves the embedded copy of $R$
intact.

It follows from Example \ref{example_two_maximal} that it is not
always the case that there exists a uniquely maximal faithful 
$\psi$-compatible two-sided ideal of $R$. We will now
describe a condition which will guarantee the existence of such an ideal. 
This condition is satisfied by many interesting examples, see Example
\ref{examples_cuntz:cross} -- \ref{examples_cuntz:graph_cuntz}.

If $J$ is a two-sided ideal of a ring $R$, then we let $J^\perp$ denote the
two-sided ideal $\{x\in R: \forall y\in J:xy=yx=0\}$. The following lemma
is then obvious.

\begin{lem} \label{lemma:perp}
  Let $R$ be a ring and let $(P,Q,\psi)$ be an $R$-system which
  satisfies condition \textbf{(FS)}. If
  $(\Delta^{-1}(\mathcal{F}_P(Q))\cap(\ker\Delta)^\perp)\cap\ker\Delta=\{0\}$,
  then $J=:\Delta^{-1}(\mathcal{F}_P(Q))\cap(\ker\Delta)^\perp$ is a
  uniquely maximal faithful $\psi$-compatible two-sided ideal of $R$. 
  Thus the Cuntz-Pimsner ring of $(P,Q,\psi)$ is defined in this case.
\end{lem}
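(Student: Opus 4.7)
The plan is to verify four properties of $J := \Delta^{-1}(\mathcal{F}_P(Q)) \cap (\ker\Delta)^\perp$: that it is a two-sided ideal, that it is $\psi$-compatible, that it is faithful, and that it contains every faithful $\psi$-compatible two-sided ideal of $R$. Given Definition \ref{def:CK}, the final assertion about the Cuntz-Pimsner ring being defined is then immediate.

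First I would verify that $J$ is a two-sided ideal of $R$. The set $\Delta^{-1}(\mathcal{F}_P(Q))$ is a two-sided ideal of $R$ because $\Delta: R \longrightarrow \mathcal{L}_P(Q)$ is a ring homomorphism and $\mathcal{F}_P(Q)$ is a two-sided ideal of $\mathcal{L}_P(Q)$ (this was recorded in the lemma following Definition \ref{def:finiterank}). The annihilator $(\ker\Delta)^\perp$ is a two-sided ideal by its definition, and the intersection of two two-sided ideals is a two-sided ideal. The containment $J \subseteq \Delta^{-1}(\mathcal{F}_P(Q))$ shows that $J$ is $\psi$-compatible, and the hypothesis $J \cap \ker\Delta = \{0\}$ is precisely the faithfulness condition.

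The main content is to prove that $J$ is uniquely maximal. The argument is clean and short. Let $J'$ be an arbitrary faithful $\psi$-compatible two-sided ideal of $R$. The $\psi$-compatibility of $J'$ immediately gives $J' \subseteq \Delta^{-1}(\mathcal{F}_P(Q))$, so it suffices to show $J' \subseteq (\ker\Delta)^\perp$. Take $x \in J'$ and $y \in \ker\Delta$; since $J'$ is a two-sided ideal, both $xy$ and $yx$ lie in $J'$, and since $\Delta$ is a ring homomorphism with $\Delta(y) = 0$, both products lie in $\ker\Delta$. Therefore $xy, yx \in J' \cap \ker\Delta = \{0\}$ by faithfulness of $J'$. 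This shows $J' \subseteq (\ker\Delta)^\perp$, hence $J' \subseteq J$.

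I do not anticipate any real obstacle here: the result is essentially a formal manipulation of ideals once one observes that multiplying a faithful $\psi$-compatible ideal into $\ker\Delta$ lands inside the ideal, forcing the product to vanish. The only point requiring minor care is to ensure that "uniquely maximal" is being compared against all faithful $\psi$-compatible two-sided ideals, which is the only way the statement makes sense given that $R$ itself is $\psi$-compatible whenever $\Delta^{-1}(\mathcal{F}_P(Q)) = R$. With that reading, the argument above completes the proof, and invoking Definition \ref{def:CK} then gives that $\mathcal{O}_{(P,Q,\psi)} = \mathcal{O}_{(P,Q,\psi)}(J)$ is defined.
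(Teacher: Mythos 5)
Your proof is correct; the paper states this lemma without proof (it is introduced with ``The following lemma is then obvious''), and your argument --- that for any faithful $\psi$-compatible ideal $J'$ one has $J'\cdot\ker\Delta$ and $\ker\Delta\cdot J'$ contained in $J'\cap\ker\Delta=\{0\}$, hence $J'\subseteq(\ker\Delta)^\perp$ and so $J'\subseteq J$ --- is exactly the intended one. Your reading of ``uniquely maximal'' as quantifying over \emph{faithful} $\psi$-compatible two-sided ideals is also the right one: the paper's definition as literally written quantifies over all $\psi$-compatible ideals, which would make the lemma false whenever $\Delta^{-1}(\mathcal{F}_P(Q))\not\subseteq(\ker\Delta)^\perp$, and the remark following that definition confirms the faithful reading is what is meant.
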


A ring $R$ is said to be \textit{semiprime} if whenever $I$ is a
two-sided ideal of $R$ such that $I^2=\{0\}$,
then $I=\{0\}$. A two-sided ideal $I$ is said to be \textit{semiprime} if
whenever there exists a two-sided ideal $J$ with $J^2\subseteq I$, then
$J\subseteq I$. Equivalently $I$ is a semiprime ideal if and only
if $R/I$ is a semiprime ring. Observe that in particular
every $C^*$-algebra $A$ is semiprime and every closed ideal $I$ of
$A$ is also semiprime (since it is a $C^*$-algebra itself).

\begin{lem} \label{lemma:semiprime}
  Let $R$ be a ring which is semiprime, and let $(P,Q,\psi)$ be an
  $R$-system which satisfies condition \textbf{(FS)}. Then
  $(\ker\Delta)^\perp\cap\ker\Delta=\{0\}$.
\end{lem}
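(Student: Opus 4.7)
The plan is to observe that $I := (\ker\Delta)^\perp \cap \ker\Delta$ is a two-sided ideal whose square is zero, and then invoke semiprimeness of $R$ to conclude $I = \{0\}$. Condition \textbf{(FS)} is not actually needed for the argument; only the ring-theoretic content of $\Delta$ matters.

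First I would verify that $I$ is a two-sided ideal. The map $\Delta: R \longrightarrow \sem_R(Q_R)$ is a ring homomorphism, so $\ker\Delta$ is a two-sided ideal of $R$. For $(\ker\Delta)^\perp$, a direct check shows that if $x$ annihilates every element of $\ker\Delta$ on both sides and $r\in R$, then so do $rx$ and $xr$: for $y\in\ker\Delta$ one has $(rx)y = r(xy)=0$ and $y(rx)=(yr)x = 0$ (using $yr \in\ker\Delta$), and symmetrically $(xr)y = x(ry)=0$, $y(xr)=(yx)r=0$. Thus $(\ker\Delta)^\perp$ is a two-sided ideal, and so is the intersection $I$.

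Next I would show $I^2 = \{0\}$. Given $x,y\in I$, the element $x$ lies in $\ker\Delta$ while $y$ lies in $(\ker\Delta)^\perp$, so by the very definition of $(\ker\Delta)^\perp$ we have $yx = 0$ and $xy = 0$. Hence every product of two elements of $I$ vanishes, so $I\cdot I = \{0\}$.

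Finally, since $R$ is semiprime, the only two-sided ideal of $R$ whose square is zero is the zero ideal. Therefore $I = (\ker\Delta)^\perp \cap \ker\Delta = \{0\}$, as claimed. The only mildly subtle step is the verification that $(\ker\Delta)^\perp$ is a two-sided ideal, but this is routine; after that the conclusion is immediate from the definitions combined with semiprimeness.
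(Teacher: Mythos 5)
Your proof is correct and is exactly the argument the paper gives (the paper simply states that $(\ker\Delta)^\perp\cap\ker\Delta$ is a two-sided ideal with square zero and invokes semiprimeness); you have merely spelled out the routine verifications. Your observation that condition \textbf{(FS)} is not actually used is also accurate.
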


\begin{proof}
It is clear that $(\ker\Delta)^\perp\cap\ker\Delta$ is a two-sided
ideal of $R$ satisfying
$((\ker\Delta)^\perp\cap\ker\Delta)^2=\{0\}$. Thus
$(\ker\Delta)^\perp\cap\ker\Delta=\{0\}$.
\end{proof}

Thus when $R$ is semiprime, then $\Delta^{-1}(\mathcal{F}_P(Q))\cap(\ker\Delta)^\perp$ 
is a uniquely maximal faithful $\psi$-compatible two-sided ideal of $R$ 
for every $R$-system $(P,Q,\psi)$ and the Cuntz-Pimsner ring $\mathcal{O}_{(P,Q,\psi)}$
is defined.

Before we look at some examples where the Cuntz-Pimsner ring is
defined, we notice that it directly follows from Theorem
\ref{uniqueness}  that if the Cuntz-Pimsner ring of an $R$-system is
defined, then it satisfies the Graded Uniqueness Theorem.

\begin{corol}[The Graded Uniqueness Theorem]\label{uniqueness_semiprime}
  Let $R$ be a ring and let $(P,Q,\psi)$ be an
  $R$-system which satisfies condition \textbf{(FS)}, and assume that
  there exists a uniquely maximal faithful $\psi$-compatible two-sided ideal of $R$.
  If $A$ is a $\mathbb{Z}$-graded ring and
  $\eta:\mathcal{O}_{(P,Q,\psi)}\longrightarrow A$ is a graded
  ring homomorphism with $\eta(p_r)\neq 0$ for every $r\in
  R\setminus\{0\}$, then $\eta$ is injective.
\end{corol}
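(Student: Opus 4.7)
The plan is to reduce the statement directly to Theorem \ref{uniqueness}. By hypothesis there exists a uniquely maximal faithful $\psi$-compatible two-sided ideal $J$ of $R$, and by Definition \ref{def:CK} we have $\mathcal{O}_{(P,Q,\psi)}=\mathcal{O}_{(P,Q,\psi)}(J)$ together with $\iota_R^{CP}=\iota_R^J$, so $p_r=\iota_R^J(r)$ for every $r\in R$. I will first observe that a uniquely maximal ideal is automatically maximal: if $J\subseteq J'$ with $J'$ faithful and $\psi$-compatible, then by unique maximality also $J'\subseteq J$, whence $J=J'$. Thus condition \eqref{item:14} of Theorem \ref{uniqueness} is met, and the theorem delivers condition \eqref{item:9}, namely that $\mathcal{O}_{(P,Q,\psi)}(J)$ satisfies the Graded Uniqueness Theorem as formulated in Definition \ref{def:gun}.

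It then remains to match the hypothesis of the corollary with the hypothesis of Definition \ref{def:gun}. Since $\eta:\mathcal{O}_{(P,Q,\psi)}\longrightarrow A$ is a graded ring homomorphism into the $\Z$-graded ring $A$, the only nontrivial point is injectivity of $\eta\circ\iota_R^J$. But $\eta(\iota_R^J(r))=\eta(p_r)$, which by assumption is nonzero for all $r\in R\setminus\{0\}$; hence $\eta\circ\iota_R^J$ is injective. Applying Definition \ref{def:gun} concludes that $\eta$ itself is injective. No genuine obstacle arises here, as all the real work has already been carried out in Theorem \ref{uniqueness}; the corollary is essentially a repackaging, with the only thing to verify being the trivial implication from uniquely maximal to maximal and the tautological translation between $\eta(p_r)\ne 0$ and injectivity of $\eta\circ\iota_R^J$.
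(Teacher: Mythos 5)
Your proposal is correct and follows exactly the route the paper intends: the authors state the corollary as an immediate consequence of Theorem \ref{uniqueness}, and your argument supplies precisely the two routine verifications (uniquely maximal implies maximal, and $\eta(p_r)\neq 0$ for $r\neq 0$ is equivalent to injectivity of $\eta\circ\iota_R^J$) needed to make that reduction explicit. Nothing further is required.
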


\begin{exem}\label{examples_cuntz:cross}
  Let us return to the Example
  \ref{examples:item:1}. We saw that if $R$ is a ring, $\varphi\in\aut(R)$, $P=R_\varphi$,
  $Q=R_{\varphi\inv}$ and
  \begin{eqnarray*}
    \psi:P\otimes_R Q & \longrightarrow R \\
    p\otimes q & \longmapsto p\varphi(q),
  \end{eqnarray*}
  then $(P,Q,\psi)$ is a $R$-system.

  Assume that $R$ has local units. If $q_1,q_2,\dots,q_n\in Q$ and
  $p_1,p_2,\dots,p_m\in P$ then there exists an idempotent $e\in R$ such
  that $eq_i=q_i$ for all $i\in\{1,2,\dots,n\}$ and $p_je=p_j$ for all
  $j\in\{1,2,\dots,m\}$ (we are here viewing the $q_i$'s and the
  $p_j$'s as elements of $R$ and using the multiplication of $R$). We
  then have that
  $\theta_{e,\varphi(e)}(q_i)=e\varphi^{-1}(\varphi(e)\varphi(q_i))=eeq_i=q_i$
  for all $i\in\{1,2,\dots,n\}$ and
  $\theta_{e,\varphi^{-1}(e)}(p_j)=p_j\varphi(\varphi^{-1}(e))e=p_jee=p_j$ for all
  $j\in\{1,2,\dots,m\}$. Thus $(P,Q,\psi)$ satisfies condition
  \textbf{(FS)}. Observe that we in this case have that $\Delta^{-1}(\mathcal{F}_P(Q))=R$
  because $\Delta(r)=\theta_{u,\varphi(r)}$ for every $r\in R$
  and $u\in R$ with $ur=ru=r$. Notice also that $\Delta$ is injective,
  so $R$ is a uniquely maximal faithful $\psi$-compatible two-sided ideal. Thus the
  Cuntz-Pimsner ring of the $R$-system $(P,Q,\psi)$ exists and is
  equal to $\mathcal{O}_{(P,Q,\psi)}(R)$.

  We saw in Example \ref{examples:item:1} that if
  $(S,T,\sigma,B)$ is a covariant representation of $(P,Q,\psi)$ and
  we for every $r\in R$ and $n\in\No$ let $(r,n)=S^n(r)$,
  $[r,-n]=T^n(r)$ and $[r,0]=\sigma(r)$, then
  $[r_1,k]+[r_2,k]=[r_1+r_2,k]$ for $r_1,r_2\in R$ and $k\in\Z$ and
  $[r_1,k_1][r_2,k_2]=[r_1\varphi^{k_1}(r_2),k_1+k_2]$ for
  $r_1,r_2\in R$ and $k_1,k_2\in\Z$ if $k_1$ and $k_2$ both are
  non-positive, or both are non-negative, or if $k_1$ is
  non-negative and $k_2$ is non-positive. If in addition
  $(S,T,\sigma,B)$ is Cuntz-Pimsner invariant relative to $R$, then
  we have for $r_1,r_2,u_1,u_2\in R$ where $r_2u_1=r_2$ and
  $u_2r_1=r_1$, and $n_1,n_2\in\No$ that
  \begin{gather*}
    \begin{split}
      [r_1,-n_1][r_2,n_1]&=T^{n_1}(r_1)S^{n_1}(r_2)
      =\pi_{S^{n_1},T^{n_1}}(\theta_{r_1,r_2})\\
      &=\sigma(r_1\varphi^{-n_1}(r_2))=[r_1\varphi^{-n_1}(r_2),0],
    \end{split}\\
    \begin{split}
      [r_1,-n_1][r_2,n_1+n_2]&=[r_1,-n_1][r_2,n_1][\varphi^{-n_1}(u_1),n_2]\\
      &=[r_1\varphi^{-n_1}(r_2),0][\varphi^{-n_1}(u_1),n_2]\\
      &=[r_1\varphi^{-n_1}(r_2)\varphi^{-n_1}(u_1),n_2]=[r_1\varphi^{-n_1}(r_2),n_2],
    \end{split}\\
    \begin{split}
      [r_1,-n_1-n_2][r_2,n_1]&=[u_2,-n_2][\varphi^{n_2}(r_1),-n_1][r_2,n_1]\\
      &=[u_2,-n_2][\varphi^{n_2}(r_1)\varphi^{n_1}(r_2),0]\\
      &=[u_2r_1\varphi^{-n_1-n_2}(r_2),-n_2]=[r_1\varphi^{-n_1-n_2}(r_2),-n_2].
    \end{split}
  \end{gather*}
  Thus $[r_1,k_1][r_2,k_2]=[r_1\varphi^{k_1}(r_2),k_1+k_2]$ for
  $r_1,r_2\in R$ and $k_1,k_2\in\Z$.

  If on the other hand we have a ring $B$ which contains a set of
  elements $\{[r,k]: r\in R,\ k\in\Z\}$ satisfying
  $[r_1,k]+[r_2,k]=[r_1+r_2,k]$ and
  $[r_1,k_1][r_2,k_2]=[r_1\varphi^{k_1}(r_2),k_1+k_2]$, and we
  define $\sigma:R\longrightarrow B$ by $\sigma(r)=[r,0]$,
  $S:P\longrightarrow B$ by $S(p)=[p,1]$, and $T:Q\longrightarrow B$
  by $T(q)=[q,-1]$, then $(S,T,\sigma,B)$ is a covariant
  representation of $(P,Q,\psi)$ which is Cuntz-Pimsner invariant
  relative to $R$.

  Thus $\mathcal{O}_{(P,Q,\psi)}$ is the universal ring generated by
  elements  $\{[r,k]: r\in R,\ k\in\Z\}$ satisfying
  $[r_1,k]+[r_2,k]=[r_1+r_2,k]$ and
  $[r_1,k_1][r_2,k_2]=[r_1\varphi^{k_1}(r_2),k_1+k_2]$; i.e.,
  $\mathcal{O}_{(P,Q,\psi)}$ is isomorphic to the crossed product
  $R\times_\varphi\Z$.

  We will return to this example in Example \ref{examples: ideals}.
\end{exem}

\begin{exem} \label{exam:endo}
  Let $R$ be a ring and let $\alpha:R\longrightarrow R$ be a ring
  homomorphism. Let $P:=\spa\{r_1\alpha(r_2)\mid r_1,r_2\in R\}$ be the
  $R$-module with left action defined by $r\cdot p=rp$ and right
  action defined by $p\cdot r=p\alpha(r)$ for $r\in R$ and $p\in P$,
  and let $Q:=\spa\{\alpha(r_1)r_2\mid r_1,r_2\in R\}$ be the
  $R$-module with left action defined by $r\cdot q=\alpha(r)q$ and
  right action defined by $q\cdot r=qr$ for $r\in R$ and $q\in
  Q$. Finally let $\psi:P\otimes Q\longrightarrow R$ be the bimodule homomorphism
  defined by $\psi(p\otimes q)=pq$. Then $(P,Q,\psi)$ is an
  $R$-system.

  If $(S,T,\sigma,B)$ is a covariant representation of $(P,Q,\psi)$,
  then $S(p)\sigma(r)=S(p\alpha(r))$, $\sigma(r)S(p)=S(rp)$,
  $T(q)\sigma(r)=T(qr)$, $\sigma(r)T(q)=T(\alpha(r)q)$ and
  $S(p)T(q)=\sigma(pq)$ for $p\in P$, $q\in Q$ and $r\in R$ where we
  view $p$ and $q$ as elements of $R$ and use the multiplication of
  $R$.

  It is not difficult to show that if $R$ has local units, then
  $(P,Q,\psi)$ satisfies condition \textbf{(FS)},
  $\Delta^{-1}(\mathcal{F}_P(Q))=R$ and that $\ker\Delta=\{0\}$. Thus
  the Cuntz-Pimsner ring of $(P,Q,\psi)$ is defined in this case and is
  equal to $\mathcal{O}_{(P,Q,\psi)}(R)$. If in addition $\alpha$ is
  injective and $\alpha(r_1)r_2\alpha(r_3)\in\alpha(R)$ for all
  $r_1,r_2,r_3\in R$, then a covariant representation $(S,T,\sigma,B)$
  of $(P,Q,\psi)$ is Cuntz-Pimsner invariant relative to $R$ if and
  only if $T(q)S(p)=\sigma(\alpha^{-1}(qp))$ for all $p\in P$ and
  $q\in Q$.

  It is not difficult to see that if $\alpha$ is an automorphism and
  $R$ has local units, then $\mathcal{O}_{(P,Q,\psi)}$ is isomorphic
  to the crossed product $R\times_\alpha\Z$, cf. Example \ref{examples_cuntz:cross}.
\end{exem}

\begin{exem}\label{examples_cuntz:skew}
Given a unital ring $R$ and a ring isomorphism
$\alpha:R\longrightarrow eRe$ where $e$ is an idempotent of $R$. Ara,
Gonz\'alez-Barroso, Goodearl and Pardo have in \cite{AGGP} defined
the \emph{fractional skew monoid ring} of the system $(R,\alpha)$
to be the universal unital ring $R[t_+,t_-;\alpha]$ generated by elements
$t_+$, $t_-$ and $\{\phi(r)\mid r\in R\}$ satisfying that $\phi:R\longrightarrow
R[t_+,t_-;\alpha]$ is a unital ring homomorphism and
that the relations
$$t_-t_+=1\qquad,\qquad t_+t_-=\phi(e)\qquad,\qquad
rt_-=t_-\alpha(r) \qquad\text{and}\qquad
t_+r=\alpha(r)t_+$$ hold for all $r\in R$.
This construction is an exact algebraic analog of the construction
of the crossed product of a $C^*$-algebra by an endomorphism
introduced by Paschke \cite{PA}. In fact, if $A$ is a $C^*$-algebra
and the corner isomorphism $\alpha$ is a $*$-homomorphism, then
Paschke's $C^*$-crossed product, which he denotes $A\ltimes_\alpha
\mathbb{N}$, is just the completion of $A[t_+,t_-;\alpha]$ in a
suitable norm. The Cuntz-Krieger rings, crossed products by
automorphisms and Leavitt path algebras of finite graphs without
sinks are examples of fractional skew monoid rings among many
others (see \cite{AGGP}). As an important advance in the study of
this class of rings, in \cite[Theorem 5.3]{AGGP} conditions for
$R[t_+,t_-;\alpha]$ being a simple and purely infinite ring are
given, and in \cite{AB} the $K_1$ of fractional skew monoid
rings is computed.

We will now show that the fractional skew monoid ring $R[t_+,t_-;\alpha]$ is
isomorphic, as a $\Z$-graded ring, to $\mathcal{O}_{(P,Q,\psi)}$
where $(P,Q,\psi)$ is the $R$-system considered in Example
\ref{exam:endo}. First we notice that if $r_1,r_2,r_3\in R$, then
$\alpha(r_1)r_2\alpha(r_3)\in eReReRe\subseteq eRe=\alpha(R)$.
Define $S:P\longrightarrow R[t_+,t_-;\alpha]$ and
$T:Q\longrightarrow R[t_+,t_-;\alpha]$ by
$S(p)=\phi(p)t_+$ and $T(q)=t_-\phi(q)$. It is then easy to check
that $(S,T,\phi,R[t_+,t_-;\alpha])$ is a surjective covariant
representation of $(P,Q,\psi)$ which is Cuntz-Pimsner invariant
relative to $R$, cf. Example \ref{exam:endo}. Thus it follows from
Theorem \ref{univ_cuntz} that there exists a ring homomorphism
$\eta:\mathcal{O}_{(P,Q,\psi)} \longrightarrow R[t_+,t_-;\alpha]$ such that
$\eta(p_r)=\phi(r)$, $\eta(y_p)=\phi(p)t_+$ and
$\eta(x_q)=t_-\phi(q)$ for $r\in R$, $p\in P$ and $q\in Q$. It
follows from \cite[Proposition 1.6 and Corollary 1.11]{AGGP} that
$\eta$ is graded and that $\eta(p_r)\ne 0$ for $r\ne 0$, so $\eta$
is injective and thus an isomorphism according to Corollary
\ref{uniqueness_semiprime}.
\end{exem}

\begin{exem}\label{examples_cuntz:graph_cuntz}
Let us return to the Example \ref{examples:graph-toeplitz}. Given
$q=\left(\sum_{e\in E^1} \lambda_e\textbf{1}_{e}\right) \in Q$ we let
$$\textrm{Supp}(\sum_{e\in E^1} \lambda_e\textbf{1}_{e} ):=\{e\in E^1:
\lambda_{e}\neq 0\}\,.$$ Notice that $|\textrm{Supp}(q)|<\infty$.
Given $q_1,\ldots,q_n\in Q$ we have that the homomorphism
$$\Theta=\sum_{e\in
\textrm{Supp}(q_1)\cup\cdots\cup \textrm{Supp}(q_n)
}\theta_{\textbf{1}_{e},\textbf{1}_{\overline{e}}}\in
\mathcal{F}_{P}(Q)
$$
satisfies $\Theta(q_i)=q_i$ for every $i\in\{1,2,\dots,n\}$.
Similarly, we have that there for $p_1,p_2,\dots,p_n\in P$ exists a
homomorphism $\Delta\in\mathcal{F}_Q(P)$ such that $\Delta(p_i)=p_i$
for every $i\in\{1,2,\dots,n\}$. Thus the $R$-system $(P,Q,\psi)$
satisfies the condition \textbf{(FS)}.

Now it is easy to see that
\begin{gather*}
\Delta^{-1}(\mathcal{F}_P(Q))=\textrm{span}_F\{\textbf{1}_v:
|s^{-1}(v)|<\infty\}\,,\\
\ker\Delta=\text{span}_F\{\textbf{1}_v: |s^{-1}(v)|=0\}\,.
\end{gather*}
It follows that $(\ker\Delta)^\perp=\text{span}_F\{\textbf{1}_v:
|s^{-1}(v)|>0\}$, and thus that
$(\Delta^{-1}(\mathcal{F}_P(Q))\cap(\ker\Delta)^\perp)\cap\ker\Delta=\{0\}$. Hence
the Cuntz-Pimsner ring of $(P,Q,\psi)$ is defined in this case and is
equal to $\mathcal{O}_{(P,Q,\psi)}(\Delta^{-1}(\mathcal{F}_P(Q))\cap(\ker\Delta)^\perp)$. 

We saw in Example \ref{examples:graph-toeplitz} that if
$(S,T,\sigma,B)$ be a covariant representation of $(P,Q,\psi)$ and
we let $p_{v}:=\sigma(\textbf{1}_v)$ for $v\in E^0$, and
$x_{e}=T(\textbf{1}_e)$ and $y_{e}=S(\textbf{1}_{\overline{e}})$ for
$e\in E^1$, then $\mathcal{R}\langle S,T,\sigma\rangle$ becomes a
$F$-algebra when we equip it with an $F$-multiplication of $F$
defined by $\lambda \sigma(r)=\sigma(\lambda r)$, $\lambda
S(p)=S(\lambda p)$ and $\lambda T(q)=T(\lambda q)$ for $\lambda\in
F$, $r\in R$, $p\in P$ and $q\in Q$. Then $\{p_{v}\}_{v\in E^0}$ is
a family of pairwise orthogonal idempotents such that we for all
$e,f\in E^1$ have that $p_{s(e)}x_{e}=x_{e}=x_{e}p_{r(e)}$,
$p_{r(e)}y_{e}=y_{e}=y_{e}p_{s(e)}$, and
$y_{e}x_{f}=\delta_{e,f}p_{r(e)}$. If in addition $(S,T,\sigma,B)$
is Cuntz-Pimsner invariant relative to
$\Delta^{-1}(\mathcal{F}_P(Q))\cap(\ker\Delta)^\perp
=\textrm{span}_F\{\textbf{1}_v: 0<|s^{-1}(v)|<\infty\}$, then we
have for $v\in E^0$ with $0<|s^{-1}(v)|<\infty$ that
\begin{equation*}
p_{v}=\sigma(\textbf{1}_v)=\pi_{T,S}\bigl(\Delta(\textbf{1}_v)\bigr)
=\pi_{T,S}\left(\sum_{e\in s\inv(v)}
\theta_{\textbf{1}_e,\textbf{1}_{\overline{e}}}\right) =\sum_{e\in
s\inv(v)}T(\textbf{1}_e)S(\textbf{1}_{\overline{e}}) =\sum_{e\in
s\inv(v)}x_{e}y_{e}\,.
\end{equation*}

On the other hand, let  $B$ be an $F$-algebra which contains a
family $\{p_{v}\}_{v\in E^0}$ of pairwise orthogonal idempotents and
families $\{x_{e}\}_{e\in E^1}$ and $\{y_{e}\}_{e\in E^1}$
satisfying $p_{s(e)}x_{e}=x_{e}=x_{e}p_{r(e)}$,
$p_{r(e)}y_{e}=y_{e}=y_{e}p_{s(e)}$, and
$y_{e}x_{f}=\delta_{e,f}p_{r(e)}$ for all $e,f\in E^1$. Then for
$r=\sum_{v\in E^0}s_v\textbf{1}_v\in R$ let $\sigma(r):=\sum_{v\in
E^0}s_vp_{v}$, for $p=\sum_{e\in E^1}
\lambda_e\textbf{1}_{\overline{e}}\in P$ let $S(p):=\sum_{e\in
E^1}p_ey_{e}$, and for $q=\sum_{e\in E^1}\lambda_e\textbf{1}_{e}\in
Q$ let $T(q):=\sum_{e\in E^1}q_ex_{e}$, we have that
$(S,T,\sigma,B)$ is a covariant representation of $(P,Q,\psi)$ which
is Cuntz-Pimsner invariant relative to
$\Delta^{-1}(\mathcal{F}_P(Q))\cap(\ker\Delta)^\perp$.

Thus $\mathcal{O}_{(P,Q,\psi)}$ is the universal $F$-algebra
generated by a set $\{p_{v}: v\in E^0\}$ of pairwise orthogonal
idempotents, together with a set $\{x_{e},y_{e}: e\in E^1\}$ of
elements satisfying
\begin{enumerate}
\item $p_{s(e)}x_{e}=x_{e}=x_{e}p_{r(e)}$ for
  $e\in E^1$,
\item $p_{r(e)}y_{e}=y_{e}=y_{e}p_{s(e)}$
  for $e\in E^1$,
\item $y_{e}x_{f}=\delta_{e,f}p_{r(e)}$ for $e,f\in E^1$,
\item $p_{v}=\sum_{e\in s\inv(v)}x_{e}y_{e}$ for $v\in
  E^0$ with $0<|s^{-1}(v)|<\infty$.
\end{enumerate}
I.e., $\mathcal{O}_{(P,Q,\psi)}$ is isomorphic to the Leavitt path $L_F(E)$
algebra associated with $E$,
cf. \cite{AA1},\cite{AA2},\cite{AALP},\cite{AMP}\&\cite{TF}. Thus we
recover from Corollary \ref{uniqueness_semiprime} the Graded
Uniqueness Theorem \cite[Theorem 4.8]{TF} for Leavitt path algebras.

We will return to this example in Example \ref{examples: ideals: graph ideals}.
\end{exem}

\section{The Algebraic Gauge-invariant Theorem}
We saw in Example \ref{examples_cuntz:cross} that our Graded
Uniqueness Theorem (Corollary \ref{uniqueness_semiprime}) is a
generalization of the Graded Uniqueness Theorem for Leavitt path
algebras (\cite[Theorem 4.8]{TF}). We will now generalize the
Algebraic Gauge-Invariant Uniqueness Theorem for row finite graphs
(\cite[Theorem 1.8]{AALP}) to Cuntz-Pimsner rings and thereby to all
directed graphs.

\begin{prop}[{Cf. \cite[Proposition 1.3]{FMR} and \cite[Remark 1.2(2)]{PI}}]
Let $R$ be an (associative) $F$-algebra where $F$ is a field, and let
$(P,Q,\psi)$ be an $R$-system satisfying condition \textbf{(FS)}
and let $J$ be a $\psi$-compatible two-sided ideal of $R$. 
Then there exists for every $t\in
F^*$ ($F^*$ denotes the multiplication group of $F$) a unique
automorphism $\tau^{J}_t$ on $\mathcal{O}_{(P,Q,\psi)}(J)$
satisfying $\tau^J_t(\iota_R^J(r))=\iota_R^J(r)$,
$\tau^J_t(\iota_P^J(p))=t\iota_P^J(p)$ and
$\tau^J_t(\iota_Q^J(q))=t\inv\iota_Q^J(q)$
for $r\in R$, $p\in P$ and $q\in Q$.

The action
$$\begin{array}{rl}
\tau^{J}: F^* & \longrightarrow \text{Aut}_F(\mathcal{O}_{(P,Q,\psi)}(J)) \\
t & \longmapsto \tau^{J}_t
\end{array}
$$
is called \emph{the gauge action of $F$ on
$\mathcal{O}_{(P,Q,\psi)}(J)$}.
\end{prop}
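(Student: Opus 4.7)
The approach is to invoke the universal property of $\mathcal{O}_{(P,Q,\psi)}(J)$ from Theorem \ref{univ_cuntz}. For each $t\in F^*$, I would construct a new covariant representation of $(P,Q,\psi)$ on $\mathcal{O}_{(P,Q,\psi)}(J)$ by rescaling the generating maps: set $\sigma_t:=\iota_R^J$, $S_t(p):=t\,\iota_P^J(p)$ and $T_t(q):=t^{-1}\iota_Q^J(q)$. The bimodule identities required for a covariant representation follow at once from the corresponding identities for $(\iota_P^J,\iota_Q^J,\iota_R^J)$ and the fact that $t,t^{-1}$ are central, and the crucial relation $\sigma_t(\psi(p\otimes q))=S_t(p)T_t(q)$ is then immediate from $t\cdot t^{-1}=1$.

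The main point is to verify that $(S_t,T_t,\sigma_t,\mathcal{O}_{(P,Q,\psi)}(J))$ remains Cuntz-Pimsner invariant relative to $J$. By Proposition \ref{lemma_2} applied to the rank-one generators, $\pi_{T_t,S_t}(\theta_{q,p})=T_t(q)S_t(p)=t^{-1}t\,\iota_Q^J(q)\iota_P^J(p)=\iota_Q^J(q)\iota_P^J(p)$, so $\pi_{T_t,S_t}$ agrees with $\pi_{\iota_Q^J,\iota_P^J}$ on all of $\mathcal{F}_P(Q)$ by linearity. Hence for $x\in J$ we obtain
\begin{equation*}
\pi_{T_t,S_t}(\Delta(x))=\pi_{\iota_Q^J,\iota_P^J}(\Delta(x))=\iota_R^J(x)=\sigma_t(x),
\end{equation*}
using the Cuntz-Pimsner invariance of the original representation. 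The property \ref{item:10} of Theorem \ref{univ_cuntz} then supplies a unique ring homomorphism $\tau^{J}_t:\mathcal{O}_{(P,Q,\psi)}(J)\longrightarrow\mathcal{O}_{(P,Q,\psi)}(J)$ taking the prescribed values on the generators.

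To promote $\tau^{J}_t$ to an automorphism and establish the group action, I would apply the same construction to $t^{-1}$ and invoke the uniqueness clause of \ref{item:10}: the compositions $\tau^{J}_t\circ\tau^{J}_{t^{-1}}$ and $\tau^{J}_{t^{-1}}\circ\tau^{J}_t$ are ring endomorphisms fixing $\iota_R^J$, $\iota_P^J$ and $\iota_Q^J$ pointwise, so by the same universal property (with the identity as the other candidate) they both equal the identity of $\mathcal{O}_{(P,Q,\psi)}(J)$. The uniqueness in \ref{item:10} likewise forces $\tau^{J}_{st}=\tau^{J}_s\circ\tau^{J}_t$ for $s,t\in F^*$, giving the desired group homomorphism into $\aut_F(\mathcal{O}_{(P,Q,\psi)}(J))$; $F$-linearity of each $\tau^{J}_t$ is inherited from the $F$-algebra structure of the Toeplitz $F$-algebra (which passes to the quotient) together with the fact that $\tau^{J}_t$ is the identity on $\iota_R^J(R)$. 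The one genuinely delicate step is the verification of Cuntz-Pimsner invariance of $(S_t,T_t,\sigma_t)$, which rests on the cancellation $t\cdot t^{-1}=1$ inside $\pi_{T_t,S_t}$; once that is in place, the remaining assertions are formal consequences of the universal property.
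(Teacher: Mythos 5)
Your proof is correct and takes essentially the same route as the paper's: rescale the canonical generators by $t$ and $t^{-1}$, observe that the resulting quadruple is a covariant representation which is still Cuntz--Pimsner invariant relative to $J$, invoke property \ref{item:10} of Theorem \ref{univ_cuntz} to obtain $\tau^J_t$, and deduce invertibility and the group law $\tau^J_{t_1}\circ\tau^J_{t_2}=\tau^J_{t_1t_2}$ from the uniqueness clause. The only difference is that you explicitly carry out the cancellation $\pi_{T_t,S_t}=\pi_{\iota_Q^J,\iota_P^J}$ on $\mathcal{F}_P(Q)$ to verify the invariance, a step the paper asserts without detail.
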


\begin{proof}
  Since $\mathcal{O}_{(P,Q,\psi)}(J)$ is generated by
  $\{\iota_R^J(r):r\in R\}\cup \{\iota_P^J(p):p\in P\}\cup
  \{\iota_Q^J(q):q\in Q\}$, it follows that
  a ring homomorphism defined on $\mathcal{O}_{(P,Q,\psi)}(J)$ is
  uniquely determined by its values on $\{\iota_R^J(r):r\in R\}\cup
  \{\iota_P^J(p):p\in P\}\cup \{\iota_Q^J(q):q\in Q\}$.

  Let $t\in F^*$. For $r\in R$, $p\in P$
  and $q\in Q$ let $\sigma(r)=\iota_R^J(r)$, $S(p)=t\iota_P^J(p)$ and
  $T(q)=t\inv \iota_Q^J(q)$.
  Then $(S,T,\sigma,\mathcal{O}_{(P,Q,\psi)}(J))$ is a covariant
  representation of $(P,Q,\psi)$ which is Cuntz-Pimsner invariant
  relative to $J$. Thus there exists a homomorphism
  $\tau^J_t:\mathcal{O}_{(P,Q,\psi)}(J)\longrightarrow
  \mathcal{O}_{(P,Q,\psi)}(J)$ such that $\tau^J_t(\iota_R^J(r))=\iota_R^J(r)$,
  $\tau^J_t(\iota_P^J(p))=t\iota_P^J(p)$ and
  $\tau^J_t(\iota_Q^J(q))=t\inv \iota_Q^J(q)$ for $r\in R$,
  $p\in P$ and $q\in Q$.

  If $t_1,t_2\in F^*$ and  $r\in R$, $p\in P$ and $q\in Q$, then
  $\tau^J_{t_1}\circ\tau^J_{t_2}(\iota_R^J(r))=\tau^J_{t_1t_2}(\iota_R^J(r))$,
  $\tau^J_{t_1}\circ\tau^J_{t_2}(\iota_P^J(p))=\tau^J_{t_1t_2}(\iota_P^J(p))$, and
  $\tau^J_{t_1}\circ\tau^J_{t_2}(\iota_Q^J(q))=\tau^J_{t_1t_2}(\iota_Q^J(q))$, so
  $\tau^J_{t_1}\circ\tau^J_{t_2}=\tau^J_{t_1t_2}$. We have in
  particular that
  $\tau^J_t\circ\tau^J_{t\inv}=\id_{\mathcal{O}_{(P,Q,\psi)}(J)}$, so
  $\tau^J_t$ is an automorphism.
\end{proof}

\begin{theor}
  Let $F$ be an infinite field, $R$ an (associative) $F$-algebra, and
  let $(P,Q,\psi)$ be an $R$-system satisfying condition
  \textbf{(FS)}. Assume that $J$ is a maximal faithful $\psi$-compatible
  two-sided ideal of $R$, and let $A$ be
  an $F$-algebra. Suppose that
  $$\phi:\mathcal{O}_{(P,Q,\psi)}(J)\longrightarrow A$$
  is a $F$-algebra homomorphism such that $\phi(\iota_R^J(r))\neq 0$ for
  every $r\in R\setminus \{0\}$. If there exists a group action
  $\sigma:F^*\longrightarrow \text{Aut}_F(A)$ such that $\phi\circ
  \tau^{J}_t=\sigma_t\circ \phi$ for every $t\in F^*$, then
  $\phi$ is injective.
\end{theor}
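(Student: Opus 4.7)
The plan is to use the gauge action together with the hypothesis that $F$ is infinite to show that $\ker\phi$ is a graded two-sided ideal of $\mathcal{O}_{(P,Q,\psi)}(J)$, and then to invoke the Graded Uniqueness Theorem (Theorem \ref{uniqueness}), which is available because $J$ is maximal.

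For the first step, take $x\in\ker\phi$ and decompose it as $x=\sum_{i=1}^k x_{n_i}$ into its $\Z$-graded homogeneous components with $n_1<n_2<\dots<n_k$. Since $\tau_t^J$ acts on the degree-$n$ piece of $\mathcal{O}_{(P,Q,\psi)}(J)$ as multiplication by $t^{-n}$ (as is immediate from its action on generators), the intertwining identity $\phi\circ\tau_t^J=\sigma_t\circ\phi$ gives
\begin{equation*}
\sum_{i=1}^k t^{-n_i}\phi(x_{n_i})=\phi(\tau_t^J(x))=\sigma_t(\phi(x))=0
\end{equation*}
for every $t\in F^*$. After multiplying by $t^{n_k}$ this is an $A$-valued polynomial identity in $t$ vanishing on the infinite set $F^*$. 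Expanding each $\phi(x_{n_i})\in A$ in a fixed $F$-basis of $A$ reduces the identity to finitely many $F$-valued polynomial identities, each of which vanishes on the infinite set $F^*$ and therefore has all coefficients zero. It follows that $\phi(x_{n_i})=0$ for every $i$, so every homogeneous component of every element of $\ker\phi$ lies in $\ker\phi$; that is, $\ker\phi$ is a graded two-sided ideal.

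The second step is then formal: the quotient $B:=\mathcal{O}_{(P,Q,\psi)}(J)/\ker\phi$ inherits a $\Z$-grading from $\mathcal{O}_{(P,Q,\psi)}(J)$, and the canonical surjection $\pi\colon\mathcal{O}_{(P,Q,\psi)}(J)\longrightarrow B$ is a graded ring homomorphism. Moreover $\pi\circ\iota_R^J$ is injective because $\phi\circ\iota_R^J$ is: if $\iota_R^J(r)\in\ker\phi$, then $\phi(\iota_R^J(r))=0$ and the hypothesis forces $r=0$. Since $J$ is a maximal faithful $\psi$-compatible two-sided ideal, Theorem \ref{uniqueness} asserts that $\mathcal{O}_{(P,Q,\psi)}(J)$ satisfies the Graded Uniqueness Theorem, which applied to $\pi$ yields that $\pi$ is injective. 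Thus $\ker\phi=\{0\}$ and $\phi$ is injective.

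The main obstacle is the first step: justifying that an $A$-valued polynomial identity holding on all of $F^*$ forces the coefficients in $A$ to vanish. This is where the infiniteness of $F$ is used in an essential way; the reduction to the classical statement for $F$-valued polynomials via expansion in an $F$-basis of $A$ is the standard trick, and it is exactly what would fail if $F$ were a finite field (since then one cannot distinguish monomials $t^{n_k-n_i}$ by their values on $F^*$).
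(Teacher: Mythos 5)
Your proof is correct and follows essentially the same route as the paper: use the intertwining of $\phi$ with the gauge action together with the infiniteness of $F$ to show that $\ker\phi$ is a graded two-sided ideal, then conclude via the Graded Uniqueness Theorem, which is available precisely because $J$ is maximal (Theorem \ref{uniqueness}). The only cosmetic difference is in how the homogeneous components are separated: you expand in an $F$-basis of $A$ and use that a polynomial over $F$ vanishing on the infinite set $F^*$ is zero, whereas the paper eliminates the components one at a time by subtracting $t^{n_r}\phi(x)=0$ and iterating; both hinge on exactly the same use of $\lvert F\rvert=\infty$.
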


\begin{proof}
  By Theorem \ref{uniqueness} it is enough to check that
  $\oplus_{n\in\Z}\phi(\rho_J(\mathcal{T}_{(P,Q,\psi)}^{(n)})$ is
  a grading of $B$. We will do that by showing that $\ker\phi$ is a graded ideal.
  Assume that $\phi(z_{n_1}+\cdots+z_{n_r})=0$,
  $n_1,\ldots,n_r\in \mathbb{Z}$, $n_i\ne n_j$ for $i\ne j$ and
  $z_{n_i}\in \rho_J(\mathcal{T}^{(n_i)}_{(P,Q,\psi)})$ for every
  $i=1,\ldots,r$.
  We then have for $t\in F^*$ that
  \begin{equation*}
    0=\sigma_t\bigl(\phi(z_{n_1}+\cdots+z_{n_r})\bigr)
    =\phi\bigl(\tau^J_t(z_{n_1}+\cdots+z_{n_r})\bigr)
    =\phi(t^{n_1}z_{n_1}+\cdots+t^{n_r}z_{n_r})\,.
  \end{equation*}
  On the other hand we have that
  $0=t^{n_r}\phi(z_{n_1}+\cdots+z_{n_r})=\phi(t^{n_r}z_{n_1}+\cdots+t^{n_r}z_{n_r})$.
  It follows that
  $$0=\phi((t^{n_r}-t^{n_1})z_{n_1}+\cdots+(t^{n_r}-t^{n_{r-1}})z_{n_{r-1}})\,,$$
  and since $F$ is an infinite field we have that
  $t^{n_r}-t^{n_i}\neq 0$ for every $i=1,\ldots,r-1$. Repeating this
  process $r-1$  times we get that $\phi(z_{n_1})=0$ as desired.
  Repeating the same argument we get that $\phi(z_{n_i})=0$ for
  every $i=1,\ldots,r$.
  This shows that $\ker\phi$ is a graded ideal and thus that
  $\oplus_{n\in\Z}\phi(\rho_J(\mathcal{T}_{(P,Q,\psi)}^{(n)})$ is
  a grading of $B$.
\end{proof}

If $F$ is a field, $R$ is an $F$-algebra, $(P,Q,\psi)$ is an
$R$-system satisfying condition \textbf{(FS)}, and $J$ is a
uniquely maximal faithful $\psi$-compatible two-sided ideal of $R$, 
then we denote by $\tau^{CP}$ the gauge
action $\tau^J$ of $\mathcal{O}_{(P,Q,\psi)}=\mathcal{O}_{(P,Q,\psi)}(J)$. 
We then get as a corollary to the previous theorem the following Gauge-Invariant
Uniqueness Theorem for Cuntz-Pimsner rings.

\begin{corol}[The Gauge-Invariant Uniqueness Theorem for Cuntz-Pimsner
  Rings, {cf. \cite[Theorem 4.1]{FMR}}]  \label{GIUTFCP}
  Let $F$ be an infinite field, $R$ an (associative) $F$-algebra and
  let $(P,Q,\psi)$ be an $R$-system satisfying condition
  \textbf{(FS)}. Assume that
  there exists a uniquely maximal faithful $\psi$-compatible two-sided ideal of $R$.
  Let $A$ be an $F$-algebra. Suppose that
  $$\phi:\mathcal{O}_{(P,Q,\psi)}\longrightarrow A$$
  is an $F$-algebra homomorphism such that $\phi(p_r)\neq 0$ for every
  $r\in R\setminus \{0\}$. If there exists a group action
  $\sigma:F^*\longrightarrow \text{Aut}_F(A)$ such that $\phi\circ
  \tau^{CP}_t=\sigma_t\circ \phi$ for every $t\in F^*$, then $\phi$ is
  injective.
\end{corol}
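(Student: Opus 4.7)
The plan is to deduce this corollary directly from the preceding theorem (the one stated immediately before Corollary \ref{GIUTFCP}). The key observation is that all the objects appearing in the corollary are, by Definition \ref{def:CK} and the notation established just before the corollary, literally the objects from the theorem specialized to the uniquely maximal ideal.

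More precisely, let $J$ be the uniquely maximal faithful $\psi$-compatible two-sided ideal of $R$ whose existence is assumed. By Definition \ref{def:CK} we have $\mathcal{O}_{(P,Q,\psi)}=\mathcal{O}_{(P,Q,\psi)}(J)$ and $p_r=\iota_R^{CP}(r)=\iota_R^J(r)$ for every $r\in R$, and by the definition of $\tau^{CP}$ given just before the corollary we have $\tau^{CP}_t=\tau^J_t$ for every $t\in F^*$. Moreover, as noted in the remark preceding Lemma \ref{lemma:perp}, a uniquely maximal faithful $\psi$-compatible two-sided ideal is in particular maximal in the sense required by the preceding theorem.

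Under these identifications, the hypothesis $\phi(p_r)\ne 0$ for every $r\in R\setminus\{0\}$ becomes $\phi(\iota_R^J(r))\ne 0$ for every $r\in R\setminus\{0\}$, and the equivariance $\phi\circ\tau^{CP}_t=\sigma_t\circ\phi$ becomes $\phi\circ\tau^J_t=\sigma_t\circ\phi$. Thus all hypotheses of the preceding theorem are satisfied with this choice of $J$, and its conclusion gives that $\phi$ is injective. There is no real obstacle here; the corollary is obtained simply by translating the notation of Definition \ref{def:CK} into that of the preceding theorem, so the proof reduces to one line invoking that theorem.

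\begin{proof}
Let $J$ be the uniquely maximal faithful $\psi$-compatible two-sided ideal of $R$. Then $J$ is maximal, and by Definition \ref{def:CK} and the definition of $\tau^{CP}$ we have $\mathcal{O}_{(P,Q,\psi)}=\mathcal{O}_{(P,Q,\psi)}(J)$, $p_r=\iota_R^J(r)$ for every $r\in R$, and $\tau^{CP}_t=\tau^J_t$ for every $t\in F^*$. The conclusion therefore follows immediately from the preceding theorem applied to $J$.
\end{proof}
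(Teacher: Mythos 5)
Your proposal is correct and matches the paper exactly: the paper states this result as an immediate corollary of the preceding theorem, obtained by specializing to the uniquely maximal faithful $\psi$-compatible ideal $J$ (which is in particular maximal) and unwinding the notation of Definition \ref{def:CK} and of $\tau^{CP}$. Nothing more is needed.
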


When we specialize to directed graphs, we get a generalization of the
Algebraic Gauge-Invariant Uniqueness Theorem \cite[Theorem 1.8.]{AALP}
from row finite graphs to all directed graphs.

\begin{corol} \label{cor:GIUTLA}
Let $E$ be a directed graph, let $F$ be an infinite field and let
$A$ be an $F$-algebra. Suppose that
$$\phi:L_F(E)\longrightarrow A$$
is a $F$-algebra homomorphism such that $\phi(p_v)\neq 0$ for every
$v\in E^0$. If there exists a group action
$\sigma:F^*\longrightarrow \text{Aut}_F(A)$ such that $\phi\circ
\tau^{E}_t=\sigma_t\circ \phi$ for every $t\in F^*$, then $\phi$ is
injective.
\end{corol}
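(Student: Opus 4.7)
The strategy is to deduce Corollary \ref{cor:GIUTLA} directly from Corollary \ref{GIUTFCP} by applying it to the $R$-system $(P,Q,\psi)$ associated with the graph $E$ constructed in Example \ref{examples:graph-toeplitz}. From Example \ref{examples_cuntz:graph_cuntz} we already know three crucial things: the $R$-system $(P,Q,\psi)$ (where $R=\oplus_{v\in E^0}F_v$) satisfies condition \textbf{(FS)}; the ideal $\Delta^{-1}(\mathcal{F}_P(Q))\cap(\ker\Delta)^\perp$ is a uniquely maximal faithful $\psi$-compatible two-sided ideal of $R$ (via Lemma \ref{lemma:perp}), so $\mathcal{O}_{(P,Q,\psi)}$ is defined; and there is an $F$-algebra isomorphism $\Phi:\mathcal{O}_{(P,Q,\psi)}\longrightarrow L_F(E)$ sending $\iota_R^{CP}(\textbf{1}_v)$ to $p_v$, $\iota_Q^{CP}(\textbf{1}_e)$ to $x_e$, and $\iota_P^{CP}(\textbf{1}_{\overline e})$ to $y_e$.

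First I would check that, under $\Phi$, the gauge action $\tau^{CP}$ on $\mathcal{O}_{(P,Q,\psi)}$ corresponds to the Leavitt gauge action $\tau^E$ on $L_F(E)$. This is immediate once one notes that both actions are characterized by fixing the vertex idempotents, scaling the edge generators $x_e$ by $t^{-1}$, and scaling the ghost edges $y_e$ by $t$, so $\Phi\circ\tau^{CP}_t=\tau^E_t\circ\Phi$ for every $t\in F^*$. Consequently the composition $\phi\circ\Phi:\mathcal{O}_{(P,Q,\psi)}\longrightarrow A$ intertwines $\tau^{CP}$ with the given action $\sigma$ on $A$.

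Next I would verify the hypothesis of Corollary \ref{GIUTFCP}, namely that $\phi\circ\Phi(\iota_R^{CP}(r))\ne 0$ for every $r\in R\setminus\{0\}$. Any nonzero $r\in R$ has the form $r=\sum_{v\in S}\lambda_v\textbf{1}_v$ with $S\subseteq E^0$ finite and all $\lambda_v\in F^*$. Fix any $w\in S$; since $\{p_v\}_{v\in E^0}$ is a family of pairwise orthogonal idempotents in $L_F(E)$, multiplying by $p_w$ on both sides yields $p_w\,\Phi(\iota_R^{CP}(r))\,p_w=\lambda_w p_w$, so
\begin{equation*}
\phi(p_w)\,\phi(\Phi(\iota_R^{CP}(r)))\,\phi(p_w)=\lambda_w\phi(p_w).
\end{equation*}
As $\lambda_w\in F^*$ and $\phi(p_w)\ne 0$ by assumption, the right-hand side is nonzero, forcing $\phi(\Phi(\iota_R^{CP}(r)))\ne 0$ as desired.

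Applying Corollary \ref{GIUTFCP} to the $F$-algebra homomorphism $\phi\circ\Phi$ then yields that $\phi\circ\Phi$ is injective, and since $\Phi$ is an isomorphism we conclude that $\phi$ is injective. The whole argument is essentially a translation of the graph data into the $R$-system language, so the only genuine point requiring care is step (3): confirming that control on the vertex idempotents is enough to recover injectivity on all of $\iota_R^{CP}(R)$. This is where the direct-sum decomposition $R=\oplus_{v\in E^0}F_v$ together with the orthogonality of the $p_v$'s is used in an essential way.
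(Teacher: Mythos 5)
Your proposal is correct and follows exactly the paper's own (one-line) argument: identify $L_F(E)$ with $\mathcal{O}_{(P,Q,\psi)}$ via Example \ref{examples_cuntz:graph_cuntz} and apply Corollary \ref{GIUTFCP}. The only detail the paper leaves implicit --- that $\phi(p_v)\neq 0$ for all $v$ forces $\phi(\Phi(\iota_R^{CP}(r)))\neq 0$ for all nonzero $r\in R$, via the orthogonality of the vertex idempotents --- is the one you rightly spell out, and your argument for it is correct.
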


\begin{proof}
Follows from Example \ref{examples_cuntz:graph_cuntz} and Corollary
\ref{GIUTFCP}.
\end{proof}

\section{Graded covariant representations}

In Section \ref{sec:relat-cuntz-pimsn} we classified all surjective,
injective and graded covariant representations of an $R$-system
satisfying condition \textbf{(FS)}. We will in this section extend
this classification to all surjective and graded covariant
representations. As a corollary we get a description of all graded
two-sided ideals of a relative Cuntz-Pimsner algebra (and therefore of the
Toeplitz ring and the Cuntz-Pimsner ring whenever it is defined)
of an $R$-system satisfying condition \textbf{(FS)}.

We will proceed as in Section \ref{sec:relat-cuntz-pimsn} and first
describe a family of surjective and graded covariant
representations of a given $R$-system which satisfies condition
\textbf{(FS)}, and then show that this family contains up to
isomorphism all surjective and graded covariant representations. This
approach is inspired by the work of Katsura in \cite{KS} (notice
however that our definition of a $T$-pair (see Definition
\ref{def:t-pair}) is different from Katsura's definition).

At the end of the section we will see how our
description of the graded two-sided ideals of a Cuntz-Pimsner ring agrees with
Tomforde's characterization of the
graded ideals of a Leavitt path algebra. We will also show (cf.
Proposition \ref{theor_2}) that if
the $R$-system $(P,Q,\psi)$ satisfies condition \textbf{(FS)}, then any
quotient of a relative Cuntz-Pimsner ring of $(P,Q,\psi)$ by a
graded two-sided ideal is again a relative Cuntz-Pimsner ring (but
of a different system).

\subsection{The classification of graded covariant representations of an $R$-system}
\label{sec:grad-covar-repr}

We begin with some definitions and some notation.

\begin{defi} \label{def:invariant}
Let $R$ be a ring and let $(P,Q,\psi)$ be an $R$-system. A two-sided
ideal $I$ of $R$ is said to be \textit{$\psi$-invariant} if
$\psi(p\otimes x q)\in I$ for every $p\in P$, $q\in Q$ an $x\in I$.

If $I$ is a two-sided ideal of $R$, then $QI:=\spa\{qx: q\in Q,\ x\in
I\}$ and $IQ:=\spa\{xq: q\in Q,\ x\in I\}$ are $I$-bimodules.
Similarly we define $IP:=\spa\{xp: p\in P,\ x\in I\}$ and
$PI:=\spa\{px: p\in P,\ x\in I\}$ which are also $I$-bimodules.
\end{defi}

\begin{rema}\label{rema31}
Observe that if $R$ is a ring, $(P,Q,\psi)$ is an $R$-system which
satisfies condition \textbf{(FS)}, and $I$ is $\psi$-invariant
two-sided ideal of $R$,
then $IQ\subseteq QI$ and
$PI\subseteq IP$. Indeed, let $x\in I$, then by the \textbf{(FS)}
condition there exists $\Theta=\sum^n_{i=1} \theta_{q_i,p_i}\in
\mathcal{F}_P(Q)$ such that $xq=\Theta(xq)=\sum^n_{i=1}
\theta_{q_i,p_i}(xq)=\sum^n_{i=1} q_i\psi(p_i\otimes xq)\in QI$
since $\psi(p_i\otimes xq)\in I$ for every $i\in \{1,\ldots,n\}$.
Similarly one can prove that $PI\subseteq IP$.
\end{rema}

\begin{defi}
Let $R$ be a ring and let $(P,Q,\psi)$ be an $R$-system satisfying
condition \textbf{(FS)}. For a two-sided ideal $I$ of $R$ we define
$R_I:=R/I$, $Q_I:=Q/QI$ and ${}_IP:=P/IP$. We let $\wp_I$ be their
respective projections.

It follows from Remark \ref{rema31} that if $I$ is a
$\psi$-invariant two-sided ideal of $R$, then $Q_I$ and ${}_IP$ are
$R_I$-bimodules. We can in this case define a $R_I$-bimodule
homomorphism $\psi_I:{}_IP\otimes Q_I\longrightarrow R_I$ by
$\psi_I(\wp_I(p)\otimes \wp_I(q))=\wp_I(\psi(p\otimes q))$.

Observe that we can also define a projection $\wp_I:\mathcal{L}_P(Q)
\longrightarrow \mathcal{L}_{{}_IP}(Q_I)$ such that
$\wp_I(T)(\wp_I(q))=\wp_I(T(q))$ for every $T\in\mathcal{L}_P(Q)$
and $q\in Q$, and then we have that
$\wp_I(\mathcal{F}_{P}(Q))=\mathcal{F}_{{}_IP}(Q_I)$. We also define
a ring homomorphism $\Delta_I:R_I\longrightarrow \text{End}(Q_I)$ by
$\Delta_I(\wp_I(r))\wp_I(q)=\wp_I(rq)$ for $r\in R$ and $q\in Q$. We
then have that
$\Delta_I(\wp_I(r))=\wp_I(\Delta(r))$ for every $r\in R$.
\end{defi}

We then have the following straightforward lemma:

\begin{lem} \label{lemma:rifs}
Let $R$ be a ring, let $(P,Q,\psi)$ be an $R$-system satisfying
condition \textbf{(FS)},
and let $I$ be a $\psi$-invariant two-sided ideal of $R$. Then the $R_I$-system
$({}_IP,Q_I,\psi_I)$ satisfies condition \textbf{(FS)}.
\end{lem}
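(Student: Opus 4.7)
The plan is to verify condition \textbf{(FS)} for the quotient system $({}_IP,Q_I,\psi_I)$ by lifting finite families from the quotients back to $P$ and $Q$, invoking condition \textbf{(FS)} for $(P,Q,\psi)$, and then pushing the resulting rank-$1$ operators down via the projection $\wp_I$.

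More concretely, given finite families $\overline{q}_1,\dots,\overline{q}_n\in Q_I$ and $\overline{p}_1,\dots,\overline{p}_m\in {}_IP$, I would first choose representatives $q_i\in Q$ and $p_j\in P$ with $\wp_I(q_i)=\overline{q}_i$ and $\wp_I(p_j)=\overline{p}_j$. By condition \textbf{(FS)} for $(P,Q,\psi)$, there exist $\Theta=\sum_k\theta_{a_k,b_k}\in\mathcal{F}_P(Q)$ and $\Delta=\sum_l\theta_{c_l,d_l}\in\mathcal{F}_Q(P)$ with $\Theta(q_i)=q_i$ for all $i$ and $\Delta(p_j)=p_j$ for all $j$. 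Define
\begin{equation*}
\Theta':=\sum_k\theta_{\wp_I(a_k),\wp_I(b_k)}\in\mathcal{F}_{{}_IP}(Q_I)\qquad\text{and}\qquad
\Delta':=\sum_l\theta_{\wp_I(c_l),\wp_I(d_l)}\in\mathcal{F}_{Q_I}({}_IP).
\end{equation*}
Then $\Theta'(\overline{q}_i)=\sum_k \wp_I(a_k)\psi_I(\wp_I(b_k)\otimes \wp_I(q_i))=\wp_I\bigl(\sum_k a_k\psi(b_k\otimes q_i)\bigr)=\wp_I(\Theta(q_i))=\wp_I(q_i)=\overline{q}_i$, and analogously $\Delta'(\overline{p}_j)=\overline{p}_j$.

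The only points requiring care are that $\Theta'$ and $\Delta'$ are well defined (independent of the choice of representatives for the quotient elements $\overline{a}_k,\overline{b}_k,\overline{c}_l,\overline{d}_l$) and lie in the correct rank-one ideals. Well-definedness follows because the formula $\theta_{\wp_I(a),\wp_I(b)}(\wp_I(q))=\wp_I(a)\psi_I(\wp_I(b)\otimes\wp_I(q))$ depends only on the classes of $a$ and $b$, and membership in $\mathcal{F}_{{}_IP}(Q_I)$ (resp.\ $\mathcal{F}_{Q_I}({}_IP)$) is immediate from the definition.

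I do not expect any real obstacle here; the $\psi$-invariance of $I$ is exactly what is needed to make $\psi_I$ well defined (hence to even speak of rank-$1$ operators in the quotient system), and then the push-down of $\Theta$ and $\Delta$ works because $\wp_I$ intertwines the actions by construction. The only mildly non-routine ingredient is the observation (already recorded in the excerpt) that $\wp_I(\mathcal{F}_P(Q))=\mathcal{F}_{{}_IP}(Q_I)$, together with its left-module analogue for $\mathcal{F}_Q(P)$, and these are verified by the identical computation.
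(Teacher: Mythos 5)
Your argument is correct and is exactly the ``straightforward'' proof the paper has in mind (the paper omits the proof entirely, having just recorded the key fact $\wp_I(\mathcal{F}_P(Q))=\mathcal{F}_{{}_IP}(Q_I)$ that your push-down step uses). Lifting representatives, applying \textbf{(FS)} in $(P,Q,\psi)$, and projecting $\Theta$ and $\Delta$ via $\wp_I$ is the intended route, and your well-definedness check for $\theta_{\wp_I(a),\wp_I(b)}$ correctly reduces to the well-definedness of $\psi_I$ and of the quotient bimodule structures, which the $\psi$-invariance of $I$ (together with \textbf{(FS)}, via $IQ\subseteq QI$ and $PI\subseteq IP$) already guarantees.
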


\begin{defi}[{Cf. \cite[Definition 5.6]{KS}}] \label{def:t-pair}
Let $R$ be a ring and let $(P,Q,\psi)$ be an $R$-system satisfying
condition \textbf{(FS)}. A pair $\omega=(I,J)$ of two-sided ideals
of $R$ such that $I\subseteq J$ is said to be a \textit{$T$-pair} of
$(P,Q,\psi)$ if $I$ is a $\psi$-invariant ideal and
$J_I:=\wp_I(J)$ is a faithful $\psi_I$-compatible two-sided ideal of $R_I$.
\end{defi}

\noindent Notice that since $I\subseteq J$, we have that
$\wp_I^{-1}(J_I)=J$.

Let $R$ be a ring, let $(P,Q,\psi)$ be an $R$-system satisfying
condition \textbf{(FS)}, and let $\omega=(I,J)$ be a $T$-pair. Then we
define the following maps
$$\iota_R^\omega:=\iota^{J_I}_{R_I}\circ \wp_I:R\longrightarrow
\mathcal{O}_{({}_IP,Q_I,\psi_I)}(J_I)\,,$$
$$\iota_Q^\omega:=\iota^{J_I}_{Q_I}\circ \wp_I:Q\longrightarrow
\mathcal{O}_{({}_IP,Q_I,\psi_I)}(J_I)\,,$$
$$\iota_P^\omega:=\iota^{J_I}_{{}_IP}\circ \wp_I:P\longrightarrow
\mathcal{O}_{({}_IP,Q_I,\psi_I)}(J_I)\,,$$ where
$(\iota^{J_I}_{{}_IP},\iota^{J_I}_{Q_I},\iota^{J_I}_{R_I},
\mathcal{O}_{({}_IP,Q_I,\psi_I)}(J_I))$ is the universal
Cuntz-Pimsner invariant representation of $({}_IP,Q_I,\psi_I)$
relative to $J_I$. It is easy to check
$(\iota_P^\omega,\iota_Q^\omega,\iota_R^\omega,\mathcal{O}_{({}_IP,Q_I,\psi_I)}(J_I))$
is a surjective and graded covariant representation of $(P,Q,\psi)$.
We will in this section show that the family
$\{(\iota_P^\omega,\iota_Q^\omega,\iota_R^\omega,\mathcal{O}_{({}_IP,Q_I,\psi_I)}(J_I))\mid
\omega\text{ is a $T$-pair of }(P,Q,\psi)\}$ up to isomorphism
contains all surjective and graded covariant representations of
$(P,Q,\psi)$.

\begin{defi} \label{def:I}
Let $R$ be a ring, let $(P,Q,\psi)$ be an $R$-system that satisfies
condition \textbf{(FS)} and let $(S,T,\sigma,B)$ be a covariant
representation of $(P,Q,\psi)$. Then we define $I_{(S,T,\sigma,B)}$
as the two-sided ideal $\ker\sigma$ of $R$.
\end{defi}

\begin{lem}\label{lemma_31}
Let $R$ be a ring and let $(P,Q,\psi)$ be an $R$-system satisfying
condition \textbf{(FS)}. If $(S,T,\sigma,B)$ is a covariant
representation of $(P,Q,\psi)$, then $\ker T=QI_{(S,T,\sigma,B)}$
and $\ker S=I_{(S,T,\sigma,B)}P$.
\end{lem}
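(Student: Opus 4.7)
The plan is to prove the two equalities by a pair of symmetric arguments, using condition \textbf{(FS)} together with the identity $\sigma(\psi(p\otimes q))=S(p)T(q)$ that every covariant representation satisfies.

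First I would establish the easy containments $QI_{(S,T,\sigma,B)}\subseteq \ker T$ and $I_{(S,T,\sigma,B)}P\subseteq \ker S$: if $x\in I_{(S,T,\sigma,B)}=\ker\sigma$ and $q\in Q$, then $T(qx)=T(q)\sigma(x)=0$, and if $p\in P$, then $S(xp)=\sigma(x)S(p)=0$. These extend linearly to $QI_{(S,T,\sigma,B)}$ and $I_{(S,T,\sigma,B)}P$ respectively.

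For the reverse inclusion $\ker T\subseteq QI_{(S,T,\sigma,B)}$, let $q\in\ker T$. Using (FS) applied to $\{q\}$, pick $\Theta=\sum_{i=1}^n\theta_{q_i,p_i}\in\mathcal{F}_P(Q)$ with $\Theta(q)=q$, so that $q=\sum_{i=1}^n q_i\psi(p_i\otimes q)$. The key observation is that for every $p\in P$ we have $\sigma(\psi(p\otimes q))=S(p)T(q)=0$, hence $\psi(p\otimes q)\in\ker\sigma=I_{(S,T,\sigma,B)}$ for all $p\in P$; in particular $\psi(p_i\otimes q)\in I_{(S,T,\sigma,B)}$ for each $i$. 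Therefore $q\in QI_{(S,T,\sigma,B)}$, as required. The argument for $\ker S\subseteq I_{(S,T,\sigma,B)}P$ is entirely dual: for $p\in\ker S$, apply (FS) to $\{p\}$ to obtain $\Omega=\sum_{j=1}^m\theta_{p_j,q_j}\in\mathcal{F}_Q(P)$ with $\Omega(p)=p=\sum_{j=1}^m\psi(p\otimes q_j)p_j$, and observe that $\sigma(\psi(p\otimes q))=S(p)T(q)=0$ for every $q\in Q$, so each $\psi(p\otimes q_j)$ lies in $I_{(S,T,\sigma,B)}$.

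There is no real obstacle here; the only subtlety is making sure condition \textbf{(FS)} is used on the correct side (rank one operators on $Q$ fix $q$ on the left, producing elements of $QI$; rank one adjoints on $P$ fix $p$ on the right, producing elements of $IP$). Everything else is a direct computation from the defining relations of a covariant representation.
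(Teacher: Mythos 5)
Your proof is correct and follows essentially the same route as the paper: the easy containment from the bimodule relations, and the reverse containment by observing that $\sigma(\psi(p\otimes q))=S(p)T(q)=0$ forces $\psi(p\otimes q)\in\ker\sigma$ for all $p$, then using \textbf{(FS)} to write $q=\sum_i q_i\psi(p_i\otimes q)\in QI_{(S,T,\sigma,B)}$ (and dually for $S$). Nothing to add.
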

\begin{proof}
Clearly $QI_{(S,T,\sigma,B)}\subseteq \ker T$. Now let $q\in \ker
T$, then for every $p\in P$ we have $0=S(p)T(q)=\sigma(\psi(p\otimes
q))$ and hence $\psi(p\otimes q)\in \ker \sigma=I_{(S,T,\sigma,B)}$
for every $p\in P$. By condition \textbf{(FS)} there exists
$\Theta=\sum^n_{i=1}\theta_{q_i,p_i}$ such that $\Theta(q)=q$ and
therefore
$q=\Theta(q)=\sum^n_{i=1}\theta_{q_i,p_i}(q)=\sum^n_{i=1}q_i\psi(p_i\otimes
q)\in QI_{(S,T,\sigma,B)}$ as desired.

That $\ker S=I_{(S,T,\sigma,B)}P$ can be proved in a similar way.
\end{proof}

\begin{prop}\label{prop_31}
Let $R$ be a ring and let $(P,Q,\psi)$ be an $R$-system satisfying
condition \textbf{(FS)}. Let $(S,T,\sigma,B)$ be a covariant
representation of $(P,Q,\psi)$, and let $I_{(S,T,\sigma,B)}$ be as defined
in Definition \ref{def:I}, and let $J_{(S,T,\sigma,B)}$ be as defined
in Definition \ref{def:J} . Then the pair
$\omega_{(S,T,\sigma,B)}:=(I_{(S,T,\sigma,B)},J_{(S,T,\sigma,B)})$
is a $T$-pair of $(P,Q,\psi)$.
\end{prop}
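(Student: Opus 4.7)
The plan is to verify the three requirements for $\omega_{(S,T,\sigma,B)}$ to be a $T$-pair: that $I \subseteq J$, that $I$ is $\psi$-invariant, and that $J_I := \wp_I(J)$ is a faithful $\psi_I$-compatible two-sided ideal of $R_I$. The first two conditions are direct. For $I \subseteq J$: if $r \in \ker\sigma$ then $\sigma(r) = 0 = \pi_{T,S}(0) \in \pi_{T,S}(\mathcal{F}_P(Q))$, so $r \in J$. For $\psi$-invariance, given $x \in I$, $p \in P$ and $q \in Q$, the covariance relations give $\sigma(\psi(p \otimes xq)) = S(p)T(xq) = S(p)\sigma(x)T(q) = 0$, so $\psi(p \otimes xq) \in \ker\sigma = I$.

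The main obstacle is establishing that $J_I$ is faithful and $\psi_I$-compatible in $R_I$, since the cleanest control of $J_{(S,T,\sigma,B)}$ via Lemma \ref{lemma_6} requires injectivity of $\sigma$, whereas here $\sigma$ typically has nontrivial kernel. My strategy is to reduce to the injective case by passing to a quotient representation. Concretely, I will define $\tilde\sigma(\wp_I(r)) = \sigma(r)$, $\tilde T(\wp_I(q)) = T(q)$ and $\tilde S(\wp_I(p)) = S(p)$; these are well defined because $I = \ker\sigma$ and Lemma \ref{lemma_31} yields $QI \subseteq \ker T$ and $IP \subseteq \ker S$. The covariance axioms of $(\tilde S, \tilde T, \tilde\sigma, B)$ with respect to the induced $R_I$-system $({}_IP, Q_I, \psi_I)$ transfer directly from those of $(S,T,\sigma,B)$, and $\tilde\sigma$ is injective by construction.

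To finish, I would invoke Lemma \ref{lemma:rifs} to note that $({}_IP, Q_I, \psi_I)$ still satisfies condition \textbf{(FS)}, and then Lemma \ref{lemma_6} to conclude that $J_{(\tilde S, \tilde T, \tilde\sigma, B)}$ is a faithful $\psi_I$-compatible two-sided ideal of $R_I$. It then only remains to identify $J_{(\tilde S, \tilde T, \tilde\sigma, B)}$ with $J_I$. Using the obvious equality $\pi_{\tilde T, \tilde S}(\theta_{\wp_I(q), \wp_I(p)}) = T(q)S(p) = \pi_{T,S}(\theta_{q,p})$ on generators, one obtains $\pi_{\tilde T, \tilde S}(\mathcal{F}_{{}_IP}(Q_I)) = \pi_{T,S}(\mathcal{F}_P(Q))$. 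Hence $\wp_I(r) \in J_{(\tilde S, \tilde T, \tilde\sigma, B)}$ iff $\sigma(r) \in \pi_{T,S}(\mathcal{F}_P(Q))$ iff $r \in J$, which (using $I \subseteq J$) is equivalent to $\wp_I(r) \in J_I$. This completes the verification that $(I,J)$ is a $T$-pair.
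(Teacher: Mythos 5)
Your proof is correct, but it takes a genuinely different route from the paper's. The paper verifies the two nontrivial conditions by hand: for $\psi_I$-compatibility it picks $\Theta$ with $\sigma(x)=\pi_{T,S}(\Theta)$ and uses Lemma \ref{lemma_31} to see $xq-\Theta(q)\in QI$, so $\Delta_I(\wp_I(x))=\wp_I(\Theta)\in\mathcal{F}_{{}_IP}(Q_I)$; for faithfulness it runs a fairly long explicit computation, applying condition \textbf{(FS)} twice to rewrite $\sigma(x)$ as a sum of terms $T(a_j)\sigma(\psi(b_j\otimes\Theta(d_k)))S(c_k)$ with $\Theta(d_k)\in QI$, forcing $\sigma(x)=0$. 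You instead factor the whole representation through the quotient system: you build the induced representation $(\tilde S,\tilde T,\tilde\sigma,B)$ of $({}_IP,Q_I,\psi_I)$ (well-defined by Lemma \ref{lemma_31}, injective because $I=\ker\sigma$), invoke Lemma \ref{lemma:rifs} so that the quotient system still satisfies \textbf{(FS)}, and then let Lemma \ref{lemma:compact} and Lemma \ref{lemma_6} — whose hypotheses are now met since $\tilde\sigma$ is injective — deliver both compatibility and faithfulness of $J_{(\tilde S,\tilde T,\tilde\sigma,B)}$ at once; the identification $J_{(\tilde S,\tilde T,\tilde\sigma,B)}=J_I$ via $\pi_{\tilde T,\tilde S}(\mathcal{F}_{{}_IP}(Q_I))=\pi_{T,S}(\mathcal{F}_P(Q))$ then closes the argument. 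This is essentially the content of Lemma \ref{lemma_33}, which the paper only proves later in Section 7; since that lemma rests only on Lemma \ref{lemma_31}, there is no circularity in your re-derivation of it here. What your approach buys is conceptual transparency — faithfulness becomes an immediate consequence of the injectivity of $\tilde\sigma$ rather than a computation — at the cost of setting up the quotient representation; the paper's approach is longer but self-contained at that point in the text.
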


\begin{proof}
We let $I:=I_{(S,T,\sigma,B)}$ and $J:=J_{(S,T,\sigma,B)}$. It is
clear that $I$ is a two-sided ideal of $R$, and it follows from
Lemma \ref{lemma_6} that also $J$ is a two-sided ideal of $R$. It is
clear that $I\subseteq J$.

First we prove that $I$ is $\psi$-invariant. Indeed, let $x\in I$, $p\in P$
and $q\in Q$. Then $\sigma(\psi(p\otimes x
q))=S(p)\sigma(x)T(q)=0$, so $\psi(p\otimes x q)\in \ker\sigma=I$.

Now let $x\in J=\sigma^{-1}(\pi_{T,S}(\mathcal{F}_{P}(Q)))$. Then
there exists $\Theta\in \mathcal{F}_{P}(Q)$ with
$\sigma(x)=\pi_{T,S}(\Theta)$. Thus we have for every $q\in Q$ that
$$T(xq)=\sigma(x)T(q)=\pi_{T,S}(\Theta)T(q)=T(\Theta(q))\,,$$
and it follows from Lemma \ref{lemma_31} that $xq-\Theta(q)\in \ker
T=QI$. Hence $\wp_I(xq)-\wp_I(\Theta(q))=0$, so
$\wp_I(x)\wp_I(q)=\wp_I(\Theta)(\wp_I(q))$. Since $\wp_I(\Theta)\in
\mathcal{F}_{{}_IP}(Q_I)$, it follows that $\Delta_I(\wp_I(x))\in
\mathcal{F}_{{}_IP}(Q_I)$.

Now we check that $J_I\cap \ker\Delta_I=0$. Let $x\in J$ and assume
that $\wp_I(x)\in \ker\Delta_I$. Then $xq\in QI$ for every $q\in Q$.
But since $x\in J$,  there exists
$\Theta=\sum^n_{i=1}\theta_{q_i,p_i}\in\mathcal{F}_{P}(Q)$ such that
$\sigma(x)=\pi_{T,S}(\Theta)=\sum^n_{i=1}T(q_i)S(p_i)$. It then
follows from Lemma \ref{lemma_31} that
$xq-\sum^n_{i=1}q_i\psi(p_i\otimes q)\in \ker T=QI$, so
$\sum^n_{i=1}q_i\psi(p_i\otimes q)\in QI$ for every $q\in Q$. Now by
condition \textbf{(FS)} there exist
$\Theta_1=\sum^m_{j=1}\theta_{a_j,b_j}\in \mathcal{F}_{P}(Q)$ and
$\Theta_2=\sum^l_{k=1}\theta_{c_k,d_k}\in \mathcal{F}_{Q}(P)$ such
that $\Theta_1(q_i)=q_i$ and $\Theta_2(p_i)=p_i$ for every
$i=1,\ldots,n$. Then we have
\begin{equation*}
\begin{split}
\sigma(x)
&=\sum^n_{i=1}T(q_i)S(p_i)=\sum^n_{i=1}T(\Theta_1(q_i))S(\Theta_2(p_i))\\
&=\sum^n_{i=1}T\left(\sum^m_{j=1}\theta_{a_j,b_j}(q_i)\right)S\left(\sum^l_{k=1}\theta_{c_k,d_k}(p_i)\right)\\
&=\sum^n_{i=1}T\left(\sum^m_{j=1}a_j\psi(b_j\otimes q_i)\right)S\left(\sum^l_{k=1}\psi(p_i\otimes d_k)c_k)\right)\\
&=\sum^n_{i=1}\sum^m_{j=1}\sum^l_{k=1}T(a_j)\sigma(\psi(b_j\otimes q_i)\psi(p_i\otimes d_k))S(c_k)\\
&=\sum^m_{j=1}\sum^l_{k=1}T(a_j)\sigma\left(\psi(b_j\otimes \sum^n_{i=1}
q_i\psi(p_i\otimes d_k))\right)S(c_k)\\
&=\sum^m_{j=1}\sum^l_{k=1}T(a_j)\sigma(\psi(b_j\otimes
\Theta(d_k)))S(c_k),
\end{split}
\end{equation*}
but $\Theta(d_k)\in QI$ for every $k=1,\ldots,l$, and hence
$\psi(b_j\otimes \Theta(d_k))\in I$. So $\sigma(\psi(b_j\otimes
\Theta(d_k)))=0$, from which it follows that
$0=\sum^n_{i=1}T(q_i)S(p_i)=\sigma(x)$, and therefore $x\in
\ker\sigma=I$. Thus $\wp_I(x)=0$.
\end{proof}

\begin{prop}\label{prop_32}
Let $R$ be a ring and let $(P,Q,\psi)$ an $R$-system satisfying
condition \textbf{(FS)}. If $\omega=(I,J)$ is a $T$-pair, then
$\omega=\omega_{(\iota_P^\omega,\iota_Q^\omega,\iota_R^\omega,\mathcal{O}_{({}_IP,Q_I,\psi_I)}(J_I))}$.
\end{prop}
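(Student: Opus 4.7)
The plan is to unpack both components of $\omega_{(\iota_P^\omega,\iota_Q^\omega,\iota_R^\omega,\mathcal{O}_{({}_IP,Q_I,\psi_I)}(J_I))} = (I',J')$ and show $I' = I$ and $J' = J$ by reducing each to Propositions \ref{prop_1} and \ref{prop_3} applied to the quotient $R_I$-system $({}_IP,Q_I,\psi_I)$ and the ideal $J_I$. Lemma \ref{lemma:rifs} guarantees that this system satisfies \textbf{(FS)}, and the $T$-pair hypothesis that $J_I$ is a faithful $\psi_I$-compatible two-sided ideal of $R_I$, so these propositions apply.

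First I would handle $I' = I_{(\iota_P^\omega,\iota_Q^\omega,\iota_R^\omega,\cdot)} = \ker\iota_R^\omega$. Since $\iota_R^\omega = \iota_{R_I}^{J_I}\circ\wp_I$ and Proposition \ref{prop_1} (applied to $({}_IP,Q_I,\psi_I)$ and the faithful ideal $J_I$) asserts that $\iota_{R_I}^{J_I}$ is injective, we get $\ker\iota_R^\omega = \ker\wp_I = I$.

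Next I would handle $J' = J_{(\iota_P^\omega,\iota_Q^\omega,\iota_R^\omega,\cdot)}$. The key observation is that $\iota_Q^\omega(q)\iota_P^\omega(p) = \iota_{Q_I}^{J_I}(\wp_I(q))\iota_{{}_IP}^{J_I}(\wp_I(p))$ for every $q\in Q$ and $p\in P$, so $\pi_{\iota_Q^\omega,\iota_P^\omega}(\theta_{q,p}) = \pi_{\iota_{Q_I}^{J_I},\iota_{{}_IP}^{J_I}}(\theta_{\wp_I(q),\wp_I(p)})$. Combined with the fact that $\wp_I(\mathcal{F}_P(Q)) = \mathcal{F}_{{}_IP}(Q_I)$, this gives
\begin{equation*}
\pi_{\iota_Q^\omega,\iota_P^\omega}(\mathcal{F}_P(Q)) = \pi_{\iota_{Q_I}^{J_I},\iota_{{}_IP}^{J_I}}(\mathcal{F}_{{}_IP}(Q_I)).
\end{equation*}
Therefore $r\in J'$ iff $\iota_{R_I}^{J_I}(\wp_I(r))\in \pi_{\iota_{Q_I}^{J_I},\iota_{{}_IP}^{J_I}}(\mathcal{F}_{{}_IP}(Q_I))$, i.e.\ iff $\wp_I(r)\in J_{(\iota_{{}_IP}^{J_I},\iota_{Q_I}^{J_I},\iota_{R_I}^{J_I},\mathcal{O}_{({}_IP,Q_I,\psi_I)}(J_I))}$. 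By Proposition \ref{prop_3} applied in the $R_I$-setting, this last set is exactly $J_I$, so $J' = \wp_I^{-1}(J_I) = J$ (the final equality uses $I\subseteq J$, part of the definition of a $T$-pair).

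There is no real obstacle here: once the identification $\pi_{\iota_Q^\omega,\iota_P^\omega}(\mathcal{F}_P(Q)) = \pi_{\iota_{Q_I}^{J_I},\iota_{{}_IP}^{J_I}}(\mathcal{F}_{{}_IP}(Q_I))$ is made, the argument is a direct application of the already-established characterizations in Propositions \ref{prop_1} and \ref{prop_3} to the quotient system. The mildly delicate point is just being careful that the conditions on $\omega$ being a $T$-pair are precisely what is needed to invoke these results — namely, that $I$ being $\psi$-invariant makes $({}_IP,Q_I,\psi_I)$ a well-defined $R_I$-system, and that $J_I$ being faithful and $\psi_I$-compatible ensures the universal Cuntz-Pimsner construction of Theorem \ref{univ_cuntz} behaves as required.
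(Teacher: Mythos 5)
Your proposal is correct and follows essentially the same route as the paper: the identification $\ker\iota_R^\omega=\ker\wp_I=I$ via the injectivity of $\iota_{R_I}^{J_I}$, and the key identity $\pi_{\iota_Q^\omega,\iota_P^\omega}(\mathcal{F}_P(Q))=\pi^{J_I}(\wp_I(\mathcal{F}_P(Q)))=\pi^{J_I}(\mathcal{F}_{{}_IP}(Q_I))$ followed by an appeal to Proposition \ref{prop_3} for the quotient system. The only cosmetic difference is that you route both inclusions for $J$ through Proposition \ref{prop_3}, whereas the paper verifies $J\subseteq J'$ directly from the Cuntz-Pimsner invariance relation $\iota_{R_I}^{J_I}(\wp_I(x))=\pi^{J_I}(\Delta_I(\wp_I(x)))$; both are valid.
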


\begin{proof}
First notice that
$I_{(\iota_P^\omega,\iota_Q^\omega,\iota_R^\omega,\mathcal{O}_{({}_IP,Q_I,\psi_I)}(J_I))}=\ker\iota_R^\omega
=\ker(\iota^{J_I}_{R_I}\circ \wp_I)=\ker \wp_I=I$ by injectivity of
$\iota^{J_I}_{R_I}$.

Let $x\in J$. Then we have that $\wp_I(x)\in J_I$ and thus that
$$\iota_R^\omega(x)=\iota^{J_I}_{R_I}(\wp_I(x))=\pi^{J_I}(\Delta_I(\wp_I(x)))
\in\pi^{J_I}(\mathcal{F}_{{}_IP}(Q_I))=\pi^{J_I}(\wp_I(\mathcal{F}_{P}(Q)))
=\pi_{\iota_Q^\omega,\iota_P^\omega}(\mathcal{F}_{P}(Q))\,,$$ and
therefore $x\in
(\iota_R^\omega)^{-1}(\pi_{\iota_Q^\omega,\iota_P^\omega}(\mathcal{F}_{P}(Q)))$.
This shows that $J\subseteq
J_{(\iota_P^\omega,\iota_Q^\omega,\iota_R^\omega,\mathcal{O}_{({}_IP,Q_I,\psi_I)}(J_I))}$.

Assume now that $x\in
J_{(\iota_P^\omega,\iota_Q^\omega,\iota_R^\omega,\mathcal{O}_{({}_IP,Q_I,\psi_I)}(J_I))}$.
Then we have $$\iota^{J_I}_{R_I}(\wp_I(x))=\iota_R^\omega(x)\in
\pi_{\iota_Q^\omega,\iota_P^\omega}(\mathcal{F}_{P}(Q))
=\pi^{J_I}(\wp_I(\mathcal{F}_{P}(Q)))=\pi^{J_I}(\mathcal{F}_{{}_IP}(Q_I)).$$
Since $J_I\subseteq \Delta^{-1}_I(\mathcal{F}_{{}_IP}(Q_I))$ and
$J_I\cap \ker\Delta_I=0$, it follows from Proposition \ref{prop_3}
that $\wp_I(x)\in J_I$. Thus $x\in J$ which shows that
$J_{(\iota_P^\omega,\iota_Q^\omega,\iota_R^\omega,\mathcal{O}_{({}_IP,Q_I,\psi_I)}(J_I))}
\subseteq J$.
\end{proof}

\begin{lem}\label{lemma_33}
  Let $R$ be a ring and $(P,Q,\psi)$ an $R$-system. Let
  $(S,T,\sigma,B)$ be a covariant representation and let $I$ be a $\psi$-invariant ideal of
  $R$. Then we have:
  \begin{enumerate}
  \item If there is a covariant representation
    $(S_I,T_I,\sigma_I,B)$ of $({}_IP,Q_I,\psi_I)$ such that
    $T=T_I\circ \wp_I$, $S=S_I\circ \wp_I$ and $\sigma=\sigma_I\circ \wp_I$,
    then $I\subseteq I_{(S,T,\sigma,B)}$.
  \item If $I\subseteq I_{(S,T,\sigma,B)}$, then there exists a unique covariant representation
    $(S_I,T_I,\sigma_I,B)$ of $({}_IP,Q_I,\psi_I)$ such that
    $T=T_I\circ \wp_I$, $S=S_I\circ \wp_I$ and $\sigma=\sigma_I\circ \wp_I$.
  \item If $I\subseteq I_{(S,T,\sigma,B)}$, then the covariant representation
    $(S_I,T_I,\sigma_I,B)$ is injective if and only if $I=I_{(S,T,\sigma,B)}$.
  \item If $I\subseteq I_{(S,T,\sigma,B)}$, then the covariant representation
    $(S_I,T_I,\sigma_I,B)$ is surjective and graded if and only if $(S,T,\sigma,B)$ is.
  \item If $I\subseteq I_{(S,T,\sigma,B)}$ and $(I,J)$ is a $T$-pair of $(P,Q,\psi)$, then the
    covariant representation $(S_I,T_I,\sigma_I,B)$ is Cuntz-Pimsner
    invariant relative to $J_I$ if and only if $J\subseteq J_{(S,T,\sigma,B)}$.
\end{enumerate}
\end{lem}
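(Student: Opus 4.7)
The plan is to dispatch parts (1)--(4) by routine factorization/image arguments using surjectivity of $\wp_I$ and Lemma \ref{lemma_31}, and to handle part (5) via a reduction to the injective case where Lemma \ref{lemma:compact} becomes available.

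Parts (1)--(4) are relatively mechanical. For (1), $\sigma = \sigma_I\circ\wp_I$ directly yields $I = \ker\wp_I\subseteq \ker\sigma = I_{(S,T,\sigma,B)}$. For (2), the assumption $I\subseteq\ker\sigma$ allows $\sigma$ to descend uniquely to $\sigma_I:R_I\longrightarrow B$; Lemma \ref{lemma_31} combined with $I\subseteq I_{(S,T,\sigma,B)}$ gives $QI\subseteq\ker T$ and $IP\subseteq\ker S$, so $T$ and $S$ descend uniquely to $T_I:Q_I\longrightarrow B$ and $S_I:{}_IP\longrightarrow B$; the covariant representation axioms for $({}_IP, Q_I, \psi_I)$ then follow by chasing each identity through $\wp_I$. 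For (3), one computes $\ker\sigma_I = \wp_I(I_{(S,T,\sigma,B)})$, which vanishes precisely when $I_{(S,T,\sigma,B)}\subseteq I$. For (4), the identities $\sigma_I(R_I)=\sigma(R)$, $T_I(Q_I)=T(Q)$ and $S_I({}_IP)=S(P)$ imply that surjectivity transfers in both directions, and any $\Z$-grading of $B$ witnessing gradedness of one representation witnesses gradedness of the other.

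For part (5), the $(\Rightarrow)$ direction is immediate: if $x\in J$, then $\wp_I(x)\in J_I$, and Cuntz-Pimsner invariance of $(S_I,T_I,\sigma_I,B)$ combined with the elementary identity $\pi_{T,S}(\Theta)=\pi_{T_I,S_I}(\wp_I(\Theta))$ for $\Theta\in\mathcal{F}_P(Q)$ gives $\sigma(x)\in\pi_{T,S}(\mathcal{F}_P(Q))$, so $x\in J_{(S,T,\sigma,B)}$. The $(\Leftarrow)$ direction is the crux, and the difficulty is that knowing merely $\sigma(x)\in\pi_{T,S}(\mathcal{F}_P(Q))$ does not by itself identify the preimage with $\Delta(x)$, whereas Cuntz-Pimsner invariance demands the precise equality $\sigma(x)=\pi_{T,S}(\Delta(x))$.

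The plan for $(\Leftarrow)$ is to pass first to an injective quotient. Let $I':=I_{(S,T,\sigma,B)}$; by Proposition \ref{prop_31} this is a $\psi$-invariant ideal of $R$, and the system $({}_{I'}P,Q_{I'},\psi_{I'})$ satisfies \textbf{(FS)} by Lemma \ref{lemma:rifs}. Applying parts (2) and (3) with $I'$ in place of $I$ produces a descended covariant representation $(S',T',\sigma',B)$ of $({}_{I'}P,Q_{I'},\psi_{I'})$ which is now \emph{injective}. A direct check, using $\sigma=\sigma'\circ\wp_{I'}$ together with $\pi_{T,S}=\pi_{T',S'}\circ\wp_{I'}$ on $\mathcal{F}_P(Q)$, shows that $\wp_{I'}(J_{(S,T,\sigma,B)})\subseteq J_{(S',T',\sigma',B)}$. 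Hence for $x\in J\subseteq J_{(S,T,\sigma,B)}$, Lemma \ref{lemma:compact} applied to the injective representation $(S',T',\sigma',B)$ yields $\sigma'(\wp_{I'}(x))=\pi_{T',S'}(\Delta_{I'}(\wp_{I'}(x)))$. Since $I\subseteq I'$ induces a natural quotient map $\mathcal{F}_{{}_IP}(Q_I)\longrightarrow\mathcal{F}_{{}_{I'}P}(Q_{I'})$ through which $\pi_{T_I,S_I}$ factors as $\pi_{T',S'}$, pulling the identity back shows that both $\sigma_I(\wp_I(x))$ and $\pi_{T_I,S_I}(\Delta_I(\wp_I(x)))$ equal $\sigma(x)$, which is the required Cuntz-Pimsner invariance relative to $J_I$. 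The main obstacle is precisely this step: Lemma \ref{lemma:compact} has an injectivity hypothesis not satisfied by $(S,T,\sigma,B)$ or $(S_I,T_I,\sigma_I,B)$ in general, and the key insight is that the intermediate quotient by $I_{(S,T,\sigma,B)}$ restores injectivity without altering the algebraic content of the invariance condition.
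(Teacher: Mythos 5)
Your proof is correct. Parts (1)--(4) follow exactly the route the paper takes (the paper is equally terse there, resting on Lemma \ref{lemma_31} and the surjectivity of $\wp_I$), so there is nothing to compare. The substantive difference is part (5): the paper dismisses both directions as ``straightforward to check'' and gives no argument, whereas you correctly identify that the $(\Leftarrow)$ direction is not straightforward --- membership of $\sigma(x)$ in $\pi_{T,S}(\mathcal{F}_P(Q))$ does not by itself pin the preimage down to $\Delta(x)$, precisely because Lemma \ref{lemma:compact} needs injectivity. Your fix --- pass to the further quotient by $I'=I_{(S,T,\sigma,B)}$ (which is $\psi$-invariant by Proposition \ref{prop_31}, with $({}_{I'}P,Q_{I'},\psi_{I'})$ satisfying \textbf{(FS)} by Lemma \ref{lemma:rifs}), apply Lemma \ref{lemma:compact} there, and pull back through the factorization $\pi_{T_I,S_I}=\pi_{T',S'}\circ\wp_{I,I'}$ on finite-rank operators --- is sound; the one point worth making explicit is that $\Delta_{I'}(\wp_{I'}(x))$ is the image of $\Delta_I(\wp_I(x))$ under the further quotient map, which is what lets you identify $\pi_{T_I,S_I}(\Delta_I(\wp_I(x)))$ with $\pi_{T',S'}(\Delta_{I'}(\wp_{I'}(x)))$. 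A marginally more direct alternative, avoiding the second quotient: from $\sigma(x)=\pi_{T,S}(\Theta)$ one gets $(\Delta_I(\wp_I(x))-\wp_I(\Theta))(Q_I)\subseteq\ker T_I=Q_I\wp_I(I')$, and then condition \textbf{(FS)} applied on the $P$-side lets one rewrite this difference as a sum of operators $\theta_{\bar q,\bar p}$ with $\bar q\in\ker T_I$, which $\pi_{T_I,S_I}$ annihilates; both arguments carry the same content, and yours has the advantage of reusing Lemma \ref{lemma:compact} verbatim.
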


\begin{proof}
  If there is a covariant representation
  $(S_I,T_I,\sigma_I,B)$ of $({}_IP,Q_I,\psi_I)$ such that
  $T=T_I\circ \wp_I$, $S=S_I\circ \wp_I$ and $\sigma=\sigma_I\circ \wp_I$,
  then $I\subseteq I_{(S,T,\sigma,B)}$.

  Assume now that $I\subseteq I_{(S,T,\sigma,B)}$. It follows from
  Lemma \ref{lemma_31} that
  we can define maps $\sigma_I:R_I\longrightarrow B$ by letting
  $\sigma_I(r+I)=\sigma(r)$ for every $r\in R$, $T_I:Q_I\longrightarrow
  B$ by letting $T_I(q+QI)=T(q)$ for every $q\in Q$ and
  $S_I:{}_IP\longrightarrow B$ by letting $S_I(p+IP)=S(p)$ for every $p\in
  P$. it is then clear that $(S_I,T_I,\sigma_I,B)$ is a covariant representation
  of $({}_IP,Q_I,\psi_I)$ satisfying
  $T=T_I\circ \wp_I$, $S=S_I\circ \wp_I$ and $\sigma=\sigma_I\circ
  \wp_I$. It is also clear that $(S_I,T_I,\sigma_I,B)$ is the unique
  covariant representation
  of $({}_IP,Q_I,\psi_I)$ with this property. Finally it is straight
  forward to check that $(S_I,T_I,\sigma_I,B)$ is injective if and
  only if $I=I_{(S,T,\sigma,B)}$, that $(S_I,T_I,\sigma_I,B)$ is
  surjective and graded if and only if $(S,T,\sigma,B)$ is, and that
  $(S_I,T_I,\sigma_I,B)$ is Cuntz-Pimsner
    invariant relative to $J_I$ if and only if $J\subseteq J_{(S,T,\sigma,B)}$.
\end{proof}

\begin{theor}\label{theor_1}
  Let $R$ be a ring and $(P,Q,\psi)$ an $R$-system that satisfies
  condition \textbf{(FS)}. Let $(S,T,\sigma,B)$ be a covariant
  representation of and let $\omega=(I,J)$ be a $T$-pair of
  $(P,Q,\psi)$.
  Then we have:
  \begin{enumerate}
  \item If there is a ring homomorphism
    $\eta:\mathcal{O}_{({}_IP,Q_I,\psi_I)}(J_I)\longrightarrow
    B$ such that
    $\eta\circ\iota^\omega_R=\sigma$,
    $\eta\circ \iota^\omega_Q=T$ and
    $\eta\circ\iota^\omega_P=S$, then
    $I\subseteq I_{(S,T,\sigma,B)}$ and $J\subseteq
    J_{(S,T,\sigma,B)}$.
  \item \label{item:18} If $I\subseteq I_{(S,T,\sigma,B)}$ and $J\subseteq
    J_{(S,T,\sigma,B)}$, then there exists a unique
    ring homomorphism
    $\eta_{(S,T,\sigma,B)}^\omega:\mathcal{O}_{({}_IP,Q_I,\psi_I)}(J_I)\longrightarrow
    B$ such that
    $\eta_{(S,T,\sigma,B)}^\omega\circ\iota^\omega_R=\sigma$,
    $\eta_{(S,T,\sigma,B)}^\omega\circ \iota^\omega_Q=T$ and
    $\eta_{(S,T,\sigma,B)}^\omega\circ\iota^\omega_P=S$.
  \item If $I\subseteq I_{(S,T,\sigma,B)}$ and $J\subseteq
    J_{(S,T,\sigma,B)}$, then $\eta_{(S,T,\sigma,B)}^\omega$ is an isomorphism if and only if
    $(S,T,\sigma,B)$ is a surjective and graded representation and
    $\omega=\omega_{(S,T,\sigma,B)}$.
  \end{enumerate}
\end{theor}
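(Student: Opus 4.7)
The overall plan is to reduce everything to the already established classification in Theorem \ref{prop:psi} by passing to the quotient $R_I$-system $({}_IP, Q_I, \psi_I)$, which satisfies condition \textbf{(FS)} by Lemma \ref{lemma:rifs}. The bridge between covariant representations of $(P,Q,\psi)$ with $I \subseteq I_{(S,T,\sigma,B)}$ and covariant representations of $({}_IP, Q_I, \psi_I)$ on the same ring $B$ is provided by Lemma \ref{lemma_33}: given such $(S,T,\sigma,B)$, that lemma produces a unique $(S_I, T_I, \sigma_I, B)$ factoring through $\wp_I$, and Lemma \ref{lemma_33}(5) tells us that $(S_I,T_I,\sigma_I,B)$ is Cuntz-Pimsner invariant relative to $J_I$ precisely when $J \subseteq J_{(S,T,\sigma,B)}$.

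For part (1), I would assume that $\eta$ exists and argue directly: for $x \in I$ we have $\wp_I(x) = 0$, so $\sigma(x) = \eta(\iota_R^\omega(x)) = \eta(\iota_{R_I}^{J_I}(\wp_I(x))) = 0$, which gives $I \subseteq \ker\sigma = I_{(S,T,\sigma,B)}$. Then $(S_I,T_I,\sigma_I,B)$ exists by Lemma \ref{lemma_33}(2), and since $\eta \circ \iota_{R_I}^{J_I} = \sigma_I$ etc., Remark \ref{remark:comp} gives $\eta \circ \pi^{J_I} = \pi_{T_I,S_I}$. For $x \in J$, using that $(\iota_P^\omega, \iota_Q^\omega, \iota_R^\omega, \mathcal{O}_{({}_IP,Q_I,\psi_I)}(J_I))$ is Cuntz-Pimsner invariant relative to $J_I$, we get $\sigma_I(\wp_I(x)) = \pi_{T_I,S_I}(\Delta_I(\wp_I(x))) \in \pi_{T_I,S_I}(\mathcal{F}_{{}_IP}(Q_I))$. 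Combined with $\pi_{T_I,S_I} \circ \wp_I = \pi_{T,S}$ on $\mathcal{F}_P(Q)$, this forces $\sigma(x) \in \pi_{T,S}(\mathcal{F}_P(Q))$, so $J \subseteq J_{(S,T,\sigma,B)}$.

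For part (2), given $I \subseteq I_{(S,T,\sigma,B)}$ and $J \subseteq J_{(S,T,\sigma,B)}$, Lemma \ref{lemma_33} produces a unique $(S_I,T_I,\sigma_I,B)$ that is Cuntz-Pimsner invariant relative to $J_I$. Since $\omega$ is a $T$-pair, $J_I$ is a faithful $\psi_I$-compatible two-sided ideal of $R_I$, so Theorem \ref{univ_cuntz} (property \textbf{(CP)}) yields a unique ring homomorphism $\eta_{(S,T,\sigma,B)}^\omega : \mathcal{O}_{({}_IP,Q_I,\psi_I)}(J_I) \to B$ satisfying $\eta_{(S,T,\sigma,B)}^\omega \circ \iota_{R_I}^{J_I} = \sigma_I$, $\eta_{(S,T,\sigma,B)}^\omega \circ \iota_{Q_I}^{J_I} = T_I$, and $\eta_{(S,T,\sigma,B)}^\omega \circ \iota_{{}_IP}^{J_I} = S_I$. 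Composing with $\wp_I$ gives the required compatibility with $\iota_R^\omega, \iota_Q^\omega, \iota_P^\omega$, and uniqueness follows from the fact that $\iota_R^\omega(R) \cup \iota_Q^\omega(Q) \cup \iota_P^\omega(P)$ generates $\mathcal{O}_{({}_IP,Q_I,\psi_I)}(J_I)$.

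For part (3), the idea is to transport Theorem \ref{prop:psi}(\ref{item:21}) through the bridge. If $\eta_{(S,T,\sigma,B)}^\omega$ is an isomorphism, then $(S,T,\sigma,B)$ inherits surjectivity and the $\Z$-grading of $\mathcal{O}_{({}_IP,Q_I,\psi_I)}(J_I)$; moreover $\sigma_I$ is forced to be injective (hence $I = I_{(S,T,\sigma,B)}$ by Lemma \ref{lemma_33}(3)) and $J_I = J_{(S_I,T_I,\sigma_I,B)}$ by Theorem \ref{prop:psi}(\ref{item:21}) applied to the $R_I$-system, which translates via $\wp_I^{-1}(J_I) = J$ and $\sigma = \sigma_I \circ \wp_I$ into $J = J_{(S,T,\sigma,B)}$. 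Conversely, if $(S,T,\sigma,B)$ is surjective and graded with $\omega = \omega_{(S,T,\sigma,B)}$, then $I = I_{(S,T,\sigma,B)}$ makes $\sigma_I$ injective by Lemma \ref{lemma_33}(3), parts (4) and (5) of Lemma \ref{lemma_33} provide surjectivity, gradedness and Cuntz-Pimsner invariance relative to $J_I$, and the identity $J = J_{(S,T,\sigma,B)}$ yields $J_I = J_{(S_I,T_I,\sigma_I,B)}$; Theorem \ref{prop:psi}(\ref{item:21}) then gives that $\eta_{(S,T,\sigma,B)}^\omega$ is an isomorphism. The main technical obstacle is the bookkeeping establishing the identity $J_I = J_{(S_I,T_I,\sigma_I,B)}$ from $J = J_{(S,T,\sigma,B)}$ and vice versa, but this follows cleanly from $\sigma = \sigma_I \circ \wp_I$, $\wp_I(\mathcal{F}_P(Q)) = \mathcal{F}_{{}_IP}(Q_I)$ and $\pi_{T_I,S_I} \circ \wp_I = \pi_{T,S}$.
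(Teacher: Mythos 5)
Your proposal is correct and follows essentially the same route as the paper: both reduce to the quotient $R_I$-system via Lemma \ref{lemma_33}, invoke Theorem \ref{univ_cuntz} for existence and uniqueness in part (2), and transport Theorem \ref{prop:psi} through the bridge for part (3). The only cosmetic difference is that in the forward direction of (3) the paper cites Proposition \ref{prop_32} to conclude $\omega=\omega_{(S,T,\sigma,B)}$, whereas you re-derive the identification $J=J_{(S,T,\sigma,B)}$ directly from Theorem \ref{prop:psi} and the relation $\pi_{T_I,S_I}\circ\wp_I=\pi_{T,S}$; the content is the same.
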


\begin{proof}
  It is easy to check that if there exists a
  ring homomorphism
  $\eta:\mathcal{O}_{({}_IP,Q_I,\psi_I)}(J_I)\longrightarrow
    B$ such that
    $\eta\circ\iota^\omega_R=\sigma$,
    $\eta\circ \iota^\omega_Q=T$ and
    $\eta\circ\iota^\omega_P=S$, then
  $I\subseteq I_{(S,T,\sigma,B)}$ and $J\subseteq J_{(S,T,\sigma,B)}$.

  Assume now that $I\subseteq I_{(S,T,\sigma,B)}$ and $J\subseteq
  J_{(S,T,\sigma,B)}$. It follows from Lemma \ref{lemma_33} that there
  exists a covariant representation
  $(S_I,T_I,\sigma_I,B)$ of $({}_IP,Q_I,\psi_I)$ which is Cuntz-Pimsner
  invariant relative to $J_I$ such that
  $T=T_I\circ \wp_I$, $S=S_I\circ \wp_I$ and $\sigma=\sigma_I\circ
  \wp_I$. It then follows from Theorem \ref{univ_cuntz} that there exists
  a ring homomorphism $\eta_{(S,T,\sigma,B)}^\omega:\mathcal{O}_{({}_IP,Q_I,\psi_I)}(J_I)\longrightarrow
  B$ such that $\eta_{(S,T,\sigma,B)}^\omega\circ \iota^{J_I}_{R_I}=\sigma_I$,
  $\eta_{(S,T,\sigma,B)}^\omega\circ \iota^{J_I}_{Q_I}=T_I$ and
  $\eta_{(S,T,\sigma,B)}^\omega\circ \iota^{J_I}_{{}_IP}=S_I$. It
  follows that $\eta_{(S,T,\sigma,B)}^\omega\circ\iota^\omega_R=\sigma$,
  $\eta_{(S,T,\sigma,B)}^\omega\circ \iota^\omega_Q=T$ and
  $\eta_{(S,T,\sigma,B)}^\omega\circ\iota^\omega_P=S$. Since
  $\mathcal{O}_{({}_IP,Q_I,\psi_I)}(J_I)$ is generated by
  $\iota_R^\omega(R)$, $\iota_Q^\omega(Q)$ and $\iota_P^\omega(P)$,
  the uniqueness of $\eta_{(S,T,\sigma,B)}^\omega$ follows.

  Assume that $\eta_{(S,T,\sigma,B)}^\omega$ is an isomorphism. Then
  $(S,T,\sigma,B)$ is a surjective and graded, and
  $\omega_{(\iota_P^\omega,\iota_Q^\omega,\iota_R^\omega,\mathcal{O}_{({}_IP,Q_I,\psi_I)}(J_I))}
  =\omega_{(S,T,\sigma,B)}$. It therefore follows from
  Proposition \ref{prop_32} that
  $\omega=\omega_{(S,T,\sigma,B)}$.

  Finally assume that $(S,T,\sigma,B)$ is surjective and graded, and
  that $\omega=\omega_{(S,T,\sigma,B)}$. Then it follows from Lemma
  \ref{lemma_33} that $(S_I,T_I,\sigma_I,B)$ is surjective, injective
  and graded, and it is easy to check that
  $J_I=\wp_I(J_{(S,T,\sigma,B)})=J_{(S_I,T_I,\sigma_I,B)}$, and hence from Theorem
  \ref{prop:psi} we get that $\eta_{(S,T,\sigma,B)}^\omega$ is an isomorphism.
\end{proof}

We now have the promised classification of all surjective and graded covariant representations of a given $R$-system satisfying condition \textbf{(FS)}.

\begin{rema} \label{remark:classification-noninjective}
  Let $R$ be a ring and let $(P,Q,\psi)$ be an $R$-system satisfying
  condition \textbf{(FS)}. Then it follows from Proposition
  \ref{prop_31} and Theorem \ref{theor_1} that every surjective and
  graded covariant of $(P,Q,\psi)$ is isomorphic to
  $(\iota_P^\omega,\iota_Q^\omega,\iota_R^\omega,\mathcal{O}_{({}_IP,Q_I,\psi_I)}(J_I))$
  for some $T$-pair $\omega=(I,J)$ of $(P,Q,\psi)$. It also follows
  that if $\omega_1=(I_1,J_1)$ and $\omega_2=(I_2,J_2)$ are two
  $T$-pairs of $(P,Q,\psi)$, then there is a ring homomorphism
  $\phi:\mathcal{O}_{({}_{I_1}P,Q_{I_1},\psi_{I_1})}((J_1)_{I_1})\longrightarrow
  \mathcal{O}_{({}_{I_2}P,Q_{I_2},\psi_{I_2})}((J_2)_{I_2})$ such that
  $\phi\circ\iota^{\omega_1}_R=\iota^{\omega_2}_R$,
  $\phi\circ\iota^{\omega_1}_P=\iota^{\omega_2}_P$ and
  $\phi\circ\iota^{\omega_1}_Q=\iota^{\omega_2}_Q$ if and only if
  $I_1\subseteq I$ and $J_1\subseteq J_2$.
\end{rema}

Let $R$ be a ring and let $(P,Q,\psi)$ be an $R$-system satisfying
condition \textbf{(FS)}. If $(I,J)$ is a pair of two-sided ideals of $R$ such that $I\subseteq J$, the ideal $I$ is $\psi$-invariant and $\wp_I(J)\subseteq \Delta^{-1}_I(\mathcal{F}_{{}_IP}(Q_I))$, then $(\iota^{J_I}_{P_I}\circ \wp_I,\iota^{J_I}_{Q_I}\circ \wp_I,\iota^{J_I}_{R_I}\circ \wp_I,\mathcal{O}_{({}_IP,Q_I,\psi_I)}(J_I))$ is a surjective and graded covariant representation of $(P,Q,\psi)$, even though $\wp_I(J)\cap \ker\Delta_I\ne 0$, and it then follows from the previous remark that this representation is isomorphic to $(\iota_P^{\omega'},\iota_Q^{\omega'},\iota_R^{\omega'},\mathcal{O}_{({}_{I'}P,Q_{I'},\psi_{I'})}(J'_{I'}))$ for some $T$-pair $\omega'=(I',J')$. We will now describe this $T$-pair in terms of the pair $(I,J)$. We will begin with the case where $I=\{0\}$, but first a lemma:

\begin{lem} \label{lemma:compact-expand}
  Let $R$ be a ring and let $(P,Q,\psi)$ be an $R$-system satisfying
  condition \textbf{(FS)}. If $x\in\Delta^{-1}(\mathcal{F}_P(Q))$, $q\in Q^{\otimes n}$ and
  $p\in P^{\otimes n}$, then
  $\theta_{qx,p}\otimes 1_Q\in\mathcal{F}_{P^{\otimes n+1}}(Q^{\otimes n+1})$ and
  \begin{equation} \label{eq:7}
    \pi(\theta_{qx,p}\otimes 1_Q)=\iota_Q^n(q)\pi(\Delta(x))\iota_P^n(p).
  \end{equation}
\end{lem}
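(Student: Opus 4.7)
The plan is to express $\theta_{qx,p}\otimes 1_Q$ explicitly as a finite sum of rank $1$ adjointable homomorphisms on $Q^{\otimes n+1}$, from which both assertions will follow. Using the hypothesis $x\in\Delta^{-1}(\mathcal{F}_P(Q))$, I would write $\Delta(x)=\sum_{i=1}^k\theta_{a_i,b_i}$ with $a_i\in Q$ and $b_i\in P$, so that $xy=\sum_i a_i\psi(b_i\otimes y)$ for every $y\in Q$. The key identity to prove is
\begin{equation*}
\theta_{qx,p}\otimes 1_Q=\sum_{i=1}^k\theta_{q\otimes a_i,\,b_i\otimes p}
\end{equation*}
inside $\mathcal{F}_{P^{\otimes n+1}}(Q^{\otimes n+1})$, which immediately gives the membership assertion.

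To verify this identity I would evaluate both sides on an arbitrary elementary tensor $q'\otimes y\in Q^{\otimes n}\otimes Q$. The left-hand side is $qx\,\psi_n(p\otimes q')\otimes y$, which by $R$-balancing of $\otimes_R$ equals $q\otimes x\,\psi_n(p\otimes q')\,y$. For the right-hand side, the recursive formula for $\psi_{n+1}$ (using the splittings $P^{\otimes n+1}=P\otimes P^{\otimes n}$ and $Q^{\otimes n+1}=Q^{\otimes n}\otimes Q$) together with the $R$-balancing of $\psi$ gives
\begin{equation*}
\psi_{n+1}\bigl((b_i\otimes p)\otimes(q'\otimes y)\bigr)=\psi\bigl(b_i\cdot\psi_n(p\otimes q')\otimes y\bigr)=\psi\bigl(b_i\otimes \psi_n(p\otimes q')\,y\bigr).
\end{equation*}
Summing and pulling the $R$-action across the tensor yields
\begin{equation*}
\sum_i(q\otimes a_i)\psi\bigl(b_i\otimes\psi_n(p\otimes q')\,y\bigr)=q\otimes\sum_i a_i\,\psi\bigl(b_i\otimes\psi_n(p\otimes q')\,y\bigr)=q\otimes x\,\psi_n(p\otimes q')\,y,
\end{equation*}
using the formula for $\Delta(x)$ in the last step, so both sides agree.

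Applying $\pi$ to the tensor identity and invoking $\iota_Q^{n+1}(q\otimes a_i)=\iota_Q^n(q)\iota_Q(a_i)$ and $\iota_P^{n+1}(b_i\otimes p)=\iota_P(b_i)\iota_P^n(p)$ from Lemma \ref{lemma:tensor} then gives
\begin{equation*}
\pi(\theta_{qx,p}\otimes 1_Q)=\sum_i\iota_Q^n(q)\iota_Q(a_i)\iota_P(b_i)\iota_P^n(p)=\iota_Q^n(q)\,\pi\!\left(\sum_i\theta_{a_i,b_i}\right)\iota_P^n(p)=\iota_Q^n(q)\,\pi(\Delta(x))\,\iota_P^n(p),
\end{equation*}
which is \eqref{eq:7}. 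The only real obstacle is notational bookkeeping: one needs to keep straight the two splittings of the $(n+1)$-fold tensors and apply the $R$-balancing of both $\otimes_R$ and $\psi$ at exactly the right moments; no idea beyond writing $\Delta(x)$ as a finite sum of rank $1$ operators is required. The corresponding (even easier) computation handles the boundary case $n=0$ under the conventions $\mathcal{F}_{P^{\otimes 0}}(Q^{\otimes 0})=R$ and $r\otimes 1_Q=\Delta(r)$.
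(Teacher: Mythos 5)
Your proof is correct and follows essentially the same route as the paper: decompose $\Delta(x)=\sum_i\theta_{a_i,b_i}$, verify $\theta_{qx,p}\otimes 1_Q=\sum_i\theta_{q\otimes a_i,\,b_i\otimes p}$ on elementary tensors using the recursive definition of $\psi_{n+1}$ and $R$-balancing, and then apply $\pi$. The only (harmless) addition is your explicit remark about the $n=0$ boundary case, which the paper handles implicitly through its notational conventions.
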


\begin{proof}
  Choose $q_1,q_2,\dots,q_k\in Q$ and $p_1,p_2,\dots,p_k\in P$ such that
 $\Delta(x)=\sum_{i=1}^k\theta_{q_i,p_i}$. Then we have for
 $q^n\in Q^{\otimes n}$ and $q^1\in Q$ that
 \begin{equation*}
   \begin{split}
     \theta_{qx,p}\otimes 1_Q(q^{\otimes n}\otimes q^1)
     &=q\otimes x\psi_n(p\otimes q^n)q^1
     =\sum_{i=1}^kq\otimes q_i\psi\bigl(p_i\otimes\psi_n(p\otimes q^n)q^1\bigr)\\
     &=\sum_{i=1}^kq\otimes q_i\psi_{n+1}\bigl((p_i\otimes p)\otimes (q^n\otimes q^1)\bigr)
     =\sum_{i=1}^k\theta_{q\otimes q_i,p_i\otimes p}(q^n\otimes q^1).
   \end{split}
 \end{equation*}
 It follows that $\theta_{qx,p}\otimes 1_Q=\sum_{i=1}^k\theta_{q\otimes q_i,p_i\otimes p}
 \in\mathcal{F}_{P^{\otimes n+1}}(Q^{\otimes n+1})$ and that
 \begin{equation*}
   \pi(\theta_{qx,p}\otimes 1_Q)=\sum_{i=1}^k\iota_Q^n(q)\iota_Q(q_i)\iota_P(p_i)\iota_P^n(p)
   =\iota_Q^n(q)\pi(\Delta(x))\iota_P^n(p).
 \end{equation*}
\end{proof}

Let $R$ be a ring and let $(P,Q,\psi)$ be an $R$-system. For every
$x\in R$ we define $\Delta^n(x)\in\mathcal{L}_{P^{\otimes
    n}}(Q^{\otimes n})$ inductively by letting $\Delta^1(x)=\Delta(x)$
and $\Delta^n(x)=\Delta^{n-1}(x)\otimes 1_Q$ for $n\ge 2$.

\begin{lem} \label{lemma:newt}
  Let $R$ be a ring, let $(P,Q,\psi)$ be an $R$-system satisfying
  condition \textbf{(FS)}, and let $J$ be a two-sided ideal of $R$
  such that $J\subseteq \Delta^{-1}(\mathcal{F}_P(Q))$. If we let
  $$I=\{x\in J\mid \forall
    m\in\N:\Delta^m(x)(Q^{\otimes m})\subseteq Q^{\otimes m}J
    \land \exists n\in\N:\Delta^n(x)=0\},$$
  then $I=I_{(\iota_P^J,\iota_Q^J,\iota_R^J,\mathcal{O}_{(P,Q,\psi)}(J))}$
  and $J=J_{(\iota_P^J,\iota_Q^J,\iota_R^J,\mathcal{O}_{(P,Q,\psi)}(J))}$.
\end{lem}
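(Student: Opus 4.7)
The plan is to establish both $J = J_{(\iota_P^J,\iota_Q^J,\iota_R^J,\mathcal{O}_{(P,Q,\psi)}(J))}$ and $I = I_{(\iota_P^J,\iota_Q^J,\iota_R^J,\mathcal{O}_{(P,Q,\psi)}(J))}$ by exploiting the $\semigroup$-grading on $\mathcal{T}_{(P,Q,\psi)}$ from Theorem \ref{theor:toeplitz}. The two main tools will be the explicit description of $\mathcal{T}^{(k)}(J)$ from Lemma \ref{lemma_5} and the identity of Lemma \ref{lemma:compact-expand}, which expresses $\pi(\theta_{qy,p}\otimes 1_Q)$ as $\iota_Q^n(q)\pi(\Delta(y))\iota_P^n(p)$ for $y\in\Delta^{-1}(\mathcal{F}_P(Q))$.

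I would first handle the ideals by elementary projection arguments. Write $I' = \ker\iota_R^J = \{x\in R:\iota_R(x)\in\mathcal{T}(J)\}$ and $J' = J_{(\iota_P^J,\iota_Q^J,\iota_R^J,\mathcal{O}_{(P,Q,\psi)}(J))}$. The inclusion $J\subseteq J'$ is immediate from Cuntz--Pimsner invariance, and for $J'\subseteq J$ the relation $\iota_R^J(x) = \pi^J(\Theta)$ translates into $\iota_R(x) - \pi(\Theta)\in\mathcal{T}(J)$; since $\pi(\Theta)\in\mathcal{T}_{(1,1)}$, the $(0,0)$-component of this element is $\iota_R(x)$ itself, and Lemma \ref{lemma_5}\eqref{item:7} forces $\iota_R(x)\in\iota_R(J)$, hence $x\in J$. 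For $I'\subseteq I$ the same projection argument (applied to $\iota_R(x)\in\mathcal{T}(J)\cap\mathcal{T}_{(0,0)}$) yields $x\in J$; Lemma \ref{lemma_5}\eqref{item:8} together with the injectivity of $\iota_Q^m$ from Lemma \ref{lemma:inj} delivers some $n$ with $\Delta^n(x) = 0$. Finally, $\iota_R(x)\iota_Q^m(q) = \iota_Q^m(\Delta^m(x)(q))$ lies in $\mathcal{T}(J)\cap\mathcal{T}_{(m,0)}$, and inspecting the generators of $\mathcal{T}^{(m)}(J)$ shows that only the ``second type'' generators $\iota_Q^m(q')(\iota_R(y) - \pi(\Delta(y)))$ contribute a nonzero $(m,0)$-component, which equals $\iota_Q^m(q'y)\in\iota_Q^m(Q^{\otimes m}J)$; injectivity of $\iota_Q^m$ then gives $\Delta^m(x)(Q^{\otimes m})\subseteq Q^{\otimes m}J$, so $I'\subseteq I$.

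The main challenge is the converse $I\subseteq I'$. Given $x\in I$ with $n$ minimal such that $\Delta^n(x) = 0$, the fact that $\iota_R(x) - \pi(\Delta(x))\in\mathcal{T}(J)$ reduces matters to showing $\pi(\Delta(x))\in\mathcal{T}(J)$. The strategy is a telescoping induction: prove for $m = 1,\ldots,n$ both that $\Delta^m(x)\in\mathcal{F}_{P^{\otimes m}}(Q^{\otimes m})$ and that $\pi(\Delta^m(x))\equiv\pi(\Delta^{m+1}(x))\pmod{\mathcal{T}(J)}$. Given an arbitrary decomposition $\Delta^m(x) = \sum_i\theta_{q_i,p_i}$, the key maneuver invokes condition \textbf{(FS)} for $(P^{\otimes m},Q^{\otimes m},\psi_m)$ (Lemma \ref{lemma_1}) applied to $\{p_i\}$ to produce $c_\beta\in P^{\otimes m}$ and $d_\beta\in Q^{\otimes m}$ with $\sum_\beta\psi_m(p_i\otimes d_\beta)c_\beta = p_i$; this recasts $\Delta^m(x) = \sum_\beta\theta_{e_\beta,c_\beta}$ with $e_\beta = \Delta^m(x)(d_\beta)\in Q^{\otimes m}J$ by the hypothesis on $x$. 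Decomposing each $e_\beta = \sum_\gamma a_\gamma^\beta y_\gamma^\beta$ with $y_\gamma^\beta\in J$ expresses $\Delta^m(x) = \sum_{\beta,\gamma}\theta_{a_\gamma^\beta y_\gamma^\beta,c_\beta}$, and since each $y_\gamma^\beta\in J\subseteq\Delta^{-1}(\mathcal{F}_P(Q))$, Lemma \ref{lemma:compact-expand} applies termwise to show both $\Delta^{m+1}(x) = \Delta^m(x)\otimes 1_Q\in\mathcal{F}_{P^{\otimes m+1}}(Q^{\otimes m+1})$ and
\[
\pi(\Delta^m(x)) - \pi(\Delta^{m+1}(x)) = \sum_{\beta,\gamma}\iota_Q^m(a_\gamma^\beta)\bigl[\iota_R(y_\gamma^\beta) - \pi(\Delta(y_\gamma^\beta))\bigr]\iota_P^m(c_\beta)\in\mathcal{T}(J).
\]
Telescoping $\pi(\Delta(x))\equiv\pi(\Delta^2(x))\equiv\cdots\equiv\pi(\Delta^n(x)) = 0\pmod{\mathcal{T}(J)}$ then yields $\pi(\Delta(x))\in\mathcal{T}(J)$, completing the proof. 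The hard part is precisely this \textbf{(FS)}-driven reshuffling of $\Delta^m(x)$ so that the $Q^{\otimes m}$-entries land in $Q^{\otimes m}J$: without it one cannot invoke Lemma \ref{lemma:compact-expand} to obtain either the membership of $\Delta^{m+1}(x)$ in $\mathcal{F}_{P^{\otimes m+1}}(Q^{\otimes m+1})$ or the telescoping congruence that drives the whole argument.
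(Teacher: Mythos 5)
Your proof is correct and follows essentially the same route as the paper's: Lemma \ref{lemma_5} together with the $\semigroup$-grading projections handles the inclusions $I_{(\iota_P^J,\iota_Q^J,\iota_R^J,\mathcal{O}_{(P,Q,\psi)}(J))}\subseteq I$ and $J_{(\iota_P^J,\iota_Q^J,\iota_R^J,\mathcal{O}_{(P,Q,\psi)}(J))}\subseteq J$, while the \textbf{(FS)}-driven rewriting of $\Delta^m(x)$ into the form $\sum\theta_{qy,p}$ with $y\in J$, followed by Lemma \ref{lemma:compact-expand} and telescoping modulo $\mathcal{T}(J)$, gives $I\subseteq I_{(\iota_P^J,\iota_Q^J,\iota_R^J,\mathcal{O}_{(P,Q,\psi)}(J))}$. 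The only differences are streamlinings of the bookkeeping: you read off $\Delta^m(x)(Q^{\otimes m})\subseteq Q^{\otimes m}J$ directly from the $(m,0)$-components of the generators of $\mathcal{T}^{(m)}(J)$ instead of the paper's explicit inductive identification of $\Delta^i(x)$, and your $(0,0)$-projection argument for $J_{(\iota_P^J,\iota_Q^J,\iota_R^J,\mathcal{O}_{(P,Q,\psi)}(J))}\subseteq J$ (using an arbitrary $\Theta$ with $\iota_R^J(x)=\pi^J(\Theta)$) neatly sidesteps the paper's appeal to Lemma \ref{lemma:compact}, whose injectivity hypothesis is not actually available here since $J$ need not be faithful.
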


\begin{proof}
  Let $x\in
  I_{(\iota_P^J,\iota_Q^J,\iota_R^J,\mathcal{O}_{(P,Q,\psi)}(J))}$.
  Then $\iota_R(x)\in\mathcal{T}(J)$. It follows from Lemma
  \ref{lemma_5} that
  $\iota_R(x)=\projection_{(0,0)}(\iota_R(x))\in\iota_R(J)$ and that
  there is an $n\in\N$ such that
  $\iota_Q^n(xq)=\iota_R(x)\iota_Q^n(q)=0$ for every $q\in Q^{\otimes
    n}$. Since $\iota_R$ and $\iota_Q^n$ are injective (cf. Theorem
  \ref{theor:toeplitz} and Lemma \ref{lemma:inj}) it follows that
  $x\in J$ and that $\Delta^n(x)=0$. It also follows from Lemma
  \ref{lemma_5} that
  \begin{equation*}
    \iota_R(x)=\iota_R(x)-\pi(\Delta(x))+\sum_{i=1}^{n-1}\sum_{j=1}^{m_i}
    \iota_Q^i(q_j^i)\bigl(\iota_R(x_j^i)-\pi(\Delta(x_j^i))\bigr)\iota_P(p_j^i)
  \end{equation*}
  for some $x_j^i\in J$, $q_j^i\in Q^{\otimes i}$, $p_j^i\in
  P^{\otimes i}$. We will by induction show that
  \begin{equation} \label{eq:2}
    \Delta^i(x)=\sum_{j=1}^{m_i}\theta_{q_j^ix_j^i,p_j^i}
  \end{equation}
  for every $i\in\{1,2,\dots,n-1\}$.
  It will then follow that
  $\Delta^i(x)(q)=\sum_{j=1}^{m_i}q_j^ix_j^i\psi_j(p_j^i\otimes q)\in
  Q^{\otimes i}J$ for every $i\in\{1,2,\dots,n-1\}$ and every $q\in
  Q^{\otimes i}$, and thus that $x\in I$.

  For $i=1$ we have
  \begin{equation*}
    0=\projection_{(1,1)}(\iota_R(x))=-\pi(\Delta(x))+\sum_{j=1}^{m_1}
    \iota_Q(q_j^1)\iota_R(x_j^1)\iota_P(p_j^1).
  \end{equation*}
  Thus we have
  \begin{equation*}
    \pi(\Delta(x))=\sum_{j=1}^{m_1}
    \iota_Q(q_j^1)\iota_R(x_j^1)\iota_P(p_j^1)
    =\pi\left(\sum_{j=1}^{m_1}\theta_{q_j^1x_j^1,p_j^1}\right)
  \end{equation*}
  and since $\pi$ is injective (cf. Proposition \ref{lemma_2}), it
  follows that Equation \eqref{eq:2} holds for $i=1$.

  Let $k\in\{1,2,\dots,n-2\}$ and assume that Equation \eqref{eq:2} holds for
  $i=k$. We have that
   \begin{equation*}
    0=\projection_{(k+1,k+1)}(\iota_R(x))
    =-\sum_{j=1}^{m_k}\iota_q^k(q_j^k)\pi(\Delta(x_j^k))
    \iota_P^i(p_j^k)+\sum_{j=1}^{m_{k+1}}
    \iota_Q(q_j^{k+1})\iota_R(x_j^{k+1})\iota_P(p_j^{k+1}).
  \end{equation*}
  It follows that if $q_k\in Q^{\otimes k}$ and $q_1\in Q$, then we
  have that
  \begin{equation*}
    \begin{split}
      \iota_Q^{k+1}(\Delta^{k+1}(x)(q_k\otimes q_1)
      &=\pi(\Delta^k(x))\iota_Q^k(q_k)\iota_Q(q_1)
      =\sum_{j=1}^{m_k}\iota_Q(q_j^k)\iota_R(x_j^k)
      \iota_P^k(p_j^k)\iota_Q^k(q_k)\iota_Q(q_1)\\
      &=\sum_{j=1}^{m_k}\iota_Q^k(q_j^k)\pi(\Delta(x_j^k))\iota_P^k(p_j^k)
      \iota_Q^k(q_k)\iota_Q(q_1)\\
      &=\sum_{j=1}^{m_{k+1}}\iota_Q^{k+1}(q_j^{k+1})\iota_R(x_j^{k+1})
      \iota_P^{k+1}(p_j^{k+1})\iota_Q(q_k)\iota_Q(q_1)\\
      &=\iota_Q^{k+1}\left(\sum_{j=1}^{m_{k+1}}
        \theta_{q_j^{k+1}x_j^{k+1},p_j^{k+1}}(q_k\otimes q_1)\right),
    \end{split}
  \end{equation*}
  and since $\iota_Q^{k+1}$ is injective and $Q^{k+1}=\spa\{q_k\otimes
  q_1\mid q_k\in Q^{\otimes k},\ q_1\in Q\}$, it follows that Equation
  \eqref{eq:2} holds for $i=k+1$. Hence Equation \eqref{eq:2} holds for every
  $i\in\{1,2,\dots,n-1\}$. We have thus proved that
  $I_{(\iota_P^J,\iota_Q^J,\iota_R^J,\mathcal{O}_{(P,Q,\psi)}(J))}
  \subseteq I$.

  Let $x\in J$ and assume that $\Delta^m(x)(Q^{\otimes m})\subseteq
  Q^{\otimes m}J$ for all $m\in\N$ and that there is an $n\in\N$ such
  that $\Delta^n(x)=0$. We will by induction show that there for every
  $i\in\{1,2,\dots,n-1\}$ exist $x_j^i\in J$, $q_j^i\in Q^{\otimes
    i}$, $p_j^i\in P^{\otimes i}$ such that
  \begin{equation} \label{eq:5}
    \Delta^i(x)=\sum_{j=1}^{m_i}\theta_{q_j^ix_j^i,p_j^i}
  \end{equation}
  and such that $\Delta^{i+1}(x)\in\mathcal{F}_{P^{\otimes i+1}}(Q^{\otimes i+1})$ and
  \begin{equation}
    \label{eq:6}
    \pi(\Delta^{i+1}(x))=\sum_{j=1}^{m_i}\iota_Q^i(q_j^i)
    \pi(\Delta(x_j^i))\iota_P^i(p_j^i).
  \end{equation}
  It will then follow that we have
  \begin{equation*}
    \iota_R(x)=\iota_R(x)-\pi(\Delta(x))
    +\sum_{i=1}^{n-1}\sum_{j=1}^{m_i}\iota_Q^i(q_j^i)
    \bigl(\iota_R(x_j^i)-\pi(\Delta(x_j^i))\bigr)\iota_P^i(p_j^i)
    \in\mathcal{T}(J),
  \end{equation*}
  and thus that $x\in
  I_{(\iota_P^J,\iota_Q^J,\iota_R^J,\mathcal{O}_{(P,Q,\psi)}(J))}$.

  Choose $q_1,q_2,\dots,q_k\in Q$, $p_1,p_2,\dots,p_k\in P$ such that
  $\Delta(x)=\sum_{j=1}^k\theta_{q_j,p_j}$. It follows from condition
  \textbf{(FS)} that there exist $q'_1,q'_2,\dots,q'_h\in Q$ and
  $p'_1,p'_2,\dots,p'_h\in P$ such that
  $\sum_{l=1}^h\theta_{p'_l,q'_l}(p_j)=p_j$ for every
  $j\in\{1,2,\dots,k\}$. We then have that
  \begin{equation*}
    \Delta(x)=\sum_{j=1}^k\theta_{q_j,p_j}
    =\sum_{j=1}^k\theta_{q_j,\sum_{l=1}^h\psi(p_j\otimes q'_l)p'_l}
    =\sum_{l=1}^h\theta_{\Delta(x)q'_l,p'_l}.
  \end{equation*}
  Since $\Delta(x)q'_l\in QJ$ for each $l\in\{1,2,\dots,h\}$, it
  follows that there exist $x_j^1\in J$, $q_j^1\in Q$,
  $p_j^1\in P$ such that Equation \eqref{eq:5} holds for $i=1$. It
  then follows from Lemma \ref{lemma:compact-expand} that also Equation
  \eqref{eq:6} holds for $i=1$.

  Assume then that $k\in\{1,2,\dots,n-1\}$ and that there exist
  $x_j^k\in J$, $q_j^k\in Q^{\otimes k}$, $p_j^k\in P^{\otimes k}$ such
  that Equation \eqref{eq:6} holds for $i=k$. For each $j\in\{1,2,\dots,m_k\}$
  choose $q_{(j,1)}, q_{(j,2)},\dots,q_{(j,n_j)}\in Q$ and $p_{(j,1)}, p_{(j,2)},\dots,p_{(j,n_j)}\in P$
  such that $\Delta(x_j^k)=\sum_{h=1}^{n_j}\theta_{q_{(j,h)},p_{(j,h)}}$. If $q^k\in Q^{\otimes k}$
  and $q^1\in Q$, then we have
  \begin{equation*}
    \begin{split}
      \iota^{k+1}_Q\bigl(\Delta^{k+1}(x)(q^k\otimes q^1)\bigr)
      &=\sum_{j=1}^{m_k}\iota_Q^k(q_j^k)\iota_R(x_j^k)
      \iota_P^k(p_j^k)\iota_Q^k(q_k)\iota_Q(q^1)\\
      &=\sum_{j=1}^{m_k}\iota_Q^k(q_j^k)\iota_Q\bigl(x_j^k
      \psi(p_j^k\otimes q_k)q^1\bigr)\\
      &=\sum_{j=1}^{m_k}\iota_Q^k(q_j^k)\pi(\Delta(x_j^k))
      \iota_Q\bigl(\psi(p_j^k\otimes q_k)q^1\bigr)\\
      &=\sum_{j=1}^{m_k}\iota_Q^k(q_j^k)
      \left(\sum_{h=1}^{n_j}\iota_Q(q_{(j,h)})\iota_P(p_{(j,h)})\right)
      \iota_Q\bigl(\psi(p_j^k\otimes q_k)q^1\bigr)\\
      &=\iota_Q^{k+1}\left(\sum_{j=1}^{m_k}\sum_{h=1}^{n_j}
        \theta_{q_j^k\otimes q_{(j,h)}, p_{(j,h)}\otimes p_j^k}(q^k\otimes q^1)\right).
    \end{split}
  \end{equation*}
  It follows that $\Delta^{k+1}(x)=\sum_{j=1}^{m_k}\sum_{h=1}^{n_j}
  \theta_{q_j^k\otimes q_{(j,h)}, p_{(j,h)}\otimes p_j^k}$.
  By  condition
  \textbf{(FS)} there exist $q'_1,q'_2,\dots,q'_r\in Q^{\otimes k+1}$ and
  $p'_1,p'_2,\dots,p'_r\in P^{\otimes k+1}$ such that
  $\sum_{l=1}^r\theta_{p'_l,q'_l}(p_{(j,h)}\otimes p_j^k)=p_{(j,h)}\otimes p_j^k$ for every
  $j\in\{1,2,\dots,m_k\}$ and every $h\in\{1,2,\dots,n_j\}$. We then have that
  \begin{equation*}
    \Delta^{k+1}(x)=\sum_{j=1}^{m_k}\sum_{h=1}^{n_j}
    \theta_{q_j^k\otimes q_{(j,h)}, p_{(j,h)}\otimes p_j^k}
    =\sum_{j=1}^{m_k}\sum_{h=1}^{n_j}
    \theta_{q_j^k\otimes q_{(j,h)}, \sum_{l=1}^r\psi_{k+1}((p_{(j,h)}\otimes p_j^k)\otimes q'_l)p'_l}
    =\sum_{l=1}^r\theta_{\Delta^{k+1}(x)q'_l,p'_l}.
  \end{equation*}
  Since $\Delta^{k+1}(x)q'_l\in Q^{k+1}J$ for each $l\in\{1,2,\dots,r\}$, it
  follows that there exist $x_j^{k+1}\in J$, $q_j^{k+1}\in Q^{\otimes k+1}$,
  $p_j^{k+1}\in P^{\otimes k+1}$ such that Equation \eqref{eq:5} holds for $i=k+1$. It
  then follows from Lemma \ref{lemma:compact-expand} that also
  Equation \eqref{eq:6} holds for $i=k+1$.

  Thus there exist for every $i\in\{1,2,\dots,n-1\}$ elements
  $x_j^i\in J$, $q_j^i\in Q^{\otimes  i}$, $p_j^i\in P^{\otimes i}$ such that
  Equation \eqref{eq:5} and \eqref{eq:6} hold, and
  $x\in I_{(\iota_P^J,\iota_Q^J,\iota_R^J,\mathcal{O}_{(P,Q,\psi)}(J))}$. This shows that
  $I\subseteq I_{(\iota_P^J,\iota_Q^J,\iota_R^J,\mathcal{O}_{(P,Q,\psi)}(J))}$, and so we have proved that
  $I= I_{(\iota_P^J,\iota_Q^J,\iota_R^J,\mathcal{O}_{(P,Q,\psi)}(J))}$.

  We will now show that $J=J_{(\iota_P^J,\iota_Q^J,\iota_R^J,\mathcal{O}_{(P,Q,\psi)}(J))}$.
  If $x\in J$, then $\iota_R(x)-\pi(\Delta(x))\in\mathcal{T}(J)$, so
  $\iota_R^J(x)=\pi^J(\Delta(x))$ and $x\in J_{(\iota_P^J,\iota_Q^J,\iota_R^J,\mathcal{O}_{(P,Q,\psi)}(J))}$.
  In the other direction, if $x\in J_{(\iota_P^J,\iota_Q^J,\iota_R^J,\mathcal{O}_{(P,Q,\psi)}(J))}$,
  then it follows from Lemma \ref{lemma:compact} that $\iota_R^J(x)=\pi^J(\Delta(x))$
  and so $\iota_R(x)-\pi(\Delta(x))\in\mathcal{T}(J)$. It then follows from
  Lemma \ref{lemma_5} that $\iota_R(x)=\projection_{(0,0)}(\iota_R(x)-\pi(\Delta(x)))\in
  \iota_R(J)$, and since $\iota_R$ is injective, we have $x\in J$.
\end{proof}

\begin{prop} \label{prop:newt}
  Let $R$ be a ring, let $(P,Q,\psi)$ be an $R$-system satisfying
  condition \textbf{(FS)}.
  Let $(I,J)$ be a pair of two-sided ideals of $R$ such that $I\subseteq J$,
  the ideal $I$ is $\psi$-invariant and
  $\wp_I(J)\subseteq \Delta^{-1}_I(\mathcal{F}_{{}_IP}(Q_I))$.  If we let
  $$I'=\{x\in J\mid \forall
    m\in\N:\Delta_I^m(x)(Q_I^{\otimes m})\subseteq Q_I^{\otimes m}J_I
    \land \exists n\in\N:\Delta_I^n(x)=0\},$$
  then $I'=I_{(\iota^{J_I}_{P_I}\circ \wp_I,\iota^{J_I}_{Q_I}\circ \wp_I,\iota^{J_I}_{R_I}\circ \wp_I,\mathcal{O}_{({}_IP,Q_I,\psi_I)}(J_I))}$
  and $J=J_{(\iota^{J_I}_{P_I}\circ \wp_I,\iota^{J_I}_{Q_I}\circ \wp_I,\iota^{J_I}_{R_I}\circ \wp_I,\mathcal{O}_{({}_IP,Q_I,\psi_I)}(J_I))}$.
\end{prop}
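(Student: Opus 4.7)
The plan is to reduce to Lemma \ref{lemma:newt} applied to the quotient $R_I$-system $({}_IP,Q_I,\psi_I)$ and then pull the resulting ideals back along $\wp_I$.

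First I would note that, by Lemma \ref{lemma:rifs}, the triple $({}_IP,Q_I,\psi_I)$ is an $R_I$-system satisfying condition \textbf{(FS)}, and by hypothesis the ideal $J_I=\wp_I(J)$ of $R_I$ satisfies $J_I\subseteq \Delta_I^{-1}(\mathcal{F}_{{}_IP}(Q_I))$, so $J_I$ is $\psi_I$-compatible. Applying Lemma \ref{lemma:newt} to the system $({}_IP,Q_I,\psi_I)$ and the ideal $J_I$, and setting
\begin{equation*}
  \widetilde{I}:=\{y\in J_I\mid \forall m\in\N:\Delta_I^m(y)(Q_I^{\otimes m})\subseteq Q_I^{\otimes m}J_I
  \land \exists n\in\N:\Delta_I^n(y)=0\},
\end{equation*}
I obtain the identifications $\widetilde{I}=I_{(\iota^{J_I}_{{}_IP},\iota^{J_I}_{Q_I},\iota^{J_I}_{R_I},\mathcal{O}_{({}_IP,Q_I,\psi_I)}(J_I))}$ and $J_I=J_{(\iota^{J_I}_{{}_IP},\iota^{J_I}_{Q_I},\iota^{J_I}_{R_I},\mathcal{O}_{({}_IP,Q_I,\psi_I)}(J_I))}$.

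Next I would abbreviate the given representation of $(P,Q,\psi)$ by $(S,T,\sigma,B):=(\iota^{J_I}_{{}_IP}\circ\wp_I,\iota^{J_I}_{Q_I}\circ\wp_I,\iota^{J_I}_{R_I}\circ\wp_I,\mathcal{O}_{({}_IP,Q_I,\psi_I)}(J_I))$ and translate $I_{(S,T,\sigma,B)}$ and $J_{(S,T,\sigma,B)}$ into data for the quotient system. For the first, $I_{(S,T,\sigma,B)}=\ker\sigma=\wp_I^{-1}(\ker\iota^{J_I}_{R_I})=\wp_I^{-1}(\widetilde{I})$. For the second, using that $\wp_I:\mathcal{F}_P(Q)\longrightarrow \mathcal{F}_{{}_IP}(Q_I)$ is surjective with $\wp_I(\theta_{q,p})=\theta_{\wp_I(q),\wp_I(p)}$, one computes $\pi_{T,S}(\mathcal{F}_P(Q))=\pi^{J_I}(\mathcal{F}_{{}_IP}(Q_I))$, so $x\in J_{(S,T,\sigma,B)}$ iff $\iota^{J_I}_{R_I}(\wp_I(x))\in\pi^{J_I}(\mathcal{F}_{{}_IP}(Q_I))$ iff $\wp_I(x)\in J_I$, which is equivalent to $x\in\wp_I^{-1}(J_I)=J$ (using $I\subseteq J$). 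This immediately gives $J_{(S,T,\sigma,B)}=J$, proving the second half of the claim.

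Finally I would verify that $\wp_I^{-1}(\widetilde{I})=I'$. Since $\widetilde{I}\subseteq J_I$, any $x\in\wp_I^{-1}(\widetilde{I})$ satisfies $\wp_I(x)\in J_I$ and hence $x\in J$, and conversely an element $x\in J$ lies in $\wp_I^{-1}(\widetilde{I})$ precisely when $\wp_I(x)\in\widetilde{I}$. Unfolding the definition of $\widetilde{I}$, this last condition reads $\forall m\in\N:\Delta_I^m(\wp_I(x))(Q_I^{\otimes m})\subseteq Q_I^{\otimes m}J_I$ and $\exists n\in\N:\Delta_I^n(\wp_I(x))=0$, which is exactly the defining condition for membership of $x$ in $I'$. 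Thus $I_{(S,T,\sigma,B)}=\wp_I^{-1}(\widetilde{I})=I'$, completing the proof. The only genuinely nontrivial input is Lemma \ref{lemma:newt}; beyond that, the argument is pure bookkeeping, and I expect the main (minor) obstacle to be keeping the identifications between $\Delta^m(x)$, $\Delta_I^m(x)$, and $\Delta_I^m(\wp_I(x))$ straight and verifying that $\wp_I$ commutes with forming rank--$1$ adjointables at every tensor power.
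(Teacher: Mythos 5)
Your proposal is correct and follows essentially the same route as the paper: the paper's proof likewise observes that the ideals $I_{(\cdot)}$ and $J_{(\cdot)}$ of the composed representation are the $\wp_I$-preimages of the corresponding ideals for the representation of the quotient system $({}_IP,Q_I,\psi_I)$, and then invokes Lemma \ref{lemma:newt}. You simply spell out in more detail the bookkeeping that the paper dismisses as ``clear.''
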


\begin{proof}
  It is clear that we have
  $$I_{(\iota^{J_I}_{P_I}\circ \wp_I,\iota^{J_I}_{Q_I}\circ \wp_I,\iota^{J_I}_{R_I}\circ \wp_I,\mathcal{O}_{({}_IP,Q_I,\psi_I)}(J_I))}
  =\wp_I^{-1}(I_{(\iota_{{}_IP}^{J_I},\iota_{Q_I}^{J_I},\iota_{R_I}^{J_I},
    \mathcal{O}_{({}_IP,Q_I,\psi_I)}(J_I))})$$
  $$J_{(\iota^{J_I}_{P_I}\circ \wp_I,\iota^{J_I}_{Q_I}\circ \wp_I,\iota^{J_I}_{R_I}\circ \wp_I,\mathcal{O}_{({}_IP,Q_I,\psi_I)}(J_I))}
  =\wp_I^{-1}(J_{(\iota_{{}_IP}^{J_I},\iota_{Q_I}^{J_I},\iota_{R_I}^{J_I},
    \mathcal{O}_{({}_IP,Q_I,\psi_I)}(J_I))}),$$
  and the result then follows from Lemma \ref{lemma:newt}.
\end{proof}

\subsection{Products and coproducts in $\category$}
\label{sec:prod-copr-categ}

We will show that if $R$ is a ring and $(P,Q,\psi)$ is an $R$-system, then $\category$ has products and coproducts, and we will, in the case where $(P,Q,\psi)$ satisfies condition \textbf{(FS)}, show how the product and coproduct are related to $T$-pairs of $(P,Q,\psi)$.

\begin{prop} \label{prop:diamond}
  Let $R$ be a ring, let $(P,Q,\psi)$ be an $R$-system and let
  $((S_\lambda,T_\lambda,\sigma_\lambda,B_\lambda))_{\lambda\in \Lambda}$
  be a family of surjective covariant representations of $(P,Q,\psi)$.

  Then the product of
  $((S_\lambda,T_\lambda,\sigma_\lambda,B_\lambda))_{\lambda\in \Lambda}$
  in $\category$ exists; i.e., there exists a surjective covariant
  representation
  $(S_{\prod_{\lambda\in \Lambda}(S_\lambda,T_\lambda,\sigma_\lambda,B_\lambda)},
  T_{\prod_{\lambda\in \Lambda}(S_\lambda,T_\lambda,\sigma_\lambda,B_\lambda)},
  \sigma_{\prod_{\lambda\in \Lambda}(S_\lambda,T_\lambda,\sigma_\lambda,B_\lambda)},
  B_{\prod_{\lambda\in \Lambda}(S_\lambda,T_\lambda,\sigma_\lambda,B_\lambda)})$
  of $(P,Q,\psi)$ and a family $(\phi_\lambda)_{\lambda\in \Lambda}$ of
  ring homomorphisms
  $\phi_\lambda:B_{\prod_{j\in \Lambda}(S_j,T_j,\sigma_j,B_j)}\longrightarrow B_\lambda$ satisfying
  $\phi_\lambda\circ S_{\prod_{j\in \Lambda}(S_j,T_j,\sigma_j,B_j)}=S_\lambda$,
  $\phi_\lambda\circ T_{\prod_{j\in \Lambda}(S_j,T_j,\sigma_j,B_j)}=T_\lambda$ and
  $\phi_\lambda\circ \sigma_{\prod_{j\in \Lambda}(S_j,T_j,\sigma_j,B_j)}=\sigma_\lambda$ for all
  $\lambda\in \Lambda$, with the following property:
  \begin{enumerate}[label=\textbf{(PR)}]
  \item If $(S,T,\sigma,B)$ is a surjective covariant representation
    of $(P,Q,\psi)$ and there for each $\lambda\in \Lambda$ exists a ring homomorphism
    $\psi_\lambda:B\longrightarrow B_\lambda$ such that
    $\psi_\lambda\circ T=T_\lambda$, $\psi_\lambda\circ
    S=S_\lambda$ and $\psi_\lambda\circ\sigma=\sigma_\lambda$, then
    there exists a unique ring homomorphism
    $\tau:B\longrightarrow B_{\prod_{\lambda\in \Lambda}(S_\lambda,T_\lambda,\sigma_\lambda,B_\lambda)}$ such that
    $\tau\circ S=S_{\prod_{\lambda\in \Lambda}(S_\lambda,T_\lambda,\sigma_\lambda,B_\lambda)}$,
    $\tau\circ T=T_{\prod_{\lambda\in \Lambda}(S_\lambda,T_\lambda,\sigma_\lambda,B_\lambda)}$ and
    $\tau\circ \sigma
    =\sigma_{\prod_{\lambda\in \Lambda}(S_\lambda,T_\lambda,\sigma_\lambda,B_\lambda)}$, and such that
    $\phi_\lambda\circ\tau=\psi_\lambda$ for each $\lambda\in \Lambda$. \label{item:12}
  \end{enumerate}
  We furthermore have that the surjective covariant representation
  $$(S_{\prod_{\lambda\in \Lambda}(S_\lambda,T_\lambda,\sigma_\lambda,B_\lambda)},
  T_{\prod_{\lambda\in \Lambda}(S_\lambda,T_\lambda,\sigma_\lambda,B_\lambda)},
  \sigma_{\prod_{\lambda\in \Lambda}(S_\lambda,T_\lambda,\sigma_\lambda,B_\lambda)},
  B_{\prod_{\lambda\in \Lambda}(S_\lambda,T_\lambda,\sigma_\lambda,B_\lambda)})$$
  and the family $(\phi_\lambda)_{\lambda\in \Lambda}$
  are, up to isomorphism, the unique pair which possesses property \ref{item:12}; in fact if
  $(S,T,\sigma,B)$ is a surjective covariant representation
  of $(P,Q,\psi)$ and $(\psi_\lambda)_{\lambda\in \Lambda}$ is a family of  ring
  homomorphisms $\psi_\lambda:B\longrightarrow B_\lambda$ satisfying
  $\psi_\lambda\circ S=S_\lambda$, $\psi_\lambda\circ T=T_\lambda$ and
  $\psi_\lambda\circ\sigma=\sigma_\lambda$ for each $\lambda\in \Lambda$, and
  $\varphi:B_{\prod_{\lambda\in \Lambda}(S_\lambda,T_\lambda,\sigma_\lambda,B_\lambda)}\longrightarrow B$ is a ring
  homomorphism such that
  $\varphi\circ S_{\prod_{\lambda\in \Lambda}(S_\lambda,T_\lambda,\sigma_\lambda,B_\lambda)}=S$,
  $\varphi\circ T_{\prod_{\lambda\in \Lambda}(S_\lambda,T_\lambda,\sigma_\lambda,B_\lambda)}=T$ and
  $\varphi\circ \sigma_{\prod_{\lambda\in \Lambda}(S_\lambda,T_\lambda,\sigma_\lambda,B_\lambda)}
  =\sigma$,
  then $\varphi$ is an isomorphism.

  Moreover, $x\in B_{\prod_{\lambda\in \Lambda}(S_\lambda,T_\lambda,\sigma_\lambda,B_\lambda)}$
  is zero if and only if $\phi_\lambda(x)=0$ for all $\lambda\in \Lambda$.
\end{prop}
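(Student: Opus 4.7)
The plan is to realize the product as a subring of the direct product ring $\prod_{\lambda\in\Lambda}B_\lambda$. Concretely, equip $\prod_{\lambda\in\Lambda}B_\lambda$ with componentwise addition and multiplication and define maps
\begin{equation*}
  \sigma_{\text{prod}}\colon R\to\textstyle\prod_\lambda B_\lambda,\quad
  T_{\text{prod}}\colon Q\to\textstyle\prod_\lambda B_\lambda,\quad
  S_{\text{prod}}\colon P\to\textstyle\prod_\lambda B_\lambda
\end{equation*}
by $\sigma_{\text{prod}}(r)=(\sigma_\lambda(r))_\lambda$, $T_{\text{prod}}(q)=(T_\lambda(q))_\lambda$ and $S_{\text{prod}}(p)=(S_\lambda(p))_\lambda$. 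Because each coordinate is a covariant representation, the five axioms of Definition \ref{def:representations} hold componentwise and therefore hold in the product. Let $B_{\text{prod}}$ be the subring of $\prod_\lambda B_\lambda$ generated by $\sigma_{\text{prod}}(R)\cup T_{\text{prod}}(Q)\cup S_{\text{prod}}(P)$; this yields a surjective covariant representation, which I will write as $(S_{\text{prod}},T_{\text{prod}},\sigma_{\text{prod}},B_{\text{prod}})$. The maps $\phi_\lambda$ are simply the restrictions to $B_{\text{prod}}$ of the canonical projections $\prod_\mu B_\mu\twoheadrightarrow B_\lambda$; by construction $\phi_\lambda\circ\sigma_{\text{prod}}=\sigma_\lambda$, $\phi_\lambda\circ T_{\text{prod}}=T_\lambda$ and $\phi_\lambda\circ S_{\text{prod}}=S_\lambda$.

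Next I would verify the universal property \ref{item:12}. Given $(S,T,\sigma,B)$ together with ring homomorphisms $\psi_\lambda\colon B\to B_\lambda$ that intertwine the generating maps, define $\tau\colon B\to\prod_\lambda B_\lambda$ by $\tau(b)=(\psi_\lambda(b))_\lambda$. This is a ring homomorphism, and for every generator $b\in\sigma(R)\cup T(Q)\cup S(P)$ of $B$ the element $\tau(b)$ lies in $B_{\text{prod}}$; since $B$ is generated by these elements, $\tau$ takes values in $B_{\text{prod}}$. It is immediate that $\tau\circ\sigma=\sigma_{\text{prod}}$, $\tau\circ T=T_{\text{prod}}$, $\tau\circ S=S_{\text{prod}}$ and $\phi_\lambda\circ\tau=\psi_\lambda$ for all $\lambda$. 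Uniqueness of $\tau$ follows from the fact that $B$ is generated (as a ring) by $\sigma(R)\cup T(Q)\cup S(P)$, so any two ring homomorphisms out of $B$ that agree on these generators coincide.

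The uniqueness up to isomorphism is a formal consequence of the universal property, exactly as in the proof of Theorem \ref{theor:toeplitz}: if $(S,T,\sigma,B)$ together with $(\psi_\lambda)$ also has property \ref{item:12} and $\varphi\colon B_{\text{prod}}\to B$ intertwines the generating maps with $\psi_\lambda\circ\varphi=\phi_\lambda$, then applying the universal property in both directions gives mutually inverse ring homomorphisms, so $\varphi$ is an isomorphism. Finally, the last claim follows from the definition of $B_{\text{prod}}$ as a subring of $\prod_\lambda B_\lambda$: an element of the direct product ring is zero if and only if all of its coordinates vanish, and the coordinate at $\lambda$ of $x\in B_{\text{prod}}$ is exactly $\phi_\lambda(x)$.

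I do not anticipate a serious obstacle here; the main care is to check that $\tau$ lands in the subring $B_{\text{prod}}$ rather than just in $\prod_\lambda B_\lambda$, and this is handled by restricting attention to generators. Everything else is a routine verification of axioms componentwise, combined with the standard category-theoretic argument for uniqueness of products.
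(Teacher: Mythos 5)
Your proof is correct. The only real difference from the paper is the model you choose for the product: you realize it as the subring of the direct product ring $\prod_{\lambda\in\Lambda}B_\lambda$ generated by the diagonal images of $R$, $Q$ and $P$, whereas the paper realizes it as the quotient $\mathcal{T}_{(P,Q,\psi)}/H$ with $H=\cap_{\lambda\in\Lambda}\ker\eta_{(S_\lambda,T_\lambda,\sigma_\lambda,B_\lambda)}$, using the universal property of the Toeplitz ring from Theorem \ref{theor:toeplitz}. By the first isomorphism theorem applied to $x\mapsto(\eta_{(S_\lambda,T_\lambda,\sigma_\lambda,B_\lambda)}(x))_\lambda$ these two rings are canonically isomorphic, so the content is the same; your version is more self-contained and makes the final claim (that $x=0$ iff all $\phi_\lambda(x)=0$) completely transparent, while the paper's version fits the product into its running theme of describing everything via graded ideals of $\mathcal{T}_{(P,Q,\psi)}$ — a picture that pays off in the coproduct construction (Proposition \ref{prop:coprod}), where one must pass to the ideal generated by the union of the kernels and a direct-product model is less convenient. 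Two minor points of care: in the ``in fact'' clause the representation $(S,T,\sigma,B)$ is not assumed to possess property \textbf{(PR)}, so the mutually-inverse maps should be obtained by applying \textbf{(PR)} for $B_{\prod}$ to get $\tau\colon B\to B_{\prod}$ and then checking $\varphi\circ\tau=\id_B$ and $\tau\circ\varphi=\id_{B_{\prod}}$ on generators (which is what your argument amounts to); and the case $\Lambda=\emptyset$, which the paper treats explicitly, is handled automatically by your construction since the empty direct product is the zero ring. Neither affects correctness.
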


\begin{proof}
  Let $H=\cap_{\lambda\in \Lambda}\ker\eta_{(S_\lambda,T_\lambda,\sigma_\lambda,B_\lambda)}$
  where for each $\lambda\in \Lambda$ the homomorphism
  $\eta_{(S_\lambda,T_\lambda,\sigma_\lambda,B_\lambda)}:
  \mathcal{T}_{(P,Q,\psi)}\longrightarrow B_\lambda$ is
  the homomorphism given by Theorem \ref{theor:toeplitz}. If the
  family $((S_\lambda,T_\lambda,\sigma_\lambda,B_\lambda))_{\lambda\in \Lambda}$
  is empty, then we let
  $H=\mathcal{T}_{(P,Q,\psi)}$. Let
  $\wp_H:\mathcal{T}_{(P,Q,\psi)}\longrightarrow\mathcal{T}_{(P,Q,\psi)}/H$ be the
  corresponding quotient map, and let
  $S_{\prod_{\lambda\in \Lambda}(S_\lambda,T_\lambda,\sigma_\lambda,B_\lambda)}=\wp_H\circ \iota_P$,
  $T_{\prod_{\lambda\in \Lambda}(S_\lambda,T_\lambda,\sigma_\lambda,B_\lambda)}=\wp_H\circ \iota_Q$,
  $\sigma_{\prod_{\lambda\in \Lambda}(S_\lambda,T_\lambda,\sigma_\lambda,B_\lambda)}
  =\wp_H\circ \iota_R$ and
  $B_{\prod_{\lambda\in \Lambda}(S_\lambda,T_\lambda,\sigma_\lambda,B_\lambda)}
  =\mathcal{T}_{(P,Q,\psi)}/H$.
  We then have that $$(S_{\prod_{\lambda\in \Lambda}(S_\lambda,T_\lambda,\sigma_\lambda,B_\lambda)},
  T_{\prod_{\lambda\in \Lambda}(S_\lambda,T_\lambda,\sigma_\lambda,B_\lambda)},
  \sigma_{\prod_{\lambda\in \Lambda}(S_\lambda,T_\lambda,\sigma_\lambda,B_\lambda)},
  B_{\prod_{\lambda\in\Lambda}(S_\lambda,T_\lambda,\sigma_\lambda,B_\lambda)})$$
  is a surjective covariant representation of $(P,Q,\psi)$. We also have that there for each
  $\lambda\in \Lambda$ is a ring homomorphism
  $\phi_\lambda:B_{\prod_{j\in \Lambda}(S_j,T_j,\sigma_j,B_j)}\longrightarrow B_\lambda$ satisfying
  $\phi_\lambda\circ S_{\prod_{j\in \Lambda}(S_j,T_j,\sigma_j,B_j)}=S_\lambda$,
  $\phi_\lambda\circ T_{\prod_{j\in \Lambda}(S_j,T_j,\sigma_j,B_j)}=T_\lambda$ and
  $\phi_\lambda\circ \sigma_{\prod_{j\in
      \Lambda}(S_j,T_j,\sigma_j,B_j)}=\sigma_\lambda$, and we have that $x\in
  B_{\prod_{\lambda\in \Lambda}(S_\lambda,T_\lambda,\sigma_\lambda,B_\lambda)}$ is zero if and only if
  $\phi_\lambda(x)=0$ for all $\lambda\in \Lambda$.

  If $(T,S,\sigma,B)$ is a surjective covariant representation
  of $(P,Q,\psi)$ and there for each $\lambda\in \Lambda$ exists a ring homomorphism
  $\psi_\lambda:B\longrightarrow B_\lambda$ such that
  $\psi_\lambda\circ S=S_\lambda$, $\psi_\lambda\circ
  T=T_\lambda$ and $\psi_\lambda\circ\sigma=\sigma_\lambda$, then
  $\ker\eta_{(S,T,\sigma,B)}\subseteq H$ where
  $\eta_{(S,T,\sigma,B)}:\mathcal{T}_{(P,Q,\psi)}\longrightarrow B$ is
  the homomorphism given by Theorem \ref{theor:toeplitz}, and it
  follows that there is a unique ring
  homomorphism
  $\tau:B\longrightarrow
  B_{\prod_{\lambda\in \Lambda}(S_\lambda,T_\lambda,\sigma_\lambda,B_\lambda)}$ such that
  $\tau\circ S=S_{\prod_{\lambda\in \Lambda}(S_\lambda,T_\lambda,\sigma_\lambda,B_\lambda)}$,
  $\tau\circ T=T_{\prod_{\lambda\in \Lambda}(S_\lambda,T_\lambda,\sigma_\lambda,B_\lambda)}$ and
  $\tau\circ \sigma
  =\sigma_{\prod_{\lambda\in \Lambda}(S_\lambda,T_\lambda,\sigma_\lambda,B_\lambda)}$, and such that
  $\phi_\lambda\circ\tau=\psi_\lambda$ for each $\lambda\in \Lambda$.
  If there in addition is a ring homomorphism
  $\varphi:B_{\prod_{\lambda\in
      \Lambda}(S_\lambda,T_\lambda,\sigma_\lambda,B_\lambda)}\longrightarrow B$
  such that
  $\varphi\circ S_{\prod_{\lambda\in \Lambda}(S_\lambda,T_\lambda,\sigma_\lambda,B_\lambda)}=S$,
  $\varphi\circ T_{\prod_{\lambda\in \Lambda}(S_\lambda,T_\lambda,\sigma_\lambda,B_\lambda)}=T$ and
  $\varphi\circ \sigma_{\prod_{\lambda\in
      \Lambda}(S_\lambda,T_\lambda,\sigma_\lambda,B_\lambda)}
  =\sigma$,
  then $\tau$ is an inverse of $\varphi$, and it follows that
  $\varphi$ is an isomorphism.
\end{proof}

\begin{prop} \label{prop:coprod}
  Let $R$ be a ring, let $(P,Q,\psi)$ be an $R$-system and let
  $((S_\lambda,T_\lambda,\sigma_\lambda,B_\lambda))_{\lambda\in \Lambda}$ be a family of surjective
  covariant representations of $(P,Q,\psi)$.

  Then the coproduct of
  $((S_\lambda,T_\lambda,\sigma_\lambda,B_\lambda))_{\lambda\in \Lambda}$
  in $\category$ exists; i.e., there exists a surjective
  covariant representation
  $(S_{\coprod_{\lambda\in \Lambda}(S_\lambda,T_\lambda,\sigma_\lambda,B_\lambda)},
  T_{\coprod_{\lambda\in
      \Lambda}(S_\lambda,T_\lambda,\sigma_\lambda,B_\lambda)},
  \sigma_{\coprod_{\lambda\in
      \Lambda}(S_\lambda,T_\lambda,\sigma_\lambda,B_\lambda)},
  B_{\coprod_{\lambda\in
      \Lambda}(S_\lambda,T_\lambda,\sigma_\lambda,B_\lambda)})$ of $(P,Q,\psi)$
  and a family $(\phi_\lambda)_{\lambda\in \Lambda}$ of
  ring homomorphisms $\phi_\lambda:B_\lambda\longrightarrow B_{\coprod_{j\in
      \Lambda}(S_j,T_j,\sigma_j,B_j)}$ satisfying
  $\phi_\lambda\circ S_\lambda=S_{\coprod_{j\in \Lambda}(S_j,T_j,\sigma_j,B_j)}$,
  $\phi_\lambda\circ T_\lambda=T_{\coprod_{j\in \Lambda}(S_j,T_j,\sigma_j,B_j)}$ and
  $\phi_\lambda\circ \sigma_\lambda=\sigma_{\coprod_{j\in \Lambda}(S_j,T_j,\sigma_j,B_j)}$ for all
  $\lambda\in \Lambda$, with the following property:
  \begin{enumerate}[label=\textbf{(CO)}]
  \item If $(S,T,\sigma,B)$ is a surjective covariant representation
    of $(P,Q,\psi)$ and there for each $\lambda\in \Lambda$ exists a ring homomorphism
    $\psi_\lambda:B_\lambda\longrightarrow B$ such that
    $\psi_\lambda\circ S_\lambda=S$, $\psi_\lambda\circ
    T_\lambda=T$ and $\psi_\lambda\circ\sigma_\lambda=\sigma$,
    then there exists a unique ring homomorphism
    $\tau:B_{\coprod_{\lambda\in \Lambda}(S_\lambda,T_\lambda,\sigma_\lambda,B_\lambda)}\longrightarrow B$ such that
    $\tau\circ S_{\coprod_{\lambda\in \Lambda}(S_\lambda,T_\lambda,\sigma_\lambda,B_\lambda)}=S$,
    $\tau\circ T_{\coprod_{\lambda\in \Lambda}(S_\lambda,T_\lambda,\sigma_\lambda,B_\lambda)}=T$ and
    $\tau\circ \sigma_{\coprod_{\lambda\in
        \Lambda}(S_\lambda,T_\lambda,\sigma_\lambda,B_\lambda)}
    =\sigma$, and such that
    $\tau\circ\phi_\lambda=\psi_\lambda$ for each $\lambda\in \Lambda$. \label{item:15}
  \end{enumerate}
  We furthermore have that the surjective covariant representation
  $$(S_{\coprod_{\lambda\in \Lambda}(S_\lambda,T_\lambda,\sigma_\lambda,B_\lambda)},
  T_{\coprod_{\lambda\in \Lambda}(S_\lambda,T_\lambda,\sigma_\lambda,B_\lambda)},
  \sigma_{\coprod_{\lambda\in
      \Lambda}(S_\lambda,T_\lambda,\sigma_\lambda, B_\lambda)},
  B_{\coprod_{\lambda\in
      \Lambda}(S_\lambda,T_\lambda,\sigma_\lambda,B_\lambda)})$$
  and the family $(\phi_\lambda)_{\lambda\in \Lambda}$ are, up to isomorphism,
  the unique pair which possesses property \ref{item:15}; in fact if
  $(S,T,\sigma,B)$ is a surjective covariant representation
  of $(P,Q,\psi)$ and $(\psi_\lambda)_{\lambda\in \Lambda}$ is a family of ring
  homomorphisms $\psi_\lambda:B_\lambda\longrightarrow B$ satisfying
  $\psi_\lambda\circ S_\lambda=S$, $\psi_\lambda\circ T_\lambda=T$
  and $\psi_\lambda\circ\sigma_\lambda=\sigma$ for each $\lambda\in \Lambda$, and
  $\varphi:B\longrightarrow B_{\coprod_{\lambda\in \Lambda}(S_\lambda,T_\lambda,\sigma_\lambda,B_\lambda)}$ is a ring
  homomorphism such that
  $\varphi\circ S=S_{\coprod_{\lambda\in \Lambda}(S_\lambda,T_\lambda,\sigma_\lambda,B_\lambda)}$,
  $\varphi\circ T=T_{\coprod_{\lambda\in \Lambda}(S_\lambda,T_\lambda,\sigma_\lambda,B_\lambda)}$ and
  $\varphi\circ \sigma=\sigma_{\coprod_{\lambda\in \Lambda}(S_\lambda,T_\lambda,\sigma_\lambda,B_\lambda)}$,
  then $\varphi$ is an isomorphism.

  Moreover, if each $(S_\lambda,T_\lambda,\sigma_\lambda,B_\lambda)$ is graded, then the
  surjective covariant representation
  $(S_{\coprod_{\lambda\in \Lambda}(S_\lambda,T_\lambda,\sigma_\lambda,B_\lambda)},
  T_{\coprod_{\lambda\in \Lambda}(S_\lambda,T_\lambda,\sigma_\lambda,B_\lambda)},
  \sigma_{\coprod_{\lambda\in \Lambda}(S_\lambda,T_\lambda,\sigma_\lambda,B_\lambda)},
  B_{\coprod_{\lambda\in \Lambda}(S_\lambda,T_\lambda,\sigma_\lambda,B_\lambda)})$ is also graded.
\end{prop}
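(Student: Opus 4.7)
The approach parallels the construction of the product in Proposition \ref{prop:diamond}, but instead of intersecting kernels we take their sum. For each $\lambda\in\Lambda$ let $\eta_\lambda:=\eta_{(S_\lambda,T_\lambda,\sigma_\lambda,B_\lambda)}:\mathcal{T}_{(P,Q,\psi)}\longrightarrow B_\lambda$ be the surjective ring homomorphism provided by Theorem \ref{theor:toeplitz}, and set $H_\lambda:=\ker\eta_\lambda$. Let $H$ be the two-sided ideal of $\mathcal{T}_{(P,Q,\psi)}$ generated by $\bigcup_{\lambda\in\Lambda}H_\lambda$ (with the convention that $H=\{0\}$ when $\Lambda=\emptyset$), and let $\wp_H:\mathcal{T}_{(P,Q,\psi)}\longrightarrow\mathcal{T}_{(P,Q,\psi)}/H$ be the quotient map. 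Then I would define $B_{\coprod_{\lambda}(S_\lambda,T_\lambda,\sigma_\lambda,B_\lambda)}:=\mathcal{T}_{(P,Q,\psi)}/H$ and set $S_{\coprod}:=\wp_H\circ\iota_P$, $T_{\coprod}:=\wp_H\circ\iota_Q$ and $\sigma_{\coprod}:=\wp_H\circ\iota_R$; this is trivially a surjective covariant representation of $(P,Q,\psi)$. Since $H_\lambda\subseteq H$ for each $\lambda$, the map $\wp_H$ factors through $\eta_\lambda$, yielding a unique ring homomorphism $\phi_\lambda:B_\lambda\longrightarrow B_{\coprod}$ with $\phi_\lambda\circ\eta_\lambda=\wp_H$, and one checks directly that $\phi_\lambda\circ S_\lambda=S_{\coprod}$, $\phi_\lambda\circ T_\lambda=T_{\coprod}$ and $\phi_\lambda\circ\sigma_\lambda=\sigma_{\coprod}$.

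For the universal property \ref{item:15}, suppose $(S,T,\sigma,B)$ is a surjective covariant representation of $(P,Q,\psi)$ together with ring homomorphisms $\psi_\lambda:B_\lambda\longrightarrow B$ satisfying $\psi_\lambda\circ S_\lambda=S$, $\psi_\lambda\circ T_\lambda=T$ and $\psi_\lambda\circ\sigma_\lambda=\sigma$. The composition $\psi_\lambda\circ\eta_\lambda:\mathcal{T}_{(P,Q,\psi)}\longrightarrow B$ clearly intertwines $(\iota_P,\iota_Q,\iota_R)$ with $(S,T,\sigma)$, so by the uniqueness statement of Theorem \ref{theor:toeplitz} it equals $\eta_{(S,T,\sigma,B)}$ for every $\lambda$. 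In particular $H_\lambda\subseteq\ker\eta_{(S,T,\sigma,B)}$ for every $\lambda$, whence $H\subseteq\ker\eta_{(S,T,\sigma,B)}$, and $\eta_{(S,T,\sigma,B)}$ factors uniquely through $B_{\coprod}$ to give the desired $\tau:B_{\coprod}\longrightarrow B$ with $\tau\circ\phi_\lambda=\psi_\lambda$. Uniqueness of $\tau$ is immediate because $B_{\coprod}$ is generated by $\sigma_{\coprod}(R)\cup S_{\coprod}(P)\cup T_{\coprod}(Q)$. The uniqueness-up-to-isomorphism assertion then follows by the standard categorical argument: any surjective $(S,T,\sigma,B)$ equipped with compatible maps $\psi_\lambda$ and a ring homomorphism $\varphi:B_{\coprod}\longrightarrow B$ intertwining the generators must have a two-sided inverse given by the $\tau$ produced above, exactly as in the proof of the corresponding statements in Theorem \ref{theor:toeplitz} and Proposition \ref{prop:diamond}.

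For the final assertion about gradings, assume each $(S_\lambda,T_\lambda,\sigma_\lambda,B_\lambda)$ is graded. By Proposition \ref{prop:graded} we have $\eta_\lambda(\mathcal{T}_{(P,Q,\psi)}^{(n)})=B_\lambda^{(n)}$, so $\eta_\lambda$ is a graded ring homomorphism and consequently its kernel $H_\lambda$ is a graded two-sided ideal of $\mathcal{T}_{(P,Q,\psi)}$, i.e.\ $H_\lambda=\oplus_{n\in\Z}(H_\lambda\cap\mathcal{T}_{(P,Q,\psi)}^{(n)})$. The two-sided ideal generated by an arbitrary union of graded ideals is again graded (each generator lies in a finite direct sum of homogeneous components), so $H$ is graded and the quotient $B_{\coprod}$ inherits a $\Z$-grading $\oplus_{n\in\Z}\wp_H(\mathcal{T}_{(P,Q,\psi)}^{(n)})$ in which $\sigma_{\coprod}(R)\subseteq B_{\coprod}^{(0)}$, $T_{\coprod}(Q)\subseteq B_{\coprod}^{(1)}$ and $S_{\coprod}(P)\subseteq B_{\coprod}^{(-1)}$. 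Thus $(S_{\coprod},T_{\coprod},\sigma_{\coprod},B_{\coprod})$ is graded.

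None of the steps present a real obstacle; the only point requiring care is that when we pass from each individual graded ideal $H_\lambda$ to the (two-sided) ideal $H$ they generate we do not leave the category of graded ideals, but this is a general fact about $\Z$-graded rings and is straightforward. The construction itself is forced by the universal property once one recognises, via Theorem \ref{theor:toeplitz}, that the surjective objects of $\category$ are in order-reversing bijection with two-sided ideals of $\mathcal{T}_{(P,Q,\psi)}$.
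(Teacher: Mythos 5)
Your proposal is correct and follows essentially the same route as the paper's own proof: both form the two-sided ideal $H$ generated by $\bigcup_{\lambda\in\Lambda}\ker\eta_{(S_\lambda,T_\lambda,\sigma_\lambda,B_\lambda)}$, take the quotient of $\mathcal{T}_{(P,Q,\psi)}$ by $H$, obtain the maps $\phi_\lambda$ from the inclusions $\ker\eta_\lambda\subseteq H$ together with surjectivity of $\eta_\lambda$, and deduce the universal property and the grading statement exactly as you do. The only detail you add beyond the paper is the explicit justification that an ideal generated by graded ideals is graded, which the paper passes over with ``it easily follows''; this is a welcome clarification rather than a divergence.
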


\begin{proof}
  Let $H$ be the smallest two-sided ideal of
  $\mathcal{T}_{(P,Q,\psi)}$ which contains $\cup_{\lambda\in
    \Lambda}\ker\eta_{(S_\lambda,T_\lambda,\sigma_\lambda,B_\lambda)}$ where for
  each $\lambda\in \Lambda$ the homomorphism
  $\eta_{(S_\lambda,T_\lambda,\sigma_\lambda,B_\lambda)}:\mathcal{T}_{(P,Q,\psi)}\longrightarrow B_\lambda$ is
  the homomorphism given by Theorem \ref{theor:toeplitz}. Let
  $\wp_H:\mathcal{T}_{(P,Q,\psi)}\longrightarrow\mathcal{T}_{(P,Q,\psi)}/H$ be the
  corresponding quotient map, and let
  $S_{\coprod_{\lambda\in \Lambda}(S_\lambda,T_\lambda,\sigma_\lambda,B_\lambda)}=\wp_H\circ \iota_P$,
  $T_{\coprod_{\lambda\in \Lambda}(S_\lambda,T_\lambda,\sigma_\lambda,B_\lambda)}=\wp_H\circ \iota_Q$,
  $\sigma_{\coprod_{\lambda\in
      \Lambda}(S_\lambda,T_\lambda,\sigma_\lambda,B_\lambda)}
  =\wp_H\circ \iota_R$ and
  $B_{\coprod_{\lambda\in
      \Lambda}(S_\lambda,T_\lambda,\sigma_\lambda,B_\lambda)}=\mathcal{T}_{(P,Q,\psi)}/H$.
  We then have that
  $$(S_{\coprod_{\lambda\in \Lambda}(S_\lambda,T_\lambda,\sigma_\lambda,B_\lambda)},
  T_{\coprod_{\lambda\in
      \Lambda}(S_\lambda,T_\lambda,\sigma_\lambda,B_\lambda)},
  \sigma_{\coprod_{\lambda\in
      \Lambda}(S_\lambda,T_\lambda,\sigma_\lambda,B_\lambda)},
  B_{\coprod_{\lambda\in
      \Lambda}(S_\lambda,T_\lambda,\sigma_\lambda,B_\lambda)})$$
  is a surjective covariant representation of $(P,Q,\psi)$.
  We also have that there for each
  $\lambda\in \Lambda$ is a ring homomorphism
  $\phi_\lambda:B_\lambda\longrightarrow B_{\coprod_{j\in \Lambda}(S_j,T_j,\sigma_j,B_j)}$ satisfying
  $\phi_\lambda\circ S_\lambda=S_{\coprod_{j\in \Lambda}(S_j,T_j,\sigma_j,B_j)}$,
  $\phi_\lambda\circ T_\lambda=T_{\coprod_{j\in \Lambda}(S_j,T_j,\sigma_j,B_j)}$ and
  $\phi_\lambda\circ \sigma_\lambda=\sigma_{\prod_{j\in
      \Lambda}(S_j,T_j,\sigma_j,B_j)}$.

  If $(S_\lambda,T_\lambda,\sigma_\lambda,B_\lambda)$ is graded, then
  $\ker\eta_{(S_\lambda,T_\lambda,\sigma_\lambda,B_\lambda)}$ is a
  graded two-sided ideal of
  $\mathcal{T}_{(P,Q,\psi)}$. It easily follows that if each
  $(S_\lambda,T_\lambda,\sigma_\lambda,B_\lambda)$ is graded, then $H$
  is a two-sided graded ideal of
  $\mathcal{T}_{(P,Q,\psi)}$, and thus that
  $(S_{\coprod_{\lambda\in \Lambda}(S_\lambda,T_\lambda,\sigma_\lambda,B_\lambda)},
  T_{\coprod_{\lambda\in
      \Lambda}(S_\lambda,T_\lambda,\sigma_\lambda,B_\lambda)},
  \sigma_{\coprod_{\lambda\in
      \Lambda}(S_\lambda,T_\lambda,\sigma_\lambda,B_\lambda)},
  B_{\coprod_{\lambda\in
      \Lambda}(S_\lambda,T_\lambda,\sigma_\lambda,B_\lambda)})$ is also graded.

  If $(S,T,\sigma,B)$ is a surjective covariant representation
  of $(P,Q,\psi)$ and there for each $\lambda\in \Lambda$ exists a ring homomorphism
  $\psi_\lambda:B_\lambda\longrightarrow B$ such that $\psi_\lambda\circ S_\lambda=S$, $\psi_\lambda\circ
  T_\lambda=T$ and $\psi_\lambda\circ\sigma_\lambda=\sigma$, then
  $H\subseteq \ker\eta_{(S,T,\sigma,B)}$ where
  $\eta_{(S,T,\sigma,B)}:\mathcal{T}_{(P,Q,\psi)}\longrightarrow B$ is
  the homomorphism given by Theorem \ref{theor:toeplitz}, and it
  follows that there is a unique ring
  homomorphism
  $\tau: B_{\coprod_{\lambda\in \Lambda}(S_\lambda,T_\lambda,\sigma_\lambda,B_\lambda)}\longrightarrow B$ such that
  $\tau\circ S_{\coprod_{\lambda\in \Lambda}(S_\lambda,T_\lambda,\sigma_\lambda,B_\lambda)}=S$,
  $\tau\circ T_{\coprod_{\lambda\in \Lambda}(S_\lambda,T_\lambda,\sigma_\lambda,B_\lambda)}=T$ and
  $\tau\circ \sigma_{\coprod_{\lambda\in
      \Lambda}(S_\lambda,T_\lambda,\sigma_\lambda,B_\lambda)}
  =\sigma$, and such that
  $\tau\circ\phi_\lambda=\psi_\lambda$ for each $\lambda\in \Lambda$.
  If there in addition is a ring homomorphism
  $\varphi:B\longrightarrow B_{\coprod_{\lambda\in
      \Lambda}(S_\lambda,T_\lambda,\sigma_\lambda,B_\lambda)}$
  such that
  $\varphi\circ S=S_{\coprod_{\lambda\in \Lambda}(S_\lambda,T_\lambda,\sigma_\lambda,B_\lambda)}$,
  $\varphi\circ T=T_{\coprod_{\lambda\in \Lambda}(S_\lambda,T_\lambda,\sigma_\lambda,B_\lambda)}$ and
  $\varphi\circ \sigma
  =\sigma_{\coprod_{\lambda\in \Lambda}(S_\lambda,T_\lambda,\sigma_\lambda,B_\lambda)}$,
  then $\tau$ is an inverse of $\varphi$, and it follows that
  $\varphi$ is an isomorphism.
\end{proof}

\begin{lem} \label{lemma:graded and injective}
  Let $R$ be a ring and let $(P,Q,\psi)$ be an $R$-system. If
  $(S_1,T_1,\sigma_1,B_1)$ and $(S_2,T_2,\sigma_2,B_2)$ are two
  covariant representations of $(P,Q,\psi)$ and $\phi:B_1\longrightarrow B_2$ is a
  ring homomorphism satisfying $\phi\circ T_1=T_2$,
  $\phi\circ S_1=S_2$ and $\phi\circ\sigma_1=\sigma_2$, then the
  following holds:
  \begin{enumerate}
  \item If $(S_2,T_2,\sigma_2,B_2)$ is injective, then so is
    $(S_1,T_1,\sigma_1,B_1)$. \label{item:13}
  \item If $\phi$ is surjective and $(S_2,T_2,\sigma_2,B_2)$ is
    surjective and graded, then so is
    $(S_1,T_1,\sigma_1,B_1)$. \label{item:1}
  \end{enumerate}
\end{lem}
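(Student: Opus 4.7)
The plan is as follows.

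For \eqref{item:13}, the argument is immediate from the factorisation $\sigma_2=\phi\circ\sigma_1$: any $r\in\ker\sigma_1$ satisfies $\sigma_2(r)=\phi(\sigma_1(r))=0$, so injectivity of $\sigma_2$ forces $r=0$. Hence $\sigma_1$ is injective, i.e., $(S_1,T_1,\sigma_1,B_1)$ is injective.

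For \eqref{item:1}, my plan is to pass through the Toeplitz ring. By Theorem \ref{theor:toeplitz} there exist unique ring homomorphisms $\eta_i\colon\mathcal{T}_{(P,Q,\psi)}\to B_i$ intertwining the canonical data with that of $(S_i,T_i,\sigma_i)$, and by the uniqueness clause of Theorem \ref{theor:toeplitz} applied to $\phi\circ\eta_1$ (which intertwines correctly with the data of $(S_2,T_2,\sigma_2)$) I obtain $\phi\circ\eta_1=\eta_2$. Surjectivity of $(S_2,T_2,\sigma_2,B_2)$ gives $\eta_2(\mathcal{T}_{(P,Q,\psi)})=B_2$, hence $\phi(\eta_1(\mathcal{T}_{(P,Q,\psi)}))=\phi(B_1)$. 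Combining this identity with the surjectivity of $\phi$, I will argue that $\eta_1(\mathcal{T}_{(P,Q,\psi)})=B_1$, i.e., that $(S_1,T_1,\sigma_1,B_1)$ is surjective.

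With surjectivity in hand, I transfer the grading by setting $B_1^{(n)}:=\eta_1(\mathcal{T}_{(P,Q,\psi)}^{(n)})$ for each $n\in\Z$, where I use the $\Z$-grading of $\mathcal{T}_{(P,Q,\psi)}$ from Proposition \ref{prop:Z-grading}. The inclusions $\iota_R(R)\subseteq\mathcal{T}_{(P,Q,\psi)}^{(0)}$, $\iota_Q(Q)\subseteq\mathcal{T}_{(P,Q,\psi)}^{(1)}$, $\iota_P(P)\subseteq\mathcal{T}_{(P,Q,\psi)}^{(-1)}$ place $\sigma_1(R)$, $T_1(Q)$, $S_1(P)$ into the correct degrees of $B_1$; the relation $B_1^{(n)}B_1^{(m)}\subseteq B_1^{(n+m)}$ is inherited from $\mathcal{T}_{(P,Q,\psi)}$ because $\eta_1$ is a ring homomorphism; and $\sum_n B_1^{(n)}=\eta_1(\mathcal{T}_{(P,Q,\psi)})=B_1$ by the surjectivity of $\eta_1$ just established.

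The main obstacle is directness of the decomposition $B_1=\sum_n B_1^{(n)}$. The approach here is: take a relation $\sum_n\eta_1(x_n)=0$ with $x_n\in\mathcal{T}_{(P,Q,\psi)}^{(n)}$ and apply $\phi$ to obtain $\sum_n\eta_2(x_n)=0$ in $B_2$; Proposition \ref{prop:graded} gives $\eta_2(x_n)\in B_2^{(n)}$, and the directness of the grading of $B_2$ then forces $\eta_2(x_n)=0$ for every $n$. The delicate point, where the surjectivity of $\phi$ must be used, is to lift $\eta_2(x_n)=\phi(\eta_1(x_n))=0$ back to $\eta_1(x_n)=0$; this will require a degree-by-degree analysis of $\ker\phi$ inside the natural filtration of $B_1$ induced by $\eta_1$, exploiting surjectivity of $\phi$ to propagate the splitting of $B_2$ back up through $\phi$.
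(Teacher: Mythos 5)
Part \eqref{item:13} of your proposal is correct and is exactly the paper's argument (the paper simply calls it obvious). For part \eqref{item:1} your outline is the same as the paper's: transport the canonical grading of Proposition \ref{prop:Z-grading} along $\eta_1:=\eta_{(S_1,T_1,\sigma_1,B_1)}$, using $\phi\circ\eta_1=\eta_{(S_2,T_2,\sigma_2,B_2)}=:\eta_2$ (which does follow from the uniqueness clause of Theorem \ref{theor:toeplitz}) and Proposition \ref{prop:graded} on the $B_2$ side; the paper compresses everything after that into the sentence ``It follows that $\oplus_{n\in\Z}\eta_1(\mathcal{T}_{(P,Q,\psi)}^{(n)})$ is a grading of $B_1$''. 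The trouble is that the two steps you explicitly defer are not merely delicate: they are false under the stated hypotheses, so no argument can close them. For surjectivity, the identity $\phi\bigl(\eta_1(\mathcal{T}_{(P,Q,\psi)})\bigr)=B_2=\phi(B_1)$ together with surjectivity of $\phi$ does not yield $\eta_1(\mathcal{T}_{(P,Q,\psi)})=B_1$, because a surjection cannot distinguish a proper subring from the whole ring: take any surjective graded representation $(S_2,T_2,\sigma_2,B_2)$, any nonzero ring $C$, set $B_1=B_2\times C$ with $S_1=(S_2,0)$, $T_1=(T_2,0)$, $\sigma_1=(\sigma_2,0)$, and let $\phi$ be the first projection. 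All hypotheses of \eqref{item:1} hold, yet $\mathcal{R}\langle S_1,T_1,\sigma_1\rangle=B_2\times\{0\}\neq B_1$.

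The directness step fails as well: lifting $\eta_2(x_n)=0$ back to $\eta_1(x_n)=0$ amounts to the inclusion $\ker\eta_2\subseteq\ker\eta_1$, which is the reverse of the inclusion that $\phi\circ\eta_1=\eta_2$ provides, and surjectivity of $\phi$ pushes information forward, never back, so no ``degree-by-degree analysis of $\ker\phi$'' can rescue it. Here is a counterexample in which even the first representation is surjective: take $R=P=Q=\Z$ with $\psi(p\otimes q)=pq$ (the first example of Section \ref{sec:r-systems-covariant}), compose the covariant representation on $\Z[t,t^{-1}]$ given there with the quotient maps onto $B_1:=\Z[t,t^{-1}]/(2+2t)$ and $B_2:=\Z[t,t^{-1}]/(2)\cong\mathbb{F}_2[t,t^{-1}]$, and let $\phi:B_1\longrightarrow B_2$ be the canonical surjection. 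Then $(S_2,T_2,\sigma_2,B_2)$ is surjective and graded, while $(S_1,T_1,\sigma_1,B_1)$ is surjective but \emph{not} graded: by Proposition \ref{prop:graded} any compatible grading would have $B_1^{(n)}$ equal to the image of $\Z t^n$, and $2\overline{t}=-2\overline{1}$ is then a nonzero element of $B_1^{(1)}\cap B_1^{(0)}$ (nonzero because $t=(1+t)g$ has no solution $g\in\Z[t,t^{-1}]$, as evaluation at $t=-1$ shows); equivalently, the ideal $(2+2t)$ is not graded although it sits inside the graded ideal $(2)$. So part \eqref{item:1} of the lemma is itself false as stated, and the paper's ``It follows'' conceals a genuine error rather than a routine verification. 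What is true, and all that is used in Proposition \ref{prop:super}, is this: if $(S_1,T_1,\sigma_1,B_1)$ is surjective and $\ker\eta_1$ is a graded ideal of $\mathcal{T}_{(P,Q,\psi)}$, then the representation is graded; in that application $\ker\eta_1=\bigcap_\lambda\ker\eta_{(S_\lambda,T_\lambda,\sigma_\lambda,B_\lambda)}$, and each of these kernels is a graded ideal precisely by the directness argument you ran on the $B_2$ side, hence so is the intersection (the point being that the family $(\phi_\lambda)$ of Proposition \ref{prop:diamond} is jointly injective). Alternatively, \eqref{item:1} becomes correct if one assumes $(S_1,T_1,\sigma_1,B_1)$ surjective and $\phi$ \emph{injective}. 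I recommend you prove one of these amended statements rather than the lemma as printed.
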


\begin{proof}
  That \eqref{item:13} holds is obvious. If $\phi$ is surjective and
  $(S_2,T_2,\sigma_2,B_2)$ is surjective and graded, then it follows
  from Proposition \ref{prop:graded} that
  $\oplus_{n\in\Z}\eta_{(S_2,T_2,\sigma_2,B_2)}(\mathcal{T}_{(P,Q,\psi)}^{(n)})$
  is a grading of $B_2$. It follows that $\oplus_{n\in\Z}\eta_{(S_1,T_1,\sigma_1,B_1)}
  (\mathcal{T}_{(P,Q,\psi)}^{(n)})$ is a grading of $B_1$, and
  thus that $(S_1,T_1,\sigma_1,B_1)$ is graded.
\end{proof}

\begin{prop} \label{prop:super}
  Let $R$ be a ring, let $(P,Q,\psi)$ be an $R$-system satisfying
  condition \textbf{(FS)} and let
  $\Omega=(\omega_\lambda)_{\lambda\in\Lambda}=((I_\lambda,J_\lambda))_{\lambda\in\Lambda}$
  be a non-empty family of $T$-pairs of $(P,Q,\psi)$.
  For each $\lambda\in\Lambda$ denote by
  $\Gamma_\lambda$ the covariant representation $(\iota_P^{\omega_\lambda},
      \iota_Q^{\omega_\lambda},
      \iota_R^{\omega_\lambda},
      \mathcal{O}_{({}_{I_\lambda}P,Q_{I_\lambda},R_{I_\lambda})}((J_\lambda)_{I_\lambda}))$.
  Then we have:
  \begin{enumerate}
  \item \label{item:16}
    If we let $I_{\prod\Omega}=\cap_{\lambda\in\Lambda}I_\lambda$ and
    $J_{\prod\Omega}=\cap_{\lambda\in\Lambda}J_\lambda$, then the pair
    $\omega_{\prod\Omega}=(I_{\prod\Omega},J_{\prod\Omega})$ is a
    $T$-pair of $(P,Q,\psi)$, and the covariant representation
    $$\left(S_{\prod_{\lambda\in\Lambda}\Gamma_\lambda},
      T_{\prod_{\lambda\in\Lambda}\Gamma_\lambda},
      \sigma_{\prod_{\lambda\in\Lambda}\Gamma_\lambda},
      B_{\prod_{\lambda\in\Lambda}\Gamma_\lambda}\right)$$
    is surjective and graded, and it is isomorphic to
    $$\left(\iota_P^{\omega_{\prod\Omega}},
      \iota_Q^{\omega_{\prod\Omega}},
      \iota_R^{\omega_{\prod\Omega}},
      \mathcal{O}_{({}_{I_{\prod\Omega}}P,
        Q_{I_{\prod\Omega}},
        \psi_{I_{\prod\Omega}})}\bigl((J_{\prod\Omega})_{I_{\prod\Omega}}\bigr)\right).$$
  \item \label{item:17}
    If we let $I$ be the smallest two-sided ideal of $R$ containing
    $\cup_{\lambda\in\Lambda}I_\lambda$, $J_{\coprod\Omega}$ be the
    smallest two-sided ideal of $R$ containing
    $\cup_{\lambda\in\Lambda}J_\lambda$ and $I_{\coprod\Omega}=\{x\in
    J_{\coprod\Omega}\mid \forall
    m\in\N:\Delta_I^m(x)(Q_I^{\otimes m})\subseteq Q_I^{\otimes m}(J_{\coprod\Omega})_I
    \land \exists n\in\N:\Delta_I^n(x)=0\}$, then the pair
    $\omega_{\coprod\Omega}= (I_{\coprod\Omega},J_{\coprod\Omega})$ is a
    $T$-pair of $(P,Q,\psi)$, and the covariant representation
    $$\left(S_{\coprod_{\lambda\in\Lambda}\Gamma_\lambda},
      T_{\coprod_{\lambda\in\Lambda}\Gamma_\lambda},
      \sigma_{\coprod_{\lambda\in\Lambda}\Gamma_\lambda},
      B_{\coprod_{\lambda\in\Lambda}\Gamma_\lambda}\right)$$
    is surjective and graded, and it is isomorphic to
    $$\left(\iota_P^{\omega_{\coprod\Omega}},
      \iota_Q^{\omega_{\coprod\Omega}},
      \iota_R^{\omega_{\coprod\Omega}},
      \mathcal{O}_{({}_{I_{\coprod\Omega}}P,
        Q_{I_{\coprod\Omega}},
        \psi_{I_{\coprod\Omega}})}\bigl((J_{\coprod\Omega})_{I_{\coprod\Omega}}\bigr)\right).$$
  \end{enumerate}
\end{prop}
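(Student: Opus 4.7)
The plan is to establish both parts through a common three-step strategy: first, identify the product (resp.\ coproduct) representation obtained from Propositions \ref{prop:diamond} and \ref{prop:coprod} as a surjective graded covariant representation of $(P,Q,\psi)$; second, compute its associated $T$-pair (which is automatically a $T$-pair by Proposition \ref{prop_31}) and identify it with the candidate $\omega_{\prod\Omega}$ or $\omega_{\coprod\Omega}$; third, invoke Theorem \ref{theor_1}\eqref{item:21} (together with Proposition \ref{prop:newt} for part \eqref{item:17}) to conclude the isomorphism. In particular, the fact that $\omega_{\prod\Omega}$ and $\omega_{\coprod\Omega}$ are $T$-pairs will arise as a byproduct of the $T$-pair identification, rather than being verified in advance.

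For part \eqref{item:16}, the product representation $(S_\prod, T_\prod, \sigma_\prod, B_\prod)$ is graded because each $\ker \eta_{\Gamma_\lambda}$ is a graded ideal of $\mathcal{T}_{(P,Q,\psi)}$ and graded ideals are closed under intersection. The key auxiliary identity is $\bigcap_\lambda QI_\lambda = Q\,I_{\prod\Omega}$, which follows by combining Lemma \ref{lemma_31} applied to the product representation (whose kernel on $Q$ is $Q\cdot\ker\sigma_\prod$) with the characterisation from Proposition \ref{prop:diamond} that $\tilde{T}(q)=0$ iff $\iota_Q^{\omega_\lambda}(q)=0$ for every $\lambda$, together with Lemma \ref{lemma_31} applied to each $\Gamma_\lambda$. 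A direct diagram chase through the embedding $B_\prod \hookrightarrow \prod_\lambda B_\lambda$ yields $\ker\sigma_\prod = \bigcap_\lambda I_\lambda = I_{\prod\Omega}$. For the inclusion $J_{(S_\prod,\dots)} \subseteq J_{\prod\Omega}$, I would apply each $\phi_\lambda$ to a relation $\sigma_\prod(r) = \pi_{T_\prod,S_\prod}(\Theta)$ and use Remark \ref{remark:comp} together with Proposition \ref{prop_32}. The reverse inclusion is the main difficulty: given $r\in \bigcap_\lambda J_\lambda$, each $\Gamma_\lambda$ supplies a lift $\Theta_\lambda\in\mathcal{F}_P(Q)$ with $\wp_{I_\lambda}(\Theta_\lambda) = \Delta_{I_\lambda}(\wp_{I_\lambda}(r))$, but one must produce a single common $\Theta\in \mathcal{F}_P(Q)$. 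Using condition \textbf{(FS)} via Lemma \ref{lemma_1} and Lemma \ref{lemma:two-compact} to ``compress'' the family $\{\Theta_\lambda\}$ on finite collections of generators, and exploiting the identity $\bigcap_\lambda QI_\lambda = Q\,I_{\prod\Omega}$, one reduces the question to showing $\Delta_{I_{\prod\Omega}}(\wp_{I_{\prod\Omega}}(r)) \in \mathcal{F}_{{}_{I_{\prod\Omega}}P}(Q_{I_{\prod\Omega}})$, after which the existence of $\Theta$ follows from $\mathcal{F}_{{}_{I_{\prod\Omega}}P}(Q_{I_{\prod\Omega}}) = \wp_{I_{\prod\Omega}}(\mathcal{F}_P(Q))$.

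For part \eqref{item:17}, the coproduct $B_\coprod = \mathcal{T}_{(P,Q,\psi)}/K$, with $K$ the smallest ideal containing $\bigcup_\lambda \ker \eta_{\Gamma_\lambda}$, is surjective and graded by Proposition \ref{prop:coprod}. I would apply Proposition \ref{prop:newt} to the pair $(I, J_{\coprod\Omega})$, where $I$ denotes the smallest two-sided ideal of $R$ containing $\bigcup_\lambda I_\lambda$. That $I$ is $\psi$-invariant is routine since the union of $\psi$-invariant ideals generates a $\psi$-invariant ideal. For the compatibility hypothesis $\wp_I(J_{\coprod\Omega}) \subseteq \Delta_I^{-1}(\mathcal{F}_{{}_IP}(Q_I))$: for each $x\in J_\lambda$, the image of a lift of $\Delta_{I_\lambda}(\wp_{I_\lambda}(x))$ under the natural map $\mathcal{F}_{{}_{I_\lambda}P}(Q_{I_\lambda}) \to \mathcal{F}_{{}_IP}(Q_I)$ realises $\Delta_I(\wp_I(x))$ as a finite-rank operator on $Q_I$; combined with the fact that $\Delta_I^{-1}(\mathcal{F}_{{}_IP}(Q_I))$ is a two-sided ideal of $R_I$ (since $\mathcal{F}_{{}_IP}(Q_I)$ is a two-sided ideal of $\mathcal{L}_{{}_IP}(Q_I)$), this extends to all of $J_{\coprod\Omega}$. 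The main identification step is then to verify $K = \ker\bigl(\mathcal{T}_{(P,Q,\psi)} \twoheadrightarrow \mathcal{O}_{({}_IP, Q_I, \psi_I)}((J_{\coprod\Omega})_I)\bigr)$ by mutual containment of generators: the right-hand kernel contains $\iota_R(I)$, $\iota_Q(QI)$, $\iota_P(IP)$ and the Cuntz-Pimsner generators $\iota_R(x) - \pi(\Theta)$ for $x \in J_{\coprod\Omega}$ and $\Theta$ a lift of $\Delta_I(\wp_I(x))$, all of which lie in $K$, and conversely each $\ker\eta_{\Gamma_\lambda}$ is contained in this kernel because any Cuntz-Pimsner lift relative to $I_\lambda$ also serves as one relative to $I$. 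Once this identification is established, Proposition \ref{prop:newt} immediately yields that the $T$-pair associated with the coproduct representation is $(I_{\coprod\Omega}, J_{\coprod\Omega})$, producing the stated isomorphism.

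The hardest step will be the construction of the common lift in part \eqref{item:16}, which is the only place where the structure of condition \textbf{(FS)} is genuinely exploited beyond routine usage; the remaining verifications are diagram-chasing arguments along the universal properties of Propositions \ref{prop:diamond} and \ref{prop:coprod} combined with the classification in Remark \ref{remark:classification-noninjective}.
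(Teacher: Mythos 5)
Your overall strategy --- realize the product/coproduct via Propositions \ref{prop:diamond} and \ref{prop:coprod}, compute the associated $T$-pair, and invoke Theorem \ref{theor_1} --- is the same as the paper's, and your treatment of part \eqref{item:17} is essentially sound: the kernel comparison inside $\mathcal{T}_{(P,Q,\psi)}$ is an acceptable substitute for the paper's back-and-forth construction of homomorphisms via Theorem \ref{univ_cuntz} and Proposition \ref{prop:newt}, though you still owe an argument that the elements you list really generate $\ker\eta_{\Gamma_\lambda}$. The genuine problem is the step you yourself single out in part \eqref{item:16}: producing a common $\Theta\in\mathcal{F}_P(Q)$ for $r\in\bigcap_\lambda J_\lambda$. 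Your compression idea does work when $\Lambda$ is finite: by \textbf{(FS)} choose a single $\Xi=\sum_j\theta_{a_j,b_j}$ fixing the finitely many left-hand generators of all the $\Theta_\lambda$; then $\Theta:=\Xi\Delta(r)=\sum_j\theta_{a_j,b_jr}$ satisfies $\Delta(r)-\Theta=(1-\Xi)\bigl(\Delta(r)-\Theta_\lambda\bigr)$, which maps $Q$ into $QI_\lambda$ for every $\lambda$, hence into $Q\bigl(\bigcap_\lambda I_\lambda\bigr)$. For infinite $\Lambda$ no such $\Xi$ exists, and the statement you propose to reduce to, namely $\Delta_{I_{\prod\Omega}}(\wp_{I_{\prod\Omega}}(r))\in\mathcal{F}_{{}_{I_{\prod\Omega}}P}(Q_{I_{\prod\Omega}})$, is false in general, so the gap cannot be closed.

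Concretely, take the graph with one vertex $v$ emitting edges $e_1,e_2,\dots$ to distinct sinks $w_1,w_2,\dots$ and the $R$-system of Example \ref{examples:graph-toeplitz}. For $n\ge 2$ put $I_n=\spa_F\{\textbf{1}_{w_i}:i\ge n\}$ and $J_n=F\textbf{1}_v+I_n$; each $(I_n,J_n)$ is a $T$-pair because in the quotient system $v$ emits only the edges $e_1,\dots,e_{n-1}$. Then $\bigcap_nI_n=\{0\}$ and $\bigcap_nJ_n=F\textbf{1}_v$, but $\textbf{1}_v\notin\Delta^{-1}(\mathcal{F}_P(Q))$ since $v$ is an infinite emitter (cf.\ Example \ref{examples_cuntz:graph_cuntz}), so $(\bigcap_nI_n,\bigcap_nJ_n)$ is not a $T$-pair. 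Moreover, compressing by $y_{e_m}$ on the left and $x_{e_m}$ on the right for large $m$ shows that no single $\Theta\in\mathcal{F}_P(Q)$ satisfies $\iota_R^{\omega_n}(\textbf{1}_v)=\pi_{\iota_Q^{\omega_n},\iota_P^{\omega_n}}(\Theta)$ for all $n$, so the $J$-ideal of the product representation is $\{0\}\subsetneq\bigcap_nJ_n$. Thus part \eqref{item:16} as stated only holds for finite families (the paper's own proof glosses over exactly this point when it asserts the equivalence of $\sigma_{\prod}(x)=\pi_{T_{\prod},S_{\prod}}(\Delta(x))$ with $x\in\bigcap_\lambda J_\lambda$), and any correct write-up must either restrict to finite $\Lambda$ or replace $\bigcap_\lambda J_\lambda$ by the possibly smaller ideal $J$ of the product representation.
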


\begin{proof}
  \eqref{item:16}: It follows from Lemma \ref{lemma:graded and
    injective} that the surjective covariant representation
  $$\left(S_{\prod_{\lambda\in\Lambda}\Gamma_\lambda},
    T_{\prod_{\lambda\in\Lambda}\Gamma_\lambda},
    \sigma_{\prod_{\lambda\in\Lambda}\Gamma_\lambda},
    B_{\prod_{\lambda\in\Lambda}\Gamma_\lambda}\right)$$
  is graded. It therefore follows from Proposition
  \ref{prop_31} and Theorem \ref{theor_1} that
  $$\left(S_{\prod_{\lambda\in\Lambda}\Gamma_\lambda},
    T_{\prod_{\lambda\in\Lambda}\Gamma_\lambda},
    \sigma_{\prod_{\lambda\in\Lambda}\Gamma_\lambda},
    B_{\prod_{\lambda\in\Lambda}\Gamma_\lambda}\right)$$
  is isomorphic to
  $(\iota_P^\omega,\iota_Q^\omega,\iota_R^\omega,\mathcal{O}_{({}_IP,Q_I,\psi_I)}(J_I))$
  for some $T$-pair $\omega=(I,J)$ of $(P,Q,\psi)$. It follows from
  Lemma \ref{lemma:compact} and Proposition \ref{prop:diamond} that we
  have
  \begin{equation*}
    x\in I \iff \sigma_{\prod_{\lambda\in\Lambda}\Gamma_\lambda}(x)=0
    \iff \forall \lambda\in\Lambda: \iota_R^{\omega_\lambda}(x)=0 \iff
    x\in \cap_{\lambda\in\Lambda}I_\lambda=I_{\prod\Omega}
  \end{equation*}
  and
  \begin{align*}
    x\in J &\iff \sigma_{\prod_{\lambda\in\Lambda}\Gamma_\lambda}(x)
    =\pi_{T_{\prod_{\lambda\in\Lambda}\Gamma_\lambda},S_{\prod_{\lambda\in\Lambda}\Gamma_\lambda}}
    (\Delta(x)) \\&\iff \forall \lambda\in\Lambda:
    \iota_R^{\omega_\lambda}(x)
    =\pi_{\iota_Q^\omega,\iota_P^\omega}(\Delta(x)) \iff x\in
    \cap_{\lambda\in\Lambda}J_\lambda= J_{\prod\Omega}
  \end{align*}
  from which \eqref{item:16} follows.

  \eqref{item:17}:
   It follows from Proposition \ref{prop:coprod} that the representation
  $$\left(S_{\coprod_{\lambda\in\Lambda}\Gamma_\lambda},
    T_{\coprod_{\lambda\in\Lambda}\Gamma_\lambda},
    \sigma_{\coprod_{\lambda\in\Lambda}\Gamma_\lambda},
    B_{\coprod_{\lambda\in\Lambda}\Gamma_\lambda}\right)$$
  is surjective and graded.

  It is easy to check that $I\subseteq J_{\coprod\Omega}$,
  that $I$ is $\psi$-invariant and that
  $\wp_I(J_{\coprod\Omega})\subseteq \Delta^{-1}_I(\mathcal{F}_{{}_IP}(Q_I))$.
  It therefore follows from Proposition \ref{prop_31} and \ref{prop:newt} that
  $(I_{\coprod\Omega},J_{\coprod\Omega})$ is a $T$-pair of $(P,Q,\psi)$.

  We have for each $\lambda\in\Lambda$ that $I_\lambda\subseteq I_{\coprod\Omega}$ and
  $J_\lambda\subseteq J_{\coprod\Omega}$ so it follows from Proposition \ref{prop_32} and
  Theorem \ref{theor_1} \eqref{item:18} that there exists a ring homomorphism
  $$\psi_\lambda: \mathcal{O}_{({}_{I_\lambda}P,Q_{I_\lambda},R_{I_\lambda})}((J_\lambda)_{I_\lambda})
  \longrightarrow \mathcal{O}_{({}_{I_{\coprod\Omega}}P,Q_{I_{\coprod\Omega}},R_{I_{\coprod\Omega}})}((J_{\coprod\Omega})_{I_{\coprod\Omega}})$$
  such that $\psi_\lambda\circ\iota_R^{\omega_\lambda}=\iota_R^{\omega_{\coprod\Omega}}$,
  $\psi_\lambda\circ\iota_Q^{\omega_\lambda}=\iota_Q^{\omega_{\coprod\Omega}}$ and
  $\psi_\lambda\circ\iota_P^{\omega_\lambda}=\iota_P^{\omega_{\coprod\Omega}}$.

  We will show that there exists a ring homomorphism
  $$\phi:
  \mathcal{O}_{({}_{I_{\coprod\Omega}}P,Q_{I_{\coprod\Omega}},
    R_{I_{\coprod\Omega}})}((J_{\coprod\Omega})_{I_{\coprod\Omega}})
  \longrightarrow B_{\coprod_{\lambda\in\Lambda}\Gamma_\lambda}$$
  such that $\phi\circ \iota_R^{\omega_{\coprod\Omega}}
  =\sigma_{\coprod_{\lambda\in\Lambda}\Gamma_\lambda}$, $\phi\circ
  \iota_Q^{\omega_{\coprod\Omega}}
  =T_{\coprod_{\lambda\in\Lambda}\Gamma_\lambda}$ and
  $\phi\circ \iota_P^{\omega_{\coprod\Omega}}
  =S_{\coprod_{\lambda\in\Lambda}\Gamma_\lambda}$.
  It will then follow from Proposition \ref{prop:coprod} that the two representations
  $$\left(S_{\coprod_{\lambda\in\Lambda}\Gamma_\lambda},
      T_{\coprod_{\lambda\in\Lambda}\Gamma_\lambda},
      \sigma_{\coprod_{\lambda\in\Lambda}\Gamma_\lambda},
      B_{\coprod_{\lambda\in\Lambda}\Gamma_\lambda}\right)$$
    and
    $$\left(\iota_P^{\omega_{\coprod\Omega}},
      \iota_Q^{\omega_{\coprod\Omega}},
      \iota_R^{\omega_{\coprod\Omega}},
      \mathcal{O}_{({}_{I_{\coprod\Omega}}P,
        Q_{I_{\coprod\Omega}},
        \psi_{I_{\coprod\Omega}})}\bigl((J_{\coprod\Omega})_{I_{\coprod\Omega}}\bigr)\right)$$
    are isomorphic.

    We have for each $\lambda\in\Lambda$ that there is a ring homomorphism
    $\phi_\lambda:\mathcal{O}_{({}_{I_\lambda}P,Q_{I_\lambda},R_{I_\lambda})}((J_\lambda)_{I_\lambda}
    \longrightarrow B_{\coprod_{\lambda\in\Lambda}\Gamma_\lambda}$ such that $\phi_\lambda\circ
    \iota_R^{\omega_\lambda}= \sigma_{\coprod_{\lambda\in\Lambda}\Gamma_\lambda}$,
    $\phi_\lambda\circ
    \iota_Q^{\omega_\lambda}= T_{\coprod_{\lambda\in\Lambda}\Gamma_\lambda}$
    and $\phi_\lambda\circ
    \iota_P^{\omega_\lambda}= S_{\coprod_{\lambda\in\Lambda}\Gamma_\lambda}$. It follows from
    Theorem \ref{theor_1} that we have $$I_\lambda\subseteq
    I_{(S_{\coprod_{\lambda\in\Lambda}\Gamma_\lambda},
      T_{\coprod_{\lambda\in\Lambda}\Gamma_\lambda},
      \sigma_{\coprod_{\lambda\in\Lambda}\Gamma_\lambda},
      B_{\coprod_{\lambda\in\Lambda}\Gamma_\lambda})}$$ and
    $$J_\lambda\subseteq J_{(S_{\coprod_{\lambda\in\Lambda}\Gamma_\lambda},
      T_{\coprod_{\lambda\in\Lambda}\Gamma_\lambda},
      \sigma_{\coprod_{\lambda\in\Lambda}\Gamma_\lambda},
      B_{\coprod_{\lambda\in\Lambda}\Gamma_\lambda})}.$$
    We therefore have that
    \begin{equation*}
      I\subseteq
      I_{(S_{\coprod_{\lambda\in\Lambda}\Gamma_\lambda},
        T_{\coprod_{\lambda\in\Lambda}\Gamma_\lambda},
        \sigma_{\coprod_{\lambda\in\Lambda}\Gamma_\lambda},
        B_{\coprod_{\lambda\in\Lambda}\Gamma_\lambda})}
    \end{equation*}
    and
    \begin{equation} \label{eq:8}
      J_{\coprod\Omega}\subseteq J_{(S_{\coprod_{\lambda\in\Lambda}\Gamma_\lambda},
        T_{\coprod_{\lambda\in\Lambda}\Gamma_\lambda},
        \sigma_{\coprod_{\lambda\in\Lambda}\Gamma_\lambda},
        B_{\coprod_{\lambda\in\Lambda}\Gamma_\lambda})}.
    \end{equation}
    It then follows from Lemma \ref{lemma_33} that there exists a covariant representation
    $(S,T,\sigma,B_{\coprod_{\lambda\in\Lambda}\Gamma_\lambda})$ of $({}_IP,Q_I,\psi_I)$ such
    that $S\circ\wp_I=S_{\coprod_{\lambda\in\Lambda}\Gamma_\lambda}$,
    $T\circ\wp_I=T_{\coprod_{\lambda\in\Lambda}\Gamma_\lambda}$ and
    $\sigma\circ\wp_I=\sigma_{\coprod_{\lambda\in\Lambda}\Gamma_\lambda}$.
    It follows from Equation \eqref{eq:8} that this representation is
    Cuntz-Pimsner invariant relative to $(J_{\coprod\Omega})_I$, and it then follows
    from Theorem \ref{univ_cuntz} that there is a ring homomorphism
    $\eta:\mathcal{O}_{({}_IP,Q_I,\psi_I)}((J_{\coprod\Omega})_I)\longrightarrow
    B_{\coprod_{\lambda\in\Lambda}\Gamma_\lambda}$ such that $\eta\circ
    \iota^{(J_{\coprod\Omega})_I}_{R_I}=\sigma$, $\eta\circ\iota^{(J_{\coprod\Omega})_I}_{Q_I}=T$ and
    $\eta\circ \iota^{(J_{\coprod\Omega})_I}_{{}_IP}=S$.
    It follows from Proposition \ref{prop:newt} that the two representations
    $$\left(\iota_P^{\omega_{\coprod\Omega}},
      \iota_Q^{\omega_{\coprod\Omega}},
      \iota_R^{\omega_{\coprod\Omega}},
      \mathcal{O}_{({}_{I_{\coprod\Omega}}P,
        Q_{I_{\coprod\Omega}},
        \psi_{I_{\coprod\Omega}})}\bigl((J_{\coprod\Omega})_{I_{\coprod\Omega}}\bigr)\right)$$
    and
    $$(\iota^{(J_{\coprod\Omega})_I}_{{}_IP}\circ \wp_I,\iota^{(J_{\coprod\Omega})_I}_{Q_I}
    \circ \wp_I,\iota^{(J_{\coprod\Omega})_I}_{R_I}\circ
    \wp_I,\mathcal{O}_{({}_IP,Q_I,\psi_I)}((J_{\coprod\Omega})_I))$$
    are isomorphic, and it follows that there exists a ring homomorphism
    $$\phi:
    \mathcal{O}_{({}_{I_{\coprod\Omega}}P,Q_{I_{\coprod\Omega}},
      R_{I_{\coprod\Omega}})}((J_{\coprod\Omega})_{I_{\coprod\Omega}})
    \longrightarrow B_{\coprod_{\lambda\in\Lambda}\Gamma_\lambda}$$
    such that $\phi\circ \iota_R^{\omega_{\coprod\Omega}}
    =\sigma_{\coprod_{\lambda\in\Lambda}\Gamma_\lambda}$, $\phi\circ
    \iota_Q^{\omega_{\coprod\Omega}}
    =T_{\coprod_{\lambda\in\Lambda}\Gamma_\lambda}$ and
    $\phi\circ \iota_P^{\omega_{\coprod\Omega}}
    =S_{\coprod_{\lambda\in\Lambda}\Gamma_\lambda}$.
\end{proof}

\begin{rema}
  Let $R$ be a ring, let $(P,Q,\psi)$ be an $R$-system and let
  $((S_\lambda,T_\lambda,\sigma_\lambda,B_\lambda))_{\lambda\in \Lambda}$ be a family of injective
  and surjective covariant representations of $(P,Q,\psi)$. Then the product
  $(S_{\prod_{\lambda\in \Lambda}(S_\lambda,T_\lambda,\sigma_\lambda,B_\lambda)},
  T_{\prod_{\lambda\in
      \Lambda}(S_\lambda,T_\lambda,\sigma_\lambda,B_\lambda)},
  \sigma_{\prod_{\lambda\in
      \Lambda}(S_\lambda,T_\lambda,\sigma_\lambda,B_\lambda)},
  B_{\prod_{\lambda\in
      \Lambda}(S_\lambda,T_\lambda,\sigma_\lambda,B_\lambda)})$ is also injective and surjective, but the coproduct
  $$(S_{\coprod_{\lambda\in \Lambda}(S_\lambda,T_\lambda,\sigma_\lambda,B_\lambda)},
  T_{\coprod_{\lambda\in
      \Lambda}(S_\lambda,T_\lambda,\sigma_\lambda,B_\lambda)},
  \sigma_{\coprod_{\lambda\in
      \Lambda}(S_\lambda,T_\lambda,\sigma_\lambda,B_\lambda)},
  B_{\coprod_{\lambda\in
      \Lambda}(S_\lambda,T_\lambda,\sigma_\lambda,B_\lambda)})$$ is not necessarily injective. Example \ref{example_two_maximal} gives us an example of this phenomena.
\end{rema}

\subsection{Graded ideals of $\mathcal{O}_{(P,Q,\psi)}(J)$}
\label{sec:graded-ideals}

Let $R$ be a ring and $(P,Q,\psi)$ an $R$-system satisfying condition \textbf{(FS)}.
We will now show how the classification of surjective and graded
representations of $(P,Q,\psi)$ can be used to describe the graded
two-sided ideals of $\mathcal{O}_{(P,Q,\psi)}(J)$ for any faithful $\psi$-compatible two-sided ideal $J$ of $R$, and in particular of $\mathcal{T}_{(P,Q,\psi)}$ and $\mathcal{O}_{(P,Q,\psi)}$ (if it exists).

\begin{defi} \label{def:omegahk}
Let $R$ be a ring, let $(P,Q,\psi)$ be an $R$-system satisfying condition
\textbf{(FS)} and let $K$ be a
two-sided ideal of $R$ such that $K\subseteq
\Delta^{-1}(\mathcal{F}_P(Q))$ and $K\cap \ker\Delta=0$. For
a two-sided ideal $H$ of $\mathcal{O}_{(P,Q,\psi)}(K)$ we define two two-sided ideals
$I^K_H$ and $J^K_H$ of $R$ by
$$I^K_H:=\{x\in R\mid \iota^{K}_R(x)\in H\}\qquad \text{and}\qquad
J^K_H:=\{x\in R\mid \iota^{K}_R(x)\in H+\mathcal{F}_P(Q)\}\,.$$
We set $\omega^K_H=(I^K_H,J^K_H)$.
\end{defi}

\begin{prop}\label{prop_33}
Let $R$ be a ring, let $(P,Q,\psi)$ be an $R$-system satisfying
condition \textbf{(FS)} and let $K$ be a two-sided ideal of $R$
such that $K\subseteq \Delta^{-1}(\mathcal{F}_P(Q))$ and $K\cap
\ker\Delta=0$. For a two-sided ideal $H$ of
$\mathcal{O}_{(P,Q,\psi)}(K)$, denote by $\wp_H$ the projection
from $\mathcal{O}_{(P,Q,\psi)}(K)$ to
$\mathcal{O}_{(P,Q,\psi)}(K)/H$. If we consider the covariant
representation
$$(S_H,T_H,\sigma_H,\mathcal{O}_{(P,Q,\psi)}(K)/H):=(\wp_H\circ\iota^{K}_P,\wp_H\circ
\iota^{K}_Q,\wp_H\circ \iota^{K}_R,\mathcal{O}_{(P,Q,\psi)}(K)/H),$$
then we have that
$\omega^K_H=\omega_{(S_H,T_H,\sigma_H,\mathcal{O}_{(P,Q,\psi)}(K)/H)}$.
Hence $\omega^K_H$ is a $T$-pair satisfying $K \subseteq J^K_H$.

We furthermore have that the representation
$(S_H,T_H,\sigma_H,\mathcal{O}_{(P,Q,\psi)}(K)/H)$ is graded if and only if $H$ is graded.
\end{prop}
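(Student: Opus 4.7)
The plan is to unpack the definitions and reduce everything to results already established, in particular Proposition \ref{prop_31} and Remark \ref{remark:comp}. First I would show the equality $\omega^K_H=\omega_{(S_H,T_H,\sigma_H,\mathcal{O}_{(P,Q,\psi)}(K)/H)}$ componentwise. The equality $I^K_H=I_{(S_H,T_H,\sigma_H,\mathcal{O}_{(P,Q,\psi)}(K)/H)}$ is immediate, since both consist of those $r\in R$ for which $\wp_H(\iota^K_R(r))=0$. For the $J$-component, I would apply Remark \ref{remark:comp} to the quotient map $\wp_H$ to obtain the identity $\pi_{T_H,S_H}=\wp_H\circ\pi^K$ on $\mathcal{F}_P(Q)$, so that $\sigma_H(x)\in\pi_{T_H,S_H}(\mathcal{F}_P(Q))$ if and only if $\iota^K_R(x)-\pi^K(\Theta)\in H$ for some $\Theta\in\mathcal{F}_P(Q)$, i.e. if and only if $x\in J^K_H$ (interpreting the ``$\mathcal{F}_P(Q)$'' in Definition \ref{def:omegahk} as $\pi^K(\mathcal{F}_P(Q))\subseteq\mathcal{O}_{(P,Q,\psi)}(K)$).

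Once this identification is in place, Proposition \ref{prop_31} immediately gives that $\omega^K_H$ is a $T$-pair of $(P,Q,\psi)$. To see the inclusion $K\subseteq J^K_H$, I would note that the Cuntz-Pimsner representation $(\iota^K_P,\iota^K_Q,\iota^K_R,\mathcal{O}_{(P,Q,\psi)}(K))$ is Cuntz-Pimsner invariant relative to $K$ (Theorem \ref{univ_cuntz}), so for every $x\in K$ one has $\iota^K_R(x)=\pi^K(\Delta(x))\in\pi^K(\mathcal{F}_P(Q))\subseteq H+\pi^K(\mathcal{F}_P(Q))$, hence $x\in J^K_H$.

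For the last assertion, one direction is straightforward: if $H$ is graded with respect to the $\Z$-grading of $\mathcal{O}_{(P,Q,\psi)}(K)$ inherited from the grading of $\mathcal{T}_{(P,Q,\psi)}$ (cf. Theorem \ref{univ_cuntz}), then $\mathcal{O}_{(P,Q,\psi)}(K)/H$ inherits a $\Z$-grading for which the quotient map is graded; since $\iota^K_R(R)$, $\iota^K_Q(Q)$ and $\iota^K_P(P)$ lie in components $0$, $1$ and $-1$ respectively, the same holds after composing with $\wp_H$, so $(S_H,T_H,\sigma_H,\mathcal{O}_{(P,Q,\psi)}(K)/H)$ is graded. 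For the converse, suppose this representation is graded. Composing with $\rho_K$ and applying Proposition \ref{prop:graded} to the surjective, graded representation $(S_H,T_H,\sigma_H,\mathcal{O}_{(P,Q,\psi)}(K)/H)$ of $(P,Q,\psi)$ shows that $\oplus_{n\in\Z}\wp_H(\mathcal{O}_{(P,Q,\psi)}(K)^{(n)})$ is a $\Z$-grading of $\mathcal{O}_{(P,Q,\psi)}(K)/H$, where $\mathcal{O}_{(P,Q,\psi)}(K)^{(n)}:=\rho_K(\mathcal{T}_{(P,Q,\psi)}^{(n)})$.

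Given this, I would conclude as follows. Let $H^{(n)}:=H\cap\mathcal{O}_{(P,Q,\psi)}(K)^{(n)}$. For any $h\in H$ write $h=\sum_n h_n$ with $h_n\in\mathcal{O}_{(P,Q,\psi)}(K)^{(n)}$. Then $0=\wp_H(h)=\sum_n\wp_H(h_n)$ in the direct sum $\oplus_n\wp_H(\mathcal{O}_{(P,Q,\psi)}(K)^{(n)})$, so $\wp_H(h_n)=0$ for each $n$, i.e. $h_n\in H^{(n)}$. Hence $H=\oplus_n H^{(n)}$, which is exactly the statement that $H$ is graded. The main potential pitfall I foresee is purely notational: making sure that the symbol $\mathcal{F}_P(Q)$ appearing in Definition \ref{def:omegahk} is consistently read as its image $\pi^K(\mathcal{F}_P(Q))$ inside $\mathcal{O}_{(P,Q,\psi)}(K)$; once this is fixed, all the steps are routine manipulations with the universal properties already established.
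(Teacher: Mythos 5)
Your proposal is correct and follows essentially the same route as the paper: the identification $\omega^K_H=\omega_{(S_H,T_H,\sigma_H,\mathcal{O}_{(P,Q,\psi)}(K)/H)}$ via $\sigma_H=\wp_H\circ\iota_R^K$ and $\pi_{T_H,S_H}=\wp_H\circ\pi^K$ (Remark \ref{remark:comp}), the appeal to Proposition \ref{prop_31} for the $T$-pair claim, and the componentwise argument for the graded equivalence all match the paper's proof. Your explicit remark that $\mathcal{F}_P(Q)$ in Definition \ref{def:omegahk} must be read as $\pi^K(\mathcal{F}_P(Q))$ is a correct and worthwhile clarification of the paper's abuse of notation.
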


\begin{proof}
By using that $\wp_H\circ\iota_R^K=\sigma_H$ and
$\wp_H\circ\pi^K=\pi_{T_H,S_H}$, it is straightforward to check that
$I_H^K=I_{(S_H,T_H,\sigma_H,\mathcal{O}_{(P,Q,\psi)}(K)/H)}$ and
$J_H^K=J_{(S_H,T_H,\sigma_H,\mathcal{O}_{(P,Q,\psi)}(K)/H)}$,
and thus that $\omega^K_H=\omega_{(S_H,T_H,\sigma_H,\mathcal{O}_{(P,Q,\psi)}(K)/H)}$. It is also
easy to check that $K \subseteq J^K_H$. That
$\omega_{(S_H,T_H,\sigma_H \mathcal{O}_{(P,Q,\psi)}(K)/H)}$, and thus $\omega^K_H$, is a $T$-pair
follows from Proposition \ref{prop_31}.

Assume that $H$ is graded. If $x=\sum_{i=1}^mx^{n_i}\in H$ where each $x^{n_i}\in \wp_K(\mathcal{T}_{(P,Q,\psi)}^{(n_i)})$, then each $x^{n_i}\in H$. This shows that
$\oplus_{n\in\Z}\wp_H(\wp_K(\mathcal{T}_{(P,Q,\psi)}^{(n)}))$ is a grading of $\mathcal{O}_{(P,Q,\psi)}(K)/H$, and it follows that $(S_H,T_H,\sigma_H,\mathcal{O}_{(P,Q,\psi)}(K)/H)$ is graded.

If $(S_H,T_H,\sigma_H,\mathcal{O}_{(P,Q,\psi)}(K)/H)$ is graded and
$x=\sum_{i=1}^mx^{n_i}\in H$ where each $x^{n_i}\in \wp_K(\mathcal{T}_{(P,Q,\psi)}^{(n_i)})$, then each $\wp_H(x^{(n_i)})=0$ which shows that $H=\oplus_{n\in Z}(\wp_K(\mathcal{T}_{(P,Q,\psi)}^{(n)})\cap H)$, and thus that $H$ is graded.
\end{proof}

\begin{lem} \label{lemma:psi}
Let $R$ be a ring, let $(P,Q,\psi)$ be an $R$-system satisfying
condition \textbf{(FS)} and let $K$ be a faithful $\psi$-compatible 
two-sided ideal of $R$.
If $\omega=(I,J)$ is a $T$-pair such that $K
\subseteq J$, then there exists a unique surjective and graded ring homomorphisms
$\Psi^K_\omega:\mathcal{O}_{(P,Q,\psi)}(K) \longrightarrow
\mathcal{O}_{({}_IP,Q_I,\psi_I)}(J_I)$ such that $\Psi^K_\omega\circ\iota_R^K=\iota_R^\omega$,
$\Psi^K_\omega\circ\iota_Q^K=\iota_Q^\omega$ and $\Psi^K_\omega\circ\iota_P^K=\iota_P^\omega$.
\end{lem}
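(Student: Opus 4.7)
The plan is to invoke the universal property of $\mathcal{O}_{(P,Q,\psi)}(K)$ established in Theorem \ref{univ_cuntz}. To do this I need to verify that the quadruple $\bigl(\iota_P^\omega,\iota_Q^\omega,\iota_R^\omega,\mathcal{O}_{({}_IP,Q_I,\psi_I)}(J_I)\bigr)$ is a covariant representation of $(P,Q,\psi)$ which is Cuntz-Pimsner invariant relative to $K$. That this quadruple is a surjective covariant representation of $(P,Q,\psi)$ has already been recorded just after the definition of a $T$-pair, so the main content of the lemma is the Cuntz-Pimsner invariance with respect to $K$.

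The key preliminary observation is the following identity: for every $\Theta\in\mathcal{F}_P(Q)$,
\begin{equation*}
\pi_{\iota_Q^\omega,\iota_P^\omega}(\Theta)=\pi^{J_I}\bigl(\wp_I(\Theta)\bigr).
\end{equation*}
This follows by comparing the Toeplitz-style representations, or equivalently by applying Remark \ref{remark:comp} to the ring homomorphism induced by $\wp_I$; it is enough to check it on rank one elements $\theta_{q,p}$, where both sides equal $\iota_Q^\omega(q)\iota_P^\omega(p)$. With this identity in hand, take $x\in K$. Since $K$ is $\psi$-compatible we have $\Delta(x)\in\mathcal{F}_P(Q)$ and
\begin{equation*}
\wp_I(\Delta(x))=\Delta_I(\wp_I(x)).
\end{equation*}
Since $K\subseteq J$ we have $\wp_I(x)\in J_I$, and the Cuntz-Pimsner invariance of the representation $(\iota_{{}_IP}^{J_I},\iota_{Q_I}^{J_I},\iota_{R_I}^{J_I},\mathcal{O}_{({}_IP,Q_I,\psi_I)}(J_I))$ relative to $J_I$ gives
\begin{equation*}
\iota_{R_I}^{J_I}\bigl(\wp_I(x)\bigr)=\pi^{J_I}\bigl(\Delta_I(\wp_I(x))\bigr).
\end{equation*}
Chaining these equalities yields $\iota_R^\omega(x)=\pi_{\iota_Q^\omega,\iota_P^\omega}(\Delta(x))$, which is precisely the Cuntz-Pimsner invariance of our representation relative to $K$.

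Having established this, I would then apply property \ref{item:10} of Theorem \ref{univ_cuntz} to obtain a unique ring homomorphism $\Psi^K_\omega:\mathcal{O}_{(P,Q,\psi)}(K)\longrightarrow \mathcal{O}_{({}_IP,Q_I,\psi_I)}(J_I)$ satisfying $\Psi^K_\omega\circ\iota_R^K=\iota_R^\omega$, $\Psi^K_\omega\circ\iota_Q^K=\iota_Q^\omega$ and $\Psi^K_\omega\circ\iota_P^K=\iota_P^\omega$. Surjectivity is immediate since the image contains the generating set $\iota_R^\omega(R)\cup\iota_Q^\omega(Q)\cup\iota_P^\omega(P)$ of $\mathcal{O}_{({}_IP,Q_I,\psi_I)}(J_I)$. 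Finally, $\Psi^K_\omega$ is graded because it sends $\iota_R^K(R)$, $\iota_Q^K(Q)$ and $\iota_P^K(P)$ into the degree $0$, $1$ and $-1$ pieces of the codomain respectively, so Proposition \ref{prop:graded} applied to the composition $\Psi^K_\omega\circ\rho_K=\eta_{(\iota_P^\omega,\iota_Q^\omega,\iota_R^\omega,\mathcal{O}_{({}_IP,Q_I,\psi_I)}(J_I))}$ forces the $\Z$-grading to be preserved.

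The only step that requires any care is the identity $\pi_{\iota_Q^\omega,\iota_P^\omega}=\pi^{J_I}\circ\wp_I$ on $\mathcal{F}_P(Q)$; everything else is essentially a bookkeeping consequence of the universal properties already proved. This is not really an obstacle, but it is the one place where one must be attentive to the distinction between the bimodule projections $\wp_I$ at the level of $R$, $P$, $Q$ and their lift to $\mathcal{F}_P(Q)$.
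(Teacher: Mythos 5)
Your argument is correct and is essentially the paper's own proof: both verify that $(\iota_P^\omega,\iota_Q^\omega,\iota_R^\omega,\mathcal{O}_{({}_IP,Q_I,\psi_I)}(J_I))$ is a surjective covariant representation of $(P,Q,\psi)$ that is Cuntz-Pimsner invariant relative to $K$ (using $K\subseteq J$) and then invoke Theorem \ref{univ_cuntz}. The only difference is that you spell out the identity $\pi_{\iota_Q^\omega,\iota_P^\omega}=\pi^{J_I}\circ\wp_I$ on $\mathcal{F}_P(Q)$ and the resulting chain of equalities, which the paper leaves implicit in the phrase ``since $K\subseteq J$, this representation is Cuntz-Pimsner invariant relative to $K$.''
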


\begin{proof}
We have that
$(\iota_P^\omega,\iota_Q^\omega,\iota_R^\omega,\mathcal{O}_{({}_IP,Q_I,\psi_I)}(J_I))$
is a surjective and graded covariant representation of $(P,Q,\psi)$,
and since $K \subseteq J$, this representation is Cuntz-Pimsner invariant relative to $K$.
The uniqueness and existence of $\Psi^K_\omega$ then follows from Theorem \ref{univ_cuntz}. It is easy to check that $\Psi^K_\omega$ is graded.
\end{proof}

\begin{defi} \label{defi:H}
Let $R$ be a ring, let $(P,Q,\psi)$ be an $R$-system satisfying
condition \textbf{(FS)} and let $K$ be a faithful $\psi$-compatible  
two-sided ideal of $R$.
Given a $T$-pair $\omega=(I,J)$ such that $K
\subseteq J$. We define $H_\omega^K$ to be the two-sided ideal $\ker\Psi^K_\omega$ of $R$ where $\Psi^K_\omega$ is as in Lemma \ref{lemma:psi}.
\end{defi}

\begin{lem} \label{lemma:sur}
Let $R$ be a ring, let $(P,Q,\psi)$ be an $R$-system satisfying
condition \textbf{(FS)} and let $K$ be a faithful $\psi$-compatible  
two-sided ideal of $R$.
If $\omega=(I,J)$ is a $T$-pair such that $K
\subseteq J$, then $H_\omega^K$ is a graded two-sided ideal of
$\mathcal{O}_{(P,Q,\psi)}(K)$ and satisfies
$\omega_{H_\omega^K}=\omega$.
\end{lem}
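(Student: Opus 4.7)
The plan is as follows. By construction $H_\omega^K = \ker \Psi^K_\omega$ is a two-sided ideal of $\mathcal{O}_{(P,Q,\psi)}(K)$; we need to check that it is graded and that $\omega^K_{H_\omega^K}=\omega$.

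For gradedness, observe that the first isomorphism theorem, combined with the defining properties $\Psi^K_\omega\circ\iota_R^K=\iota_R^\omega$, $\Psi^K_\omega\circ\iota_Q^K=\iota_Q^\omega$ and $\Psi^K_\omega\circ\iota_P^K=\iota_P^\omega$ from Lemma \ref{lemma:psi}, yields a ring isomorphism
\begin{equation*}
\overline{\Psi^K_\omega}: \mathcal{O}_{(P,Q,\psi)}(K)/H_\omega^K \xrightarrow{\sim} \mathcal{O}_{({}_IP,Q_I,\psi_I)}(J_I)
\end{equation*}
which intertwines the induced quotient representation $(S_{H_\omega^K},T_{H_\omega^K},\sigma_{H_\omega^K},\mathcal{O}_{(P,Q,\psi)}(K)/H_\omega^K)$ with $(\iota_P^\omega,\iota_Q^\omega,\iota_R^\omega,\mathcal{O}_{({}_IP,Q_I,\psi_I)}(J_I))$. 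Since the latter representation is graded (it is the universal representation of an $R_I$-system composed with $\wp_I$, cf.\ the discussion following Definition \ref{def:t-pair}), it follows that the quotient representation is also graded. Proposition \ref{prop_33} (the last assertion) then gives that $H_\omega^K$ is a graded two-sided ideal of $\mathcal{O}_{(P,Q,\psi)}(K)$.

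For the equality $\omega^K_{H_\omega^K}=\omega$, Proposition \ref{prop_33} identifies $\omega^K_{H_\omega^K}$ with the $T$-pair $\omega_{(S_{H_\omega^K},T_{H_\omega^K},\sigma_{H_\omega^K},\mathcal{O}_{(P,Q,\psi)}(K)/H_\omega^K)}$ associated to the quotient covariant representation. Transporting along the isomorphism $\overline{\Psi^K_\omega}$ (which preserves the $T$-pair invariants $I_{(\cdot)}$ and $J_{(\cdot)}$ since it is a ring isomorphism commuting with $\sigma$, $T$, $S$ and $\pi_{T,S}$), we obtain
\begin{equation*}
\omega^K_{H_\omega^K} = \omega_{(\iota_P^\omega,\iota_Q^\omega,\iota_R^\omega,\mathcal{O}_{({}_IP,Q_I,\psi_I)}(J_I))}.
\end{equation*}
Proposition \ref{prop_32} now finishes the proof by showing that this last $T$-pair is exactly $\omega$.

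There is no real obstacle here; the lemma is essentially a bookkeeping consequence of the correspondence between graded ideals and quotient representations (Proposition \ref{prop_33}) and the computation of $T$-pairs of the canonical representations (Proposition \ref{prop_32}). The only point worth being careful about is verifying that a ring isomorphism between two covariant representations of $(P,Q,\psi)$ that intertwines $\sigma$, $T$, $S$ preserves both $I_{(\cdot)}$ and $J_{(\cdot)}$, but this follows immediately from Remark \ref{remark:comp} applied to the isomorphism and its inverse.
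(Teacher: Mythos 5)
Your proof is correct, and it reaches the conclusion by a slightly more structural route than the paper's. The paper gets gradedness of $H_\omega^K$ in one line from the fact that $\Psi^K_\omega$ is a graded homomorphism (the kernel of a graded homomorphism is a graded ideal), and then verifies $I=(\iota_R^K)\inv(\ker\Psi^K_\omega)$ and $J=(\iota_R^K)\inv(\ker\Psi^K_\omega+\pi^K(\mathcal{F}_P(Q)))$ largely by hand: the $I$-equality via injectivity of $\iota_{R_I}^{J_I}$, one inclusion for $J$ by expanding $\pi^{J_I}(\Delta_I(\wp_I(x)))$ as a sum of products $\iota_{Q_I}^{J_I}(\wp_I(q_i))\iota_{{}_IP}^{J_I}(\wp_I(p_i))$, and only the reverse inclusion via Proposition \ref{prop_32}. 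You instead pass to the quotient $\mathcal{O}_{(P,Q,\psi)}(K)/H_\omega^K$, use the first isomorphism theorem to identify its representation with $(\iota_P^\omega,\iota_Q^\omega,\iota_R^\omega,\mathcal{O}_{({}_IP,Q_I,\psi_I)}(J_I))$, and then delegate everything — gradedness via the equivalence in Proposition \ref{prop_33}, and the computation of the $T$-pair via Proposition \ref{prop_32} — to results already established. The two small points you flag are indeed the only ones needing care, and both are fine: an intertwining ring isomorphism preserves $I_{(\cdot)}=\ker\sigma$ by injectivity and preserves $J_{(\cdot)}$ because Remark \ref{remark:comp} gives $\phi\circ\pi_{T_1,S_1}=\pi_{T_2,S_2}$, so membership of $\sigma(r)$ in $\pi_{T,S}(\mathcal{F}_P(Q))$ is transported both ways. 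What your version buys is the elimination of the repeated element-level manipulations; what the paper's version buys is that it is self-contained at this point and makes explicit which injectivity statements are being used. There is no circularity in your citations, since Propositions \ref{prop_32} and \ref{prop_33} both precede the lemma.
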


\begin{proof}
Let $\Psi^K_\omega$ be the homomorphism from Lemma \ref{lemma:psi}.
That $H_\omega^K$ is a graded two-sided ideal follows from the fact that
$\Psi^K_\omega$ is graded.

To show $\omega_{H_\omega^K}=\omega$ we have to show that
$I=(\iota_R^K)\inv(\ker\Psi^K_\omega)$ and that
$J=(\iota_R^K)\inv(\ker\Psi^K_\omega+\pi^K(\mathcal{F}_P(Q)))$. If
$x\in I$, then
$\Psi^K_\omega(\iota_R^K(x))=\iota_{R_I}^{J_I}(\wp_I(x))=0$. Thus
$I\subseteq(\iota_R^K)\inv(\ker\Psi^K_\omega)$. If $x\in R$ and
$\Psi^K_\omega(\iota_R^K(x))=0$, then $\iota_{R_I}^{J_I}(\wp(x))=0$,
and since $\iota_{R_I}^{J_I}$ is injective, it follows that
$x\in\ker \wp_I=I$. Thus $I=(\iota_R^K)\inv(\ker\Psi^K_\omega)$.

Let $x\in J$. Then $\wp_I(x)\in J_I$, so we have
\begin{equation*}
\Psi^K_\omega(\iota_R^K(x))=\iota_{R_I}^{J_I}(\wp_I(x))
=\pi^{J_I}(\Delta_I(\wp_I(x))).
\end{equation*}
Thus there exist $q_1,q_2,\dots,q_n\in Q$ and
$p_1,p_2,\dots,p_n\in P$ such that
$$\Psi^K_\omega(\iota_R^K(x))=
\sum_{i=1}^n\iota_{Q_I}^{J_I}(\wp_I(q_i))\iota_{{}_IP}^{J_I}(\wp_I(p_i)).$$
We then have that
$\iota_R^K(x)-\sum_{i=1}^n\iota_Q^K(q_i)\iota_P^K(p_i)\in
\ker\Psi^K_\omega$, which shows that $J\subseteq
(\iota_R^K)\inv(\ker\Psi^K_\omega+\pi^K(\mathcal{F}_P(Q)))$.

Let $\sigma_\omega:=\iota^{J_I}_{R_I}\circ \wp_I$,
$T_\omega:=\iota^{J_I}_{Q_I}\circ \wp_I$ and
$S_\omega:=\iota^{J_I}_{{}_IP}\circ \wp_I$. It follows from
Proposition \ref{prop_32} that
$\sigma_\omega^{-1}(\pi_{T_\omega,S_\omega}(\mathcal{F}_P(Q)))=J$.
If $x\in R$, $y\in\ker\Psi^K_\omega$, $q_1,q_2,\dots,q_n\in Q$,
$p_1,p_2,\dots,p_n\in P$ and
$\iota_R^K(x)=y+\sum_{i=1}^n\iota_Q^K(q_i)\iota_P^K(p_i)$, then
\begin{align*}
\sigma_\omega(x) &
=\iota^{J_I}_{R_I}(\wp_I(x))=\Psi^K_\omega(\iota_R^K(x))=
\Psi^K_\omega(\sum_{i=1}^n\iota_Q^K(q_i)\iota_P^K(p_i))\\ & =
\sum_{i=1}^n\iota^{J_I}_{Q_I}(\wp_I(q_i))\iota^{J_I}_{{}_IP}(\wp_I(p_i)) = \pi_{T_\omega,S_\omega}(\sum_{i=1}^n\theta_{q_i,p_i})\in
\pi_{T_\omega,S_\omega}(\mathcal{F}_P(Q))\,,
\end{align*}
 so $x\in J$. Thus
$J=(\iota_R^K)\inv(\ker\Psi^K_\omega+\pi^K(\mathcal{F}_P(Q)))$.
\end{proof}

\begin{prop}\label{theor_2}
Let $R$ be a ring, let $(P,Q,\psi)$ be an $R$-system satisfying
condition \textbf{(FS)} and let $K$ be a faithful $\psi$-compatible  
two-sided ideal of $R$.
Let $H$ be a two-sided ideal of
$\mathcal{O}_{(P,Q,\psi)}(K)$ and let $\omega=(I,J)$ be a $T$-pair of $(P,Q,\psi)$.
Let $\wp_H$ denote the quotient map from $\mathcal{O}_{(P,Q,\psi)}(K)$ to
$\mathcal{O}_{(P,Q,\psi)}(K)/H$.
Then we have:
\begin{enumerate}
\item \label{item:22}
  If there exists a ring homomorphism $\Upsilon:\mathcal{O}_{({}_IP,Q_I,\psi_I)}(J_I)
  \longrightarrow \mathcal{O}_{(P,Q,\psi)}(K)/H$ such that
  $\Upsilon\circ\iota_R^\omega=\wp_H\circ\iota_R^K$,
  $\Upsilon\circ\iota_Q^\omega=\wp_H\circ\iota_Q^K$ and
  $\Upsilon\circ\iota_P^\omega=\wp_H\circ\iota_P^K$, then $I\subseteq I_H^K$ and
  $J\subseteq J_H^K$.
\item \label{item:23}
  If $I\subseteq I_H^K$ and $J\subseteq J_H^K$, then there exists a unique
  ring homomorphism $\Upsilon:\mathcal{O}_{({}_IP,Q_I,\psi_I)}(J_I)
  \longrightarrow \mathcal{O}_{(P,Q,\psi)}(K)/H$ such that
  $\Upsilon\circ\iota_R^\omega=\wp_H\circ\iota_R^K$,
  $\Upsilon\circ\iota_Q^\omega=\wp_H\circ\iota_Q^K$ and
  $\Upsilon\circ\iota_P^\omega=\wp_H\circ\iota_P^K$.
\item \label{item:24}
  If $I\subseteq I_H^K$ and $J\subseteq J_H^K$, then the
  ring homomorphism $\Upsilon$ is an isomorphism
  if and only if $H$ is graded and $\omega_H^K=\omega$.
\end{enumerate}
\end{prop}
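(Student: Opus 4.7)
The plan is to deduce all three parts of Proposition \ref{theor_2} from Theorem \ref{theor_1} applied to the surjective covariant representation
\[
\Lambda_H := (S_H,T_H,\sigma_H,\mathcal{O}_{(P,Q,\psi)}(K)/H)
= (\wp_H\circ\iota_P^K,\wp_H\circ\iota_Q^K,\wp_H\circ\iota_R^K,\mathcal{O}_{(P,Q,\psi)}(K)/H)
\]
of $(P,Q,\psi)$, together with the translation provided by Proposition \ref{prop_33}, which identifies $\omega_H^K=(I_H^K,J_H^K)$ with $\omega_{\Lambda_H}=(I_{\Lambda_H},J_{\Lambda_H})$.

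For part \eqref{item:22}, suppose there is a ring homomorphism $\Upsilon:\mathcal{O}_{({}_IP,Q_I,\psi_I)}(J_I)\longrightarrow \mathcal{O}_{(P,Q,\psi)}(K)/H$ with $\Upsilon\circ\iota_R^\omega=\sigma_H$, $\Upsilon\circ\iota_Q^\omega=T_H$ and $\Upsilon\circ\iota_P^\omega=S_H$. Then $\Upsilon$ fulfils the hypothesis of the first conclusion of Theorem \ref{theor_1} with $(S,T,\sigma,B)$ replaced by $\Lambda_H$, so $I\subseteq I_{\Lambda_H}$ and $J\subseteq J_{\Lambda_H}$. By Proposition \ref{prop_33} these inclusions read $I\subseteq I_H^K$ and $J\subseteq J_H^K$.

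For part \eqref{item:23}, the hypotheses $I\subseteq I_H^K$ and $J\subseteq J_H^K$ become, via Proposition \ref{prop_33}, $I\subseteq I_{\Lambda_H}$ and $J\subseteq J_{\Lambda_H}$, which is exactly the assumption of conclusion \eqref{item:18} of Theorem \ref{theor_1}. That theorem provides a unique ring homomorphism
$\eta_{\Lambda_H}^\omega:\mathcal{O}_{({}_IP,Q_I,\psi_I)}(J_I)\longrightarrow \mathcal{O}_{(P,Q,\psi)}(K)/H$ with $\eta_{\Lambda_H}^\omega\circ\iota_R^\omega=\sigma_H$, $\eta_{\Lambda_H}^\omega\circ\iota_Q^\omega=T_H$ and $\eta_{\Lambda_H}^\omega\circ\iota_P^\omega=S_H$. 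We set $\Upsilon:=\eta_{\Lambda_H}^\omega$; uniqueness is immediate because $\mathcal{O}_{({}_IP,Q_I,\psi_I)}(J_I)$ is generated by the images of $\iota_R^\omega,\iota_Q^\omega,\iota_P^\omega$.

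For part \eqref{item:24}, the conclusion of Theorem \ref{theor_1} asserts that $\eta_{\Lambda_H}^\omega=\Upsilon$ is an isomorphism if and only if $\Lambda_H$ is surjective and graded and $\omega=\omega_{\Lambda_H}$. Since $\Lambda_H$ is always surjective, the first condition reduces to $\Lambda_H$ being graded, which by the last assertion of Proposition \ref{prop_33} is equivalent to $H$ being graded; and by the first assertion of Proposition \ref{prop_33}, $\omega_{\Lambda_H}=\omega_H^K$, so the second condition is exactly $\omega_H^K=\omega$. This gives the stated equivalence.

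There is no real obstacle beyond bookkeeping: the whole argument is a direct application of Theorem \ref{theor_1} once one recognises, via Proposition \ref{prop_33}, that the ideal-theoretic pair $\omega_H^K$ attached to a quotient ideal $H$ of $\mathcal{O}_{(P,Q,\psi)}(K)$ coincides with the representation-theoretic pair $\omega_{\Lambda_H}$ attached to the induced representation $\Lambda_H$. The most delicate point to verify carefully is only that the ring homomorphism produced by Theorem \ref{theor_1} genuinely equals the $\Upsilon$ described in the statement, which follows at once from the characterising properties.
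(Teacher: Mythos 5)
Your proposal is correct and follows essentially the same route as the paper's own proof: all three parts are reduced to Theorem \ref{theor_1} applied to the quotient representation $(S_H,T_H,\sigma_H,\mathcal{O}_{(P,Q,\psi)}(K)/H)$, with Proposition \ref{prop_33} supplying the identification $\omega_H^K=\omega_{(S_H,T_H,\sigma_H)}$ and the equivalence of gradedness of $H$ with gradedness of that representation. The only cosmetic difference is that for part (1) the paper re-runs the short computation (via Proposition \ref{prop_32} and Lemma \ref{lemma:compact}) instead of citing Theorem \ref{theor_1}(1) outright, which changes nothing of substance.
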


\begin{proof}
  \eqref{item:22}: Assume that there exists a ring homomorphism
  $\Upsilon:\mathcal{O}_{({}_IP,Q_I,\psi_I)}(J_I)
  \longrightarrow \mathcal{O}_{(P,Q,\psi)}(K)/H$ such that
  $\Upsilon\circ\iota_R^\omega=\wp_H\circ\iota_R^K$,
  $\Upsilon\circ\iota_Q^\omega=\wp_H\circ\iota_Q^K$ and
  $\Upsilon\circ\iota_P^\omega=\wp_H\circ\iota_P^K$.
  If $x\in I$, then  it follows from Proposition \ref{prop_32}
  that$\wp_H(\iota_R^K(x))=\Upsilon(\iota_R^\omega(x))=0$, so $x\in I_H^K$.
  If $x\in J$, then  it follows from Lemma \ref{lemma:compact} and
  Proposition \ref{prop_32}
  that
  $$\wp_H(\iota_R^K(x))=\Upsilon(\iota_R^\omega(x))
  =\Upsilon(\pi_{\iota_Q^\omega, \iota_P^\omega}(\Delta(x)))
  =\wp_H(\pi^K(\Delta(x))),$$
  so $x\in J_H^K$.

  \eqref{item:23}: Assume that $I\subseteq I_H^K$ and $J\subseteq J_H^K$.
  Let $(S_H,T_H,\sigma_H,\mathcal{O}_{(P,Q,\psi)}(K)/H)$ be as in
  Proposition \ref{prop_33}. Then we have
  $(I,J)\subseteq \omega_H^K=\omega_{(S_H,T_H,\sigma_H)}$, so the existence and
  uniqueness of $\Upsilon$ follows from Theorem \ref{theor_1}.

  \eqref{item:24}: It also follows from Theorem \ref{theor_1} that $\Upsilon$ is
  an isomorphism if and only if the representation
  $(S_H,T_H,\sigma_H,\mathcal{O}_{(P,Q,\psi)}(K)/H)$
  is surjective and graded and $\omega=\omega_{(S_H,T_H,\sigma_H)}=\omega_H^K$.
  The representation $(S_H,T_H,\sigma_H,\mathcal{O}_{(P,Q,\psi)}(K)/H)$
  is always surjective, and it follows from Proposition \ref{prop_33} that
  it is graded if and only if $H$ is graded, and the desired result follows.
\end{proof}

\begin{theor}\label{ideals_cuntz}
Let $R$ be a ring and let $(P,Q,\psi)$ be an $R$-system satisfying
condition \textbf{(FS)}. Let $K$ be a faithful $\psi$-compatible  
two-sided ideal of $R$.
Then
$$H\longmapsto \omega^K_H,\quad\omega \longmapsto H^K_w$$
is a bijective correspondence between
the set of all the graded two-sided ideals $H$ of
$\mathcal{O}_{(P,Q,\psi)}(K)$ and the set of
all $T$-pairs $\omega=(I,J)$ of $(P,Q,\psi)$ satisfying $K\subseteq J$.
This bijection preserves inclusion, and if $(H_\lambda)_{\lambda\in\Lambda}$ is a non-empty
family of graded two-sided ideals of $\mathcal{O}_{(P,Q,\psi)}(K)$ and
$\Omega=(\omega^K_{H_\lambda})_{\lambda\in\Lambda}$, then
$H^K_{\omega_{\prod\Omega}}=\cap_{\lambda\in\Lambda}H_\lambda$ and $H^K_{\omega_{\coprod\Omega}}$ is the smallest two-sided ideal of $\mathcal{O}_{(P,Q,\psi)}(K)$
containing $\cup_{\lambda\in\Lambda}H_\lambda$.
\end{theor}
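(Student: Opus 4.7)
The plan is to verify that the two maps $H\longmapsto \omega^K_H$ and $\omega\longmapsto H^K_\omega$ are mutually inverse, to check that they are order-preserving, and finally to identify intersections and joins. Well-definedness is already in hand: $\omega^K_H$ is a $T$-pair with $K\subseteq J^K_H$ by Proposition \ref{prop_33}, and $H^K_\omega$ is a graded two-sided ideal by Lemma \ref{lemma:sur}.

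For the composition $\omega\longmapsto H^K_\omega\longmapsto \omega^K_{H^K_\omega}=\omega$, simply invoke Lemma \ref{lemma:sur}. For the reverse composition $H\longmapsto\omega^K_H\longmapsto H^K_{\omega^K_H}$, apply Proposition \ref{theor_2}\eqref{item:24} with the $T$-pair $\omega:=\omega^K_H$. The hypotheses $I\subseteq I^K_H$ and $J\subseteq J^K_H$ are trivially satisfied, $H$ is graded by assumption, and $\omega^K_H=\omega$ by construction, so Proposition \ref{theor_2}\eqref{item:24} yields an isomorphism $\Upsilon:\mathcal{O}_{({}_IP,Q_I,\psi_I)}(J_I)\longrightarrow \mathcal{O}_{(P,Q,\psi)}(K)/H$ satisfying $\Upsilon\circ\iota^\omega_R=\wp_H\circ\iota^K_R$ and the analogous equalities for $P$ and $Q$. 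Composing with $\Psi^K_\omega$ and using the universal property of $\mathcal{O}_{(P,Q,\psi)}(K)$ from Theorem \ref{univ_cuntz}, the map $\Upsilon\circ\Psi^K_\omega$ must coincide with $\wp_H$. Since $\Upsilon$ is injective, $\ker \wp_H=\ker\Psi^K_\omega$, i.e.\ $H=H^K_{\omega^K_H}$.

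Inclusion-preservation is then straightforward. If $H_1\subseteq H_2$, both $I^K_{H_1}\subseteq I^K_{H_2}$ and $J^K_{H_1}\subseteq J^K_{H_2}$ follow from Definition \ref{def:omegahk}. Conversely, if $\omega_1=(I_1,J_1)\subseteq\omega_2=(I_2,J_2)$ in the componentwise sense, then by Remark \ref{remark:classification-noninjective} there is a ring homomorphism $\phi$ from $\mathcal{O}_{({}_{I_1}P,Q_{I_1},\psi_{I_1})}((J_1)_{I_1})$ to $\mathcal{O}_{({}_{I_2}P,Q_{I_2},\psi_{I_2})}((J_2)_{I_2})$ intertwining the respective universal maps; by the universal property, $\Psi^K_{\omega_2}=\phi\circ\Psi^K_{\omega_1}$, which gives $H^K_{\omega_1}\subseteq H^K_{\omega_2}$. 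Combined with bijectivity this makes the correspondence an order isomorphism.

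For the final assertion on $\prod$ and $\coprod$, the key observation is that the composition $\Psi^K_{\omega_\lambda}\circ\rho_K:\mathcal{T}_{(P,Q,\psi)}\longrightarrow \mathcal{O}_{({}_{I_\lambda}P,Q_{I_\lambda},\psi_{I_\lambda})}((J_\lambda)_{I_\lambda})$ equals $\eta_{\Gamma_\lambda}$ by uniqueness in Theorem \ref{theor:toeplitz}, so $\ker\eta_{\Gamma_\lambda}=\rho_K^{-1}(H_\lambda)$. Combine this with the explicit constructions in the proofs of Propositions \ref{prop:diamond} and \ref{prop:coprod}: the product ring is $\mathcal{T}_{(P,Q,\psi)}/\bigcap_\lambda\ker\eta_{\Gamma_\lambda}=\mathcal{T}_{(P,Q,\psi)}/\rho_K^{-1}(\bigcap_\lambda H_\lambda)\cong \mathcal{O}_{(P,Q,\psi)}(K)/\bigcap_\lambda H_\lambda$, while the coproduct ring is $\mathcal{O}_{(P,Q,\psi)}(K)/H'$, where $H'$ is the smallest two-sided ideal containing $\bigcup_\lambda H_\lambda$. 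On the other hand, Proposition \ref{prop:super}\eqref{item:16}--\eqref{item:17} identifies the product and coproduct in $\category$ with the Cuntz-Pimsner representations associated to $\omega_{\prod\Omega}$ and $\omega_{\coprod\Omega}$. Comparing the two identifications via the universal property yields $H^K_{\omega_{\prod\Omega}}=\bigcap_\lambda H_\lambda$ and $H^K_{\omega_{\coprod\Omega}}=H'$. The main technical obstacle is step three (the identification $H^K_{\omega^K_H}=H$) where one must apply Proposition \ref{theor_2} together with a careful uniqueness argument; the rest is bookkeeping with universal properties.
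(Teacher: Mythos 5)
Your proposal is correct, and for the bijection itself it is essentially the paper's own argument: well-definedness from Proposition \ref{prop_33} and Lemma \ref{lemma:sur}, the identity $\omega^K_{H^K_\omega}=\omega$ from Lemma \ref{lemma:sur}, and $H^K_{\omega^K_H}=H$ by feeding $\omega=\omega^K_H$ into Proposition \ref{theor_2}\eqref{item:24} and observing that $\Upsilon\circ\Psi^K_\omega=\wp_H$ by the uniqueness clause of Theorem \ref{univ_cuntz}, so injectivity of $\Upsilon$ forces $\ker\Psi^K_\omega=H$. Your order-preservation argument via Remark \ref{remark:classification-noninjective} is a reasonable way to make precise what the paper dismisses as ``easy to check.'' The only place you genuinely deviate is the lattice part: you pull everything back to $\mathcal{T}_{(P,Q,\psi)}$, note $\Psi^K_{\omega_\lambda}\circ\rho_K=\eta_{\Gamma_\lambda}$ so that $\ker\eta_{\Gamma_\lambda}=\rho_K^{-1}(H_\lambda)$, and then read off the product and coproduct rings as $\mathcal{O}_{(P,Q,\psi)}(K)/\bigcap_\lambda H_\lambda$ and $\mathcal{O}_{(P,Q,\psi)}(K)/H'$ from the explicit quotient constructions in the proofs of Propositions \ref{prop:diamond} and \ref{prop:coprod} (this uses, correctly, that each $\rho_K^{-1}(H_\lambda)$ contains $\ker\rho_K$, so preimages commute with intersections and with generated ideals). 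The paper instead stays inside $\mathcal{O}_{(P,Q,\psi)}(K)$: for the product it verifies Cuntz--Pimsner invariance relative to $K$ of the product representation to manufacture the comparison map $\eta$ and then uses that $x=0$ iff all $\phi_\lambda(x)=0$, while for the coproduct it proves the two inclusions $H_\lambda\subseteq H^K_{\omega_{\coprod\Omega}}$ and $H^K_{\omega_{\coprod\Omega}}\subseteq H$ separately. Both routes hinge on Proposition \ref{prop:super} identifying the categorical product and coproduct with the representations attached to $\omega_{\prod\Omega}$ and $\omega_{\coprod\Omega}$; yours is marginally slicker but leans on the internal constructions of Propositions \ref{prop:diamond} and \ref{prop:coprod} rather than only their stated universal properties, which is harmless here since those propositions also assert uniqueness up to isomorphism.
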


\begin{proof}
  If $\omega=(I,J)$ is a $T$-pair of $(P,Q,\psi)$ satisfying $K\subseteq J$, then it
  follows from Lemma \ref{lemma:sur} that $H^K_\omega$ is a graded two-sided
  ideal of $\mathcal{O}_{(P,Q,\psi)}(K)$, and that $\omega_{H^K_\omega}=\omega$.

  If $H$  is a graded two-sided ideal of $\mathcal{O}_{(P,Q,\psi)}(K)$, then it follows from
  Proposition \ref{prop_33} that $\omega^K_H$ is a $T$-pair of $(P,Q,\psi)$
  satisfying $K \subseteq J^K_H$. Let $\Psi^K_\omega$ be the unique ring homomorphism
  $\Psi^K_\omega:\mathcal{O}_{(P,Q,\psi)}(K) \longrightarrow
  \mathcal{O}_{({}_IP,Q_I,\psi_I)}(J_I)$ satisfying
  $\Psi^K_\omega\circ\iota_R^K=\iota_R^\omega$,
  $\Psi^K_\omega\circ\iota_Q^K=\iota_Q^\omega$ and
  $\Psi^K_\omega\circ\iota_P^K=\iota_P^\omega$.
  Let $(I,J)=\omega=\omega^K_H$. Then it follows from Proposition \ref{theor_2}
  that there is a ring isomorphism
  $\Upsilon:\mathcal{O}_{({}_IP,Q_I,\psi_I)}(J_I)
  \longrightarrow \mathcal{O}_{(P,Q,\psi)}(K)/H$ such that
  $\Upsilon\circ\iota_R^\omega=\wp_H\circ\iota_R^K$,
  $\Upsilon\circ\iota_Q^\omega=\wp_H\circ\iota_Q^K$ and
  $\Upsilon\circ\iota_P^\omega=\wp_H\circ\iota_P^K$. We then have that
  $\Upsilon\circ\Psi^K_\omega$ is the quotient map from $\mathcal{O}_{(P,Q,\psi)}(K)$ to
  $\mathcal{O}_{(P,Q,\psi)}(K)/H$, and it follows that $H^K_{\omega^K_H}=\ker\Psi^K_H=H$.

  Thus $H\longmapsto \omega^K_H$ and $\omega \longmapsto H^K_w$ is a
  bijective correspondence between the set of all the graded two-sided
  ideals of $\mathcal{O}_{(P,Q,\psi)}(K)$ and the set of all the $T$-pairs
  $\omega=(I,J)$ of $(P,Q,\psi)$ satisfying $K\subseteq J$.
  It is easy to check that the correspondence preserve inclusion.

  Let $(H_\lambda)_{\lambda\in\Lambda}$ be a non-empty
  family of graded two-sided ideals of $\mathcal{O}_{(P,Q,\psi)}(K)$ and let
  $\Omega=(\omega^K_{H_\lambda})_{\lambda\in\Lambda}$.
  For each $\lambda\in\Lambda$ let
  $(S_{H_\lambda},T_{H_\lambda},\sigma_{H_\lambda},\mathcal{O}_{(P,Q,\psi)}(K)/{H_\lambda})$
  be as in Proposition \ref{prop_33}.
  It follows from Proposition \ref{theor_2} that
  $(S_{H_\lambda},T_{H_\lambda},\sigma_{H_\lambda},\mathcal{O}_{(P,Q,\psi)}(K)/{H_\lambda})$
  is isomorphic to the covariant representation
  $$\left(\iota_P^{\omega^K_{H_\lambda}},
  \iota_Q^{\omega^K_{H_\lambda}},
  \iota_R^{\omega^K_{H_\lambda}},
  \mathcal{O}_{\bigl({}_{I^K_{H_\lambda}}P,
    Q_{I^K_{H_\lambda}},R_{I^K_{H_\lambda}}\bigr)}
  \bigl((J^K_{H_\lambda})_{I^K_{H_\lambda}}\bigr)\right).$$
  It therefore follows from Proposition \ref{prop:super} that
  there exists a ring isomorphism
  $$\phi:B_{\prod_{\lambda\in \Lambda}(S_\lambda,T_\lambda,\sigma_\lambda,B_\lambda)}
  \longrightarrow \mathcal{O}_{\bigl({}_{I_{\prod\Omega}}P,
    Q_{I_{\prod\Omega}},
    \psi_{I_{\prod\Omega}}\bigr)}\bigl((J_{\prod\Omega})_{I_{\prod\Omega}}\bigr)$$
  satisfying
  $\phi\circ \sigma_{\prod_{\lambda\in \Lambda}
    (S_\lambda,T_\lambda,\sigma_\lambda,B_\lambda)}
  =\iota_R^{\omega_{\prod\Omega}}$,
  $\phi\circ T_{\prod_{\lambda\in \Lambda}
    (S_\lambda,T_\lambda,\sigma_\lambda,B_\lambda)}
  =\iota_Q^{\omega_{\prod\Omega}}$
  and $\phi\circ S_{\prod_{\lambda\in \Lambda}
    (S_\lambda,T_\lambda,\sigma_\lambda,B_\lambda)}
  =\iota_P^{\omega_{\prod\Omega}}$.

  If $x\in K$, then we have for all $\lambda\in\Lambda$
  that $\sigma_{H_\lambda}(x)-\pi_{T_H,S_H}(\Delta(x))=0$, and it thus follows from
  Proposition \ref{prop:diamond} that
  $\sigma_{\prod_{\lambda\in \Lambda}(S_\lambda,T_\lambda,\sigma_\lambda,B_\lambda)}(x)=
  \pi_{T_{\prod_{\lambda\in \Lambda}(S_\lambda,T_\lambda,\sigma_\lambda,B_\lambda)},
    S_{\prod_{\lambda\in \Lambda}(S_\lambda,T_\lambda,\sigma_\lambda,B_\lambda)}}(\Delta(x))$. Thus
  the covariant representation
  $$\bigl(S_{\prod_{\lambda\in \Lambda}(S_\lambda,T_\lambda,\sigma_\lambda,B_\lambda)},
  T_{\prod_{\lambda\in \Lambda}(S_\lambda,T_\lambda,\sigma_\lambda,B_\lambda)},
  \sigma_{\prod_{\lambda\in \Lambda}(S_\lambda,T_\lambda,\sigma_\lambda,B_\lambda)},
  B_{\prod_{\lambda\in \Lambda}(S_\lambda,T_\lambda,\sigma_\lambda,B_\lambda)}\bigr)$$
  of $(P,Q,\psi)$ is Cuntz-Pimsner invariant relative to $K$. It therefore
  follows from Theorem \ref{univ_cuntz} that there exists a ring homomorphism
  $\eta:\mathcal{O}_{(P,Q,\psi)}(K)\longrightarrow
  B_{\prod_{\lambda\in \Lambda}(S_\lambda,T_\lambda,\sigma_\lambda,B_\lambda)}$
  such that
  $\eta\circ \iota^K_R
  =\sigma_{\prod_{\lambda\in\Lambda}(S_\lambda,T_\lambda,\sigma_\lambda,B_\lambda)}$,
  $\eta\circ \iota^K_Q
  =T_{\prod_{\lambda\in \Lambda}(S_\lambda,T_\lambda,\sigma_\lambda,B_\lambda)}$ and
  $\eta\circ \iota^K_P
  =S_{\prod_{\lambda\in \Lambda}(S_\lambda,T_\lambda,\sigma_\lambda,B_\lambda)}$.
  We then have that
  $$\phi\circ\eta:\mathcal{O}_{(P,Q,\psi)}(K)\longrightarrow
  \mathcal{O}_{\bigl({}_{I_{\prod\Omega}}P,
    Q_{I_{\prod\Omega}},
    \psi_{I_{\prod\Omega}}\bigr)}\bigl((J_{\prod\Omega})_{I_{\prod\Omega}}\bigr)$$
  is a ring homomorphism satisfying
  $\phi\circ\eta\circ \iota^K_R
  =\iota_R^{\omega_{\prod\Omega}}$,
  $\phi\circ \eta\circ\iota^K_Q
  =\iota_Q^{\omega_{\prod\Omega}}$
  and $\phi\circ \eta\circ\iota^K_P
  =\iota_P^{\omega_{\prod\Omega}}$. It therefore follows that $H^K_{\omega_{\prod\Omega}}=
  \ker(\phi\circ\eta)=\ker\eta$, and since it follows from
  Proposition \ref{prop:diamond} that
  $\ker\eta=\cap_{\lambda\in\Lambda} H_\lambda$, we can conclude that
  $H^K_{\omega_{\prod\Omega}}=\cap_{\lambda\in\Lambda}H_\lambda$.

  It follows from Proposition \ref{prop:coprod}, \ref{prop:super} and
  \ref{theor_2} that there for
  each $\lambda\in\Lambda$ exists a ring homomorphism
  $\psi_\lambda:\mathcal{O}_{(P,Q,\psi)}(K)/H_\lambda\longrightarrow
  \mathcal{O}_{(P,Q,\psi)}(K)/H^K_{\omega_{\coprod\Omega}}$
  such that $\psi_\lambda\circ\sigma_{H_\lambda}=\sigma_{H^K_{\omega_{\coprod\Omega}}}$,
  $\psi_\lambda\circ T_{H_\lambda}=T_{H^K_{\omega_{\coprod\Omega}}}$ and
  $\psi_\lambda\circ S_{H_\lambda}=S_{H^K_{\omega_{\coprod\Omega}}}$. It follows that
  $\psi_\lambda\circ\wp_{H_\lambda}=\wp_{H^K_{\omega_{\coprod\Omega}}}$,
  and thus that $H_\lambda\subseteq H^K_{\omega_{\coprod\Omega}}$.

  Let $H$ be a two-sided ideal of $\mathcal{O}_{(P,Q,\psi)}(K)$ containing
  $\cup_{\lambda\in\Lambda}H_\lambda$.
  Then we have for each $\lambda\in\Lambda$ that there exists a ring homomorphism
  $\psi_\lambda:\mathcal{O}_{(P,Q,\psi)}(K)/H_\lambda\longrightarrow \mathcal{O}_{(P,Q,\psi)}(K)/H$
  such that $\psi_\lambda\circ\sigma_{H_\lambda}=\sigma_H$,
  $\psi_\lambda\circ T_{H_\lambda}=T_H$ and
  $\psi_\lambda\circ S_{H_\lambda}=S_H$. It therefore follows from Proposition
  \ref{prop:coprod} and \ref{prop:super} that there exists a ring homomorphism
  $$\tau:
  \mathcal{O}_{\bigl({}_{I_{\coprod\Omega}}P,
    Q_{I_{\coprod\Omega}},
    \psi_{I_{\coprod\Omega}}\bigr)}\bigl((J_{\coprod\Omega})_{I_{\coprod\Omega}}\bigr)
  \longrightarrow \mathcal{O}_{(P,Q,\psi)}(K)/H$$
  satisfying
  $\tau\circ\iota^{\omega_{\coprod\Omega}}_R=\wp_H\circ\iota^K_R$,
  $\tau\circ\iota^{\omega_{\coprod\Omega}}_Q=\wp_H\circ\iota^K_Q$
  and $\tau\circ\iota^{\omega_{\coprod\Omega}}_P=\wp_H\circ\iota^K_P$.
  It then follows that $\tau\circ\Psi^K_{\omega_{\coprod\Omega}}=\wp_H$, and thus
  that $H^K_{\omega_{\coprod\Omega}}=\ker\Psi^K_{\omega_{\coprod\Omega}}\subseteq H$. Hence
  $H^K_{\omega_{\coprod\Omega}}$ is the smallest two-sided ideal of $\mathcal{O}_{(P,Q,\psi)}(K)$
  containing $\cup_{\lambda\in\Lambda}H_\lambda$.
\end{proof}

\begin{corol}\label{ideals_toeplitz}
Let $R$ be a ring, let $(P,Q,\psi)$ be an $R$-system satisfying
condition \textbf{(FS)}. Then
$$H\longmapsto \omega^{\{0\}}_H,\quad\omega \longmapsto H^{\{0\}}_w$$
is a bijective correspondence between
the set of all the graded two-sided ideals $H$ of
$\mathcal{T}_{(P,Q,\psi)}$ and the set of
all $T$-pairs $\omega=(I,J)$ of $(P,Q,\psi)$.
This bijection preserves inclusion, and if $(H_\lambda)_{\lambda\in\Lambda}$ is a non-empty
family of graded two-sided ideals of $\mathcal{T}_{(P,Q,\psi)}$ and
$\Omega=(\omega^{\{0\}}_{H_\lambda})_{\lambda\in\Lambda}$, then
$H^{\{0\}}_{\omega_{\prod\Omega}}=\cap_{\lambda\in\Lambda}H_\lambda$ and $H^{\{0\}}_{\omega_{\coprod\Omega}}$ is the smallest two-sided ideal of $\mathcal{T}_{(P,Q,\psi)}$
containing $\cup_{\lambda\in\Lambda}H_\lambda$.
\end{corol}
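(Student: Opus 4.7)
The plan is to deduce Corollary \ref{ideals_toeplitz} as an immediate specialization of Theorem \ref{ideals_cuntz} with $K=\{0\}$. There are really only three small things to check, and no new arguments are needed.

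First, I would observe that $K=\{0\}$ is a faithful $\psi$-compatible two-sided ideal of $R$: the inclusion $\{0\}\subseteq \Delta^{-1}(\mathcal{F}_P(Q))$ is trivial (so $\{0\}$ is $\psi$-compatible), and $\{0\}\cap\ker\Delta=\{0\}$ (so it is faithful). Hence Theorem \ref{ideals_cuntz} is applicable with $K=\{0\}$.

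Second, I would identify $\mathcal{O}_{(P,Q,\psi)}(\{0\})$ with $\mathcal{T}_{(P,Q,\psi)}$. Indeed, the ideal $\mathcal{T}(\{0\})$ is by definition the minimal two-sided ideal of $\mathcal{T}_{(P,Q,\psi)}$ containing $\{\iota_R(x)-\pi(\Delta(x))\mid x\in\{0\}\}=\{0\}$, so $\mathcal{T}(\{0\})=\{0\}$, and thus $\mathcal{O}_{(P,Q,\psi)}(\{0\})=\mathcal{T}_{(P,Q,\psi)}/\{0\}=\mathcal{T}_{(P,Q,\psi)}$ with $\iota_R^{\{0\}}=\iota_R$, $\iota_Q^{\{0\}}=\iota_Q$ and $\iota_P^{\{0\}}=\iota_P$. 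In particular, the maps $H\longmapsto \omega_H^{\{0\}}$ and $\omega\longmapsto H_\omega^{\{0\}}$ of Definition \ref{def:omegahk} and Definition \ref{defi:H} are defined on, respectively, the graded two-sided ideals of $\mathcal{T}_{(P,Q,\psi)}$ and the $T$-pairs $\omega=(I,J)$ with $\{0\}\subseteq J$.

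Third, since the condition ``$\{0\}\subseteq J$'' is automatic for any two-sided ideal $J$ of $R$, the $T$-pairs appearing in Theorem \ref{ideals_cuntz} with $K=\{0\}$ are precisely \emph{all} $T$-pairs of $(P,Q,\psi)$. Applying Theorem \ref{ideals_cuntz} then directly yields the bijective, inclusion-preserving correspondence between graded two-sided ideals of $\mathcal{T}_{(P,Q,\psi)}$ and $T$-pairs of $(P,Q,\psi)$, together with the two ``lattice'' statements $H^{\{0\}}_{\omega_{\prod\Omega}}=\bigcap_{\lambda\in\Lambda}H_\lambda$ and ``$H^{\{0\}}_{\omega_{\coprod\Omega}}$ is the smallest two-sided ideal containing $\bigcup_{\lambda\in\Lambda}H_\lambda$''. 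There is no hard step: the only thing one must be careful about is the identification in the second paragraph, which relies on unfolding the definition of $\mathcal{T}(J)$ for $J=\{0\}$.
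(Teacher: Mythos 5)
Your proposal is correct and matches the paper's intent exactly: the corollary is stated without separate proof precisely because it is the specialization of Theorem \ref{ideals_cuntz} to $K=\{0\}$, and the three points you verify (that $\{0\}$ is a faithful $\psi$-compatible ideal, that $\mathcal{T}(\{0\})=\{0\}$ so $\mathcal{O}_{(P,Q,\psi)}(\{0\})=\mathcal{T}_{(P,Q,\psi)}$, and that the constraint $\{0\}\subseteq J$ is vacuous) are all that is needed.
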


\begin{corol} \label{corol:ideal}
Let $R$ be a ring, let $(P,Q,\psi)$ be an $R$-system satisfying
condition \textbf{(FS)} and assume that there exists a uniquely maximal
faithful $\psi$-compatible two-sided ideal $K$ of $R$.
Then
$$H\longmapsto \omega^K_H,\quad\omega \longmapsto H^K_w$$
is a bijective correspondence between
the set of all the graded two-sided ideals $H$ of
$\mathcal{O}_{(P,Q,\psi)}$ and the set of
all $T$-pairs $\omega=(I,J)$ of $(P,Q,\psi)$ satisfying $K\subseteq J$.
This bijection preserves inclusion, and if $(H_\lambda)_{\lambda\in\Lambda}$ is a non-empty
family of graded two-sided ideals of $\mathcal{O}_{(P,Q,\psi)}$ and
$\Omega=(\omega^K_{H_\lambda})_{\lambda\in\Lambda}$, then
$H^K_{\omega_{\prod\Omega}}=\cap_{\lambda\in\Lambda}H_\lambda$ and $H^K_{\omega_{\coprod\Omega}}$ is the smallest two-sided ideal of $\mathcal{O}_{(P,Q,\psi)}$
containing $\cup_{\lambda\in\Lambda}H_\lambda$.
\end{corol}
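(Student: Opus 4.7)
The plan is to observe that this corollary is an immediate specialization of Theorem \ref{ideals_cuntz}. By Definition \ref{def:CK}, when a uniquely maximal faithful $\psi$-compatible two-sided ideal $K$ of $R$ exists, the Cuntz-Pimsner ring $\mathcal{O}_{(P,Q,\psi)}$ is defined precisely as $\mathcal{O}_{(P,Q,\psi)}(K)$, and the Cuntz-Pimsner representation $(\iota_P^{CP},\iota_Q^{CP},\iota_R^{CP},\mathcal{O}_{(P,Q,\psi)})$ equals $(\iota_P^K,\iota_Q^K,\iota_R^K,\mathcal{O}_{(P,Q,\psi)}(K))$. Since $K$ is in particular a faithful $\psi$-compatible two-sided ideal of $R$, all hypotheses of Theorem \ref{ideals_cuntz} are satisfied for this choice of $K$.

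First, I would invoke Theorem \ref{ideals_cuntz} directly with the ring $R$, the $R$-system $(P,Q,\psi)$ and the faithful $\psi$-compatible two-sided ideal $K$. The theorem then gives us, verbatim, the desired bijective correspondence $H\longmapsto \omega^K_H$ and $\omega\longmapsto H^K_\omega$ between graded two-sided ideals $H$ of $\mathcal{O}_{(P,Q,\psi)}(K)=\mathcal{O}_{(P,Q,\psi)}$ and $T$-pairs $\omega=(I,J)$ of $(P,Q,\psi)$ satisfying $K\subseteq J$, together with the preservation of inclusion, the identification of intersections via the product $T$-pair and of ideals generated by unions via the coproduct $T$-pair. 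No further computation is required, since every clause in the statement of the corollary is already a clause in the statement of Theorem \ref{ideals_cuntz}. There is no main obstacle; the content of the corollary is exactly that Theorem \ref{ideals_cuntz} applies to the canonical choice $K$ provided by the uniquely maximal hypothesis, giving a description of the graded ideals of the Cuntz-Pimsner ring in terms of $T$-pairs dominating $K$.
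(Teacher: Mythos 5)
Your proposal is correct and is exactly the route the paper takes: the corollary is stated without separate proof precisely because it is the specialization of Theorem \ref{ideals_cuntz} to the uniquely maximal faithful $\psi$-compatible ideal $K$, for which $\mathcal{O}_{(P,Q,\psi)}=\mathcal{O}_{(P,Q,\psi)}(K)$ by Definition \ref{def:CK}. Nothing further is needed.
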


\begin{exem} \label{examples: ideals}
  Let us once again return to Example
  \ref{examples:item:1}. We saw in Example
  \ref{examples_cuntz:cross} that if $R$ is a ring with
  local units, $\varphi\in\aut(R)$, $P=R_\varphi$, $Q=R_{\varphi\inv}$ and
\begin{eqnarray*}
  \psi:P\otimes_R Q & \longrightarrow R \\
  p\otimes q & \longmapsto p\varphi(q),
\end{eqnarray*}
then $(P,Q,\psi)$ is a $R$-system which satisfies condition
\textbf{(FS)}, $\ker\Delta=\{0\}$,
$\Delta^{-1}(\mathcal{F}_P(Q))=R$,
and $\mathcal{O}_{(P,Q,\psi)}=\mathcal{O}_{(P,Q,\psi)}(R)$ is the universal ring generated by
elements  $\{[r,k]: r\in R,\ k\in\Z\}$ satisfying
$[r_1,k]+[r_2,k]=[r_1+r_2,k]$ and
$[r_1,k_1][r_2,k_2]=[r_1\varphi^{k_1}(r_2),k_1+k_2]$.

It is easy to see that a two-sided ideal $I$ of $R$ is $\psi$-invariant if
and only if $\varphi(I)\subseteq I$. It is also easy to see that if
$I$ is a $\psi$-invariant ideal, then
$\ker\Delta_I=\varphi\inv(I)+I$. Thus $(I,R)$ is a $T$-pair if and
only if $I$ is a two-sided ideal of $R$ such that $\varphi(I)=I$. It
therefore follows from Corollary \ref{corol:ideal} that we have a
bijective correspondence between $\varphi$-invariant ideals of $R$
and graded two-sided ideals of $\mathcal{O}_{(P,Q,\psi)}$ which takes a
$\psi$-invariant ideal $I$ to the graded two-sided ideal
$\{[x,k]\in\mathcal{O}_{(P,Q,\psi)}: x\in I,\ k\in\Z\}$, which is
isomorphic to the crossed product $I\times_\varphi\Z$.

It is easy to see that if we by $\varphi_I$ denote the automorphism
of $R_I=R/I$ induced by $\varphi$, then ${}_IP=(R/I)_{\varphi_I}$
and $Q_I=(R/I)_{\varphi_I\inv}$. It follows from Proposition
\ref{theor_2} that the quotient of $\mathcal{O}_{(P,Q,\psi)}$ by the
ideal $\{[x,k]\in\mathcal{O}_{(P,Q,\psi)}: x\in I,\ k\in\Z\}$ is
isomorphic to
$\mathcal{O}_{({}_IP,Q_I,\psi_I)}(R_I)=\mathcal{O}_{({}_IP,Q_I,\psi_I)}$
and thus to the crossed product $(R/I)\times_{\varphi_I}\Z$.
\end{exem}

\begin{exem} \label{examples: ideals: graph ideals}
Let $E=(E^0,E^1)$ be an oriented graph and
  $F$ a commutative
  unital ring. Let $R$ be the ring and $(P,Q,\psi)$
  the $R$-system associated with $E$ in Example
  \ref{examples:graph-toeplitz} and Example
  \ref{examples_cuntz:graph_cuntz}. For an ideal $I$ of $R$, let
  $H=\{v\in E^0: \mathbf{1}_{v}\in I\}$. We then have that
  $I=\text{span}_F\{\mathbf{1}_v: v\in I\}$. We may identify $R_I$
  with $\text{span}_F\{\wp_I(\mathbf{1}_v): v\in E^0\setminus H\}$. It is
  easy to see that
  $I$ is $\psi$-invariant if and only if
  the set of vertices $H$ is
  \textit{hereditary}, i.e. whenever $e\in E^1$ with $s(e)\in
  H$ then $r(e)\in H$. In that case we have
  $$IP=\text{span}_F\{\textbf{1}_{\overline{e}}: e\in E^1,\
  r(e)\in H\}\qquad\text{and}\qquad
  QI=\text{span}_F\{\textbf{1}_{e}: e\in E^1,\ r(e)\in H\}\,,$$
  so we may, and will, identify ${}_IP$ with
  $\text{span}_F\{\wp_I(\textbf{1}_{\overline{e}}): e\in E^1,\
  r(e)\notin H\}$ and $Q_I$ with $\text{span}_F\{\wp_I(\textbf{1}_{e}):
  e\in E^1,\ r(e)\notin H\}$.
  We then have that that $$\ker
  \Delta_I=\text{span}\{\wp_I(\textbf{1}_{v}):v\in\partial H\text{ or }
  s\inv(v)=\emptyset  \}\subseteq \ker
  \Delta$$
  where $\partial
  H:=\{v\in E^0: 0<|s\inv(v)|<\infty\text{ and }r(s^{-1}(v))\subseteq
  H\}$. The set $H$ is called
  \textit{saturated} if $\partial H\subseteq H$. We define the
  set of \textit{breaking vertices} of $H$ to be $$B_H:=\{v\in
  E^0_{inf}\setminus H: 0<|s^{-1}(v)\cap r^{-1}(E^0\setminus
  H)|<\infty\}$$
  where $E^0_{inf}=\{v\in E^0: |s^{-1}(v)|=\infty\}$.
  We then have that
  $$\Delta^{-1}_I(\mathcal{F}_{{}_IP}(Q_I))=\text{span}\{\wp_I(\textbf{1}_{v}):v\in
  E^0_{reg}\setminus H\text{ or }v\in B_H\}$$
  where $E^0_{reg}:=\{v\in E^0: 0\leq|s^{-1}(v)|<\infty\}$.

  Let $J$ be an ideal of
  $R$. Then $I\cup \Delta^{-1}(\mathcal{F}_P(Q))\subseteq J$ if and
  only if we for all $v\in H$ and all $v\in E^0$ with
  $0<|s\inv(v)|<\infty$ have that $\textbf{1}_v\in J$, and we have
  that $\wp_I(J)\subseteq
  \Delta^{-1}_I(\mathcal{F}_{{}_IP}(Q_I)) \cap (\ker\Delta_I)^\perp$
  if and only if
  we for $v\in E^0\setminus H$ with $\textbf{1}_v\in J$ have that
  $v\in E^0_{reg}\cup B_H$, $v\notin \partial H$ and
  $s\inv(v)\ne\emptyset$. So if $H$ is not saturated, then there does
  not exist any ideal $J$ of $R$ such that $I\cup
  \Delta^{-1}(\mathcal{F}_P(Q))\subseteq J$ and $\wp_I(J)\subseteq
  \Delta^{-1}_I(\mathcal{F}_{{}_IP}(Q_I)) \cap (\ker\Delta_I)^\perp$;
  and if $H$ is saturated, then there is a bijective correspondence
  between ideals $J$ of $R$ such that $I\cup
  \Delta^{-1}(\mathcal{F}_P(Q))\subseteq J$ and $\wp_I(J)\subseteq
  \Delta^{-1}_I(\mathcal{F}_{{}_IP}(Q_I)) \cap (\ker\Delta_I)^\perp$,
  and subsets of $B_H$. This correspondence takes a subset $S$ of
  $B_H$ to the ideal $\text{span}_F\{\mathbf{1}_v:v\in H\cup S\text{
    or }0<|s\inv(v)|<\infty\}$.

  So it follows from Corollary \ref{corol:ideal} that there is a
  bijective correspondence between pairs $(H,S)$ where $H$ is a
  hereditary and saturated subset of $E^0$ and $S$ is a subset of
  $B_H$, and graded ideals of $\mathcal{O}_{(P,Q,\psi)}$. This
  correspondence takes a graded ideal $K$ to $(H,S)$ where
  $$H=\{v\in
  E^0: p_v\in K\}$$ and
  $$S=\{v\in B_H:
  p_v-\sum_{e\in s\inv(v)\cap r\inv(E^0\setminus
    H)}x_ey_e\in
  K\}\,.$$
  It takes a pair $(H,S)$
  to the graded ideal generated by $$\{p_v: v\in
  H\}\cup\{p_v-
  \sum_{e\in s\inv(v),\ r(e)\notin H}
  x_ey_e:
  v\in S\}\,.$$

Thus we recover the result of \cite[Theorem 5.7(1)]{TF}.
\end{exem}

\end{document}